\newtheorem{definition}{Definition}[section]
\newtheorem{theorem}{Theorem}[section]
\newtheorem{lemma}[theorem]{Lemma}
\newtheorem{proposition}{Proposition}[section]
\newtheorem{remark}{Remark}[section]
\newtheorem{example}{Example}[section]
\numberwithin{equation}{section}
\numberwithin{figure}{section}
\numberwithin{table}{section}
\newcommand\bbR{\mathbb{R}}
\newcommand\bx{\bm{x}}
\newcommand\bV{\bm{V}}
\newcommand\bU{\bm{U}}
\newcommand\bF{\bm{F}}
\newcommand\pd[2]{\dfrac{\partial {#1}}{\partial {#2}}}
\newcommand\abs[1]{\lvert #1 \rvert}
\newcommand\mean[1]{\left\{\!\!\left\{ #1 \right\}\!\!\right\}}
\newcommand\jump[1]{\llbracket #1 \rrbracket}
\newcommand\jumpangle[1]{\langle\!\langle #1 \rangle\!\rangle}
\begin{document}

\begin{frontmatter}



\title{High-order accurate well-balanced energy stable finite difference schemes for multi-layer shallow water equations on fixed and adaptive moving meshes}


\author[inst1]{Zhihao Zhang}
\affiliation[inst1]{organization={Center for Applied Physics and Technology, HEDPS and LMAM,
	School of Mathematical Sciences, Peking University},
            city={Beijing},
            postcode={100871},
            country={P.R. China}}
\ead{zhihaozhang@pku.edu.cn}

\author[inst2,inst1]{Huazhong Tang}
\affiliation[inst2]{organization={Nanchang Hangkong University},
            city={Nanchang},
            postcode={330000},
            state={Jiangxi Province},
            country={P.R. China}}
\ead{hztang@math.pku.edu.cn}

\author[inst3]{Junming Duan\corref{cor1}}
\affiliation[inst3]{organization={\ Fakult{\"a}t f{\"u}r Mathematik und Informatik, Universit{\"a}t W{\"u}rzburg, Emil-Fischer-Stra{\ss}e 40},
            city={W\"urzburg},
            postcode={97074},
            country={Germany}}
\cortext[cor1]{Corresponding author}
\ead{junming.duan@uni-wuerzburg.de}

\begin{abstract}
This paper develops high-order accurate well-balanced (WB) energy stable (ES) finite difference schemes for multi-layer (the number of layers $M\geqslant 2$) shallow water equations (SWEs) with non-flat bottom topography on both fixed and adaptive moving meshes, extending our previous work on the single-layer shallow water magnetohydrodynamics \cite{Duan2021_High} and single-layer SWEs on adaptive moving meshes \cite{Zhang2023High}.
To obtain an energy inequality, the convexity of an energy function for an arbitrary $M$ is proved by finding recurrence relations of the leading principal minors or the quadratic forms of the Hessian matrix of the energy function with respect to the conservative variables, which is more involved than the single-layer case due to the coupling between the layers in the energy function.
An important ingredient in developing high-order semi-discrete ES schemes is the construction of a two-point energy conservative (EC) numerical flux.
In pursuit of the WB property, a sufficient condition for such EC fluxes is given with compatible discretizations of the source terms similar to the single-layer case.
It can be decoupled into $M$ identities individually for each layer, making it convenient to construct a two-point EC flux for the multi-layer system.
To suppress possible oscillations near discontinuities, WENO-based dissipation terms are added to the high-order WB EC fluxes, which gives semi-discrete high-order WB ES schemes.
Fully-discrete schemes are obtained by employing high-order explicit strong stability preserving Runge-Kutta methods and proved to preserve the lake at rest.
The schemes are further extended to moving meshes based on a modified energy function for a reformulated system, relying on the techniques proposed in \cite{Zhang2023High}.
Numerical experiments are conducted for some two- and three-layer cases to validate the high-order accuracy, WB and ES properties, and high efficiency of the schemes,
with a suitable amount of dissipation chosen by estimating the maximal wave speed due to the lack of an analytical expression for the eigenstructure of the multi-layer system.

\end{abstract}

\begin{keyword}
Multi-layer shallow water equations \sep energy stability \sep high-order accuracy \sep well-balance \sep adaptive moving mesh
\end{keyword}

\end{frontmatter}



\section{Introduction}\label{section:Intro}

Multi-layer shallow water equations (ML-SWEs) \cite{Schijf1953Theoretical, Vreugdenhil1994Numerical, Frings2012Adaptive} serve as a mathematical model that describes the intricate dynamics of multi-layer immiscible free surface flow in the presence of gravity and bottom topography.
These equations encompass the essential considerations of vertical variations in water density, depth and velocity, omitted by single-layer SWEs, which play a pivotal role in the examination of stratified flows, as evident in oceanography and coastal engineering \cite{Dalziel1991Two,Schijf1953Theoretical}, such as tidal dynamics and coastal hydrodynamics.
The two-dimensional (2D) ML-SWEs defined in a time-space physical domain $(t,\bm{x} = (x_1,x_2))\in\bbR^+\times\Omega_p,~\Omega_p\subset \bbR^2$ with totally $M\geqslant 2$ layers and non-flat bottom topography $b(\bm{x})$ can be written as the following hyperbolic balance laws 
\begin{equation}\label{eq:ML-SWEs-2D}
	\left\{
	\begin{aligned}
		&~\pd{}{t}h_m+\pd{}{x_1}\left(h_mu_m\right)+\pd{}{x_2}\left(h_mv_m\right)=0, 
		\\
		&\pd{}{t}\left(h_m u_m\right)+\pd{}{x_1}\left(h_m u_m^2+\frac{1}{2} g h_m^2\right) + \pd{}{x_2}\left(h_m u_m v_m\right)=-g h_m \pd{}{x_1}\left(b+\sum\limits_{k > m} h_{k} + \sum\limits_{k < m} 
		r_{km}
		h_{k}\right), 
		\\
		&\pd{}{t}\left(h_m v_m\right)+\pd{}{x_1}\left(h_m u_m v_m\right) + \pd{}{x_2}\left(h_m v_m^2+\frac{1}{2} g h_m^2\right)=-g h_m \pd{}{x_2}\left(b+\sum\limits_{k > m} h_{k} + \sum\limits_{k < m} 
		r_{km}
		h_{k}\right),
	\end{aligned}
	\right.
\end{equation}
where $h_m(t,\bm{x})\geqslant 0$ is the water depth of the $m$th-layer for $m=1,\dots,M$,
$u_m$ and $v_m$ are the $x_1$- and $x_2$-component of the $m$th-layer fluid velocity, and $g$ is the gravitational constant.
The density ratio between the $k$th- and $m$th-layer is denoted by $r_{km} = \rho_{k}/\rho_m$, and the constant densities satisfy $0 < \rho_1<\cdots< \rho_M$. 
In this paper, the dry water cases are not considered, i.e., $h_m(t,\bm{x})$ is always positive. 
An illustration of such a multi-layer system is given in Figure \ref{fig:ML-SWEs}.
\begin{figure}[h]
	\centering
	\includegraphics[width=0.5\linewidth]{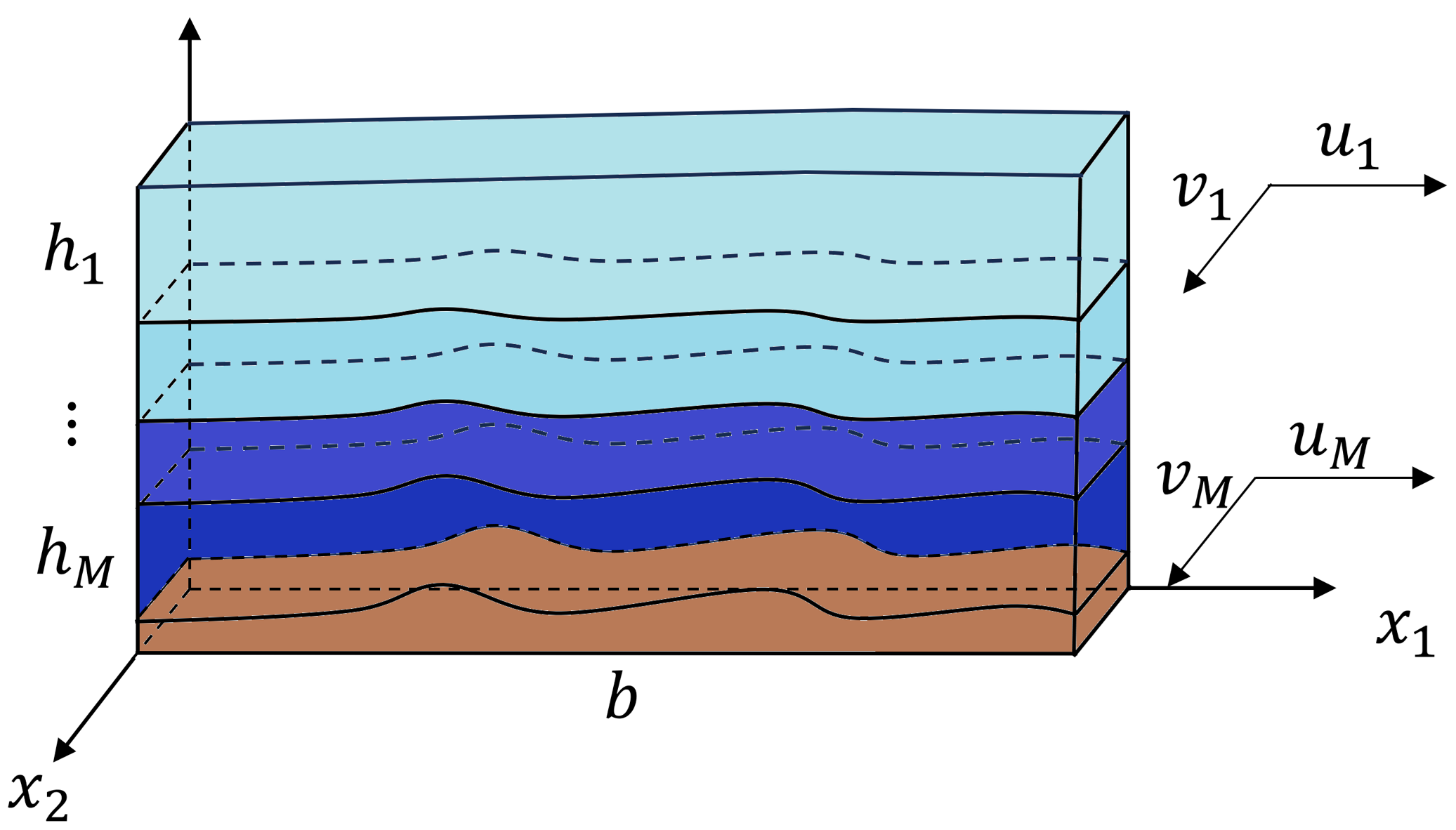}
	\caption{Illustration of the multi-layer shallow water equations.}\label{fig:ML-SWEs}
\end{figure}

In real-world applications of oceanography and coastal engineering, numerically solving the ML-SWEs is of great importance.
One of the difficulties is that there is no analytical expression for the eigenvalues and eigenvectors of the system \eqref{eq:ML-SWEs-2D},
so that some numerical methods, such as upwind schemes, cannot be directly extended to the multi-layer cases.
For the two-layer SWEs \eqref{eq:TL-SWEs-2D}, when $\abs{u_2-u_1}$
and $1-r_{12}$ are small, an approximation of the eigenvalues is given in \cite{Schijf1953Theoretical,Kim2008Two},
and one needs
$\left(u_1-u_2\right)^{2}<g(1-r_{12})(h_1+h_2)$
to ensure real eigenvalues.
But this condition is only valid when $r_{12}\approx 1$ and $u_1\approx u_2$, as stated in \cite{Kurganov2009Central}. When the number of layers  $M\geqslant3$, it is difficult to get an estimation for the eigenvalues \cite{Frings2012Adaptive}.
One more practical way is to estimate the upper or lower bounds of the eigenvalues.

Another important issue that should be addressed is that the ML-SWEs \eqref{eq:ML-SWEs-2D} possess the following lake at rest state
\begin{equation*}
	\left(h_m + b+
	\sum\limits_{k > m} h_{k} 
	+ \sum\limits_{k < m} r_{km} h_{k}\right) \equiv C_m,
	~u_m= v_m = 0,~ m =1,\cdots,M,
\end{equation*}
or equivalently,
\begin{equation*}
	h_m = \widetilde{C}_m, ~m < M,~
        h_M + b = \widetilde{C}_M,~
	u_m= v_m = 0, ~m =1,\cdots,M,
\end{equation*}
where $C_m$ and $\widetilde{C}_m$ are constant.
Many physical phenomena such as waves on a lake or tsunamis in the deep ocean \cite{Castro2012ADER} can be seen as small perturbations of the steady states, and it is crucial to capture these perturbations accurately.
This inspired researchers to develop well-balanced (WB) schemes to preserve the steady states in the discrete sense, enabling accurate capture of small perturbations even on coarse meshes.
The idea of the WB property or the ``C-property'' was first illustrated in \cite{Bermudez1994Upwind},
then various WB numerical methods for the SWEs were studied \cite{Vukovic2002ENO,Xing2005High,AuDusse2004fast,Li2012Hybrid,Noelle2007High,Xing2014Exactly}.
For more details, the readers are referred to \cite{Kurganov2018Finite} and the references therein.

Over the past few years, there have been limited numerical schemes introduced for solving the ML-SWEs. The existing schemes have predominantly concentrated on the two-layer scenarios,  with minimal attention given to the challenges posed by adaptive moving meshes, not to mention high-order WB and ES adaptive moving mesh schemes for $M\geqslant3$. A relaxation approach \cite{Abgrall2009Two} was proposed for the 1D two-layer SWEs, which gives an unconditionally hyperbolic system and an explicit eigenstructure. The similar idea was extended to the 2D two-layer case in \cite{Chiapolino2018Models} and \cite{Liu2021Anew}, where the latter presented a second-order scheme on unstructured meshes.
A central-upwind scheme was proposed in \cite{Kurganov2009Central} based on an estimation of the maximum wave speed and the path-conservative method was employed to improve the robustness of the central-upwind scheme in \cite{Castro2019Path}.
A central discontinuous Galerkin (DG) method was constructed for the two-layer SWEs in \cite{Cheng2020Ahigh} to achieve high-order accuracy.
There are also other works \cite{Bouchut2008An,Cao2023Flux,Dudzinski2013Well,Bouchut2010Arobust,Castro2005Numerical}.

In the design of numerical methods, it is desirable to satisfy semi-discrete or fully-discrete stability conditions to improve the robustness of the numerical simulations.
For hyperbolic conservation laws, the schemes satisfying entropy stability have been extensively studied, e.g., \cite{Tadmor1987The, Tadmor2003Entropy,Lefloch2002Fully, Fjordholm2012Arbitrarily,Carpenter2014Entropy,Gassner2013A,Duan2020RMHD}.
For the single-layer SWEs, energy stable (ES) schemes that satisfy some discrete energy inequalities were proposed in \cite{Berthon2016Afully,Fjordholm2011Well,Bouchut2008An,Zhang2023High},
and recently, high-order WB ES DG schemes were proposed for the two-layer SWEs \cite{Ersing2023Entropy} on fixed curvilinear meshes.
The complex structures such as shocks and sharp transitions that arise in solutions to the SWEs often require fine meshes for better resolution.
To improve both the efficiency and quality of numerical solutions, the adaptive moving mesh method has been adopted in the numerical simulations for the SWEs.
An adaptive moving mesh kinetic flux-vector splitting method was introduced in \cite{Tang2004Solution}.
High-order adaptive moving mesh DG schemes were proposed in \cite{Zhang2021High,Zhang2022AWell}, where the preservation of the positivity of the water height and the lake at rest was also discussed.
In \cite{Zhang2023Structure}, a WB and positivity-preserving finite volume WENO arbitrary
Lagrangian-Eulerian scheme was developed.

This paper aims to develop high-order accurate finite difference schemes for the ML-SWEs that are both ES and WB on fixed and adaptive moving meshes, extending our previous works \cite{Duan2021_High,Zhang2023High}.
To obtain an energy inequality, the convexity of an energy function is proved.
Compared to the single-layer case, the coupling terms between the multiple layers in the energy function make the corresponding Hessian matrix more complicated, thus the proof is more involved, which is based on finding the recurrence relations of the leading principal minors or the quadratic forms of the Hessian matrix.
The WB property requires compatible discretizations for the flux gradient and source terms, thus a sufficient condition involving the discretizations of the source terms from all the layers is derived for two-point energy conservative (EC) numerical fluxes similar to the single-layer case.
The condition can be decomposed into $M$ individual equalities for each layer, which makes it convenient to construct $M$ two-point fluxes for each single layer, then they are concatenated to form the two-point EC flux for the multi-layer system.
Based on such a two-point EC flux, the high-order WB EC fluxes are constructed by using the two-point discretizations as a building block. To suppress possible oscillations near discontinuities, the high-order WB ES schemes are obtained by adding suitable WENO-based dissipation terms to the high-order WB EC fluxes with compatible discretizations of the source terms, which are proved to satisfy semi-discrete energy inequalities.
The explicit strong-stability preserving (SSP) third-order Runge-Kutta (RK3) method is employed to obtain the fully-discrete schemes, which are proved to preserve the lake at rest.
The extension to the adaptive moving meshes is achieved based on a modified energy function for a reformulated system by including the bottom topography as an additional conservative variable and carefully designed dissipation terms, extending the techniques in \cite{Zhang2023High}.
In the numerical experiments for the two- and three-layer systems, the estimation of the maximal wave speed \cite{Kurganov2009Central} is adopted to choose a proper amount of dissipation since there is no explicit expression for the eigenstructure.

The remainder of this paper is structured as follows.
Section \ref{section:EntropyCondition} presents the proof of the convexity of the energy function and corresponding energy inequality.
Section \ref{section:2DHOScheme} is devoted to the construction of the schemes on fixed meshes.
To achieve the WB property, a sufficient condition for the two-point EC fluxes is derived with suitable discretizations of the source terms, and a corresponding two-point EC flux is constructed.
Based on such a flux, the high-order WB EC fluxes are obtained, and then the high-order WB ES schemes are developed.
The extension of the schemes to adaptive moving meshes is detailed in Section \ref{Sec:MM_Case} with a modified energy function for the reformulated system.
Numerical experiments for the two- and three-layer systems conducted in Section \ref{section:Result} serve to validate the high-order accuracy, efficiency, WB and ES properties, and shock-capturing ability.
Section \ref{section:Conc} concludes the paper with final remarks.

\section{Energy inequality for the ML-SWEs}\label{section:EntropyCondition}

The ML-SWEs \eqref{eq:ML-SWEs-2D} can be written in a more compact form
\begin{equation}\label{eq:SWE0}
    \frac{\partial{\bm{U}}}{\partial t}+\sum_{\ell=1}^2 \frac{\partial {\bm{F}}_{\ell}({\bm{U}})}{\partial x_{\ell}}
    =-g \sum_{\ell=1}^2 \sum\limits_{m=1}^Mh_m \frac{\partial {\bm{B}}_{\ell,m}(\bU, b)}{\partial x_{\ell}},
\end{equation}
where the vector of the conservative variables $\bm{U} = (\bm{U}_{1}^{\mathrm{T}},\dots,\bm{U}_M^{\mathrm{T}})^{\mathrm{T}}$ and the $x_\ell$-directional physical flux $\bm{F}_{\ell} = \left(\left({\bm{F}}_{\ell}\right)_1^{\mathrm{T}},\dots,\left({\bm{F}}_{\ell}\right)_M^{\mathrm{T}}\right)^{\mathrm{T}}$ are given by
\begin{equation*}
\begin{aligned}
&{\bm{U}_{m}}  =\left(h_m, h_m u_m, h_m v_m\right)^{\mathrm{T}}, \\
&\left({\bm{F}}_{1}\right)_{m}  =\left(h_mu_m, h u_m^2+\frac{g}{2} h_m^2, h_m u_m v_m\right)^{\mathrm{T}}, \\
&\left({\bm{F}}_{2}\right)_{m}  =\left(h_m v_m, h_m u_m v_m, h_m v_m^2+\frac{g}{2} h_m^2\right)^{\mathrm{T}},
\end{aligned}
\end{equation*}
with the source terms
\begin{equation*}
\begin{aligned}
\bm{{B}}_{1,m} &= \left(\bm{0}_{3m-2},z_m,0,\bm{0}_{3M-3m}\right)^\mathrm{T},\\
\bm{{B}}_{2,m} &= \left(\bm{0}_{3m-2},0,z_m,\bm{0}_{3M-3m}\right)^\mathrm{T}.
\end{aligned}
\end{equation*}
Here $z_m := b+
		\sum\limits_{k > m} h_{k} 
		+ \sum\limits_{k < m} r_{km} h_{k}$ and $\bm{0}_d$ is the row vector with $d$ zeros.
One can define an energy and energy flux pair for the system \eqref{eq:SWE0} as
\begin{equation}\label{eq:Entropy_Pair_ML_2D}
    \begin{aligned}
	{\eta}({\bm{U}})&:= \sum\limits_{m=1}^{M}\dfrac{\rho_m}{2}\left(h_m u_m^2 + h_m v_m^2 + g h_m^2\right) + g\sum\limits_{m=1}^{M} \rho_m h_m b + g\sum\limits_{m=1}^{M}\sum\limits_{k<m} \rho_{k} h_{k} h_m ,\\
	{q}_1({\bm{U}})&:=\sum\limits_{m=1}^{M}\dfrac{\rho_m}{2}u_m\left(h_m u_m^2 + h_m v_m^2\right)+g\sum\limits_{m=1}^{M}\rho_{m}h_m^2u_m+g\sum\limits_{m=1}^{M}\rho_m h_m u_m z_m,\\
	{q}_2({\bm{U}})&:=\sum\limits_{m=1}^{M}\dfrac{\rho_m}{2}v_m\left(h_m u_m^2 + h_m v_m^2\right)+g\sum\limits_{m=1}^{M}\rho_{m}h_m^2v_m+g\sum\limits_{m=1}^{M}\rho_m h_m v_m z_m,
    \end{aligned}
\end{equation}
which satisfies the following consistent condition
\begin{equation}\label{eq:EntropyPairConsistent}
    \pd{{q}_{\ell}}{{\bm{U}}}
    =\pd{\eta}{\bU} \left(\pd{{\bF}_{\ell}}{{\bm{U}}}+\bm{N}_{\ell}\right), ~\ell=1,2,
\end{equation}
where $\bm{N}_{\ell}\left(\bm{U}, b\right)$ comes from the non-conservative source terms
\begin{equation*}
	\bm{N}_{\ell} = g\sum_{m=1}^{M}h_m\pd{\bm{B}_{{\ell},m}}{\bm{U}}.
\end{equation*}
The convexity of the energy $\eta$ with respect to $\bU$ is proved in Proposition \ref{prop:eta_convex_prove}, based on the structure of the Hessian matrix $\partial^2{\eta}/\partial {\bm{U}}^2$ given in Lemma \ref{lemma:Hessian}.

\begin{lemma}\label{lemma:Hessian}\rm
    Denote the Hessian matrix for the $\mathcal{N}$-layer case as $\mathcal{M}_\mathcal{N} \in \mathbb{R}^{3\mathcal{N}\times3\mathcal{N}}$, then the Hessian matrix for the $(\mathcal{N}+1)$-layer case can be expressed as
    \begin{equation*}
    \mathcal{M}_{\mathcal{N}+1} = 
    \left(
        \begin{array}{cc}
             \mathcal{M}_{\mathcal{N}} & A_1^{\mathrm{T}} \\
            A_1 & A_2 
        \end{array}
        \right) \in \mathbb{R}^{(3\mathcal{N}+3)\times(3\mathcal{N}+3)},
    \end{equation*}
    where
    \begin{align*}
        &A_1 = 
        \left(
        \begin{array}{cccccccccccccc}
             g\rho_{1}&0&0  &g\rho_{2}&0&0&  \cdots&g\rho_{\mathcal{N}-1}&0&0&g\rho_{\mathcal{N}}&0&0\\
              0&0&0  &0&0&0& \cdots&0&0&0&0&0&0\\
              0&0&0  &0&0&0 &\cdots&0&0&0&0&0&0
        \end{array}
        \right) \in \mathbb{R}^{3\times3\mathcal{N}},
        \\~
            &A_2 = 
            \left(
            \begin{array}{ccc}
        \frac{\rho _{\mathcal{N}+1}\,\left(u_{\mathcal{N}+1}^2+v_{\mathcal{N}+1}^2+g\,h_{\mathcal{N}+1}\right)}{h_{\mathcal{N}+1}} & -\frac{\rho _{\mathcal{N}+1}\,u_{\mathcal{N}+1}}{h_{\mathcal{N}+1}} & -\frac{\rho _{\mathcal{N}+1}\,v_{\mathcal{N}+1}}{h_{\mathcal{N}+1}}\\
       -\frac{\rho _{\mathcal{N}+1}\,u_{\mathcal{N}+1}}{h_{\mathcal{N}+1}} & \frac{\rho _{\mathcal{N}+1}}{h_{\mathcal{N}+1}} & 0\\
         -\frac{\rho _{\mathcal{N}+1}\,v_{\mathcal{N}+1}}{h_{\mathcal{N}+1}} & 0 & \frac{\rho _{\mathcal{N}+1}}{h_{\mathcal{N}+1}}
            \end{array}
         \right)\in \mathbb{R}^{3\times3}.
    \end{align*}
\end{lemma}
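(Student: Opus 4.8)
The plan is to exploit the additive, layer-by-layer structure of the energy $\eta$ in \eqref{eq:Entropy_Pair_ML_2D} together with a careful bookkeeping of which conservative variables are coupled by each term. First I would rewrite $\eta$ in terms of the conservative variables $\bU_m = (h_m, h_m u_m, h_m v_m)^{\mathrm{T}}$ of each layer and split it into three groups: the single-layer self-energies $e_m := \tfrac{\rho_m}{2}\big((h_m u_m)^2 + (h_m v_m)^2\big)/h_m + \tfrac{g\rho_m}{2}h_m^2$, the topography contributions $g\rho_m h_m b$ that are linear in $h_m$, and the interlayer coupling $g\sum_{k<m}\rho_k h_k h_m$ that is bilinear in the depths of distinct layers. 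The crucial observation is that passing from $\mathcal{N}$ to $\mathcal{N}+1$ layers augments $\eta$ by exactly the terms carrying the new top index $\mathcal{N}+1$, namely $e_{\mathcal{N}+1} + g\rho_{\mathcal{N}+1}h_{\mathcal{N}+1}b + g\sum_{k=1}^{\mathcal{N}}\rho_k h_k h_{\mathcal{N}+1}$, since $\mathcal{N}+1$ is the largest index and can only appear as the outer summation index $m$ in the coupling sum, never as $k$.

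Next I would compute the two new blocks by direct differentiation of these added terms. For the $3\times3$ diagonal block $A_2$, only the self-energy $e_{\mathcal{N}+1}$ contributes second derivatives (the topography term is linear and the coupling terms couple layer $\mathcal{N}+1$ only to lower-indexed layers); differentiating $e_{\mathcal{N}+1}$ twice with respect to $(h_{\mathcal{N}+1}, h_{\mathcal{N}+1}u_{\mathcal{N}+1}, h_{\mathcal{N}+1}v_{\mathcal{N}+1})$ and simplifying via $h_m u_m^2 = (h_m u_m)^2/h_m$ yields precisely the stated $A_2$. For the off-diagonal block $A_1$, only the bilinear coupling $g\sum_{k=1}^{\mathcal{N}}\rho_k h_k h_{\mathcal{N}+1}$ produces cross derivatives between layer $\mathcal{N}+1$ and a lower-indexed layer $k$; the single nonzero mixed partial is $\partial^2\eta/\partial h_{\mathcal{N}+1}\partial h_k = g\rho_k$, while every cross derivative involving a momentum variable of either layer vanishes. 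This reproduces $A_1$, whose only nonzero row is the one associated with $h_{\mathcal{N}+1}$, carrying entries $g\rho_k$ in the $h_k$-columns.

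Finally I would argue that the top-left $3\mathcal{N}\times3\mathcal{N}$ block is unchanged and equals $\mathcal{M}_{\mathcal{N}}$. This follows because each of the three added terms has vanishing second derivatives with respect to any two variables drawn from the first $\mathcal{N}$ layers: $e_{\mathcal{N}+1}$ and $g\rho_{\mathcal{N}+1}h_{\mathcal{N}+1}b$ do not depend on those variables at all, and the coupling $g\sum_{k=1}^{\mathcal{N}}\rho_k h_k h_{\mathcal{N}+1}$ is of degree one in each $h_k$ with $k\leqslant\mathcal{N}$, so its Hessian restricted to the lower layers is zero. I do not expect a deep obstacle here; the only point requiring care is the bookkeeping of the sparsity pattern and the density weighting, namely that the interlayer coupling $\rho_k h_k h_m$ is weighted by the density $\rho_k$ of the lower-indexed layer ($k=\min(k,m)$), so the off-diagonal entries in $A_1$ are $g\rho_k$ rather than $g\rho_{\mathcal{N}+1}$, and that all cross terms involving momentum components indeed drop out.
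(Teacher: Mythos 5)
Your proposal is correct and is essentially the paper's own argument carried out in full: the paper's proof simply asserts that $\mathcal{M}_{\mathcal{N}}$ and $\mathcal{M}_{\mathcal{N}+1}$ "can be calculated directly," and your decomposition of the incremental terms $e_{\mathcal{N}+1}+g\rho_{\mathcal{N}+1}h_{\mathcal{N}+1}b+g\sum_{k\leqslant\mathcal{N}}\rho_k h_k h_{\mathcal{N}+1}$ together with the block-by-block differentiation is exactly that direct calculation, with the sparsity and the $g\rho_k$ weighting correctly identified.
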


\begin{proof}
    It can be completed because the Hessian matrices $\mathcal{M}_{\mathcal{N}}$ and $\mathcal{M}_{\mathcal{N}+1}$ can be calculated directly.
\end{proof}

\begin{proposition}\rm\label{prop:eta_convex_prove}
Under the condition $0<\rho_1<\rho_2<\dots<\rho_{\mathcal{N}}<\dots$,
the energy $\eta$ is convex with respect to $\bU$,
i.e., the Hessian matrix $\mathcal{M}_\mathcal{N} = \partial^2{\eta}/\partial {\bm{U}}^2$ is positive-definite.
Moreover,  the determinant of $\mathcal{M}_\mathcal{N}$ is
\begin{equation}\label{eq:detM}
    \det(\mathcal{M}_\mathcal{N}) = \frac{g^\mathcal{N}\rho_1^3\prod_{m=2}^{\mathcal{N}}\rho_m^2(\rho_m-\rho_{m-1})}{\prod_{m=1}^{\mathcal{N}}h_m^2}.
\end{equation}
\end{proposition}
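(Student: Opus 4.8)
The plan is to work directly with the quadratic form attached to $\mathcal{M}_\mathcal{N}$ and reduce it to a weighted sum of squares; this yields positive-definiteness and, by reading off the resulting pivots, the determinant \eqref{eq:detM} at once. First I would use Lemma \ref{lemma:Hessian} (or a direct differentiation of \eqref{eq:Entropy_Pair_ML_2D}) to record the structure of $\mathcal{M}_\mathcal{N}$: its $m$-th diagonal block is the single-layer block $A_2$ evaluated at layer $m$, while the only inter-layer coupling lives in the water-height components, the $(h_k,h_m)$ entry being $g\rho_{\min(k,m)}$ (this is exactly the content of the off-diagonal $A_1$ in the lemma). Writing a test vector as $\bm{\xi}=(\bm{\xi}_1^{\mathrm{T}},\dots,\bm{\xi}_\mathcal{N}^{\mathrm{T}})^{\mathrm{T}}$ with $\bm{\xi}_m=(\alpha_m,\beta_m,\gamma_m)^{\mathrm{T}}$ (the three entries perturbing $h_m,h_mu_m,h_mv_m$), the quadratic form reads
\begin{equation*}
\bm{\xi}^{\mathrm{T}}\mathcal{M}_\mathcal{N}\bm{\xi}=\sum_{m=1}^{\mathcal{N}}\bm{\xi}_m^{\mathrm{T}}A_2^{(m)}\bm{\xi}_m+2g\sum_{k<m}\rho_k\alpha_k\alpha_m,
\end{equation*}
where $A_2^{(m)}$ is the layer-$m$ diagonal block.

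Next I would complete the square in the two momentum components of each layer against its height component, which gives the clean per-layer identity
\begin{equation*}
\bm{\xi}_m^{\mathrm{T}}A_2^{(m)}\bm{\xi}_m=\frac{\rho_m}{h_m}(\beta_m-u_m\alpha_m)^2+\frac{\rho_m}{h_m}(\gamma_m-v_m\alpha_m)^2+g\rho_m\alpha_m^2.
\end{equation*}
The momentum directions are thus manifestly positive and decoupled, and the whole question collapses to the positivity of the height form $g\sum_{k,m}\rho_{\min(k,m)}\alpha_k\alpha_m$.

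The key step, and the part that is genuinely more involved than the single-layer case, is to diagonalise this ``min-matrix'' form. I would use the telescoping identity $\rho_{\min(k,m)}=\sum_{j=1}^{\min(k,m)}(\rho_j-\rho_{j-1})$ with $\rho_0:=0$, which rearranges the double sum into
\begin{equation*}
\sum_{k,m}\rho_{\min(k,m)}\alpha_k\alpha_m=\sum_{j=1}^{\mathcal{N}}(\rho_j-\rho_{j-1})\,s_j^2,\qquad s_j:=\sum_{k\geqslant j}\alpha_k.
\end{equation*}
Since $0=\rho_0<\rho_1<\dots<\rho_\mathcal{N}$, every weight $\rho_j-\rho_{j-1}$ is positive, so $\bm{\xi}^{\mathrm{T}}\mathcal{M}_\mathcal{N}\bm{\xi}$ is a sum of squares with positive coefficients; it vanishes only if all $s_j=0$, which forces $\alpha_\mathcal{N}=\alpha_{\mathcal{N}-1}=\dots=\alpha_1=0$ in turn and then $\beta_m=\gamma_m=0$, i.e.\ $\bm{\xi}=\bm{0}$. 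This establishes $\mathcal{M}_\mathcal{N}\succ0$.

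Finally, the determinant drops out of the same decomposition. The reduction above is a congruence $\mathcal{M}_\mathcal{N}=T^{\mathrm{T}}DT$ in which $T$ combines the momentum substitutions $\beta_m\mapsto\beta_m-u_m\alpha_m$, $\gamma_m\mapsto\gamma_m-v_m\alpha_m$ and the partial-sum map $\alpha\mapsto s$; both are unit-triangular after an obvious reordering, so $\det T=\pm1$ and $\det\mathcal{M}_\mathcal{N}=\det D$ is just the product of the pivots $\tfrac{\rho_m}{h_m}$ (twice per layer) and $g(\rho_j-\rho_{j-1})$, which reproduces \eqref{eq:detM} after factoring $\rho_1^3\prod_{m\geqslant2}\rho_m^2(\rho_m-\rho_{m-1})$ and using $\rho_0=0$. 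An alternative matching the ``recurrence of principal minors'' viewpoint is to induct on $\mathcal{N}$ through the block form of Lemma \ref{lemma:Hessian} and a Schur complement: since $A_1$ has rank one, with single nonzero row $\bm{w}^{\mathrm{T}}=(g\rho_1,0,0,\dots,g\rho_\mathcal{N},0,0)$, the Schur complement $A_2-(\bm{w}^{\mathrm{T}}\mathcal{M}_\mathcal{N}^{-1}\bm{w})\,\bm{e}_1\bm{e}_1^{\mathrm{T}}$ perturbs only the $(1,1)$ entry of $A_2$, and everything reduces to the scalar identity $\bm{w}^{\mathrm{T}}\mathcal{M}_\mathcal{N}^{-1}\bm{w}=g\rho_\mathcal{N}$. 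Proving that identity is the main obstacle on the inductive route, whereas the sum-of-squares argument sidesteps it and is the approach I would follow.
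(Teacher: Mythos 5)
Your argument is correct, but it takes a different route from the paper's own proof of this proposition. The paper proves Proposition \ref{prop:eta_convex_prove} by induction on $\mathcal{N}$ through the leading principal minors: using the block structure of Lemma \ref{lemma:Hessian}, it expands $\mathcal{D}_{3\mathcal{N}+1}$ along the $(3\mathcal{N}+1)$th column, shows all cofactors except two vanish by exhibiting a linear dependence among rows, and obtains recurrences for $\mathcal{D}_{3\mathcal{N}+1},\mathcal{D}_{3\mathcal{N}+2},\mathcal{D}_{3\mathcal{N}+3}$ that simultaneously give positivity and the determinant formula \eqref{eq:detM}. Your sum-of-squares decomposition is instead essentially the content of Remark \ref{prop:eta_convex_prove_Qua} and \ref{Sec:Proof_of_Remark2.1}, where the identical quadratic form
\begin{equation*}
	\sum_{m}\dfrac{\rho_m}{h_m}\left(\left(u_m\alpha_m-\beta_m\right)^2+\left(v_m\alpha_m-\gamma_m\right)^2\right)
	+g\rho_1\Big(\sum_{m}\alpha_m\Big)^2+g\sum_{m\geqslant 2}(\rho_m-\rho_{m-1})\Big(\sum_{l\geqslant m}\alpha_l\Big)^2
\end{equation*}
appears; the appendix establishes it by induction on $\mathcal{N}$, whereas you derive it in closed form from the telescoping identity $\rho_{\min(k,m)}=\sum_{j\leqslant\min(k,m)}(\rho_j-\rho_{j-1})$, which is cleaner. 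Two points in your favor: (i) the paper's appendix uses the quadratic form only for convexity, while you correctly observe that the same congruence $\mathcal{M}_{\mathcal{N}}=T^{\mathrm{T}}DT$ with unit-triangular $T$ also yields $\det(\mathcal{M}_{\mathcal{N}})=\prod_m(\rho_m/h_m)^2\cdot g^{\mathcal{N}}\rho_1\prod_{m\geqslant 2}(\rho_m-\rho_{m-1})$, which indeed reproduces \eqref{eq:detM}, so your single decomposition proves both claims at once; (ii) your route avoids the cofactor bookkeeping entirely. What the paper's principal-minor induction buys in exchange is the explicit intermediate minors $\mathcal{D}_{3\mathcal{N}+1},\mathcal{D}_{3\mathcal{N}+2}$, which are reused later (e.g.\ for the augmented Hessian on moving meshes in Section \ref{Sec:MM_Case}). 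Your closing Schur-complement aside is dispensable and you rightly do not rely on it.
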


\begin{proof}
The proof is given by induction on $\mathcal{N}$.
When $\mathcal{N}=2$, one observes that the Hessian matrix $\mathcal{M}_{2}$ is
\begin{equation}\label{eq:Hessian_M2}
    \mathcal{M}_{2} = \left(\begin{array}{cccccc} \frac{\rho _{1}\,\left(u_{1}^2+v_{1}^2+g\,h_{1}\right)}{h_{1}} & -\frac{\rho _{1}\,u_{1}}{h_{1}} & -\frac{\rho _{1}\,v_{1}}{h_{1}} & g\,\rho _{1} & 0 & 0\\ -\frac{\rho _{1}\,u_{1}}{h_{1}} & \frac{\rho _{1}}{h_{1}} & 0 & 0 & 0 & 0\\ -\frac{\rho _{1}\,v_{1}}{h_{1}} & 0 & \frac{\rho _{1}}{h_{1}} & 0 & 0 & 0\\ g\,\rho _{1} & 0 & 0 & \frac{\rho _{2}\,\left(u_{2}^2+v_{2}^2+g\,h_{2}\right)}{h_{2}} & -\frac{\rho _{2}\,u_{2}}{h_{2}} & -\frac{\rho _{2}\,v_{2}}{h_{2}}\\ 0 & 0 & 0 & -\frac{\rho _{2}\,u_{2}}{h_{2}} & \frac{\rho _{2}}{h_{2}} & 0\\ 0 & 0 & 0 & -\frac{\rho _{2}\,v_{2}}{h_{2}} & 0 & \frac{\rho _{2}}{h_{2}} \end{array}\right).
\end{equation}
Denote the leading principal minors of order $m$ as $\mathcal{D}_m$,
then the leading principal minors of $\mathcal{M}_{2}$ are
\begin{align*}
    &\mathcal{D}_1 = \frac{\rho _{1}
\left(u_{1}^2+v_{1}^2
+gh_{1}
\right)
}{h_{1}},~\mathcal{D}_2 = \frac{\rho_{1}^2
v_{1}^2
+gh_{1}\rho _{1}^2}
{h_{1}^2},~\mathcal{D}_3 = \frac{g\rho_{1}^3}
{h_{1}^2},\\
&\mathcal{D}_4 = \frac{g\rho_{1}^3\left(
gh_{2}\left(\rho_{2}-\rho _{1}\right)+
\rho _{2}
\left(
u_{2}^2+v_{2}^2\right)
\right)}{h_{1}^2\,h_{2}},~
\mathcal{D}_5 = \frac{
g\rho _{1}^3
\rho _{2}
\left(gh_2\left(\rho_{2}-\rho _{1}\right)+
\rho _{2}
v_{2}^2\right)
}
{h_{1}^2 h_{2}^2},\\
&\mathcal{D}_6 = \frac{g^2\rho_{1}^3\,\rho_{2}^2\,\left(\rho_{2}-\rho_{1}\right)}{h_{1}^2\,h_{2}^2},
\end{align*}
which are all positive as $0<\rho_1<\rho_2$, thus the matrix $\mathcal{M}_{2}$ is positive-definite.

Now it suffices to show that if $\mathcal{M}_{\mathcal{N}}$ is positive-definite and its determinant is \eqref{eq:detM},
then the conclusion also holds for the $(\mathcal{N}+1)$-layer case.
The first $3\mathcal{N}$ leading principal minors of $\mathcal{M}_{\mathcal{N}+1}$ are also those of $\mathcal{M}_{\mathcal{N}}$, thus they are positive.
According to the matrix structure in Lemma \ref{lemma:Hessian}, one can calculate $\mathcal{D}_{3\mathcal{N}+1}$ by expanding along the $(3\mathcal{N}+1)$th column to obtain
\begin{align*}
    \mathcal{D}_{3\mathcal{N}+1} =&\  \frac{\rho _{\mathcal{N}+1}\,\left(u_{\mathcal{N}+1}^2+v_{\mathcal{N}+1}^2+g\,h_{\mathcal{N}+1}\right)}{h_{\mathcal{N}+1}} \det(\mathcal{M}_{\mathcal{N}}) \\
    & +\sum_{m=1}^{\mathcal{N}}(-1)^{3\mathcal{N}+3m-1}g\rho_m\det
    \left(\widetilde{\mathcal{\mathcal{N}}}[3m-2;3\mathcal{N}+1]
    \right),
\end{align*}
where $\widetilde{\mathcal{M}}[3m-2;3\mathcal{N}+1] \in \mathbb{R}^{3\mathcal{N}\times3\mathcal{N}}$ represents the submatrix obtained by removing the $(3m-2)$th row and $(3\mathcal{N}+1)$th column in the upper left $(3\mathcal{N}+1)\times(3\mathcal{N}+1)$ submatrix of $\mathcal{M}_{\mathcal{N}+1}$.
When $m<\mathcal{N}$, the last four rows in $\widetilde{\mathcal{M}}[3m-2;3\mathcal{N}+1]$ are
\begin{align*}
&\mathcal{I}_1= \left(
         g\rho_1,~
         0,~
         0,~
         g\rho_2,~
         0,~
         0,~
         \cdots,~
         g\rho_{\mathcal{N}-1},~ 
         0,~
         0,~
         \frac{\rho_\mathcal{N}(u_\mathcal{N}^2+v_\mathcal{N}^2+gh_\mathcal{N})}{h_\mathcal{N}},~  
         -\frac{\rho_\mathcal{N}u_\mathcal{N}}{h_\mathcal{N}},~
         -\frac{\rho_\mathcal{N}v_\mathcal{N}}{h_\mathcal{N}}
    \right),~
    \\
    &\mathcal{I}_2=\left(
         0,~
         0,~
         0,~
         0,~
         0,~
         0,~
         \cdots,~
         0,~
         0,~
         0,~
         -\frac{\rho_\mathcal{N}u_\mathcal{N}}{h_\mathcal{N}},~
         \frac{\rho_\mathcal{N}}{h_\mathcal{N}},~
        0
    \right),\\
       &\mathcal{I}_3=\left(
         0,~
         0,~
         0,~
         0,~
         0,~
         0,~
         \cdots,~
         0,~
         0,~
         0,~
         -\frac{\rho_\mathcal{N}v_\mathcal{N}}{h_\mathcal{N}} ,~
         0,~
        \frac{\rho_\mathcal{N}}{h_\mathcal{N}}
    \right),~\\
    &\mathcal{I}_4=\left(
        g\rho_1  ,~
         0,~
         0,~
         g\rho_2  ,~
         0,~
         0,~
         \cdots,~
         g\rho_{\mathcal{N}-1}  ,~
         0,~
         0,~
       g\rho_{\mathcal{N}},~
         0,~
        0
        \right),
\end{align*}
which are linear dependent as $\mathcal{I}_1 = \mathcal{I}_4-u_\mathcal{N}\mathcal{I}_2-v_\mathcal{N}\mathcal{I}_3$, thus $\det\left(\widetilde{\mathcal{M}}[3m-2;3\mathcal{N}+1]\right) = 0$ for $ m < \mathcal{N}$.
When $m=\mathcal{N}$, the last three rows in $\widetilde{\mathcal{M}}[3m-2;3\mathcal{N}+1]$ are
\begin{align*}
    &\left(
         0,~
         0,~
         0,~
         0,~
         0,~
         0,~
         \cdots,~
         0,~
         0,~
         0,~
         -\frac{\rho_\mathcal{N}u_\mathcal{N}}{h_\mathcal{N}}  ,~
         \frac{\rho_\mathcal{N}}{h_\mathcal{N}},~
        0
    \right),~\\
      &\left(
         0,~
         0,~
         0,~
         0,~
         0,~
         0,~
         \cdots,~
         0,~
         0,~
         0,~
         -\frac{\rho_\mathcal{N}v_\mathcal{N}}{h_\mathcal{N}}  ,~
         0,~
        \frac{\rho_\mathcal{N}}{h_\mathcal{N}}~
    \right),~\\
    &\left(
        g\rho_1  ,~
         0,~
         0,~
         g\rho_2  ,~
         0,~
         0,~
         \cdots,~
         g\rho_{\mathcal{N}-1}  ,~
         0,~
         0,~
       g\rho_{\mathcal{N}}  ,~
         0,~
        0
        \right).
\end{align*}
After some matrix operations, they become
\begin{align*}
&\left(
         g\rho_1,~
         0,~
         0,~
         g\rho_2,~
         0,~
         0,~
         \cdots,~
         g\rho_{\mathcal{N}-1},~ 
         0,~
         0,~
         \frac{\rho_\mathcal{N}(u_\mathcal{N}^2+v_\mathcal{N}^2+gh_\mathcal{N})}{h_\mathcal{N}},~  
         -\frac{\rho_\mathcal{N}u_\mathcal{N}}{h_\mathcal{N}},~
         -\frac{\rho_\mathcal{N}v_\mathcal{N}}{h_\mathcal{N}}
    \right),~
    \\
    &\left(
         0,~
         0,~
         0,~
         0,~
         0,~
         0,~
         \cdots,~
         0,~
         0,~
         0,~
         -\frac{\rho_\mathcal{N}u_\mathcal{N}}{h_\mathcal{N}},~
         \frac{\rho_\mathcal{N}}{h_\mathcal{N}},~
        0
    \right),\\
       &\left(
         0,~
         0,~
         0,~
         0,~
         0,~
         0,~
         \cdots,~
         0,~
         0,~
         0,~
         -\frac{\rho_\mathcal{N}v_\mathcal{N}}{h_\mathcal{N}} ,~
         0,~
        \frac{\rho_\mathcal{N}}{h_\mathcal{N}}
    \right),
    \end{align*}
which recover the last three rows in $\mathcal{M}_{\mathcal{N}}$,
and the determinant of $\widetilde{\mathcal{M}}[3m-2;3\mathcal{N}+1]$ remains the same,
so that
\begin{equation*}
g\rho_\mathcal{N} \det\left(\widetilde{\mathcal{M}}[3m-2;3\mathcal{N}+1]\right) =  g\rho_\mathcal{N}  \det\left(\mathcal{M}_\mathcal{N}\right),
\end{equation*}
and
\begin{align*}
    &\mathcal{D}_{3\mathcal{N}+1} = \frac{\rho _{\mathcal{N}+1}\,\left(u_{\mathcal{N}+1}^2+v_{\mathcal{N}+1}^2+gh_{\mathcal{N}+1}\right)}{h_{\mathcal{N}+1}} \det(\mathcal{M}_{\mathcal{N}})-g\rho_\mathcal{N}\det(\mathcal{M}_{\mathcal{N}}).
    \end{align*}
It is straightforward to obtain the expressions of the last two leading principal minors,
    \begin{align*}
    &\mathcal{D}_{3\mathcal{N}+2} = \frac{\rho_{\mathcal{N}+1}}{h_{\mathcal{N}+1}} \mathcal{D}_{3\mathcal{N}+1}
    -\frac{\rho_{\mathcal{N}+1}^2u_{\mathcal{N}+1}^2}{h_{\mathcal{N}+1}^2}\det(\mathcal{M}_{\mathcal{N}}),\\
    &\mathcal{D}_{3\mathcal{N}+3} = \frac{\rho _{\mathcal{N}+1}}{h_{\mathcal{N}+1}} \mathcal{D}_{3\mathcal{N}+2}
    -\frac{\rho_{\mathcal{N}+1}^3v_{\mathcal{N}+1}^2}{h_{\mathcal{N}+1}^3}
    \det(\mathcal{M}_{\mathcal{N}}).
\end{align*}
Using \eqref{eq:detM} yields
\begin{align*}
    \mathcal{D}_{3\mathcal{N}+1} = &\frac{\left(\rho_{\mathcal{N}+1}(u_{\mathcal{N}+1}^2+v_{\mathcal{N}+1}^2)+gh_{\mathcal{N}+1}(\rho_{\mathcal{N}+1}-\rho_{\mathcal{N}})\right)
    g^{\mathcal{N}}\rho_1^3\prod_{m=2}^{\mathcal{N}}\rho_m^2(\rho_m-\rho_{m-1})}
    {h_{\mathcal{N}+1}\prod_{m=1}^{\mathcal{N}}h_m^2},\\
    \mathcal{D}_{3\mathcal{N}+2} = &\frac{\rho_{\mathcal{N}+1}(\rho_{\mathcal{N}+1}v_{\mathcal{N}+1}^2+gh_{\mathcal{N}+1}(\rho_{\mathcal{N}+1}-\rho_{\mathcal{N}}))g^{\mathcal{N}}\rho_1^3\prod_{m=2}^{\mathcal{N}}\rho_m^2(\rho_m-\rho_{m-1})}
    {\prod_{m=1}^{\mathcal{N}+1}h_m^2},\\
    \mathcal{D}_{3\mathcal{N}+3} = &\frac{g^{\mathcal{N}+1}\rho_1^3\prod_{m=2}^{\mathcal{N}+1}\rho_m^2(\rho_m-\rho_{m-1})}{\prod_{m=1}^{\mathcal{N}+1}h_m^2},
\end{align*}
which are all positive, and \eqref{eq:detM} also holds for the $(\mathcal{N}+1)$-layer case when $\mathcal{N} $ is replaced by $\mathcal{N}+1$.
\end{proof}

\begin{remark}\rm\label{prop:eta_convex_prove_Qua}
One can also prove the convexity of the energy function by finding the recurrence relations of the quadratic forms of the Hessian matrix, with more details in \ref{Sec:Proof_of_Remark2.1}.
\end{remark}

Based on the convexity of $\eta$ and the consistent condition \eqref{eq:EntropyPairConsistent}, the following energy inequality is satisfied
\begin{equation*}
    \frac{\partial {\eta}({\bm{U}})}{\partial t}+\sum_{\ell=1}^{2} \frac{\partial {q}_{\ell}({\bm{U}})}{\partial x_{\ell}} \leqslant 0,
\end{equation*}
where the equality holds only when the solutions are smooth and the inequality holds in the distribution sense.
For clarity of notations, define ${\bV}:=\left(\bV_1,\dots,\bV_M\right)^{\mathrm{T}} = (\partial{\eta}/\partial{\bm{U}})^{\mathrm{T}}$ with
\begin{equation}\label{eq:Entropy_Variables_ML}
	\bm{V}_{m} = \left(
		g\rho_m\left(h_m+z_m\right) - \dfrac{\rho_m}{2}\left(u_m^2 + v_m^2\right),
		~\rho_mu_m,
		~\rho_mv_m\right).
\end{equation}
Several auxiliary variables are introduced,
\begin{equation}\label{eq:Entropy_Potential_ML_2D}
		\begin{aligned}
		&\phi = \bm{{V}}^{T}\bm{{U}} - {\eta} =
		\sum\limits_{m=1}^{M}\dfrac{g\rho_{m}h_m}{2}\left(h_m+2\sum_{k> m}h_{k}\right),\\
		&\psi_1 = \bm{{V}}^{T}\bm{{F}}_1 - {q}_1 = \sum\limits_{m=1}^{M}\dfrac{g}{2}\rho_mh_m^2u_m,\\
		&\psi_2 = \bm{{V}}^{T}\bm{{F}}_2 - {q}_2 = \sum\limits_{m=1}^{M}\dfrac{g}{2}\rho_mh_m^2v_m,
	\end{aligned}
\end{equation}
which play important roles in the construction of so-called two-point EC flux in Section \ref{section:2DHOScheme}.

\begin{remark}\rm
    When $M=2$, the system
    \eqref{eq:ML-SWEs-2D} degenerates to the two-layer shallow water equations 
\begin{equation}\label{eq:TL-SWEs-2D}
	\left\{
	\begin{aligned}
		&~\pd{h_1}{t}+\pd{}{x_1}\left(h_1u_1\right)+\pd{}{x_2}\left(h_1v_1\right)=0, 
		\\
		&\pd{}{t}\left(h_1 u_1\right)+\pd{}{x_1}\left(h_1 u_1^2+\frac{1}{2} g h_1^2\right) + \pd{}{x_2}\left(h_1 u_1 v_1\right)=-g h_1 \pd{h_2}{x_1}-g h_1 \pd{b}{x_1}, 
		\\
		&\pd{}{t}\left(h_1 v_1\right)+\pd{}{x_1}\left(h_1 u_1 v_1\right) + \pd{}{x_2}\left(h_1 v_1^2+\frac{1}{2} g h_1^2\right)=-g h_1 \pd{h_2}{x_2}-g h_1 \pd{b}{x_2}, 
		\\
		&~\pd{h_2}{t}+\pd{}{x_1}\left(h_2u_2\right)+\pd{}{x_2}\left(h_2v_2\right)=0, 
		\\
		&\pd{}{t}\left(h_2 u_2\right)+\pd{}{x_1}\left(h_2 u_2^2+\frac{1}{2} g h_2^2\right) + \pd{}{x_2}\left(h_2 u_2 v_2\right)=-g r_{12} h_2 \pd{h_1}{x_1}-g h_2 \pd{b}{x_1}, 
		\\
		&\pd{}{t}\left(h_2 v_2\right)+\pd{}{x_1}\left(h_2 u_2 v_2\right) + \pd{}{x_2}\left(h_2 v_2^2+\frac{1}{2} g h_2^2\right)=
  -g r_{12} h_2 \pd{h_1}{x_2}-g h_2 \pd{b}{x_2},
	\end{aligned}
	\right.
\end{equation}
with $r_{12} = \rho_{1}/\rho_{2}$.
The energy and energy flux in \eqref{eq:Entropy_Pair_ML_2D} reduce to
 \begin{equation*}
		\begin{aligned}
			{\eta}({\bm{U}})&= \sum\limits_{m=1}^{2}\dfrac{\rho_m}{2}
			\left(h_m u_m^2 + h_m v_m^2 + g h_m^2 \right)  
			+  \rho_{1} g h_{1} h_{2}  
			+ \sum\limits_{m=1}^{2} \rho_m g h_m b,\\
			{q}_1({\bm{U}})&=
			\sum\limits_{m=1}^{2}\dfrac{\rho_m}{2} 
			u_m\left(h_m u_m^2 + h_m v_m^2\right) + g \sum\limits_{m=1}^{2}\rho_m u_mh_m^2+g\sum\limits_{m=1}^{2}\rho_m u_m h_m  z_m ,\\
			{q}_2({\bm{U}})&=\sum\limits_{m=1}^{2}\dfrac{\rho_m}{2} v_m\left(h_m u_m^2 
			+ h_m v_m^2\right)+
   g \sum\limits_{m=1}^{2}\rho_m v_mh_m^2+g\sum\limits_{m=1}^{2}\rho_m v_m h_m  z_m ,
		\end{aligned}
	\end{equation*}
which are similar to those 1D versions in \cite{Bouchut2008An}.
\end{remark}

\section{Numerical schemes on fixed meshes}\label{section:2DHOScheme}
This section constructs high-order accurate WB ES schemes for the 2D ML-SWEs \eqref{eq:SWE0} on fixed meshes, extending our previous work in \cite{Duan2021_High} for the shallow water magnetohydrodynamical equations, which reduce to the single-layer SWEs when the magnetic fields are zero.
The construction of the schemes on fixed meshes is the foundation of those on adaptive moving meshes in Section \ref{Sec:MM_Case}, and is not available in the literature. 

Denote the physical domain as $[a_1,b_1]\times[a_2,b_2]$, and corresponding uniform computational mesh
$(x_1)_i = a_1 + i\Delta x_1,~i=0,1,\cdots,N_1-1,~\Delta x_1 = (b_1-a_1)/(N_1-1)$,
$(x_2)_j = a_2 + j\Delta x_2,~j=0,1,\cdots,N_2-1,~\Delta x_2 = (b_2-a_2)/(N_2-1)$.
Consider the following semi-discrete conservative finite difference schemes for \eqref{eq:SWE0}
\begin{align}
  \dfrac{\mathrm{d}}{\mathrm{d} t} \bm{{U}}_{i, j}=
  & -\dfrac{1}{\Delta x_1}
  \left(\left(\bm{\widetilde{{F}}}_{1}\right)_{i+\frac{1}{2}, j}
  -\left(\bm{\widetilde{{F}}}_{1}\right)_{i-\frac{1}{2}, j}\right)
  -\frac{1}{\Delta x_2}
  \left(\left(\bm{\widetilde{{F}}}_{2}\right)_{i, j
  	+\frac{1}{2}}-
  \left(\bm{\widetilde{{F}}}_{2}\right)_{i, j-\frac{1}{2}}
  \right)- \left(\bm{S}_1\right)_{i,j} - \left(\bm{S}_2\right)_{i,j},\label{eq:2D_semi_Discrete}
\end{align}
where $\left(\bm{\widetilde{{F}}}_{1}\right)_{i\pm\frac{1}{2}, j}$ and $\left(\bm{\widetilde{{F}}}_{2}\right)_{i, j\pm\frac{1}{2}}$ are the $x_1$- and $x_2$-directional numerical fluxes, $\left(\bm{S}_1\right)_{i,j}$ and $\left(\bm{S}_2\right)_{i,j}$ are suitable discretizations of the source terms.

\begin{definition}\rm\label{def:ECES_condition}
    The semi-discrete scheme \eqref{eq:2D_semi_Discrete} or its  flux is ES, if its solution satisfies the semi-discrete numerical energy inequality 
     \begin{equation*}
	\frac{\mathrm{d}}
 {\mathrm{d} t}\eta_{i, j}+\frac{1}{\Delta x_{1}}
 \left(\left({\widetilde{{{Q}}}}_1\right)_{i+\frac{1}{2},j}
 -\left({\widetilde{{{Q}}}}_1\right)_{i-\frac{1}{2},j}\right)
 +\frac{1}{\Delta x_{2}}
 \left(\left({\widetilde{{{Q}}}}_2\right)_{i,j+\frac{1}{2}}
 -\left({\widetilde{{{Q}}}}_2\right)_{i,j-\frac{1}{2}}
 \right) \leqslant 0,
	\end{equation*}
 where $\left({\widetilde{{{Q}}}}_1\right)_{i\pm\frac{1}{2},j}$ and $\left({\widetilde{{{Q}}}}_2\right)_{i,j\pm\frac{1}{2}}$ are numerical energy fluxes consistent with $q_1$ and $q_2$, respectively.
 If the equality holds, the semi-discrete scheme or its flux is called EC. 
\end{definition}

\begin{definition}\rm\label{def:WB_condition}
   The fully-discrete scheme for the ML-SWEs \eqref{eq:SWE0} obtained by the semi-discrete scheme \eqref{eq:2D_semi_Discrete} using forward Euler or SSP-RK3 time discretization is WB, if the lake at rest state is preserved,
   i.e., if the numerical solution satisfies the lake at rest at the initial time
  \begin{equation*}
    \left(h_m + z_m\right)_{i,j}^{0} \equiv C_m,
    ~(u_m)_{i,j}^{0} = (v_m)_{i,j}^{0} = 0,~ m =1,\cdots,M,
    ~\forall i,j,
  \end{equation*}
  then it also satisfies the lake at rest at $t^n$,
  \begin{equation}\label{eq:2pnd_WB_tn}
      \left(h_m + z_m\right)_{i,j}^{n} \equiv C_m,
    ~(u_m)_{i,j}^{n}=(v_m)_{i,j}^{n} = 0,~ m =1,\cdots,M,
    ~\forall i,j,
  \end{equation}
  where $C_m$ is constant.
  In such cases, the corresponding numerical flux is also called WB.
\end{definition}

In this paper, high-order WB ES schemes are obtained by adding suitable dissipation terms to high-order WB EC fluxes, and the first step is to construct the so-called two-point EC flux.

\subsection{Construction of a two-point EC flux}
In pursuit of the WB property, a sufficient condition for the two-point EC fluxes is proposed, which can be decoupled into $M$ identities individually for each layer, and makes it convenient to construct the two-point EC flux.

\begin{proposition}\rm\label{def:sufficient_condition_ML_2D}
In the semi-discrete schemes \eqref{eq:2D_semi_Discrete} with the second-order central difference discretization for the first-order derivatives in the source terms,
 if consistent two-point numerical fluxes $\widetilde{\bm{{F}}}_1\left(\bm{U}_{L},\bm{U}_{R}\right)$ and  $\widetilde{\bm{{F}}}_2\left(\bm{U}_{L},\bm{U}_{R}\right)$ satisfy
	\begin{align}
		\left(\bm{{V}}\left(\bU_{R}\right)-\bm{{V}}\left(\bU_{L}\right)\right)^{\mathrm{T}}\widetilde{\bm{{F}}}_1
		= \ &\left(\psi_1\left(\bU_R\right)-\psi_1\left(\bU_L\right)\right)\nonumber
		 + \sum\limits_{m=1}^{M}
		\rho_m 
		g\left[ \left(h_m z_m u_m\right)_R  -  
            \left(h_m z_m u_m\right)_L\right] \nonumber\\
		& - \sum\limits_{m=1}^{M}
		 \dfrac{\rho_m}{2} g\left[ 
		 \left(h_m u_m\right)_R 
		 -  \left(h_m u_m\right)_L\right]
		 \left(\left(z_m\right)_{L}+\left(z_m\right)_{R}\right),
		\label{eq:EC_condition_2D_ML}
\\
		\left(\bm{{V}}\left(\bU_{R}\right)-\bm{{V}}\left(\bU_{L}\right)\right)^{\mathrm{T}}\widetilde{\bm{{F}}}_2
		= \ &\left(\psi_2\left(\bU_R\right)-\psi_2\left(\bU_L\right)\right)\nonumber
		 + \sum\limits_{m=1}^{M}
		\rho_m 
		g\left[ \left(h_m z_m v_m\right)_R  -  
            \left(h_m z_m v_m\right)_L\right] \nonumber\\
		& - \sum\limits_{m=1}^{M}
		 \dfrac{\rho_m}{2} g\left[ 
		 \left(h_m v_m\right)_R 
		 -  \left(h_m v_m\right)_L\right]
		 \left(\left(z_m\right)_{L}+\left(z_m\right)_{R}\right),\nonumber
	\end{align}
	 with $\psi_{\ell}$ given in \eqref{eq:Entropy_Potential_ML_2D}, they are two-point EC fluxes.
\end{proposition}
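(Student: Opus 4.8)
The plan is to adapt Tadmor's framework for entropy (here, energy) conservative fluxes to the balance-law form \eqref{eq:SWE0}. Because the scheme \eqref{eq:2D_semi_Discrete} has a tensor-product structure and the two coordinate directions enter symmetrically, I would establish the discrete energy balance only for the $x_1$-direction and obtain the $x_2$-direction by the identical argument with the second identity in \eqref{eq:EC_condition_2D_ML}. The starting point is the chain rule: since $\bm{V}=(\partial{\eta}/\partial{\bm{U}})^{\mathrm{T}}$, contracting \eqref{eq:2D_semi_Discrete} with $\bm{V}_{i,j}$ gives $\frac{\mathrm{d}}{\mathrm{d}t}\eta_{i,j}=\bm{V}_{i,j}^{\mathrm{T}}\frac{\mathrm{d}}{\mathrm{d}t}\bm{U}_{i,j}$, which splits into an $x_1$-flux part, an $x_2$-flux part, and the node source contributions $\bm{V}_{i,j}^{\mathrm{T}}(\bm{S}_1)_{i,j}$ and $\bm{V}_{i,j}^{\mathrm{T}}(\bm{S}_2)_{i,j}$. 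The task then reduces to rewriting the $x_1$-flux and source contributions as a single conservative difference $-\Delta x_1^{-1}\big((\widetilde{Q}_1)_{i+\frac12,j}-(\widetilde{Q}_1)_{i-\frac12,j}\big)$ with $\widetilde{Q}_1$ consistent with $q_1$, which is exactly the EC equality in Definition \ref{def:ECES_condition}.

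Next I would propose the candidate numerical energy flux in the standard symmetric form corrected by a source term,
\begin{equation*}
\left(\widetilde{Q}_1\right)_{i+\frac12,j}
=\tfrac12\left(\bm{V}_{i,j}+\bm{V}_{i+1,j}\right)^{\mathrm{T}}\left(\widetilde{\bm{F}}_1\right)_{i+\frac12,j}
-\tfrac12\left(\left(\psi_1\right)_{i,j}+\left(\psi_1\right)_{i+1,j}\right)+R_{i+\frac12,j},
\end{equation*}
where $R$ collects the $z_m$-dependent source correction. Using the splitting $\bm{V}_{i,j}=\tfrac12(\bm{V}_{i,j}+\bm{V}_{i\pm1,j})\mp\tfrac12(\bm{V}_{i\pm1,j}-\bm{V}_{i,j})$ at the two interfaces, the average parts assemble into the symmetric flux term above, while the jump parts produce the interface quantities $(\bm{V}_{i+1,j}-\bm{V}_{i,j})^{\mathrm{T}}(\widetilde{\bm{F}}_1)_{i+\frac12,j}$ and its $i-\tfrac12$ analogue. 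Each of these is now replaced by the hypothesis \eqref{eq:EC_condition_2D_ML}: it equals a jump of $\psi_1$ plus the two $z_m$-dependent sums on the right-hand side. The $\psi_1$-jumps combine with the symmetric flux into $\tfrac12(\bm{V}_{i,j}+\bm{V}_{i+1,j})^{\mathrm{T}}(\widetilde{\bm{F}}_1)_{i+\frac12,j}-\tfrac12((\psi_1)_{i,j}+(\psi_1)_{i+1,j})$, the usual consistent energy flux, which already telescopes cleanly after the single-node $(\psi_1)_{i,j}$ cancels between the two interfaces. Consistency with $q_1$ follows because, when all states coincide with $\bm{U}$, $\widetilde{\bm{F}}_1\to\bm{F}_1$ and $\bm{V}^{\mathrm{T}}\bm{F}_1-\psi_1=q_1$ by \eqref{eq:Entropy_Potential_ML_2D}, while $R$ vanishes on the diagonal.

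The main obstacle, and the only place where the source structure matters, is to show that the leftover $z_m$-sums from \eqref{eq:EC_condition_2D_ML} at the interfaces $i\pm\tfrac12$ telescope together with the node source term. With the prescribed second-order central difference, and since the source feeds only the $x_1$-momentum slots contracted against $\rho_m u_m$,
\begin{equation*}
\bm{V}_{i,j}^{\mathrm{T}}\left(\bm{S}_1\right)_{i,j}
=g\sum_{m=1}^{M}\rho_m\left(h_mu_m\right)_{i,j}\,\frac{\left(z_m\right)_{i+1,j}-\left(z_m\right)_{i-1,j}}{2\Delta x_1}.
\end{equation*}
I would then verify, layer by layer, that the two interface sums in \eqref{eq:EC_condition_2D_ML} reduce to $\tfrac14\big((z_m)_{i+1,j}-(z_m)_{i,j}\big)\big((h_mu_m)_{i+1,j}+(h_mu_m)_{i,j}\big)$ at each interface, so that after subtracting the central-difference source they collapse to the pure jump product $\tfrac14\big((z_m)_{i+1,j}-(z_m)_{i,j}\big)\big((h_mu_m)_{i+1,j}-(h_mu_m)_{i,j}\big)$; this is manifestly a telescoping difference and vanishes on the diagonal, so it is exactly the admissible source correction $R$. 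Collecting the symmetric part and $R$ yields the explicit consistent $\widetilde{Q}_1$, establishes the discrete energy equality, and proves the fluxes are EC. I expect this cancellation—matching the central-difference source against the two $z_m$-sums—to be the delicate step; the remaining manipulations are routine telescoping that mirrors the single-layer construction in \cite{Duan2021_High}, with the inter-layer coupling carried transparently inside $z_m$.
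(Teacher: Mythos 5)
Your proposal is correct and follows essentially the same route as the paper: the paper proves this proposition by specializing the proof of Proposition \ref{prop:2D_HO_sufficient_condition} to the two-point case without dissipation, using exactly your contraction with $\bm{V}_{i,j}$, the half-sum/half-jump splitting at each interface, substitution of the hypothesis for the jump terms, and the cancellation of the leftover $z_m$-sums against the central-difference source. Your identified correction $R_{i+\frac12,j}=-\sum_m\frac{g\rho_m}{4}\jump{h_mu_m}\jump{z_m}$ is algebraically identical to the source terms appearing in the paper's $\widetilde{Q}_1$, so the two arguments coincide.
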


\begin{proof}
    The proof is omitted here as it can be obtained by reducing the proof of Proposition \ref{prop:2D_HO_sufficient_condition} to the two-point case and without considering the dissipation terms.
\end{proof}
Such a condition involves the compatible discretizations of the source terms, which is important for preserving the WB property of the EC schemes, similar to those in \cite{Duan2021_High,Zhang2023High}, but considering the coupling terms between the multiple layers.

\begin{proposition}\rm\label{Prop:Two-point_EC}
For the 2D ML-SWEs \eqref{eq:SWE0}, two-point EC fluxes $\bm{\widetilde{F}}_1 = \left((\widetilde{\bm{F}}_1)_{1}^{\mathrm{T}},\dots
(\widetilde{\bm{F}}_1)_{M}^{\mathrm{T}}\right)^{\mathrm{T}}$ and  $\bm{\widetilde{F}}_2 = \left((\widetilde{\bm{F}}_2)_{1}^{\mathrm{T}},\dots,(\widetilde{\bm{F}}_2)_{M}^{\mathrm{T}}\right)^{\mathrm{T}}$ can be chosen as
	\begin{equation*}
		\left(\widetilde{\bm{F}}_{1}\right)_{m}=\left(\begin{array}{c}
			\mean{h_m}\mean{u_m}\\
			\mean{h_m}\mean{u_m}^2 + \frac{g}{2}\mean{h_m^2}+g\left(\mean{h_m z_m} - \mean{h_m}\mean{z_m}\right) \\
			\mean{h_m}\mean{u_m}\mean{v_m} \\
		\end{array}\right),
	\end{equation*}
	\begin{equation*}
 \left(\widetilde{\bm{F}}_{2}\right)_{m}=\left(\begin{array}{c}
		\mean{h_m}\mean{v_m}\\
		\mean{h_m}\mean{u_m}\mean{v_m} \\
		\mean{h_m}\mean{v_m}^2 + \frac{g}{2}\mean{h_m^2}+g\left(\mean{h_m z_m} - \mean{h_m}\mean{z_m}\right)
	\end{array}\right),
\end{equation*}
where $m=1,\dots,M$, $\mean{a}= (a_{L}+a_{R})/2$
and $\jump{a} = (a_{R}-a_{L})$
denote the average and jump of $a$, respectively.
\end{proposition}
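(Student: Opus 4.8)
The plan is to verify directly that the fluxes in the statement satisfy the sufficient condition \eqref{eq:EC_condition_2D_ML} of Proposition \ref{def:sufficient_condition_ML_2D}; once this is done, the fluxes are EC by that proposition. I would carry out the computation only for $\widetilde{\bm F}_1$, since $\widetilde{\bm F}_2$ follows verbatim under the symmetry $u_m\leftrightarrow v_m$, $x_1\leftrightarrow x_2$.

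First I would exploit the per-layer decoupling announced before the proposition. From \eqref{eq:Entropy_Variables_ML} each block $\bV_m$ carries the common factor $\rho_m$, so I write $\bV_m=\rho_m\widetilde{\bV}_m$ with $\widetilde{\bV}_m=\bigl(g(h_m+z_m)-\tfrac12(u_m^2+v_m^2),\,u_m,\,v_m\bigr)$; since every sum on the right-hand side of \eqref{eq:EC_condition_2D_ML} has the form $\sum_m\rho_m(\cdots)$ and $\jump{\psi_1}=\sum_m\tfrac{g}{2}\rho_m\jump{h_m^2u_m}$, the scalar identity splits into $M$ independent per-layer identities. Using $(z_m)_L+(z_m)_R=2\mean{z_m}$, for layer $m$ it remains to check
\begin{equation*}
\jump{\widetilde{\bV}_m}^{\mathrm T}(\widetilde{\bm F}_1)_m
= \jump{\tfrac{g}{2}h_m^2u_m} + g\jump{h_mz_mu_m} - g\jump{h_mu_m}\mean{z_m}.
\end{equation*}

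Next I would substitute the explicit flux and split the left-hand side into three groups, using the elementary identities $\jump{ab}=\mean{a}\jump{b}+\mean{b}\jump{a}$, $\jump{a^2}=2\mean{a}\jump{a}$, and $\jump{h_m}\mean{h_m}=\tfrac12\jump{h_m^2}$. The \emph{pressure} group (the $g\jump{h_m}$ entry of $\jump{\widetilde{\bV}_m}$ paired with $\mean{h_m}\mean{u_m}$, together with $\tfrac{g}{2}\mean{h_m^2}\jump{u_m}$) collapses to $\tfrac{g}{2}\jump{h_m^2u_m}$. The \emph{kinetic} group, $-\tfrac12\jump{u_m^2+v_m^2}\mean{h_m}\mean{u_m}+\mean{h_m}\mean{u_m}^2\jump{u_m}+\mean{h_m}\mean{u_m}\mean{v_m}\jump{v_m}$, vanishes identically after applying $\jump{a^2}=2\mean{a}\jump{a}$.

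The hard part will be the \emph{source}/coupling group, i.e. the terms carrying $z_m$, which is precisely what is absent from a plain EC flux and what encodes the well-balanced treatment of the layered source. These are $g\jump{z_m}\mean{h_m}\mean{u_m}$ (from the first entry of $\jump{\widetilde{\bV}_m}$) plus $g\bigl(\mean{h_mz_m}-\mean{h_m}\mean{z_m}\bigr)\jump{u_m}$ (from the extra term in the momentum flux). Here I would use the covariance-type identity
\begin{equation*}
\mean{h_mz_m}-\mean{h_m}\mean{z_m}=\tfrac14\jump{h_m}\jump{z_m},
\end{equation*}
so that the whole $z_m$-contribution becomes $\tfrac{g\jump{z_m}}{4}\bigl[(h_L+h_R)(u_L+u_R)+(h_R-h_L)(u_R-u_L)\bigr]$, whose bracket telescopes to $2(h_Lu_L+h_Ru_R)$; a short rearrangement then shows this equals the target $g\jump{h_mz_mu_m}-g\jump{h_mu_m}\mean{z_m}$. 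Assembling the three groups yields the per-layer identity, hence \eqref{eq:EC_condition_2D_ML}, and the proof is complete. The only genuine subtlety is this coupling group; everything else is a routine reshuffling of averages and jumps.
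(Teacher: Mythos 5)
Your proof is correct and follows essentially the same route as the paper's: both reduce the sufficient condition \eqref{eq:EC_condition_2D_ML} to $M$ per-layer identities and expand with $\jump{ab}=\mean{a}\jump{b}+\jump{a}\mean{b}$, the only difference being that you verify the stated flux directly (via the covariance identity $\mean{h_mz_m}-\mean{h_m}\mean{z_m}=\tfrac14\jump{h_m}\jump{z_m}$) whereas the paper derives it by equating jump coefficients and solving the resulting linear system \eqref{eq:Fix_Flux_Derive}. All three of your groups check out (and your sign for the $\mean{z_m}$ term matches \eqref{eq:EC_condition_2D_ML}, not the typo in \eqref{eq:EC_condition_2D_ML_each_layer}); just add the one-line consistency check $\widetilde{\bm{F}}_\ell(\bU,\bU)=\bm{F}_\ell(\bU)$, which Proposition \ref{def:sufficient_condition_ML_2D} also requires.
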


\begin{proof}
    The sufficient condition \eqref{eq:EC_condition_2D_ML} can be decoupled into $M$ identities individually for each layer as 
\begin{align}
    \left(
    \bm{V}_{m}
    (\bm{U}_R) 
    - \bm{V}_{m}
    (\bm{U}_L) 
    \right)
    \left(
    \bm{\widetilde{F}}_1
    \right)_m
    = ~
    &
    (\psi_1)_m(\bm{U}_{R}) 
    - 
    (\psi_1)_m(\bm{U}_L)
    +
    \rho_m 
    g
    \left[
    \left(h_m z_m u_m\right)_R  
    -  
    \left(h_m z_m u_m\right)_L\right]
    \nonumber\\
    &+
    \dfrac{\rho_m}{2} 
    g\left[ 
    \left(h_m u_m\right)_R 
-  \left(h_m u_m\right)_L
\right]
\left(
\left(z_m\right)_{L}
+\left(z_m\right)_{R}
\right),
\label{eq:EC_condition_2D_ML_each_layer}
\end{align}
with $m=1,\cdots, M$ and $(\psi_1)_m = \dfrac{g}{2}\rho_mh_m^2u_m$.
One can derive a two-point flux for each layer, and concatenate $M$ two-point fluxes to form a two-point EC flux for the multi-layer system.
To be specific, denote the three components of $\bV_{m}$ in \eqref{eq:Entropy_Variables_ML} as
  \begin{align*}
  		\bV_{m,1}:= g\rho_m\left(h_m+z_m\right) - \dfrac{\rho_m}{2}\left(u_m^2 + v_m^2\right),~
  		\bV_{m,2}:= \rho_mu_m,~
  		\bV_{m,3}:= \rho_mv_m,
  \end{align*}
  and one can utilize $\jump{ab} = \jump{a}\mean{b}+\mean{a}\jump{b}$ to obtain the following expansions
   \begin{align*}
  	\jump{\bV_{m,1}}= &\ \jump{g\rho_m\left(h_m+z_m\right) - \dfrac{\rho_m}{2}\left(u_m^2 + v_m^2\right)}\\
  = &\  g\rho_{m}\jump{h_m}+g\rho_{m}\jump{z_m} - \rho_m\mean{u_m}\jump{u_m} - \rho_m\mean{v_m}\jump{v_m},\\
  	\jump{\bV_{m,2}} = &\ \rho_m\jump{u_m},\\
  	\jump{\bV_{m,3}} = &\ \rho_m\jump{v_m}.
  \end{align*}
The jump of $(\psi_1)_m$ is
\begin{equation*}
	\jump{(\psi_1)_m} = \dfrac{g}{2}\rho_m\left(\mean{h_m^{2}}\jump{u_m}+2\mean{h_m}\mean{u_m}\jump{h_m}\right),
\end{equation*}
and one also has
\begin{align*}
 \rho_m 
	g\jump{ h_m
	z_m
	u_m} 
	-
	\rho_m g\jump{ 
	h_mu_m}
	\mean{z_m}
 =&~ \rho_m g\mean{h_m
 	z_m}\jump{u_m}-\rho_m g\mean{h_m}\mean{z_m}\jump{u_m}\\
 &+\rho_m g\mean{h_m}\mean{u_m}\jump{z_m}.
\end{align*}
Substituting the above expansions into \eqref{eq:EC_condition_2D_ML_each_layer} and equating the coefficients of the corresponding jump terms on both sides results in
\begin{equation}\label{eq:Fix_Flux_Derive}
  \begin{aligned}
  	{\left(\bm{\widetilde{F}}_1\right)}_{m,1}\rho_m\jump{h_m}  =&\ \rho_m \mean{h_m}\mean{u_m}\jump{h_m},
  	\\{\left(\bm{\widetilde{F}}_1\right)}_{m,1}\rho_m\jump{z_m} =&\ \rho_m\mean{h_m}\mean{u_m}\jump{z_m},
  	\\
   {\left(\bm{\widetilde{F}}_1\right)}_{m,2}\rho_m\jump{u_m}-{\left(\bm{\widetilde{F}}_1\right)}_{m,1}\rho_m\mean{u_m}\jump{u_m} 
  	=&\ \dfrac{g}{2}\rho_m\mean{h_m^{2}}\jump{u_m}+\rho_mg\mean{h_m
  		z_m}\jump{u_m} \\
    &\ -\rho_mg\mean{h_m}\mean{z_m}\jump{u_m},\\
    {\left(\bm{\widetilde{F}}_1\right)}_{m,3}\rho_m\jump{v_m}-{\left(\bm{\widetilde{F}}_1\right)}_{m,1}\rho_m\mean{v_m}\jump{v_m} =&\ 0,
  \end{aligned}
  \end{equation}
where $\left(\widetilde{\bm{F}}_1\right)_{m,1}$, $\left(\widetilde{\bm{F}}_1\right)_{m,2}$, and $\left(\widetilde{\bm{F}}_1\right)_{m,3}$ represent the three components of $\left(\widetilde{\bm{F}}_1\right)_{m}$.
Solving the linear system \eqref{eq:Fix_Flux_Derive} gives the explicit expressions of $\left(\widetilde{\bm{F}}_1\right)_{m}$ as follows
  	\begin{equation*}
  	\left(\widetilde{\bm{F}}_1\right)_{m}=\left(\begin{array}{c}
  		\mean{h_m}\mean{u_m}\\
  		\mean{h_m}\mean{u_m}^2 + \frac{g}{2}\mean{h_m^2}+g\left(\mean{h_m z_m} - \mean{h_m}\mean{z_m}\right) \\
  		\mean{h_m}\mean{u_m}\mean{v_m} \\
  	\end{array}\right).
  \end{equation*}
For the $x_2$-direction, $\widetilde{\bm{F}}_2$ can be derived similarly and it is straightforward to verify the consistency of $\widetilde{\bm{F}}_\ell$ with $\bm{F}_\ell$.
\end{proof}
\begin{remark}\rm
    When $M=2$, the two-point EC fluxes $\bm{\widetilde{F}}_1$ and $\bm{\widetilde{F}}_2$ in Proposition \ref{Prop:Two-point_EC} reduce to
	\begin{equation*}
		\widetilde{\bm{F}}_1=\left(\begin{array}{c}
			\mean{h_1}\mean{u_1} \\
			\mean{h_1}\mean{u_1}^2 + \frac{g}{2}\mean{h_1^2}+g\left(\mean{h_1(h_2+b)} - \mean{h_1}\mean{h_2+b}\right) \\
			\mean{h_1}\mean{u_1}\mean{v_1} \\
			\mean{h_2}\mean{u_2}\\
			\mean{h_2}\mean{u_2}^2 + \frac{g}{2}\mean{h_2^2}+g\left(\mean{h_2(rh_1+b)} - \mean{h_2}\mean{rh_1+b}\right) \\
			\mean{h_2}\mean{u_2}\mean{v_2} \\
		\end{array}\right),
	\end{equation*}
	\begin{equation*}
		\widetilde{\bm{F}}_2=\left(\begin{array}{c}
			\mean{h_1}\mean{v_1} \\
			\mean{h_1}\mean{u_1}\mean{v_1} \\
			\mean{h_1}\mean{v_1}^2 + \frac{g}{2}\mean{h_1^2}+g\left(\mean{h_1(h_2+b)} - \mean{h_1}\mean{h_2+b}\right) \\
			\mean{h_2}\mean{v_2}\\
			\mean{h_2}\mean{u_2}\mean{v_2} \\
			\mean{h_2}\mean{v_2}^2 + \frac{g}{2}\mean{h_2^2}+g\left(\mean{h_2(rh_1+b)} - \mean{h_2}\mean{rh_1+b}\right) \\
		\end{array}\right).
	\end{equation*}
    When $M=1$, the two-point fluxes recover those in \cite{Zhang2023High}, or \cite{Duan2021_High} with zero magnetic fields.
\end{remark}

\subsection{High-order WB ES schemes}
Our high-order WB ES schemes can be constructed by using high-order WB ES fluxes and high-order source term discretizations in \eqref{eq:2D_semi_Discrete}, to be specific
\begin{align}
  \dfrac{\mathrm{d}}{\mathrm{d} t} \bm{{U}}_{i, j}=
  & -\dfrac{1}{\Delta x_1}
  \left(\left(\bm{\widetilde{{F}}}_{1}\right)_{i+\frac{1}{2}, j}^{\mathtt{ES}}
  -\left(\bm{\widetilde{{F}}}_{1}\right)_{i-\frac{1}{2}, j}^{\mathtt{ES}}\right)
  -\frac{1}{\Delta x_2}
  \left(\left(\bm{\widetilde{{F}}}_{2}\right)_{i, j
  	+\frac{1}{2}}^{\mathtt{ES}}-
  \left(\bm{\widetilde{{F}}}_{2}\right)_{i, j-\frac{1}{2}}^{\mathtt{ES}}
  \right)\nonumber
  \\
  &- \sum\limits_{m=1}^{M}
  \dfrac{g(h_m)_{i,j}}
  {\Delta x_1}
  \left(\left(\bm{\widetilde{{B}}}_{1,m}\right)_{i+\frac{1}{2}, j}^{{\mathtt{2pth}}}
  -\left(\bm{\widetilde{{B}}}_{1,m}\right)_{i-\frac{1}{2}, j}^{{\mathtt{2pth}}}\right)
 	\nonumber\\
   &- \sum\limits_{m=1}^{M}\dfrac{g(h_m)_{i,j}}
   {\Delta x_2}
   \left(
   \left(\bm{\widetilde{{B}}}_{2,m}\right)_{i, j
   	+\frac{1}{2}}^{{\mathtt{2pth}}}-
   \left(\bm{\widetilde{{B}}}_{2,m}\right)_{i, j-\frac{1}{2}}^{{\mathtt{2pth}}}\right).\label{eq:2D_HighOrder_EC_Discrete}
\end{align}
Here the high-order WB ES fluxes are
\begin{equation}\label{eq:HighOrder_EC_Flux}
  \begin{aligned}
    \left(\bm{\widetilde{{F}}}_{1}\right)_{i+\frac{1}{2}, j}^{\mathtt{ES}}=\left(\bm{\widetilde{{F}}}_{1}\right)_{i+\frac{1}{2}, j}^{\mathtt{2pth}} -
\bm{D}_{i+\frac{1}{2},j},
    \\
    \left(\bm{\widetilde{{F}}}_{2}\right)_{i, j+\frac{1}{2}}^{\mathtt{ES}}=\left(\bm{\widetilde{{F}}}_{2}\right)_{i, j+\frac{1}{2}}^{\mathtt{2pth}}-
   \bm{D}_{i,j+\frac{1}{2}},
  \end{aligned}
\end{equation}
where 
\begin{equation}
    \begin{aligned}
    \left(\bm{\widetilde{{F}}}_{1}\right)_{i+\frac{1}{2}, j}^{\mathtt{2pth}}=\sum_{q=1}^{p} \alpha_{p,q} \sum_{s=0}^{q-1} 
    \widetilde{\bm{F}}_{1}\left(\bm{U}_{i-s, j}, \bm{U}_{i-s+q, j}\right) ,
    \\
    \left(\bm{\widetilde{{F}}}_{2}\right)_{i, j+\frac{1}{2}}^{\mathtt{2pth}}=\sum_{q=1}^{p} \alpha_{p,q} \sum_{s=0}^{q-1}  
    \widetilde{\bm{F}}_{2}\left(\bm{U}_{i, j-s}, \bm{U}_{i, j-s+q}\right),
  \end{aligned}
\end{equation}
are the $2p$th-order WB EC fluxes based on the two-point EC fluxes in Proposition \ref{Prop:Two-point_EC}, and the fluxes for the source terms are
\begin{equation}\label{eq:HighOrder_EC_Flux_B}
  \begin{aligned}
    \left(\bm{\widetilde{{B}}}_{1,m}\right)_{i+\frac{1}{2},j}^{{\mathtt{2pth}}}=\sum_{q=1}^{p} \alpha_{p,q} \sum_{s=0}^{q-1} &\frac{1}{2}\left(\left(\bm{B}_{1,m}\right)_{i-s,j}+\left(\bm{B}_{1,m}\right)_{i-s+q,j}\right),
    \\
    \left(\bm{\widetilde{{B}}}_{2,m}\right)_{i, j+\frac{1}{2}}^{{\mathtt{2pth}}}=\sum_{q=1}^{p} \alpha_{p,q} \sum_{s=0}^{q-1} &\frac{1}{2}\left(\left(\bm{B}_{2,m}\right)_{i, j-s}+\left(\bm{B}_{2,m}\right)_{i, j-s+q}\right).\\
  \end{aligned}
\end{equation}
The dissipation terms $\bm{D}_{i+\frac{1}{2},j}$ and $\bm{D}_{i,j+\frac{1}{2}}$ in \eqref{eq:HighOrder_EC_Flux} will be detailed in Section \ref{section:ES}. The coefficients $\alpha_{p,q}$ satisfy 
\begin{equation*}
  \sum_{q=1}^{p} q \alpha_{p,q} = 1,~\sum_{q=1}^{p} q^{2s-1}\alpha_{p,q} = 0,~s = 2,\dots,p,
\end{equation*}
and in the numerical experiments, the case of $p=3$ is used, they are
\begin{equation*}
    \alpha_{3,1} = \dfrac{3}{2},~\alpha_{3,2}=-\frac{3}{10},~\alpha_{3,3}=\frac{1}{30}.
\end{equation*}
Notice that the numerical fluxes for the flux gradient are linear combinations of the two-point case \cite{Lefloch2002Fully}, and those for the source terms are compatible so that one can prove the semi-discrete stability conditions \cite{Duan2021_High,Wu2020Entropy}.

\subsection{Dissipation terms on fixed meshes}\label{section:ES}
Considering the presence of discontinuities in the solution, the WB ES schemes are developed based on adding suitable dissipation terms to the WB EC fluxes to suppress oscillations.
In pursuit of the WB and ES properties simultaneously, one needs to deal with the dissipation terms carefully.
The construction of the dissipation terms is based on the jumps of $\bm{V}$, which are constant for the lake at rest.
Take the $x_1$-direction as an example, the dissipation terms can be chosen as
\begin{align}
\bm{D}_{i+\frac{1}{2},j}
  = 
\frac{1}{2}{\bm{S}}_{i+\frac{1}{2},j}{\bm{Y}}_{{i+\frac{1}{2},j} }\jumpangle{\bm{\widetilde{V}}}_{i+\frac{1}{2},j}^{\mathtt{WENO}}, \label{eq:diss1}
\end{align}
where
\begin{equation}\label{eq:diss1_S1}
  {\bm{S}}_{i+\frac{1}{2},j} = \left(\alpha_1\right)_{i+\frac12,j}\left(\bm{R}( {\bm{U}})\right)_{i+\frac{1}{2},j}.
\end{equation}
A suitable amount of dissipation depends on the choice of the maximal wave speed $\left(\alpha_1\right)_{i+\frac12,j}$ in \eqref{eq:diss1_S1} evaluated at  $\left(\left(x_1\right)_{i+\frac12},\left(x_2\right)_{j}\right)$.
Due to the lack of an analytical expression for the eigenstructure, an estimation will be used in the numerical experiments for the two- and three-layer cases, see Section \ref{subsec:Upper/Lower Bound}.
The matrix $\bm{R}$ satisfies the Cholesky decomposition
\begin{equation*}
  \pd{{\bm{U}}}{\bm{{V}}}=\bm{R} \bm{R}^{\mathrm{T}},
\end{equation*}
based on the convexity of ${\eta}$ w.r.t ${\bU}$,
with $\bm{{U}}$ and $\bm{{V}}$ defined in Section \ref{section:EntropyCondition}.
For $M=2$, the specific expression of $\bm{R}$ is
\begin{equation}\label{eq:R_2L}
  \bm{R} = 
   \left(\begin{array}{cccccc} \sqrt{\frac{\rho_2}{g(\rho_2-\rho_1)\rho_1}} & 0 & 0 & 0 & 0 & 0
   	\\ 
   	\sqrt{\frac{\rho_2}{g(\rho_2-\rho_1)\rho_1}}u_1 & \sqrt{\frac{h_{1}}{\rho _{1}}} & 0 & 0 & 0 & 0
   	\\
   	 \sqrt{\frac{\rho_2}{g(\rho_2-\rho_1)\rho_1}}v_1 & 0 & \sqrt{\frac{h_{1}}{\rho _{1}}} & 0 & 0 & 0
   	 \\ 
   	-\sqrt{\frac{\rho_1}{g(\rho_2-\rho_1)\rho_2}} & 0 & 0 & \frac{1}{\sqrt{g\rho _{2}}} & 0 & 0
   	\\
   		-\sqrt{\frac{\rho_1}{g(\rho_2-\rho_1)\rho_2}}u_2 & 0 & 0 & \frac{u_{2}}{\sqrt{g\rho _{2}}} & \sqrt{\frac{h_{2}}{\rho _{2}}} & 0
   		\\ 
   		-\sqrt{\frac{\rho_1}{g(\rho_2-\rho_1)\rho_2}}v_2 & 0 & 0 & \frac{v_{2}}{\sqrt{g\rho _{2}}} & 0 & \sqrt{\frac{h_{2}}{\rho _{2}}} \end{array}\right),
\end{equation}
while for $M=3$,
\begin{footnotesize}
\begin{equation}\label{eq:R_3L}
	\bm{R} = \left(\begin{array}{ccccccccc} \sqrt{\frac{{\rho _{2}}}{g\left(\rho _{2}-\rho _{1}\right)\rho _{1}}} & 0 & 0 & 0 & 0 & 0 & 0 & 0 & 0\\ \sqrt{\frac{{\rho _{2}}}{g\left(\rho _{2}-\rho _{1}\right)\rho _{1}}} u_1 & \sqrt{\frac{h_{1}}{\rho _{1}}} & 0 & 0 & 0 & 0 & 0 & 0 & 0\\  \sqrt{\frac{{\rho _{2}}}{g\left(\rho _{2}-\rho _{1}\right)\rho _{1}}} v_1 & 0 & \sqrt{\frac{h_{1}}{\rho _{1}}} & 0 & 0 & 0 & 0 & 0 & 0\\ -\sqrt{\frac{\rho_1}{g(\rho_2-\rho_1)\rho_2}} & 0 & 0 & \sqrt{\frac{{\rho _{3}}}{{g}\left(\rho _{3}-\rho _{2}\right){\rho _{2}}}} & 0 & 0 & 0 & 0 & 0\\ -\sqrt{\frac{\rho_1}{g(\rho_2-\rho_1)\rho_2}} u_2 & 0 & 0 & \sqrt{\frac{{\rho _{3}}}{{g}\left(\rho _{3}-\rho _{2}\right){\rho _{2}}}} u_2 & \sqrt{\frac{h_{2}}{\rho _{2}}} & 0 & 0 & 0 & 0\\ 
 -\sqrt{\frac{\rho_1}{g(\rho_2-\rho_1)\rho_2}} v_2 & 0 & 0 & \sqrt{\frac{{\rho _{3}}}{{g}\left(\rho _{3}-\rho _{2}\right){\rho _{2}}}} v_2 & 0 & \sqrt{\frac{h_{2}}{\rho _{2}}} & 0 & 0 & 0\\ 0 & 0 & 0 & -\sqrt{\frac{{\rho _{2}}}{{g}\left(\rho _{3}-\rho _{2}\right){\rho _{3}}}} & 0 & 0 & \frac{1}{\sqrt{g \rho _{3}}} & 0 & 0\\ 0 & 0 & 0 & -\sqrt{\frac{{\rho _{2}}}{{g}\left(\rho _{3}-\rho _{2}\right){\rho _{3}}}} u_3 & 0 & 0 & \frac{u_{3}}{\sqrt{g \rho _{3}}} & \sqrt{\frac{h_{3}}{\rho_{3}}} & 0\\ 0 & 0 & 0 & -\sqrt{\frac{{\rho _{2}}}{{g}\left(\rho _{3}-\rho _{2}\right){\rho _{3}}}} v_3 & 0 & 0 & \frac{v_{3}}{\sqrt{g\rho_{3}}} & 0 & \sqrt{\frac{h_{3}}{\rho_{3}}} \end{array}\right).
\end{equation}
\end{footnotesize}%
When $M>3$, the non-zero entries of $\bm{R}$ lie on the diagonal and $(3m-2)$th columns for $m=1,\dots,M$.
The diagonal entries are 
\begin{align*}
&\sqrt{
\dfrac
{\rho_2}
{g
\left(
\rho_2-\rho_1
\right)\rho_1
}}, 
\sqrt{
\frac{h_1}
{\rho_1}
}
,
\sqrt{
\dfrac{h_1}
{\rho_1}
},
\dots
,
\sqrt{
\dfrac
{\rho_{m+1}}
{g
\left(
\rho_{m+1}-\rho_{m}
\right)\rho_{m}
}}, 
\sqrt{
\dfrac{h_{m}}
{\rho_{m}}
}
,
\sqrt{
\frac{h_{m}}
{\rho_{m}}}
,
\dots
\\
&\sqrt{
\dfrac
{\rho_{M}}
{g
\left(
\rho_{M}-\rho_{M-1}
\right)\rho_{M-1}
}}, 
\sqrt{
\dfrac{h_{M-1}}
{\rho_{M-1}}
}
,
\sqrt{
\frac{h_{M-1}}
{\rho_{M-1}}},
\sqrt{
\dfrac
{1}
{g
\rho_{M}
}}, 
\sqrt{
\dfrac{h_{M}}
{\rho_{M}}
}
,
\sqrt{
\frac{h_{M}}
{\rho_{M}}},
\end{align*}
and the $(3m-2)$th column ($m=1,\dots,M-1$) is
\begin{align*}
    \Bigg(&\bm{0}_{3m-3},\sqrt{\frac{\rho_{m+1}}{g\left(\rho_{m+1}-\rho_{m}\right)\rho_{m}}}, 
    \sqrt{\frac{\rho_{m+1}}{g\left(\rho_{m+1}-\rho_{m}\right)\rho_{m}}}u_{m},
    \sqrt{\frac{\rho_{m+1}}{g\left(\rho_{m+1}-\rho_{m}\right)\rho_{m}}}v_{m},
    \\ &-\sqrt{\frac{\rho_{m}}{g\left(\rho_{m+1}-\rho_{m}\right)\rho_{m+1}}},
    -\sqrt{\frac{\rho_{m}}{g\left(\rho_{m+1}-\rho_{m}\right)\rho_{m+1}}}u_{m+1}
    ,
    -\sqrt{\frac{\rho_{m}}{g\left(\rho_{m+1}-\rho_{m}\right)\rho_{m+1}}}v_{m+1},\bm{0}_{3M-3m-3}
    \Bigg)^{\mathrm{T}},
\end{align*}
with the $(3M-2)$th column
\begin{equation*}
\left(\bm{0}_{3M-3}, \frac{1}{\sqrt{g\rho_m}}, \frac{u_m}{\sqrt{g\rho_m}}, \frac{v_m}{\sqrt{g\rho_m}}\right)^\mathrm{T}.
\end{equation*}
High-order accuracy can be achieved by employing the WENO reconstruction \cite{Borges2008An} to obtain high-order jumps, similar to the works \cite{Duan2020RHD,Duan2021_High}.
To be specific, the scaled variables
$\Big\{\widetilde{\bm{{V}}}_{i+r,j} := \big(\bm{R}^{\mathrm{T}}({\bU})\big)_{i+\frac12,j}\bm{{V}}_{i+r,j}\Big\}$
are reconstructed to obtain the left and right limit values $\widetilde{\bm{{V}}}_{i+\frac{1}{2},j}^{{\mathtt{WENO}},-}$ (using stencil $r=-p+1,\cdots,p-1$) and $\widetilde{\bm{{V}}}_{i+\frac{1}{2},j}^{{\mathtt{WENO}},+}$ (using stencil $r=-p+2,\cdots,p$),
then the corresponding high-order jumps are defined as
\begin{equation*}
  \jumpangle{\widetilde{\bm{{V}}}}_{i+\frac{1}{2},j}^{{\mathtt{WENO}}} = \widetilde{\bm{{V}}}_{i+\frac{1}{2},j}^{{\mathtt{WENO}},+} - \widetilde{\bm{{V}}}_{i+\frac{1}{2},j}^{{\mathtt{WENO}},-}.
\end{equation*}
The diagonal matrix ${\bm{Y}}_{i+\frac{1}{2},j}$ is used to preserve the ``sign'' property \cite{Biswas2018Low},
with the diagonal elements
\begin{equation*}
  \left({\bm{Y}}_{i+\frac{1}{2},j}\right)_{\iota,\iota} = \begin{cases}
    1,\quad
    &\text{if}\quad \text{sign}\left(\jumpangle{\widetilde{\bm{{V}}}_\iota}^{{\mathtt{WENO}}}_{{{i+\frac{1}{2},j} }}\right) ~\text{sign}\left(\jump{\bm{\widetilde{V}}_\iota}_{{{i+\frac{1}{2},j} }}\right)\geqslant 0, \\
    0, &\text{otherwise}, \\
  \end{cases}
\end{equation*}
where $\iota = 1,\dots,3M$.
The lake at rest state
\begin{equation*}
	\left(h_m + z_m\right) \equiv C_m,
	~u_m = v_m = 0,
\end{equation*}
is not affected by the dissipation terms,
because the jump of the scaled variables $\bm{\widetilde{V}}$ is zero in this case.

\begin{proposition}\rm\label{prop:2D_HO_sufficient_condition}
    The schemes \eqref{eq:2D_HighOrder_EC_Discrete}-\eqref{eq:HighOrder_EC_Flux_B} are ES, in the sense of Definition \ref{def:ECES_condition},
with the numerical energy fluxes
 \begin{align*}
		&\left(\widetilde{{Q}}_1\right)_{i+\frac{1}{2},j}^{\mathtt{ES}}=\sum_{q=1}^p \alpha_{p, q} \sum_{s=0}^{q-1} \widetilde{{Q}}_1\left(\bm{U}_{i-s,j}, \bm{U}_{i-s+q,j}\right)  -
\frac{1}{2}
\mean{\bm{V}}_{i+\frac{1}{2},j}^{\mathrm{T}}
{\bm{S}}_{i+\frac{1}{2},j}{\bm{Y}}_{{i+\frac{1}{2},j} }\jumpangle{\bm{\widetilde{V}}}_{i+\frac{1}{2},j}^{\mathtt{WENO}},\\
&\left(\widetilde{{Q}}_2\right)_{i,j+\frac{1}{2}}^{\mathtt{ES}}=\sum_{q=1}^p \alpha_{p, q} \sum_{s=0}^{q-1} \widetilde{{Q}}_2\left(\bm{U}_{i-s,j}, \bm{U}_{i-s+q,j}\right)  -
\frac{1}{2}
\mean{\bm{V}}_{i,j+\frac{1}{2}}^{\mathrm{T}}
{\bm{S}}_{i,j+\frac{1}{2}}{\bm{Y}}_{{i,j+\frac{1}{2}} }\jumpangle{\bm{\widetilde{V}}}_{i,j+\frac{1}{2}}^{\mathtt{WENO}},
	\end{align*}
where
	\begin{align*}
		\widetilde{{Q}}_1\left(\bU_L, \bU_R\right) =
		\ &\frac{1}{2}\left(\bm{{V}}(\bU_{L})+\bm{{V}}(\bU_{R})\right)^{\mathrm{T}}\widetilde{\bm{{F}}}_{1}\left(\bU_L, \bU_R\right)
	- \frac{1}{2}
 \left(\psi_{1}
 \left(\bU_{L}\right)
 +\psi_{1}
 \left(\bU_{R}\right)\right)
 \nonumber\\
		& 
  + \sum\limits_{m=1}^{M}\frac{g\rho_m}{4} 
  \left(\left(h_m u_m\right)_{L}
  +\left(h_m u_m\right)_{R}\right)
  \left(		\left(z_m\right)_{L}+		\left(z_m\right)_{R}\right)
		\\
		& 
  -\sum\limits_{m=1}^{M} 
  \frac{g\rho_m}{2} 
  \left(	\left(h_m u_m	z_m\right)_{L}
  +	\left(h_m u_m z_m\right)_{R}\right),
  \\
 	\widetilde{{Q}}_2\left(\bU_L, \bU_R\right) =
		\ &\frac{1}{2}\left(\bm{{V}}(\bU_{L})+\bm{{V}}(\bU_{R})\right)^{\mathrm{T}}
  \widetilde{\bm{{F}}}_{2}\left(\bU_L, \bU_R\right)
	- \frac{1}{2}
 \left(\psi_{2}\left(\bU_{L}\right)
 +\psi_{2}
 \left(\bU_{R}\right)\right) \nonumber\\
		& + \sum\limits_{m=1}^{M}
  \frac{g\rho_m}{4} 
  \left(\left(h_m v_m\right)_{L}
  +\left(h_m v_m\right)_{R}\right)
  \left(		\left(z_m\right)_{L}+		\left(z_m\right)_{R}\right)
		\\
		& -\sum\limits_{m=1}^{M} \frac{g\rho_m}{2} 
  \left(	\left(h_m v_m	z_m\right)_{L}
  +	\left(h_m v_m z_m\right)_{R}\right).
 \end{align*}
\end{proposition}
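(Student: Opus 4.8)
The plan is to multiply the semi-discrete scheme \eqref{eq:2D_HighOrder_EC_Discrete} on the left by the entropy variables $\bm{V}_{i,j}^{\mathrm{T}}$ and use the chain rule $\frac{\mathrm{d}}{\mathrm{d}t}\eta_{i,j}=\bm{V}_{i,j}^{\mathrm{T}}\frac{\mathrm{d}}{\mathrm{d}t}\bm{U}_{i,j}$, so that it suffices to show the right-hand side equals a difference of consistent numerical energy fluxes plus a non-positive remainder. I would split the right-hand side into the high-order WB EC flux part, the compatible source part, and the dissipation part. Since the $x_1$- and $x_2$-directional terms decouple, I would carry out the computation only in the $x_1$-direction and argue symmetrically for $x_2$.

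For the EC flux together with the source terms, the building block is the two-point energy balance. Writing $\bm{V}_i=\mean{\bm{V}}-\tfrac12\jump{\bm{V}}$ and $\bm{V}_{i+1}=\mean{\bm{V}}+\tfrac12\jump{\bm{V}}$ at the interface $i+\frac12$ and inserting the sufficient condition \eqref{eq:EC_condition_2D_ML} (which the flux of Proposition \ref{Prop:Two-point_EC} satisfies) shows that, with the modified potential $\Psi_{1,i}:=\psi_1(\bm{U}_i)+\sum_m g\rho_m(h_mu_mz_m)_i$, one has $\bm{V}_i^{\mathrm{T}}\widetilde{\bm{F}}_1(\bm{U}_i,\bm{U}_{i+1})=\widetilde{Q}_1(\bm{U}_i,\bm{U}_{i+1})+\Psi_{1,i}-\sum_m g\rho_m\mean{z_m}(h_mu_m)_i$, and symmetrically with $i$ as the right state. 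The residual $\sum_m g\rho_m\mean{z_m}(h_mu_m)_i$ is exactly what the compatible source discretization produces: since $\bm{V}_i^{\mathrm{T}}\bm{B}_{1,m}=\rho_m u_m z_m$, one gets $g(h_m)_{i}\,\bm{V}_i^{\mathrm{T}}(\widetilde{\bm{B}}_{1,m})^{\mathtt{2pth}}_{i+\frac12}=g\rho_m(h_mu_m)_i\mean{z_m}_{i+\frac12}$ in the two-point case, so the flux residual and the source term cancel interface-by-interface and leave a clean telescoping into $\widetilde{Q}_1$. The extension to the $2p$th-order fluxes \eqref{eq:HighOrder_EC_Flux}--\eqref{eq:HighOrder_EC_Flux_B} is the standard linear-combination argument of \cite{Lefloch2002Fully}, used in \cite{Duan2021_High,Wu2020Entropy}, applied simultaneously to the flux and to the source so that the $\mean{z_m}$-type residuals cancel order by order; here the matching $\alpha_{p,q}$ weighting of the two discretizations is essential. (Restricting this computation to a single interface and discarding the dissipation is precisely Proposition \ref{def:sufficient_condition_ML_2D}.)

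For the dissipation part I would once more use $\bm{V}_i=\mean{\bm{V}}\mp\tfrac12\jump{\bm{V}}$ at each interface to split $\bm{V}_i^{\mathrm{T}}(\bm{D}_{i+\frac12}-\bm{D}_{i-\frac12})$ into a telescoping difference of the dissipative energy fluxes $-\mean{\bm{V}}^{\mathrm{T}}\bm{D}$, which is exactly the dissipative group in the stated $\widetilde{Q}_1^{\mathtt{ES}}$, plus the remainder $-\tfrac12\jump{\bm{V}}_{i+\frac12}^{\mathrm{T}}\bm{D}_{i+\frac12}-\tfrac12\jump{\bm{V}}_{i-\frac12}^{\mathrm{T}}\bm{D}_{i-\frac12}$. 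It then remains to prove $\jump{\bm{V}}^{\mathrm{T}}\bm{D}\geqslant0$ at every interface. Using $\bm{S}=\alpha_1\bm{R}$ and the scaling $\widetilde{\bm{V}}=\bm{R}^{\mathrm{T}}\bm{V}$, one has $\jump{\bm{V}}^{\mathrm{T}}\bm{R}=\jump{\widetilde{\bm{V}}}^{\mathrm{T}}$, so that $\jump{\bm{V}}^{\mathrm{T}}\bm{D}=\tfrac12\alpha_1\sum_\iota(\bm{Y})_{\iota\iota}\jump{\widetilde{V}_\iota}\jumpangle{\widetilde{V}_\iota}^{\mathtt{WENO}}\geqslant0$, because $\alpha_1\geqslant0$ and the sign matrix $\bm{Y}$ is constructed so that each surviving summand has $\jump{\widetilde{V}_\iota}$ and $\jumpangle{\widetilde{V}_\iota}^{\mathtt{WENO}}$ of the same sign.

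Collecting the three contributions and the symmetric $x_2$-terms reproduces the energy identity of Definition \ref{def:ECES_condition} with the stated $\widetilde{Q}_1^{\mathtt{ES}}$ and $\widetilde{Q}_2^{\mathtt{ES}}$, up to the non-positive dissipation remainders, which gives the inequality. I expect the main obstacle to be the high-order telescoping of the EC flux with the compatible source term: the two-point cancellation is a direct computation, but pushing it through the $2p$th-order linear combination requires the $\mean{z_m}$-type residuals of the flux and of the source to cancel at every order, which relies on both using the same $\alpha_{p,q}$ stencil and on the per-layer decoupling \eqref{eq:EC_condition_2D_ML_each_layer}; the coupling of all layers inside $z_m$ is what makes this bookkeeping more involved than in the single-layer case.
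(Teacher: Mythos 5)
Your proposal is correct and follows essentially the same route as the paper's proof: left-multiplying by $\bm{V}_{i,j}^{\mathrm{T}}$, decomposing into flux, source, and dissipation contributions, using the average--jump splitting together with the EC condition \eqref{eq:EC_condition_2D_ML} to telescope the flux-plus-source part into differences of $\widetilde{Q}_1$, and bounding the dissipation remainder via $\jump{\bm{V}}^{\mathrm{T}}\bm{R}=\jump{\widetilde{\bm{V}}}^{\mathrm{T}}$ and the sign property of $\bm{Y}$. Your bookkeeping through the modified potential $\Psi_{1,i}$ is just a repackaging of the paper's direct manipulation of the terms $I_1+I_2$, and the verification confirms it yields the identical cancellation of the $\mean{z_m}$-residuals against the compatible source discretization.
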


\begin{proof}\rm
Left multiply the schemes \eqref{eq:2D_HighOrder_EC_Discrete} with $\bm{{V}}_{i,j}^\mathrm{T}:=\bm{{V}}^\mathrm{T}(\bU_{i,j})$, then one has
	\begin{align*}
		\frac{\mathrm{d}}{\mathrm{d} t}\eta_{i,j}
		=
		& -\dfrac{1}{\Delta x_1}\bm{{V}}_{i,j}^{\mathrm{T}}
		\left(
		\left(\bm{\widetilde{{F}}}_{1}\right)_{i+\frac{1}{2}, j}^{\mathtt{ES}}-\left(\bm{\widetilde{{F}}}_{1}\right)_{i-\frac{1}{2}, j}^{\mathtt{ES}}\right)
		-\frac{1}{\Delta x_2}\bm{{V}}_{i,j}^{\mathrm{T}}
		\left(\left(\bm{\widetilde{{F}}}_{2}\right)_{i, j+\frac{1}{2}}^{\mathtt{2pth}}
		-\left(\bm{\widetilde{{F}}}_{2}\right)_{i, j-\frac{1}{2}}^{\mathtt{2pth}}\right)\nonumber
		\\
 &- \sum\limits_{m=1}^{M}
 \bm{{V}}_{i,j}^{\mathrm{T}}
 \dfrac{g(h_m)_{i,j}}{\Delta x_1}
 \left(\left(\bm{\widetilde{{B}}}_{1,m}\right)_{i+\frac{1}{2}, j}^{{\mathtt{2pth}}}-\left(\bm{\widetilde{{B}}}_{1,m}\right)_{i-\frac{1}{2}, j}^{{\mathtt{2pth}}}\right)
\\
&- \sum\limits_{m=1}^{M}
\bm{{V}}_{i,j}^{\mathrm{T}}
\dfrac{g(h_m)_{i,j}}{\Delta x_2}
\left(
\left(\bm{\widetilde{{B}}}_{2,m}\right)_{i, j+\frac{1}{2}}^{{\mathtt{2pth}}}-\left(\bm{\widetilde{{B}}}_{2,m}\right)_{i, j-\frac{1}{2}}^{{\mathtt{2pth}}}\right).
	\end{align*}
	For clarity, the right-hand side can be split as
	\begin{equation*}
		\frac{\mathrm{d}}{\mathrm{d} t}\eta_{i,j}
		= -\frac{1}{\Delta x_1}\left[\sum_{q=1}^{p}\alpha_{p,q}\left(I_1+I_2\right)+I_3\right]-\frac{1}{\Delta x_2}\left[\sum_{q=1}^{p}\alpha_{p,q}\left(I_4+I_5\right)+I_6\right],
	\end{equation*}
	with
	\begin{align*}
		&I_1 = \bm{{V}}_{i,j}^{\mathrm{T}}
		\left[\bm{\widetilde{{F}}}_1
		\left(\bm{U}_{i,j},\bm{U}_{i+q,j}\right)
		-\bm{\widetilde{{F}}}_1
		\left(\bm{U}_{i,j},\bm{U}_{i-q,j}\right)\right],\nonumber\\
		&I_2 =\sum\limits_{m = 1}^{M}
		\dfrac{ g\rho_m\left(h_mu_m\right)_{i,j}}{2}
		\left[\left({\left(z_m\right)}_{i,j}+\left(z_m\right)_{i+q,j}\right)
		-\left({\left(z_m\right)}_{i,j}
		+\left(z_m\right)_{i-q,j}\right)\right],\\
              &I_3 = -\frac{1}{2}\bm{{V}}_{i,j}^{\mathrm{T}}\left({\bm{S}}_{i+\frac{1}{2},j}{\bm{Y}}_{{i+\frac{1}{2},j} }\jumpangle{\bm{\widetilde{V}}}_{i+\frac{1}{2},j}^{\mathtt{WENO}}-{\bm{S}}_{i-\frac{1}{2},j}{\bm{Y}}_{{i-\frac{1}{2},j} }\jumpangle{\bm{\widetilde{V}}}_{i-\frac{1}{2},j}^{\mathtt{WENO}}\right),
            \\
		&I_4 = \bm{{V}}_{i,j}^{\mathrm{T}}
		\left[\bm{\widetilde{{F}}}_2
		\left(\bm{U}_{i,j},\bm{U}_{i,j+q}\right)
		-\bm{\widetilde{{F}}}_2\left(\bm{U}_{i,j},\bm{U}_{i,j-q}\right)\right],\nonumber\\
		&I_5 =\sum\limits_{m = 1}^{M}
		\dfrac{ g\rho_m\left(h_m v_m\right)_{i,j}}{2}
		\left[\left({\left(z_m\right)}_{i,j}
		+\left(z_m\right)_{i,j+q}\right)
		-\left({\left(z_m\right)}_{i,j}
		+\left(z_m\right)_{i,j-q}\right)\right],
  \\
            &I_6 = -\frac{1}{2}\bm{{V}}_{i,j}^{\mathrm{T}}\left({\bm{S}}_{i,j+\frac{1}{2}}{\bm{Y}}_{{i,j+\frac{1}{2}} }\jumpangle{\bm{\widetilde{V}}}_{i,j+\frac{1}{2}}^{\mathtt{WENO}}-{\bm{S}}_{i,j-\frac{1}{2}}{\bm{Y}}_{{i,j-\frac{1}{2}} }\jumpangle{\bm{\widetilde{V}}}_{i,j-\frac{1}{2}}^{\mathtt{WENO}}\right).
	\end{align*}
	Due to the fact $a_{i,j}= \frac{1}{2}\left(a_{i,j}+a_{i+q,j}\right)-\frac{1}{2}\left(a_{i+q,j}-a_{i,j}\right)$ and $a_{i,j} = \frac{1}{2}\left(a_{i,j}+a_{i-q,j}\right)+\frac{1}{2}\left(a_{i,j}-a_{i-q,j}\right)$,
	the term $I_1$ can be simplified as
	\begin{align*}
		I_1 = &+\frac{1}{2}\left(\bm{{V}}_{i,j}+\bm{{V}}_{i+q,j}\right)^{\mathrm{T}}
		\bm{\widetilde{{F}}}_1\left(\bm{U}_{i,j},\bm{U}_{i+q,j}\right)-\frac{1}{2}\left(\bm{{V}}_{i+q,j}-\bm{{V}}_{i,j}\right)^{\mathrm{T}}
		\bm{\widetilde{{F}}}_1\left(\bm{U}_{i,j},\bm{U}_{i+q,j}\right)\\
		&-\frac{1}{2}\left(\bm{{V}}_{i,j}+\bm{{V}}_{i-q,j}\right)^{\mathrm{T}}\bm{\widetilde{{F}}}_1\left(\bm{U}_{i,j},\bm{U}_{i-q,j}\right) -\frac{1}{2}\left(\bm{{V}}_{i,j}-\bm{{V}}_{i-q,j}\right)^{\mathrm{T}}\bm{\widetilde{{F}}}_1\left(\bm{U}_{i,j},\bm{U}_{i-q,j}\right).
	\end{align*}
Similarly, one can obtain
	\begin{align*}
		I_2 =\sum_{m=1}^M g\rho_m
  \Bigg[
  &\frac{1}{4}\left(
  \left(h_m u_m
  \right)_{i+q,j}+\left(h_mu_m\right)_{i,j}
  \right)
  \left(
  {\left(z_m\right)}_{i,j}
  +\left(z_m\right)_{i+q,j}
  \right) 
  \\
  -&\frac{1}{4}\left(\left(h_m u_m\right)_{i+q,j}-\left(h_mu_m\right)_{i,j}\right)
		\left({\left(z_m\right)}_{i,j}+\left(z_m\right)_{i+q,j}\right)
  \\
		-&\frac{1}{4}\left(\left(h_mu_m\right)_{i-q,j}+\left(h_mu_m\right)_{i,j}\right)
	\left({\left(z_m\right)}_{i,j}+\left(z_m\right)_{i-q,j}\right)
 \\
 -&\frac{1}{4}\left(\left(h_m u_m\right)_{i,j}-\left(h_m u_m\right)_{i-q,j}\right)
		\left({\left(z_m\right)}_{i,j}+\left(z_m\right)_{i-q,j}\right)
		\Bigg].
	\end{align*}
	Based on the condition \eqref{eq:EC_condition_2D_ML}, 
 and
 \begin{align*}
 &\dfrac{1}{2}\left(
 \rho_mg
 \left(
 \left({h_{m}z_mu_m}
 \right)_{i+q,j}-
 \left({h_{m}z_mu_m}
 \right)_{i,j}\right)
 +
 \rho_mg
 \left(
 \left({h_{m}z_mu_m}
 \right)_{i,j}-
 \left({h_{m}z_mu_m}
 \right)_{i-q,j}\right)
 \right)
 \\=& 
\dfrac{1}{2}\left(
 \rho_mg
 \left(
 \left({h_{m}z_mu_m}
 \right)_{i+q,j}+
 \left({h_{m}z_mu_m}
 \right)_{i,j}\right)
 -
 \rho_mg
 \left(
 \left({h_{m}z_mu_m}
 \right)_{i,j}+
 \left({h_{m}z_mu_m}
 \right)_{i-q,j}\right)
 \right),
 \end{align*}
 one has
	\begin{equation*}
		I_1+I_2 = \widetilde{{Q}}_1\left(\bm{U}_{i,j}, \bm{U}_{i+q,j}\right)-\widetilde{{Q}}_1\left(\bm{U}_{i,j}, \bm{U}_{i-q,j}\right).
	\end{equation*}
As for the term $I_3$, one deduces that
\begin{align*}
    I_3 = &-\frac{1}{2}\bm{{V}}_{i,j}^{\mathrm{T}}\left({\bm{S}}_{i+\frac{1}{2},j}{\bm{Y}}_{{i+\frac{1}{2},j} }\jumpangle{\bm{\widetilde{V}}}_{i+\frac{1}{2},j}^{\mathtt{WENO}}-{\bm{S}}_{i-\frac{1}{2},j}{\bm{Y}}_{{i-\frac{1}{2},j} }\jumpangle{\bm{\widetilde{V}}}_{i-\frac{1}{2},j}^{\mathtt{WENO}}\right)\\
    =&-\frac{1}{2}
    \left(\mean{\bm{V}}_{i+\frac{1}{2},j}^{\mathrm{T}}
    {\bm{S}}_{i+\frac{1}{2},j}{\bm{Y}}_{{i+\frac{1}{2},j} }\jumpangle{\bm{\widetilde{V}}}_{i+\frac{1}{2},j}^{\mathtt{WENO}}
    -\mean{\bm{V}}_{i-\frac{1}{2},j}^{\mathrm{T}}
    {\bm{S}}_{i-\frac{1}{2},j}
    {\bm{Y}}_{{i-\frac{1}{2},j} }\jumpangle{\bm{\widetilde{V}}}_{i-\frac{1}{2},j}^{\mathtt{WENO}}\right)
    \\&+\frac{1}{4}
    \left(
    \jump{\bm{V}}_{i+\frac{1}{2},j}
    ^{\mathrm{T}}
    {\bm{S}}_{i+\frac{1}{2},j}
    {\bm{Y}}_{{i+\frac{1}{2},j} }
    \jumpangle{\bm{\widetilde{V}}}_{i+\frac{1}{2},j}^{\mathtt{WENO}}
    +
    \jump{\bm{V}}_{i-\frac{1}{2},j}^{\mathrm{T}}
    {\bm{S}}_{i-\frac{1}{2},j}
    {\bm{Y}}_{{i-\frac{1}{2},j} }
    \jumpangle{\bm{\widetilde{V}}}_{i-\frac{1}{2},j}^{\mathtt{WENO}}\right),
\end{align*}
which implies
\begin{align*}
    -\frac{1}{\Delta x_1}\left[\sum_{q=1}^{p}\alpha_{p,q}\left(I_1+I_2\right)+I_3\right] = &-\frac{1}{\Delta x_1}\left(\left({\widetilde{{{Q}}}}_1\right)_{i+\frac{1}{2},j}^{\mathtt{ES}}
 -\left({\widetilde{{{Q}}}}_1\right)_{i-\frac{1}{2},j}^{\mathtt{ES}}\right) 
 \\
 &- \frac{1}{4\Delta x_1} 
 \left(
 \left[
 {\alpha_1}
 \jump{\widetilde{\bm{V}}}^{\mathrm{T}}
 {\bm{Y}}
 \jumpangle{\bm{\widetilde{V}}}^{\mathtt{WENO}}
 \right]
 _{i+\frac{1}{2},j}
    +
    \left[
    {\alpha_1}
    \jump{\widetilde{\bm{V}}}^{\mathrm{T}}
    {\bm{Y}}
    \jumpangle{\bm{\widetilde{V}}}^{\mathtt{WENO}}
    \right]
_{i-\frac{1}{2},j}\right).
\end{align*}
 The terms $I_4$, $I_5$, and $I_6$ can be simplified in the same way. The proof is completed.
\end{proof}

This paper uses the explicit SSP-RK3 time discretization to obtain the fully-discrete schemes, 
\begin{equation*}
  \begin{aligned}
    \bm{{U}}^{*} &= \bm{{U}}^{n}+\Delta t^n \bm{{L}}\left(\bU^{n}\right),\\
    \bm{{U}}^{**} &= \frac{3}{4}\bm{{U}}^{n}+\frac{1}{4}\left(\bm{U}^{*}+\Delta t^n \bm{{L}}\left(\bU^{*}\right)\right),\\
    \bm{{U}}^{n+1} &= \frac{1}{3}\bm{{U}}^{n}+\frac{2}{3}\left(\bm{{U}}^{**}+\Delta t^n \bm{{L}}\left(\bU^{**}\right)\right),
  \end{aligned}
\end{equation*}
where $\bm{{L}}$ is the right-hand side of
\eqref{eq:2D_HighOrder_EC_Discrete}
for the semi-discrete high-order WB ES schemes. The time stepsize $\Delta t^{n}$ of the 2D schemes is
\begin{equation}\label{eq:dt_fixed}
  \Delta t^{n} = \frac{C_{\text{\tiny \tt CFL}}}{\sum\limits_{\ell = 1}^{2}\max_{i,j}\limits\left({\alpha}_{\ell}\right)^{n}_{i,j}/\Delta {x_\ell}},
\end{equation}
with $\alpha_1$ defined in \eqref{eq:alpha_i} for the $x_1$-direction.

\begin{proposition}\rm\label{prop:WB_proof}
 The fully-discrete schemes based on the semi-discrete ES schemes \eqref{eq:2D_HighOrder_EC_Discrete}-\eqref{eq:HighOrder_EC_Flux_B} and the forward Euler or explicit SSP-RK3 time discretizations are WB, in the sense of Definition \ref{def:WB_condition}.
\end{proposition}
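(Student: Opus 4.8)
The plan is to reduce the fully-discrete statement to a single semi-discrete fact: that the spatial operator $\bm{L}$ on the right-hand side of \eqref{eq:2D_HighOrder_EC_Discrete} annihilates any lake-at-rest state. Suppose $\bm{U}^n$ satisfies \eqref{eq:2pnd_WB_tn} at time $t^n$. If one shows $\bm{L}(\bm{U}^n)=\bm{0}$, then the forward Euler update gives $\bm{U}^{n+1}=\bm{U}^n+\Delta t^n\bm{L}(\bm{U}^n)=\bm{U}^n$, so the state is preserved. For SSP-RK3, each stage is a convex combination of forward Euler steps: since $\bm{L}(\bm{U}^n)=\bm{0}$ one gets $\bm{U}^{*}=\bm{U}^n$, hence $\bm{L}(\bm{U}^{*})=\bm{0}$ and $\bm{U}^{**}=\tfrac34\bm{U}^n+\tfrac14\bm{U}^n=\bm{U}^n$, and likewise $\bm{U}^{n+1}=\bm{U}^n$. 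Thus it suffices to verify $\bm{L}(\bm{U})=\bm{0}$ at the lake at rest, which I would do component by component.

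First I would dispose of the dissipation and the continuity/off-diagonal pieces. At the lake at rest the entropy variables \eqref{eq:Entropy_Variables_ML} collapse to $\bm{V}_m=(g\rho_m C_m,0,0)$, a constant vector over the whole stencil; the WENO reconstruction of constant data reproduces that constant, so $\jumpangle{\bm{\widetilde{V}}}^{\mathtt{WENO}}=\bm{0}$ and every dissipation term $\bm{D}$ in \eqref{eq:HighOrder_EC_Flux} vanishes, as already noted in Section \ref{section:ES}. Since $u_m=v_m=0$, the first component of each layer's two-point flux $\mean{h_m}\mean{u_m}$ (resp.\ $\mean{h_m}\mean{v_m}$) and the off-diagonal momentum entries $\mean{h_m}\mean{u_m}\mean{v_m}$ all vanish, so the continuity equations carry no net flux (and have no source), while the $x_2$-flux contributes nothing to the $x_1$-momentum and vice versa. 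It remains to treat the diagonal pressure term in each momentum equation.

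The heart of the argument is the balance between the high-order EC pressure flux and the high-order source discretization for the $x_1$-momentum of layer $m$ (the $x_2$-direction being identical). Using $z_m=C_m-h_m$ at the lake at rest, a short computation gives $\mean{h_m z_m}-\mean{h_m}\mean{z_m}=\mean{h_m}^2-\mean{h_m^2}$, so the two-point pressure flux between states $k$ and $l$ reduces to the clean product $\widetilde{\bm{F}}_1(\bm{U}_k,\bm{U}_l)_{m,2}=\tfrac{g}{2}(h_m)_k(h_m)_l$, while the two-point source average reduces to $\tfrac12((z_m)_k+(z_m)_l)=C_m-\mean{h_m}$. Writing $H_r:=(h_m)_{r,j}$ and inserting these into \eqref{eq:HighOrder_EC_Flux_B}, the telescoping of the $\alpha_{p,q}$-weighted sums of consecutive products and averages yields, using $\sum_q q\alpha_{p,q}=1$,
\begin{align*}
 \left(\widetilde{\bm{F}}_1\right)^{\mathtt{2pth}}_{m,2,i+\frac12}-\left(\widetilde{\bm{F}}_1\right)^{\mathtt{2pth}}_{m,2,i-\frac12} &= \frac{gH_i}{2}\sum_{q=1}^p\alpha_{p,q}\left(H_{i+q}-H_{i-q}\right),\\
 \left(\widetilde{\bm{B}}_{1,m}\right)^{\mathtt{2pth}}_{i+\frac12}-\left(\widetilde{\bm{B}}_{1,m}\right)^{\mathtt{2pth}}_{i-\frac12} &= -\frac12\sum_{q=1}^p\alpha_{p,q}\left(H_{i+q}-H_{i-q}\right),
\end{align*}
where the second identity is read off the $z_m$-component of $\bm{B}_{1,m}$. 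Multiplying the source difference by $g H_i$ and adding to the flux difference shows the two contributions cancel exactly, so the $x_1$-momentum right-hand side is zero; the $x_2$-momentum is treated verbatim, and all components of $\bm{L}(\bm{U})$ vanish.

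I expect the telescoping step to be the main obstacle, since one must track how the high-order $\alpha_{p,q}$-combination of two-point building blocks redistributes over neighboring interfaces. The key observations are that $\sum_{s=0}^{q-1}H_{i-s}H_{i-s+q}$ and $\sum_{s=0}^{q-1}(H_{i-s}+H_{i-s+q})$ are sums over $2q$ consecutive indices whose interface differences collapse to $H_i(H_{i+q}-H_{i-q})$ and $H_{i+q}-H_{i-q}$, respectively. The remaining components are routine, and the reduction to $\bm{L}(\bm{U})=\bm{0}$ makes the fully-discrete conclusion immediate for both time integrators.
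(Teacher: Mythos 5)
Your proposal is correct and follows essentially the same route as the paper: reduce SSP-RK3 to forward Euler, observe that the dissipation vanishes because $\bm{V}$ is constant at the lake at rest and that the continuity and off-diagonal momentum contributions vanish with the velocities, then telescope the high-order flux and source sums to two-point differences and cancel them. The only (cosmetic) variation is in the final algebra — you substitute $z_m=C_m-h_m$ to collapse the two-point pressure flux to $\tfrac{g}{2}(h_m)_k(h_m)_l$ and cancel flux against source per $q$, whereas the paper keeps $h_m$ and $z_m$ separate and factors each pairwise contribution as a multiple of the vanishing jump $\jump{h_m+z_m}$; both verifications are valid, and your aside that $\sum_q q\,\alpha_{p,q}=1$ is needed is superfluous since the cancellation holds term by term in $q$.
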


\begin{proof}
    The explicit SSP-RK3 scheme can be written as a convex combination of the forward Euler discretization so that it suffices to prove in the case of forward Euler time discretization.
    By induction, assuming that the conditions \eqref{eq:2pnd_WB_tn} are satisfied at $t^{n}$,
  one needs to prove that they still hold at $t^{n+1}$.
  Under the forward Euler time discretization, the update of the water height for the $m$th-layer is
  \begin{align*}
    (h_{m})_{i,j}^{n+1} - (h_{m})_{i,j}^{n} = &~0,
  \end{align*}
then one has
\begin{align*}
	 \left(h_m + b+
	\sum\limits_{k > m} h_{k} 
	+ \sum\limits_{k < m} r_{km} h_{k}\right)_{i,j}^{n+1} = \left(h_m + b+
	\sum\limits_{k > m} h_{k} 
	+ \sum\limits_{k < m} r_{km} h_{k}\right)^{n}_{i,j} = C_m,
\end{align*}
i.e., $\left(h_m+z_m\right)^{n+1} = C_m$.

Notice that the dissipation terms in \eqref{eq:HighOrder_EC_Flux} become zero for the lake at rest, so that only the contributions from the high-order EC flux and discretizations of the source terms are considered.
  The second component of the semi-discrete schemes for the $m$th-layer can be written as
  \begin{align}\label{eq:JHU_original}
    (h_mu_m)^{n+1}_{i,j} - (h_mu_m)^n_{i,j} =
    -\frac{\Delta t}{\Delta x_1}\sum_{q=1}^{p}\alpha_{p,q}\left(H_1-H_2\right),
  \end{align}
  where
  \begin{align}
    H_1 = \ g\Bigg[&\frac{1}{4}\left((h_m)^2_{i,j}+(h_m)^2_{i+q,j}\right) + \frac{1}{2}\left(\left(h_m z_m\right)_{i,j} +\left(h_m z_m\right)_{i+q,j}\right)\nonumber
      \\
    - &\frac{1}{4}\left(\left(h_m\right)_{i,j}+\left(h_m \right)_{i+q,j}\right)\left({\left(z_m\right)}_{i,j}+{\left(z_m\right)}_{i+q,j}\right) + \frac{1}{2}\left(h_m\right)_{i,j} \left({\left(z_m\right)}_{i,j}+{\left(z_m\right)}_{i+q,j}\right) \Bigg]\nonumber,
    \\
    H_2 = \ g\Bigg[&\frac{1}{4}\left((h_m)^2_{i,j}+(h_m)^2_{i-q,j}\right) + \frac{1}{2}\left(\left(h_m z_m \right)_{i,j} + \left(h_m z_m \right)_{i-q,j}\right)\nonumber
    \\
    - &\frac{1}{4}\left(\left(h_m\right)_{i,j}+\left(h_m \right)_{i-q,j}\right)\left({\left(z_m\right)}_{i,j}+{\left(z_m\right)}_{i-q,j}\right) + \frac{1}{2}\left(h_m\right)_{i,j} \left({\left(z_m\right)}_{i,j}+{\left(z_m\right)}_{i-q,j}\right) \Bigg]\nonumber,
  \end{align}
    and the superscript $n$ is omitted in the right-hand side for simplicity.
  By adding $-\dfrac{g}{2}(h_m)^{2}_{i,j}-g\left(h_m z_m\right)_{i,j}$ to $H_1$ and $H_2$,
  one can rewrite \eqref{eq:JHU_original} as
  \begin{align*}
    (h_mu_m)^{n+1}_{i,j} - (h_mu_m)^n_{i,j} =
    -\frac{\Delta t}{\Delta x_1}\sum_{q=1}^{p}\alpha_{p,q}\left(\widetilde{H}_1-\widetilde{H}_2\right),
  \end{align*}
  where
  \begin{align*}
   	\widetilde{H}_1 = \  g\Bigg[&\frac{1}{4}\left((h_m)^2_{i,j}+(h_m)^2_{i+q,j}\right) + \frac{1}{2}\left(\left(h_m z_m \right)_{i,j} +\left(h_m z_m\right)_{i+q,j}\right) -\dfrac{1}{2}(h_m)^{2}_{i,j} -\left(h_m z_m\right)_{i,j}  \nonumber
   	\\
   	- & \frac{1}{4}\left(\left(h_m \right)_{i,j}+\left(h_m \right)_{i+q,j}\right)\left({\left(z_m\right)}_{i,j}+{\left(z_m\right)}_{i+q,j}\right) + \frac{1}{2}\left(h_m\right)_{i,j} \left({\left(z_m\right)}_{i,j}+{\left(z_m\right)}_{i+q,j}\right)  \Bigg]\nonumber,
   	\\
   	\widetilde{H}_2 = \  g\Bigg[&\frac{1}{4}\left((h_m)^2_{i,j}+(h_m)^2_{i-q,j}\right) + \frac{1}{2}\left(\left(h_m z_m\right)_{i,j} +\left(h_m z_m\right)_{i-q,j}\right) -\dfrac{1}{2}(h_m)^{2}_{i,j} -\left(h_m z_m\right)_{i,j} \nonumber
   	\\
   	-&  \frac{1}{4}\left(\left(h_m \right)_{i,j}+\left(h_m \right)_{i-q,j}\right)\left({\left(z_m\right)}_{i,j}+{\left(z_m\right)}_{i-q,j}\right) + \frac{1}{2}\left(h_m\right)_{i,j} \left({\left(z_m\right)}_{i,j}+{\left(z_m\right)}_{i-q,j}\right) \Bigg]\nonumber.
  \end{align*}
  Simplify the two parts in the brackets in $\widetilde{H}_1$ as
  \begin{align*}
    &\frac{1}{4}\left(\left(h_m \right)_{i,j}^2+\left(h_m \right)_{i+q,j}^2\right)  - \frac{1}{2}\left(h_m \right)^2_{i,j} = \frac{1}{4}\left(\left(h_m\right)_{i,j}+\left(h_m \right)_{i+q,j}\right)\left(\left(h_m \right)_{i+q,j}-\left(h_m\right)_{i,j}\right),
    \\
    &\frac{1}{2}\left(\left(h_m z_m\right)_{i,j} + \left(h_m z_m\right)_{i+q,j}\right) -\left(h_m z_m\right)_{i,j} - \frac{1}{4}\left(\left(h_m \right)_{i,j}+\left(h_m \right)_{i+q,j}\right)\left({\left(z_m\right)}_{i,j}+{\left(z_m\right)}_{i+q,j}\right) \nonumber
    \\ + &\frac{1}{2}\left(h_m\right)_{i,j} \left({\left(z_m\right)}_{i,j}+{\left(z_m\right)}_{i+q,j}\right) = \frac{1}{4}\left(\left(h_m\right)_{i,j}+\left(h_m \right)_{i+q,j}\right)\left(\left(z_m\right)_{i+q,j}-\left(z_m\right)_{i,j}\right),
  \end{align*}
  then one has
\begin{align*}
    \widetilde{H}_1= \left\{\left(\left(h_m \right)_{i+q,j}+\left(h_m\right)_{i,j}\right)\left[\left(h_{m}+z_m\right)_{i+q,j} - \left(h_{m}+z_m\right)_{i,j}\right]\right\}=0.
  \end{align*}
Similarly,
  \begin{align*}
    \widetilde{H}_2= \left\{\left(\left(h_m \right)_{i-q,j}+\left(h_m\right)_{i,j}\right)\left[\left(h_{m}+z_m\right)_{i-q,j} - \left(h_{m}+z_m\right)_{i,j}\right]\right\}=0,
  \end{align*}
 so that $(h_mu_m)^{n+1}_{i,j} - (h_mu_m)^n_{i,j} = 0$,
  or $\left(u_m\right)_{i,j}^{n+1} = 0$.
  One can also obtain $(v_m)^{n+1}_{i,j}=0$ and the proof is completed.
\end{proof}

\section{Extension to adaptive moving meshes}\label{Sec:MM_Case}

In this section, the high-order accurate WB ES finite difference schemes are extended to adaptive moving meshes based on our previous work \cite{Zhang2023High}.

\subsection{Reformulated ML-SWEs and corresponding energy inequality}
Generally, there are two ways to obtain the bottom topography on a new mesh during mesh movement: by interpolation using the analytical expression or by regarding $b$ as a time-dependent variable evolved with the original equations simultaneously.
It is challenging to obtain the two-point EC fluxes in the first way, as mentioned in \cite{Zhang2023High}, so the second way is adopted here.
By adding $\partial{b}/ \partial{t} = 0$ to the original system of ML-SWEs \eqref{eq:SWE0}, it is reformulated as
\begin{equation}\label{eq:ML_SWE_Modified}
    \frac{\partial{\widehat{\bm{U}}}}{\partial t}+\sum_{\ell=1}^2 \frac{\partial {\widehat{\bm{F}}}_{\ell}({\widehat{\bm{U}}})}{\partial x_{\ell}}
    =-g \sum_{\ell=1}^2 \sum\limits_{m=1}^Mh_m \frac{\partial \widehat{\bm{B}}_{\ell,m}}{\partial x_{\ell}},
\end{equation}
with modified conservative variables, physical flux, and source terms
\begin{equation*}
\begin{aligned}
{\widehat{\bm{U}}}  =\left(\bm{U}^{\mathrm{T}},b\right)^{\mathrm{T}},~{\widehat{\bm{F}}}_\ell =\left(\bm{F}_{\ell}^{\mathrm{T}},0\right)^{\mathrm{T}}, ~
\widehat{\bm{{B}}}_{\ell,m} = \left(\bm{{B}}_{\ell,m}^{\mathrm{T}},0\right)^\mathrm{T}.
\end{aligned}
\end{equation*}
The corresponding energy and energy flux for \eqref{eq:ML_SWE_Modified} are
\begin{equation*}
	\widehat{\eta}(\widehat{\bm{U}})=\eta(\bU)+\gamma g b^2,~
\widehat{q}_\ell(\widehat{{\bm{U}}})= {q}_{\ell}(\bm{U}).
\end{equation*}
One can verify that the Hessian matrix can be expressed as
\begin{equation*}
    \partial^2 \widehat{\eta}/\partial \widehat{\bU}^2 = 
    \left(
        \begin{array}{cc}
             \mathcal{M}_{M} & \widehat{A}_1^{\mathrm{T}} \\
            \widehat{A}_1 & 2\gamma g
        \end{array}
        \right) \in \mathbb{R}^{(3M+1)\times(3M+1)},
\end{equation*}
where $\widehat{A}_1 = 
        \left(g\rho_{1}, 0, 0, g\rho_{2}, 0, 0,  \cdots, g\rho_{M}, 0, 0
        \right) \in \mathbb{R}^{1\times3M}$.
The first $3M$ leading principle minors of the Hessian matrix equal to those in Proposition \ref{prop:eta_convex_prove} with $\mathcal{N} = M$,
and the determinant is
\begin{equation*}
    \frac{g^{M+1}\rho_1^3(2\gamma - \rho_M)\prod_{m=2}^{M}\rho_m^2(\rho_m-\rho_{m-1})}{\prod_{m=1}^{M}h_m^2}.
\end{equation*}
One can also verify that the quadratic form of the Hessian matrix is 
\begin{equation*}
    \begin{aligned} 
    \widehat{\bm{\beta}}_{M} 
    \dfrac{\partial^2 \widehat{\eta}}{\partial \widehat{\bU}^2}\widehat{\bm{\beta}}_{M}^{\mathrm{T}} = &\ g\rho_1\left(
    \sum_{m=1}^{M}
    \beta_{3m-2}
    +\beta_{3M+1}\right)^2
    +g\sum_{m=2}^{M}(\rho_{m}-\rho_{m-1})
    \left(
    \sum_{l=m}^{M}
    \beta_{3l-2}
    +\beta_{3M+1}\right)^2
    \\ & +\sum_{m=1}^{M}
    \dfrac{\rho_m}{h_m}
    \left(
    \left(
    u_{m}\beta_{3m-2}-\beta_{3m-1}
    \right)^2
    +
    \left(
    v_{m}\beta_{3m-2}-\beta_{3m}
    \right)^2
    \right)   +g(2\gamma - \rho_M)\beta_{3M+1}^2,
\end{aligned}
\end{equation*}
with $\widehat{\bm{\beta}}_{M} = (\bm{\beta}_M,\beta_{3M+1})$.
Thus the choice of the constant $\gamma > \rho_M/2$ ensures the convexity of $\widehat{\eta}(\widehat{\bU})$.
Define
\begin{equation*}\label{eq:Entropy_Variables_ML_MM}
	\widehat{\bm{{V}}} := \partial \widehat{\eta}/\partial \widehat{\bU}^\mathrm{T} = \left(\bm{V}^{T},
  g\sum\limits_{m=1}^{M}\rho_m h_m +2\gamma gb\right)^{\mathrm{T}},
\end{equation*}
and one can derive the following energy inequality
\begin{equation}\label{eq:Energy_inequality_MM}
    \pd{\widehat{\eta}(\widehat{\bU})}{t}
    +\sum_{\ell=1}^{2}
    \pd{\widehat{q_{\ell}}(\widehat{\bU})}{x_{\ell}} \leqslant 0.
\end{equation}

To generate adaptive moving meshes, the adaptive moving mesh strategy in \cite{Duan2021_High} is adopted,
based on a time-dependent coordinate transformation $t = \tau, \bm{x}= \bm{x}\bm(\tau,\bm{\xi})$ from the computational domain $\Omega_c$ to the physical domain $\Omega_p$.
Under such a transformation, the ML-SWEs \eqref{eq:ML_SWE_Modified} and energy inequality \eqref{eq:Energy_inequality_MM} can be written in curvilinear coordinates as
\begin{align}\label{eq:ML_SWE_Curvilinear}
   &\frac{\partial{\bm{\mathcal{U}}}}{\partial t}+\sum_{\ell=1}^2 \frac{\partial \bm{\mathcal{F}}_{\ell}}{\partial \xi_{\ell}}
    =-g \sum_{\ell=1}^2 \sum\limits_{m=1}^Mh_m \frac{\partial \bm{\mathcal{B}}_{\ell,m}}{\partial \xi_{\ell}},\\
    & \pd{\mathcal{E}}{\tau}+\sum\limits_{\ell=1}^{2}\pd{\mathcal{Q}_{\ell}}{\xi_{\ell}}\leqslant 0,
\end{align}
where
\begin{align*}
    &\bm{{\mathcal{U}}}=J \widehat{\bm{U}},~
    \bm{\mathcal{F}}_{\ell}
    =\left(J \dfrac{\partial \xi_\ell}{\partial t} \right)\bm{\widehat{U}}
    +\sum_{k=1}^2\left(J \dfrac{\partial \xi_\ell}{\partial x_k}\right) \bm{\widehat{F}}_{k}, ~\bm{\mathcal{B}}_{\ell,m}=\sum_{k=1}^2\left(J \dfrac{\partial \xi_\ell}{\partial x_k}\right) \bm{\widehat{B}}_{k,m},\\
    &\mathcal{E}=J\widehat{\eta},~
    \mathcal{Q}_{\ell}=\left(J \dfrac{\partial \xi_\ell}{\partial t} \right)\widehat{\eta}
    +\sum_{k=1}^2\left(J \dfrac{\partial \xi_\ell}{\partial x_k}\right) \widehat{q}_{k},~\ell=1,2.
\end{align*}
The Jacobian matrix is
\begin{equation}
    J = \det\left(\frac{\partial (t,\bx)}{\partial (\tau,\bm{\xi})}\right) =
    \left|\begin{matrix}
		1 & 0 &0 \\
		\dfrac{\partial x_1}{\partial \tau} & \dfrac{\partial x_1}{\partial \xi_1} &  \dfrac{\partial x_1}{\partial \xi_2}\\
		\dfrac{\partial x_2}{\partial \tau} & \dfrac{\partial x_2}{\partial \xi_1} &  \dfrac{\partial x_2}{\partial \xi_2}\\
	\end{matrix}\right|,\nonumber
\end{equation}
and the mesh metrics satisfy the geometric conservation laws (GCLs),
which consist of the volume conservation law (VCL) and the surface conservation laws (SCLs)
\begin{equation}\label{eq:GCL_2D}
    \begin{aligned}
	&\text { VCL: } \quad \frac{\partial J}{\partial \tau}+ \sum_{\ell=1}^{2}\frac{\partial}{\partial \xi_\ell}\left(J \frac{\partial \xi_\ell}{\partial t}\right)=0,\\
    &\text { SCLs: } \quad \sum_{\ell = 1}^2\frac{\partial}{\partial \xi_\ell}\left(J \frac{\partial \xi_\ell}{\partial x_k}\right)=0,~k = 1,2.
    \end{aligned}
\end{equation}

\subsection{High-order WB ES schemes on moving meshes}
In this section, high-order accurate WB ES schemes for the 2D ML-SWEs in curvilinear coordinates \eqref{eq:ML_SWE_Curvilinear} are constructed based on corresponding two-point EC fluxes.
According to \cite{Zhang2023High},
the two-point EC fluxes in curvilinear coordinates are
\begin{align}\label{eq:F_MM}
  \widetilde{\bm{\mathcal{F}}}_{\ell}
  = \ &\frac{1}{2}\left(\left(J\frac{\partial \xi_\ell}{\partial t}\right)_{L}+\left(J\frac{\partial \xi_\ell}{\partial t}\right)_{R}\right)\widetilde{\bm{U}}+ \sum_{k=1}^{2} \frac{1}{2}\left(\left(J\frac{\partial \xi_\ell}{\partial x_k}\right)_{L}+\left(J\frac{\partial \xi_\ell}{\partial x_k}\right)_{R}\right)\bm{\widetilde{\widehat{F}}}_{k},
\end{align}
where $\widetilde{\bU} = \left(\widetilde{\bU}_{1},\dots,\widetilde{\bU}_{M}\right)^{\mathrm{T}}$ with
$\widetilde{\bm{U}}_{m}=\left(
        \mean{h_m},
        \mean{h_m}\mean{u_m},
        \mean{h_m}\mean{v_m},
        \mean{b} \right)$,
is consistent with the reformulated conservative variables $\widehat{\bU}$ and satisfies
\begin{align*}
  &\left(\bm{\widehat{V}}_{R}-\bm{\widehat{V}}_{L}\right)^{\mathrm{T}} \widetilde{\bm{U}}=\widehat{\phi}_{R}-\widehat{\phi}_{L},\\
    &\widehat{\phi} = \widehat{\bV}^{\mathrm{T}}\widehat{\bm{U}} - \widehat{\eta} = \phi+\sum\limits_{m=1}^{M}\rho_{m}gh_{m}+\gamma g b^2,\quad
    \widehat{\psi}_\ell  = \psi_\ell, 
\end{align*}
and $\widetilde{\widehat{\bm{F}}}_{k}$ can be obtained through the augmentation of \eqref{def:sufficient_condition_ML_2D} by incorporating an additional zero as the last component.

\begin{remark}\rm
      When $M=1$, $\bm{\widetilde{\mathcal{F}}}_\ell$ in \eqref{eq:F_MM} reduces to the single-layer case in \cite{Zhang2023High}. 
\end{remark}

Considering a uniform Cartesian mesh in the computational domain
$(\xi_1)_i = a_1 + i\Delta\xi_1,~i=0,1,\cdots,N_1-1,~\Delta\xi_1 = (b_1-a_1)/(N_1-1)$,
$(\xi_2)_j = a_2 + j\Delta\xi_2,~j=0,1,\cdots,N_2-1,~\Delta\xi_2 = (b_2-a_2)/(N_2-1)$,
and high-order semi-discrete conservative finite difference schemes for the 2D ML-SWEs in curvilinear coordinates
\begin{align}
  \dfrac{\mathrm{d}}{\mathrm{d} t} \bm{\mathcal{U}}_{i, j}=
  & -\dfrac{1}{\Delta \xi_1}
  \left(
  \left(\bm{\widetilde{\mathcal{F}}}_{1}\right)_{i+\frac{1}{2}, j}^{\mathtt{ES}}-\left(\bm{\widetilde{\mathcal{F}}}_{1}\right)_{i-\frac{1}{2}, j}^{\mathtt{ES}}
  \right)
  -\frac{1}{\Delta \xi_2}
  \left(
  \left(\bm{\widetilde{\mathcal{F}}}_{2}\right)_{i, j+\frac{1}{2}}^{\mathtt{ES}}-\left(\bm{\widetilde{\mathcal{F}}}_{2}\right)_{i, j-\frac{1}{2}}^{\mathtt{ES}}
  \right)\nonumber
  \\
    &- \sum\limits_{m=1}^{M}\dfrac{g(h_m)_{i,j}}
    {\Delta \xi_1}
    \left(
    \left(\bm{\widetilde{\mathcal{B}}}_{1,m}\right)_{i+\frac{1}{2}, j}^{{\mathtt{2pth}}}-\left(\bm{\widetilde{\mathcal{B}}}_{1,m}\right)_{i-\frac{1}{2}, j}^{{\mathtt{2pth}}}
    \right)
 	\nonumber\\
   &- \sum\limits_{m=1}^{M}\dfrac{g(h_m)_{i,j}}
   {\Delta \xi_2}
\left(
\left(\bm{\widetilde{\mathcal{B}}}_{2,m}\right)_{i, j+\frac{1}{2}}^{{\mathtt{2pth}}}-\left(\bm{\widetilde{\mathcal{B}}}_{2,m}\right)_{i, j-\frac{1}{2}}^{{\mathtt{2pth}}}
\right),\label{eq:2D_HighOrder_EC_Discrete_MM}
\end{align}
where $\bm{\mathcal{U}}_{i,j}=J_{i,j}\bU_{i,j}$ approximates the point values of $J\bU$ at $((\xi_1)_i, (\xi_2)_j)$,
and the numerical fluxes are given by
\begin{equation*}
  \begin{aligned}
  \left(
  \bm{\widetilde{\mathcal{F}}}_{1}
  \right)_{i+\frac{1}{2}, j}^{{\mathtt{ES}}}=\left(\bm{\widetilde{\mathcal{F}}}_{1}\right)_{i+\frac{1}{2}, j}^{{\mathtt{2pth}}}
- \begin{pmatrix}
    \widehat{\bm{D}}_{i+\frac12,j} \\ 0 \\
  \end{pmatrix}
  - \mathring{\bm{D}}_{i+\frac12,j},
    \\
    \left(\bm{\widetilde{\mathcal{F}}}_{2}\right)_{i, j+\frac{1}{2}}^{{\mathtt{ES}}}
    =
   \left(\bm{\widetilde{\mathcal{F}}}_{2}\right)_{i, j+\frac{1}{2}}^{{\mathtt{2pth}}}
    - \begin{pmatrix}
    \widehat{\bm{D}}_{i,j+\frac12} \\ 0 \\
  \end{pmatrix}
  - \mathring{\bm{D}}_{i,j+\frac12},
  \end{aligned}
\end{equation*}
with
\begin{equation*}
\begin{aligned}
  \left(
  \bm{\widetilde{\mathcal{F}}}_{1}
  \right)_{i+\frac{1}{2}, j}^{{\mathtt{2pth}}}=
  \sum_{q=1}^{p} \alpha_{p,q} \sum_{s=0}^{q-1} \Bigg[
  &\frac{1}{2}\left(
  \left(
  J \frac{\partial \xi_1}{\partial t}
  \right)_{i-s, j}
  +\left(
  J \frac{\partial \xi_1}{\partial t}
  \right)_{i-s+q, j}
  \right) \widetilde{{\bm{U}}}\left(\widehat{\bm{U}}_{i-s, j}, \widehat{\bm{U}}_{i-s+q, j}\right) \\
      +& \frac{1}{2}
      \left(
      \left(
      J \frac{\partial \xi_1}{\partial x_1}
      \right)_{i-s, j}
      +\left(
      J \frac{\partial \xi_1}{\partial x_1}
      \right)_{i-s+q, j}
      \right) \widetilde{\widehat{\bm{F}}}_{1}\left(\widehat{\bm{U}}_{i-s, j}, \widehat{\bm{U}}_{i-s+q, j}\right) \\
    +& \frac{1}{2}
    \left(
    \left(
    J \frac{\partial \xi_1}{\partial x_2}\right)_{i-s, j}
    +\left(
    J \frac{\partial \xi_1}{\partial x_2}
    \right)_{i-s+q, j}\right)
    \widetilde{\widehat{\bm{F}}}_{2}
    \left(\widehat{\bm{U}}_{i-s, j}, \widehat{\bm{U}}_{i-s+q, j}\right)\Bigg],
    \\
    \left(\bm{\widetilde{\mathcal{F}}}_{2}\right)_{i, j+\frac{1}{2}}^{{\mathtt{2pth}}}
    =\sum_{q=1}^{p} \alpha_{p,q} \sum_{s=0}^{q-1}  \Bigg[
    &\frac{1}{2}
    \left(
    \left(
    J \frac{\partial \xi_2}{\partial t}
    \right)_{i, j-s}
    +\left(
    J \frac{\partial \xi_2}{\partial t}
    \right)_{i, j-s+q}
    \right) \widetilde{{\bm{U}}}\left(\widehat{\bm{U}}_{i, j-s}, \widehat{\bm{U}}_{i, j-s+q}\right) \\
      +& \frac{1}{2}
      \left(
      \left(J \frac{\partial \xi_2}{\partial x_1}
      \right)_{i, j-s}
      +\left(
      J \frac{\partial \xi_2}{\partial x_1}
      \right)_{i, j-s+q}
      \right) \widetilde{\widehat{\bm{F}}}_{1}
      \left(
      \widehat{\bm{U}}_{i, j-s}, \widehat{\bm{U}}_{i, j-s+q}
      \right) \\
    +&\frac{1}{2}
    \left(
    \left(
    J \frac{\partial \xi_2}{\partial x_2}
    \right)_{i, j-s}
    +\left(
    J \frac{\partial \xi_2}{\partial x_2}
    \right)_{i, j-s+q}\right) \widetilde{\widehat{\bm{F}}}_{2}\left(\widehat{\bm{U}}_{i, j-s}, \widehat{\bm{U}}_{i, j-s+q}
    \right)\Bigg],
  \end{aligned}
\end{equation*}
denoting the $2p$th-order WB EC fluxes in curvilinear coordinates.
The dissipation terms $\widehat{\bm{D}}$ and $\mathring{\bm{D}}$ are given in Section \ref{Sec:HOMMES_Scheme} and
\begin{equation*}
  \begin{aligned}
    \left(\bm{\widetilde{\mathcal{B}}}_{1,m}\right)_{i+\frac{1}{2},j}^{{\mathtt{2pth}}}=\sum_{q=1}^{p} \alpha_{p,q} \sum_{s=0}^{q-1} \Bigg[&\frac{1}{4}\left(\left(J \frac{\partial \xi_1}{\partial x_1}\right)_{i-s,j}+\left(J \frac{\partial \xi_1}{\partial x_1}\right)_{i-s+q,j}\right)\left(\left(\widehat{\bm{B}}_{1,m}\right)_{i-s,j}+\left(\widehat{\bm{B}}_{1,m}\right)_{i-s+q,j}\right) \\
    +&\frac{1}{4}\left(\left(J \frac{\partial \xi_1}{\partial x_2}\right)_{i-s,j}+\left(J \frac{\partial \xi_1}{\partial x_2}\right)_{i-s+q,j}\right)\left(\left(\widehat{\bm{B}}_{2,m}\right)_{i-s,j}+\left(\widehat{\bm{B}}_{2,m}\right)_{i-s+q,j}\right)\Bigg],
    \\
    \left(\bm{\widetilde{\mathcal{B}}}_{2,m}\right)_{i, j+\frac{1}{2}}^{{\mathtt{2pth}}}=\sum_{q=1}^{p} \alpha_{p,q} \sum_{s=0}^{q-1} \Bigg[&\frac{1}{4}\left(\left(J \frac{\partial \xi_2}{\partial x_1}\right)_{i, j-s}+\left(J \frac{\partial \xi_2}{\partial x_1}\right)_{i, j-s+q}\right)\left(\left(\widehat{\bm{B}}_{1,m}\right)_{i, j-s}+\left(\widehat{\bm{B}}_{1,m}\right)_{i, j-s+q}\right) \\
    +&\frac{1}{4}\left(\left(J \frac{\partial \xi_2}{\partial x_2}\right)_{i, j-s}+\left(J \frac{\partial \xi_2}{\partial x_2}\right)_{i, j-s+q}\right)\left(\left(\widehat{\bm{B}}_{2,m}\right)_{i, j-s}+\left(\widehat{\bm{B}}_{2,m}\right)_{i, j-s+q}\right)\Bigg].
  \end{aligned}
\end{equation*}
The discretized GCLs \eqref{eq:GCL_2D} are the same as those in \cite{Duan2022High,Zhang2023High}, which can be written as
\begin{equation}\label{eq:VCL_Dis_2D}
\begin{aligned}
  &\frac{\mathrm{d}}{\mathrm{d} t} J_{i, j}=
  -\frac{1}{\Delta \xi_1}\left(
  \left(
  \widetilde{J \frac{\partial \xi_1}{\partial t}}
  \right)_{i+\frac{1}{2}, j}^{\mathtt{2pth}}
  -\left(
  \widetilde{J \frac{\partial \xi_1}{\partial t}}
  \right)_{i-\frac{1}{2}, j}^{\mathtt{2pth}}
  \right)-\frac{1}{\Delta \xi_2}\left(\left(\widetilde{J \frac{\partial \xi_2}{\partial t}}\right)_{i, j+\frac{1}{2}}^{\mathtt{2pth}}-\left(\widetilde{J \frac{\partial \xi_2}{\partial t}}\right)_{i, j-\frac{1}{2}}^{\mathtt{2pth}}\right),
  \\
  &\frac{1}{\Delta \xi_1}
  \left(
  \left(
  \widetilde{J \frac{\partial \xi_1}{\partial x_1}}
  \right)_{i+\frac{1}{2},j}^{{\mathtt{2pth}}}
  -\left(
  \widetilde{J \frac{\partial \xi_1}{\partial x_1}}
  \right)_{i-\frac{1}{2},j}^{{\mathtt{2pth}}}
  \right) + \frac{1}{\Delta \xi_2}\left(\left(\widetilde{J \frac{\partial \xi_2}{\partial x_1}}\right)_{i, j+\frac{1}{2}}^{{\mathtt{2pth}}}-\left(\widetilde{J \frac{\partial \xi_2}{\partial x_1}}\right)_{i,j-\frac{1}{2}}^{{\mathtt{2pth}}}\right)=0,
  \\
  &\frac{1}{\Delta \xi_1}\left(\left(\widetilde{J \frac{\partial \xi_1}{\partial x_2}}\right)_{i+\frac{1}{2},j}^{{\mathtt{2pth}}}-\left(\widetilde{J \frac{\partial \xi_1}{\partial x_2}}\right)_{i-\frac{1}{2},j}^{{\mathtt{2pth}}}\right) + \frac{1}{\Delta \xi_2}\left(\left(\widetilde{J \frac{\partial \xi_2}{\partial x_2}}\right)_{i, j+\frac{1}{2}}^{{\mathtt{2pth}}}-\left(\widetilde{J \frac{\partial \xi_2}{\partial x_2}}\right)_{i,j-\frac{1}{2}}^{{\mathtt{2pth}}}\right)=0,
\end{aligned}
\end{equation}
with the discrete mesh metrics
\begin{equation}\label{eq:Mesh_Dis_Metric}
\begin{aligned}
  & \left(
  \widetilde{J \frac{\partial \xi_1}{\partial t}}
  \right)_{i+\frac{1}{2}, j}^{\mathtt{2pth}}=
  \sum_{q=1}^p \alpha_{p,q} \sum_{s=0}^{q-1} 
  \frac{1}{2}
  \left(
  \left(J \frac{\partial \xi_1}{\partial t}
  \right)_{i-s, j}
  +\left(
  J \frac{\partial \xi_1}{\partial t}
  \right)_{i-s+q, j}\right),
  \\
  & \left(\widetilde{J \frac{\partial \xi_2}{\partial t}}\right)_{i, j+\frac{1}{2}}^{\mathtt{2pth}}=\sum_{q=1}^p \alpha_{p,q} \sum_{s=0}^{q-1} \frac{1}{2}\left(\left(J \frac{\partial \xi_2}{\partial t}\right)_{i, j-s}+\left(J \frac{\partial \xi_2}{\partial t}\right)_{i, j-s+q}\right),
  \\
  &\left(\widetilde{J \frac{\partial \xi_1}{\partial x_k}}\right)_{i+\frac{1}{2},j}^{{\mathtt{2pth}}} =
  \sum_{q=1}^{p} \alpha_{p,q} \sum_{s=0}^{q-1} \frac{1}{2}\left(\left(J \frac{\partial \xi_1}{\partial x_k}\right)_{i-s,j}+\left(J \frac{\partial \xi_1}{\partial x_k}\right)_{i-s+q,j}\right),
 \\
  &\left(\widetilde{J \frac{\partial \xi_2}{\partial x_k}}\right)_{i,j+\frac{1}{2}}^{{\mathtt{2pth}}} =
  \sum_{q=1}^{p} \alpha_{p,q} \sum_{s=0}^{q-1} \frac{1}{2}\left(\left(J \frac{\partial \xi_2}{\partial x_k}\right)_{i,j-s}+\left(J \frac{\partial \xi_2}{\partial x_k}\right)_{i,j-s+q}\right),
\end{aligned}
\end{equation}
where
\begin{equation}
  \begin{aligned}\label{eq:Grid_Matrix_Coeff}
    & \left(J \frac{\partial \xi_1}{\partial t}\right)_{i, j}=-(\dot{x}_1)_{i, j}\left(J \frac{\partial \xi_1}{\partial x_1}\right)_{i, j}-(\dot{x}_2)_{i, j}\left(J \frac{\partial \xi_1}{\partial x_2}\right)_{i, j},
    \\
    & \left(J \frac{\partial \xi_2}{\partial t}\right)_{i, j}=-(\dot{x}_1)_{i, j}\left(J \frac{\partial \xi_2}{\partial x_1}\right)_{i, j}-(\dot{x}_2)_{i, j}\left(J \frac{\partial \xi_2}{\partial x_2}\right)_{i, j},
    \\
    & \left(J \frac{\partial \xi_1}{\partial x_1}\right)_{i, j}=+\sum_{q=1}^p \frac{\alpha_{p,q}}{2
    \Delta\xi_2}
    \left(\left(x_2\right)_{i, j+q}-\left(x_2\right)_{i, j-q}\right),
    \\
    & \left(J \frac{\partial \xi_1}{\partial x_2}\right)_{i, j}=-\sum_{q=1}^p \frac{\alpha_{p,q}}{2
    \Delta\xi_2}
    \left(\left(x_1\right)_{i, j+q}-\left(x_1\right)_{i, j-q}\right),
    \\
    & \left(J \frac{\partial \xi_2}{\partial x_1}\right)_{i, j}=-\sum_{q=1}^p \frac{\alpha_{p,q}}{2
    \Delta\xi_1}
    \left(\left(x_2\right)_{i+q, j}-\left(x_2\right)_{i-q, j}\right),
    \\
    & \left(J \frac{\partial \xi_2}{\partial x_2}\right)_{i, j}=+\sum_{q=1}^p \frac{\alpha_{p,q}}{2
    \Delta\xi_1}
    \left(\left(x_1\right)_{i+q, j}-\left(x_1\right)_{i-q, j}\right).
  \end{aligned}
\end{equation}
The choice of the mesh velocities $\dot{x}_1,\dot{x}_2$ follows the mesh adaptation strategy in \cite{Duan2022High,Li2022High}.

\subsection{The dissipation terms on moving meshes}\label{Sec:HOMMES_Scheme}
This section presents suitable dissipation terms added to the high-order WB EC fluxes.
The dissipation terms are designed to maintain both the WB and ES properties of the schemes.
Especially, to suppress possible oscillations due to the transportation of the discontinuous bottom topography on moving meshes, the approach in \cite{Zhang2023High} is adopted.
For the $\xi_1$-direction, the dissipation terms are
\begin{align}
  &\widehat{\bm{D}}_{i+\frac12,j} 
  = 
  \frac{1}{2}
  \widehat{\bm{S}}_{i+\frac{1}{2},j}
  \widehat{\bm{Y}}_{{i+\frac{1}{2},j} }
  \jumpangle{\bm{\widetilde{V}}}_{i+\frac{1}{2},j}^{\mathtt{WENO}},\label{eq:diss1_MM}\\
  &\mathring{\bm{D}}_{i+\frac12,j} 
  = \frac{1}{2}
  \mathring{\bm{S}}_{i+\frac{1}{2},j}
  \mathring{\bm{Y}}_{{i+\frac{1}{2},j} }
  \jumpangle{\widehat{\bm{U}}}_{i+\frac{1}{2},j}^{\mathtt{WENO}},\label{eq:diss2_MM}
\end{align}
where $\widehat{\bm{D}}_{i+\frac{1}{2},j}\in\bbR^{3M}$ is the high-order dissipation terms in curvilinear coordinates similar to \cite{Duan2022High},
and $\mathring{\bm{D}}_{i+\frac{1}{2},j}\in\bbR^{3M+1}$ stabilizes the schemes for discontinuous $b$.

In the first part \eqref{eq:diss1_MM}, $\widehat{\bm{S}}_{i+\frac{1}{2},j}$ is defined as
\begin{equation}\label{eq:diss1_S1_MM}
  \widehat{\bm{S}}_{i+\frac{1}{2},j} = \left(\widehat{\alpha}_1\right)_{i+\frac12,j}\left(\bm{T}^{-1} \bm{R}(\bm{T} {\bm{U}})\right)_{i+\frac{1}{2},j},
\end{equation}
with the rotational matrix
\begin{equation*}
\bm{T}=\begin{pmatrix}
    \bm{T}_1 & 0 & 0  \\
    0 &  \ddots & 0 \\
    0 & 0 & \bm{T}_{M} \\
  \end{pmatrix},~
  \bm{T}_m=\begin{pmatrix}
    1 & 0 & 0  \\
    0 &  \cos \varphi &  \sin \varphi  \\
    0 & -\sin \varphi & \cos \varphi \\
  \end{pmatrix},~
  \varphi=\arctan \left(\left(J \frac{\partial \xi_1}{\partial x_2}\right) \Big/\left(J \frac{\partial \xi_1}{\partial x_1}\right)\right).
\end{equation*}
The matrix $\bm{R}$ is the same as that in \eqref{eq:R_2L} and \eqref{eq:R_3L} for $M=2$ and $M=3$, respectively,
and $\bm{{U}}$ is the vector of the conservative variables for the original ML-SWEs \eqref{eq:SWE0}.
The maximal wave speed $\widehat{\alpha}_1$ in \eqref{eq:diss1_S1_MM} is
\begin{equation}\label{eq:eigens_MM}
  \widehat{\alpha}_1:=\left|J\frac{\partial \xi_1}{\partial t}+\widetilde{L}_1{\alpha_1}(\bm{T} {\bm{U}})\right|,
\end{equation}
with
$\widetilde{L}_1=\sqrt{\sum\limits_{k=1}^{2}\left(J \pd{\xi_1}{x_{k}}\right)^{2}}$, and $\alpha_1$ defined in \eqref{eq:alpha_i}.
The high-order WENO-based jumps and the diagonal matrix $\widehat{\bm{Y}}_{i+\frac{1}{2},j}$ are the same as those in Section \ref{section:ES}.

In the second part \eqref{eq:diss2_MM},
$\mathring{\bm{S}}_{i+\frac{1}{2},j}$ and $\mathring{\bm{Y}}_{i+\frac{1}{2},j}$ are both diagonal matrices,
\begin{align*}
  &\mathring{\bm{S}}_{i+\frac{1}{2},j} = \left|J\frac{\partial \xi_1}{\partial t}\right|_{i+\frac{1}{2},j}\bm{I}_{3M+1},\\
  &(\mathring{\bm{Y}}_{i+\frac{1}{2},j})_{\nu,\nu} =
  \begin{cases}
    1,~ &\text{if}~ \nu =3M-2,~3M+1, ~        \text{sign}\left(\jumpangle{\widehat{\bm{U}}_{3M-2}}_{i+\frac{1}{2},j}^{{\mathtt{WENO}}}\right)\text{sign}\left(\jump{\widehat{\bm{V}}_{3M-2}}_{i+\frac{1}{2},j} \right)\geqslant 0,
    \\
&\text{sign}\left(\jumpangle{\widehat{\bm{U}}_{3M+1}}_{i+\frac{1}{2},j}^{{\mathtt{WENO}}}\right)\text{sign}\left(\jump{\widehat{\bm{V}}_{3M+1}}_{i+\frac{1}{2},j} \right)\geqslant 0,\\
    1,~ &\text{if}~ \nu \neq 3M-2,~3M+1, ~       \text{sign}\left(\jumpangle{\widehat{\bm{U}}_\nu}_{i+\frac{1}{2},j}^{{\mathtt{WENO}}}\right)\text{sign}\left(\jump{\widehat{\bm{V}}_\nu}_{i+\frac{1}{2},j} \right)\geqslant 0,\\
    0,&\text{otherwise},
  \end{cases}
\end{align*}
and the nonlinear weights in the WENO reconstruction of $h_M$ and $b$ should be the same to maintain the lake at rest, as illustrated in  \cite{Zhang2023High}.
The high-order dissipation terms in the $\xi_2$-direction in $\left(\bm{\widetilde{\mathcal{F}}}_2\right)_{i,j+\frac{1}{2}}^{\mathtt{ES}}$ can be established similarly.

\begin{proposition}\rm\label{Prop:ES_MM}
    The high-order schemes \eqref{eq:2D_HighOrder_EC_Discrete_MM}-\eqref{eq:Grid_Matrix_Coeff} are ES,
    in the sense that the numerical solutions satisfy the semi-discrete energy inequality
    \begin{equation*}
    \frac{\mathrm{d}}{\mathrm{d} t}{\mathcal{E}}_{i, j}+\frac{1}{\Delta \xi_{1}}\left(\left({\widetilde{{\mathcal{Q}}}}_1\right)_{i+\frac{1}{2},j}^{\tt ES}-\left({\widetilde{{\mathcal{Q}}}}_1\right)_{i-\frac{1}{2},j}^{\tt ES}\right)+\frac{1}{\Delta \xi_{2}}\left(\left({\widetilde{{\mathcal{Q}}}}_2\right)_{i,j+\frac{1}{2}}^{\tt ES}-\left({\widetilde{{\mathcal{Q}}}}_2\right)_{i,j-\frac{1}{2}}^{\tt ES}\right)\leqslant 0,
  \end{equation*}
  where the numerical energy fluxes are
  \begin{align*}
    &\left({\widetilde{{\mathcal{Q}}}}_1\right)_{i+\frac{1}{2},j}^{\tt ES} = \left({\widetilde{{\mathcal{Q}}}}_1\right)_{i+\frac{1}{2},j}^{{\tt 2pth}}
    -\frac{1}{2}\left[\widehat{\alpha}_1\mean{\bm{\widetilde{V}}}^{\mathrm{T}}\widehat{\bm{Y}}\jumpangle{\bm{\widetilde{V}}}^{\tt WENO}\right]_{i+\frac{1}{2},j}
    -\frac{1}{2}\left[\left|J\frac{\partial \xi_1}{\partial t}\right|\mean{\widehat{\bm{V}}}^{\mathrm{T}}\mathring{\bm{Y}}\jumpangle{\widehat{\bm{U}}}^{\tt WENO}\right]_{i+\frac{1}{2},j},\nonumber\\
    &\left({\widetilde{{\mathcal{Q}}}}_2\right)_{i,j+\frac{1}{2}}^{\tt ES} = \left({\widetilde{{\mathcal{Q}}}}_2\right)_{i,j+\frac{1}{2}}^{\tt 2pth}
    -\frac{1}{2}\left[\widehat{\alpha}_2\mean{\bm{\widetilde{V}}}^{\mathrm{T}}\widehat{\bm{Y}}\jumpangle{\bm{\widetilde{V}}}^{\tt WENO}\right]_{i,j+\frac{1}{2}}-\frac{1}{2}\left[\left|J\frac{\partial \xi_2}{\partial t}\right|\mean{\widehat{\bm{V}}}^{\mathrm{T}}\mathring{\bm{Y}}\jumpangle{\widehat{\bm{U}}}^{\tt WENO}\right]_{i,j+\frac{1}{2}}.\nonumber
  \end{align*}
  The first parts come from the EC fluxes on moving meshes,
  \begin{align*}
    \left(\widetilde{\mathcal{Q}}_1\right)_{i+\frac{1}{2},j}^{\tt 2pth }=\sum_{q=1}^p \alpha_{p, q} \sum_{s=0}^{q-1} \widetilde{\mathcal{Q}}_1\left(\widehat{\bm{U}}_{i-s,j}, \widehat{\bm{U}}_{i-s+q,j},\left(J \frac{\partial \xi_1}{\partial \zeta}\right)_{i-s,j},\left(J \frac{\partial \xi_1}{\partial \zeta}\right)_{i-s+q,j}\right),
    \\
    \left(\widetilde{\mathcal{Q}}_2\right)_{i,j+\frac{1}{2}}^{\tt 2pth }=\sum_{q=1}^p \alpha_{p, q} \sum_{s=0}^{q-1} \widetilde{\mathcal{Q}}_2\left(\widehat{\bm{U}}_{i,j-s}, \widehat{\bm{U}}_{i,j-s+q},\left(J \frac{\partial \xi_2}{\partial \zeta}\right)_{i,j-s},\left(J \frac{\partial \xi_2}{\partial \zeta}\right)_{i,j-s+q}\right),
  \end{align*}
  with $\zeta = t,~x_1, ~x_2$,
  and
  \begin{align*}
    \widetilde{\mathcal{Q}}_1 =
    \ &\frac{1}{2}\left(\widehat{\bm{V}}(\widehat{\bU}_{L})+\widehat{\bm{V}}(\widehat{\bU}_{R})\right)^{\mathrm{T}}\widetilde{\bm{\mathcal{F}}}
    _{1}
    - \frac{1}{2}\left(
    \left(
    J \frac{\partial \xi_{1}}{\partial t}
    \right)_L+
    \left(
    J \frac{\partial \xi_{1}}{\partial t}
    \right)_R
    \right)
    \left(
    \widehat{\phi}\left(\widehat{\bm{U}}_{L}\right)+\widehat{\phi}\left(\widehat{\bm{U}}_{R}\right)
    \right) \nonumber\\
    &- \sum_{k=1}^{2}\frac{1}{4}\left(\left(J \frac{\partial \xi_{1}}{\partial x_k}\right)_L+\left(J \frac{\partial \xi_{1}}{\partial x_k}\right)_R\right)\left(\widehat{\psi}_{k}\left(\widehat{\bm{U}}_{L}\right)+\widehat{\psi}_{k}\left(\widehat{\bm{U}}_{R}\right)\right) \nonumber\\
    & +\sum_{m=1}^{M}\sum_{k=1}^{2} \frac{g\rho_m}{8} \left(\left(J \frac{\partial \xi_1}{\partial x_k}\right)_L+\left(J \frac{\partial \xi_1}{\partial x_k}\right)_R\right)\left(\left(hu_m\right)_{L}+\left(hu_m\right)_{R}\right)\left(\left(z_m
    \right)_{L}+
    \left(z_m
    \right)_{R}\right)\\
    &-\sum_{m=1}^{M}\sum_{k=1}^{2} \frac{g\rho_m}{4} \left(\left(J \frac{\partial \xi_1}{\partial x_k}\right)_L+\left(J \frac{\partial \xi_1}{\partial x_k}\right)_R\right)\left(\left(hu_mz_m\right)_{L}+\left(hu_mz_m\right)_{R}\right),
    \\
      \widetilde{\mathcal{Q}}_2 =
    \ &\frac{1}{2}\left(\widehat{\bm{V}}(\widehat{\bU}_{L})+\widehat{\bm{V}}(\widehat{\bU}_{R})\right)^{\mathrm{T}}\widetilde{\bm{\mathcal{F}}}
    _2
    - \frac{1}{2}\left(
    \left(
    J \frac{\partial \xi_2}{\partial t}
    \right)_L+
    \left(
    J \frac{\partial \xi_2}{\partial t}
    \right)_R
    \right)
    \left(
    \widehat{\phi}\left(\widehat{\bm{U}}_{L}\right)+\widehat{\phi}\left(\widehat{\bm{U}}_{R}\right)
    \right) \nonumber \nonumber\\
    &- \sum_{k=1}^{2}\frac{1}{4}\left(\left(J \frac{\partial \xi_2}{\partial x_k}\right)_L+\left(J \frac{\partial \xi_2}{\partial x_k}\right)_R\right)\left(\widehat{\psi}_{k}\left(\widehat{\bm{U}}_{L}\right)+\widehat{\psi}_{k}\left(\widehat{\bm{U}}_{R}\right)\right) \nonumber\\
    & +\sum_{m=1}^{M}\sum_{k=1}^{2} \frac{g\rho_m}{8} \left(\left(J \frac{\partial \xi_2}{\partial x_k}\right)_L+\left(J \frac{\partial \xi_2}{\partial x_k}\right)_R\right)\left(\left(hv_m\right)_{L}+\left(hv_m\right)_{R}\right)\left(\left(z_m
    \right)_{L}+
    \left(z_m
    \right)_{R}\right)
    \\
    &-\sum_{m=1}^{M}\sum_{k=1}^{2} \frac{g\rho_m}{4} \left(\left(J \frac{\partial \xi_2}{\partial x_k}\right)_L+\left(J \frac{\partial \xi_2}{\partial x_k}\right)_R\right)\left(\left(hv_mz_m\right)_{L}+\left(hv_mz_m\right)_{R}\right).
  \end{align*}
\end{proposition}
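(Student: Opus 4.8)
The plan is to follow the fixed-mesh argument in the proof of Proposition~\ref{prop:2D_HO_sufficient_condition}, with two new ingredients specific to moving meshes: the factor $\mathrm{d}J_{i,j}/\mathrm{d}t$ produced by the product rule, and the mesh-velocity (temporal metric) contributions to the flux. First I would contract the scheme \eqref{eq:2D_HighOrder_EC_Discrete_MM} with $\widehat{\bm{V}}_{i,j}^{\mathrm{T}}:=\widehat{\bm{V}}^{\mathrm{T}}(\widehat{\bm{U}}_{i,j})$. Since $\mathcal{E}_{i,j}=J_{i,j}\widehat{\eta}_{i,j}$ and $\bm{\mathcal{U}}_{i,j}=J_{i,j}\widehat{\bm{U}}_{i,j}$, the product rule together with the entropy-potential identity $\widehat{\bm{V}}^{\mathrm{T}}\widehat{\bm{U}}-\widehat{\eta}=\widehat{\phi}$ yields
\begin{equation*}
  \frac{\mathrm{d}}{\mathrm{d}t}\mathcal{E}_{i,j}
  =\widehat{\bm{V}}_{i,j}^{\mathrm{T}}\frac{\mathrm{d}}{\mathrm{d}t}\bm{\mathcal{U}}_{i,j}
  -\widehat{\phi}_{i,j}\frac{\mathrm{d}J_{i,j}}{\mathrm{d}t}.
\end{equation*}
I would then substitute the semi-discrete scheme \eqref{eq:2D_HighOrder_EC_Discrete_MM} for $\mathrm{d}\bm{\mathcal{U}}_{i,j}/\mathrm{d}t$ and the discrete VCL \eqref{eq:VCL_Dis_2D} for $\mathrm{d}J_{i,j}/\mathrm{d}t$, so that the extra term is expressed through the same averaged temporal metrics $(J\partial\xi_\ell/\partial t)$ that appear in $\widetilde{\bm{\mathcal{F}}}_\ell$.

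Next I would split the right-hand side into an EC part and a dissipation part exactly as in the fixed-mesh proof. For the EC part I would apply the telescoping identities $a_{i,j}=\tfrac12(a_{i,j}+a_{i\pm q,j})\mp\tfrac12(a_{i\pm q,j}-a_{i,j})$ and the decomposition of the curvilinear flux \eqref{eq:F_MM} into the consistent term $\widetilde{\bm{U}}$ (weighted by the averaged $J\partial\xi_\ell/\partial t$) and the Cartesian two-point EC fluxes $\widetilde{\widehat{\bm{F}}}_k$ (weighted by the averaged $J\partial\xi_\ell/\partial x_k$). The temporal-metric piece, contracted with $\widehat{\bm{V}}_R-\widehat{\bm{V}}_L$, telescopes through $(\widehat{\bm{V}}_R-\widehat{\bm{V}}_L)^{\mathrm{T}}\widetilde{\bm{U}}=\widehat{\phi}_R-\widehat{\phi}_L$ and, together with the $-\widehat{\phi}_{i,j}\,\mathrm{d}J_{i,j}/\mathrm{d}t$ contribution supplied by the VCL, reconstructs the $\widehat{\phi}$-terms of the energy fluxes. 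Each spatial-metric piece, together with the compatible source discretizations $\widetilde{\bm{\mathcal{B}}}_{\ell,m}$, reproduces the Cartesian EC identity \eqref{eq:EC_condition_2D_ML} augmented with the zero bottom component, and therefore collapses into differences of $\widetilde{\mathcal{Q}}_1,\widetilde{\mathcal{Q}}_2$ as stated. Checking that the three metric contributions assemble into exactly the claimed energy fluxes is bookkeeping that parallels the single-layer case in \cite{Zhang2023High}, now summed over the $M$ layers.

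For the dissipation part I would treat $\widehat{\bm{D}}$ and $\mathring{\bm{D}}$ separately. Contracting $-\tfrac12\widehat{\bm{V}}_{i,j}^{\mathrm{T}}$ against the difference of the first dissipation terms \eqref{eq:diss1_MM} and splitting $\widehat{\bm{V}}_{i,j}$ by the average/jump rule at each interface sends the averaged part into $\widetilde{\mathcal{Q}}_\ell^{\mathtt{ES}}$ and leaves, at each of the two adjacent interfaces, a remainder proportional to $-\widehat{\alpha}_\ell\,\jump{\bm{\widetilde{V}}}^{\mathrm{T}}\widehat{\bm{Y}}\jumpangle{\bm{\widetilde{V}}}^{\mathtt{WENO}}$; here I use that $\widehat{\bm{S}}$ is built from the rotated Cholesky factor of $\partial\bm{U}/\partial\bm{V}=\bm{R}\bm{R}^{\mathrm{T}}$ that scales $\bm{\widetilde{V}}$, so that $\jump{\bm{V}}^{\mathrm{T}}\widehat{\bm{S}}\propto\widehat{\alpha}_\ell\,\jump{\bm{\widetilde{V}}}^{\mathrm{T}}$. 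Since $\widehat{\alpha}_\ell\geqslant 0$ and the ``sign'' matrix $\widehat{\bm{Y}}$ forces each component of $\jump{\bm{\widetilde{V}}}$ and $\jumpangle{\bm{\widetilde{V}}}^{\mathtt{WENO}}$ to share the same sign, this remainder is non-positive. The part $\mathring{\bm{D}}$ in \eqref{eq:diss2_MM} is handled identically, using $|J\partial\xi_\ell/\partial t|\geqslant 0$, the diagonal $\mathring{\bm{S}}$, and the sign property of $\mathring{\bm{Y}}$ against $\jumpangle{\widehat{\bm{U}}}^{\mathtt{WENO}}$. Summing the EC and dissipation contributions then gives the asserted inequality.

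I expect the main obstacle to be the careful matching of the temporal-metric terms: one must verify that the mesh-velocity piece of $\widehat{\bm{V}}_{i,j}^{\mathrm{T}}\,\mathrm{d}\bm{\mathcal{U}}_{i,j}/\mathrm{d}t$, after telescoping through $\widehat{\phi}$, cancels exactly against $-\widehat{\phi}_{i,j}\,\mathrm{d}J_{i,j}/\mathrm{d}t$ furnished by the discrete VCL, so that no uncontrolled geometric source survives. This is the genuinely new step relative to Proposition~\ref{prop:2D_HO_sufficient_condition}; everything else reduces, layer by layer, to the fixed-mesh computation.
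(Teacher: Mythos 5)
Your proposal is correct and follows exactly the argument the paper intends: the paper itself omits this proof, stating only that it is ``similar to those in \cite{Zhang2023High}'', and your reconstruction — the identity $\frac{\mathrm{d}}{\mathrm{d}t}\mathcal{E}_{i,j}=\widehat{\bm{V}}_{i,j}^{\mathrm{T}}\frac{\mathrm{d}}{\mathrm{d}t}\bm{\mathcal{U}}_{i,j}-\widehat{\phi}_{i,j}\frac{\mathrm{d}J_{i,j}}{\mathrm{d}t}$, the substitution of the discrete VCL so that the temporal-metric piece telescopes through $(\widehat{\bm{V}}_R-\widehat{\bm{V}}_L)^{\mathrm{T}}\widetilde{\bm{U}}=\widehat{\phi}_R-\widehat{\phi}_L$, the layer-by-layer reduction of the spatial-metric pieces to the Cartesian EC identity with compatible sources, and the sign-property argument for both $\widehat{\bm{D}}$ and $\mathring{\bm{D}}$ — is precisely that route, mirroring the paper's own fixed-mesh Proposition \ref{prop:2D_HO_sufficient_condition}. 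No gaps; the step you flag as delicate (exact cancellation of the geometric source via the VCL) is handled correctly by your product-rule identity.
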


To get the fully-discrete schemes on moving meshes, this paper uses the explicit SSP-RK3 time discretization 
\begin{equation*}
  \begin{aligned}
    &\bm{\mathcal{U}}^{*} = \ \bm{\mathcal{U}}^{n}+\Delta t^n \bm{\mathcal{L}}\left(\bU^{n}, \bx^n\right),~
    J^{*} = J^{n}+\Delta t^n {\mathcal{L}}\left(
    \bx^n\right),\\
    &\bx^{*} = \bx^{n}+\Delta t^n \dot{\bx}^n,
    \\
    &\bm{\mathcal{U}}^{**} = \ \frac{3}{4}\bm{\mathcal{U}}^{n}+\frac{1}{4}\left(\bm{\mathcal{U}}^{*}+\Delta t^n \bm{\mathcal{L}}\left(\bU^{*},
    {\bx}^*\right)\right),~
    J^{**} = \frac{3}{4}J^{n}+\frac{1}{4}\left(J^{*}+\Delta t^n {\mathcal{L}}\left(
    {\bx}^*\right)\right),\\
    &\bx^{**} = \frac{3}{4}\bx^{n}+\frac{1}{4}\left(\bx^{*} + \Delta t^n\dot{\bx}^n\right),
    \\
    &\bm{\mathcal{U}}^{n+1} = \ \frac{1}{3}\bm{\mathcal{U}}^{n}+\frac{2}{3}\left(\bm{\mathcal{U}}^{**}+\Delta t^n \bm{\mathcal{L}}\left(\bU^{**},
    {\bx}^{**}\right)\right),~
    J^{n+1} = \frac{1}{3}J^{n}+\frac{2}{3}\left(J^{**}+\Delta t^n {\mathcal{L}}\left(
    {\bx}^{**}\right)\right),\\
    &\bx^{n+1} = \frac{1}{3}\bx^{n}+\frac{2}{3}\left(\bx^{**} + \Delta t^n{\bx}^n\right),
  \end{aligned}
\end{equation*}
where $\dot{\bx}^n$ is the mesh velocity determined by using the adaptive moving mesh strategy in \cite{Duan2022High,Zhang2023High},
$\bm{\mathcal{L}}$ is the right-hand side of \eqref{eq:2D_HighOrder_EC_Discrete_MM}
for the semi-discrete high-order WB ES schemes on moving meshes,
and ${\mathcal{L}}$ is the right-hand side of the semi-discrete VCL, i.e., the first equation in \eqref{eq:VCL_Dis_2D}.
The time stepsize is given by 
\begin{equation}\label{eq:dt_MM}
	\Delta t^{n} = \frac{C_{\tt CFL}}{\sum\limits_{\ell = 1}^{2}\max\limits_{i,j} \left(\widehat{\alpha}_{\ell}\right)_{i,j}/\Delta_{{\xi}_{\ell}}},
\end{equation}
with $\widehat{\alpha}_{1}$ defined in \eqref{eq:eigens_MM} for the $\xi_1$-direction.

\begin{proposition}\rm\label{Prop:WB_MM}
    The fully-discrete high-order schemes obtained by using the semi-discrete schemes \eqref{eq:2D_HighOrder_EC_Discrete_MM}-\eqref{eq:Grid_Matrix_Coeff} with the forward Euler or explicit SSP-RK3 time discretization preserve the lake at rest, in the sense that, if the mesh does not interleave during the computation, then for the initial data
     \begin{equation*}
    \left(h_m + z_m\right)_{i,j}^{0} \equiv C_m,
    ~(u_m)_{i,j}^{0} = (v_m)_{i,j}^{0} = 0,~ m =1,\cdots,M,
    ~\forall i,j,
  \end{equation*}
  the lake at rest is also satisfied at $t^n$,
  \begin{equation*}
      \left(h_m + z_m\right)_{i,j}^{n} \equiv C_m,
    ~(u_m)_{i,j}^{n}=(v_m)_{i,j}^{n} = 0,~ m =1,\cdots,M,
    ~\forall i,j,
  \end{equation*}
  where $C_m$ is constant.
\end{proposition}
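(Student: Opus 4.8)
The plan is to mirror the fixed-mesh argument of Proposition~\ref{prop:WB_proof}, but to work with the conservative variables $\bm{\mathcal{U}}=J\widehat{\bm{U}}$ of the reformulated system. The crucial observation is that the lake-at-rest conditions are \emph{linear} in the pair $(\bm{\mathcal{U}},J)$: since each height and $b$ is an entry of $\widehat{\bm{U}}=\bm{\mathcal{U}}/J$ and $h_m+z_m$ is an affine combination of them, the conditions $(h_m+z_m)_{i,j}=C_m$ and $(h_mu_m)_{i,j}=(h_mv_m)_{i,j}=0$ are equivalent to $J_{i,j}(h_m+z_m)_{i,j}=C_mJ_{i,j}$ together with vanishing momentum entries of $\bm{\mathcal{U}}$, which are linear in $(\bm{\mathcal{U}},J)$. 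Because the SSP-RK3 update of $(\bm{\mathcal{U}},J,\bx)$ is a convex combination of forward-Euler sub-steps acting jointly on $(\bm{\mathcal{U}},J)$, and the mesh does not interleave so that $J>0$ at every stage, it suffices to verify that a single forward-Euler step preserves these linear conditions; dividing by $J^{n+1}>0$ then recovers the stated form.

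Next I would dispose of the dissipation. Because $\bm{V}$ is spatially constant at the lake at rest (indeed $\bm{V}_m=(g\rho_mC_m,0,0)$ by \eqref{eq:Entropy_Variables_ML}), the scaled jumps $\jumpangle{\bm{\widetilde{V}}}^{\mathtt{WENO}}$ vanish, so $\widehat{\bm{D}}$ in \eqref{eq:diss1_MM} is zero. For $\mathring{\bm{D}}$ in \eqref{eq:diss2_MM}, the momentum components of $\jumpangle{\widehat{\bm{U}}}^{\mathtt{WENO}}$ vanish, while for the coupled components $3M-2$ and $3M+1$ (namely $h_M$ and $b$) I would use $h_M+b\equiv\widetilde{C}_M$ together with the requirement that the WENO reconstructions of $h_M$ and $b$ share the same nonlinear weights; this forces $\jumpangle{h_M}^{\mathtt{WENO}}=-\jumpangle{b}^{\mathtt{WENO}}$ and $(\mathring{\bm{Y}})_{3M-2}=(\mathring{\bm{Y}})_{3M+1}$, so the two dissipations cancel in the combination $J(h_M+b)$.

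With the dissipation gone, the continuity equations decouple from the momenta (the physical height flux is $\mean{h_m}\mean{u_m}=0$), leaving only the mesh-velocity transport term $\tfrac12\big((J\partial_t\xi_\ell)_L+(J\partial_t\xi_\ell)_R\big)\mean{h_m}$ in \eqref{eq:F_MM}. Since $h_m\equiv\widetilde{C}_m$ for $m<M$ and $h_M+b\equiv\widetilde{C}_M$ are spatially constant, these fluxes equal the corresponding constant times the discrete metric fluxes \eqref{eq:Mesh_Dis_Metric}; comparing with the discretized VCL (the first line of \eqref{eq:VCL_Dis_2D}) gives the free-stream relations $\tfrac{\mathrm d}{\mathrm dt}(Jh_m)_{i,j}=\widetilde{C}_m\,\tfrac{\mathrm d}{\mathrm dt}J_{i,j}$ for $m<M$ and $\tfrac{\mathrm d}{\mathrm dt}\big(J(h_M+b)\big)_{i,j}=\widetilde{C}_M\,\tfrac{\mathrm d}{\mathrm dt}J_{i,j}$. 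A forward-Euler step then yields $(Jh_m)^{n+1}=\widetilde{C}_mJ^{n+1}$ and $(J(h_M+b))^{n+1}=\widetilde{C}_MJ^{n+1}$, and recombining these (noting that $h_M$ and $b$ only ever enter any $z_m$ through their sum $h_M+b$) recovers $(h_m+z_m)^{n+1}=C_m$ for every $m$.

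The remaining and hardest step is the momentum balance. At the lake at rest all velocities vanish, so the only surviving flux in \eqref{eq:F_MM} is the hydrostatic/coupling part $P_m=\tfrac{g}{2}\mean{h_m^2}+g(\mean{h_mz_m}-\mean{h_m}\mean{z_m})$ of $\widetilde{\widehat{\bm{F}}}_k$, weighted by the metric averages of $(J\partial_{x_k}\xi_\ell)$, and this must cancel against the compatibly discretized sources $\bm{\widetilde{\mathcal{B}}}_{\ell,m}$, which carry $z_m$ with the same metric averages. Following the telescoping of Proposition~\ref{prop:WB_proof} layer by layer — adding the local constants $-\tfrac{g}{2}(h_m)_{i,j}^2-g(h_mz_m)_{i,j}$ to rewrite each two-point contribution through the factors $[(h_m+z_m)_{i\pm q,j}-(h_m+z_m)_{i,j}]$ — but now splitting each metric-weighted term by a discrete Leibniz rule, the balanced part vanishes because $h_m+z_m\equiv C_m$, exactly as on the fixed mesh, while the remaining part multiplies differences of the discrete mesh metrics and telescopes to zero by the discrete SCLs (the last two lines of \eqref{eq:VCL_Dis_2D}). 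The main obstacle is precisely this bookkeeping: on a moving mesh the discrete product rule fails, so the metric contribution of the hydrostatic pressure is not individually divergence-free and must be paired with the SCLs, and the inter-layer coupling $g(\mean{h_mz_m}-\mean{h_m}\mean{z_m})$ has to be tracked together with the sources from all layers; once both are organized as above the momentum update vanishes, and with $(v_m)^{n+1}=0$ obtained identically in the $\xi_2$-direction the proof is complete.
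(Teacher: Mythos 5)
Your proposal is correct, and it reconstructs exactly the argument the paper intends: note that the paper itself does not prove Proposition \ref{Prop:WB_MM} but only remarks that the proof is ``similar to those in \cite{Zhang2023High}''. Your route --- reduction to forward Euler on the linear lake-at-rest subspace of $(\bm{\mathcal{U}},J)$, vanishing of $\widehat{\bm{D}}$ via constant $\bm{V}$ and of $\mathring{\bm{D}}$ via the shared WENO weights for $h_M$ and $b$, the discrete VCL for the heights, and the fixed-mesh telescoping of Proposition \ref{prop:WB_proof} reducing each two-point momentum flux-plus-source pair to the single-point value $\tfrac{g}{2}(h_m)_{i,j}^2+g(h_mz_m)_{i,j}$ so that the residue is killed by the discrete SCLs --- is precisely the multi-layer version of that cited argument and is sound.
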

The proofs of Propositions \ref{Prop:ES_MM} and \ref{Prop:WB_MM} are similar to those in \cite{Zhang2023High}, so they are skipped here for brevity.

\section{Numerical results}\label{section:Result}
In this section, several 1D and 2D numerical experiments are conducted to demonstrate the performance of our high-order accurate WB ES schemes for the two- and three-layer SWEs,
in which the amount of dissipation depends on an estimation of the maximal wave speed, given in Section \ref{subsec:Upper/Lower Bound}.
The two-layer test problems mainly come from literature, while the three-layer examples are newly adapted from the two-layer cases.
The reconstruction in the dissipation terms adopts the $5$th-order WENO-Z reconstruction \cite{Borges2008An}.
The time stepsize is defined in \eqref{eq:dt_fixed} and \eqref{eq:dt_MM} for the fixed and moving meshes, respectively. 
In the accuracy tests, the time stepsize is taken as $C_{\text{\tiny \tt CFL}} (\min_{\ell}\Delta x_{\ell})^{5/3}$ (for fixed mesh) or $C_{\text{\tiny \tt CFL}} (\min_{\ell}\Delta \xi_{\ell})^{5/3}$ (for moving mesh) to make the spatial errors dominant. Unless otherwise specified, the CFL number is chosen as $C_{\mathtt{CFL}} = 0.4$.
For the mesh adaptation, the total number of the iterations for the mesh equations is $10$,
and the monitor function $\omega$ will be given in the examples, see \cite{Duan2022High,Zhang2023High} for more details on the adaptive moving mesh strategy.
Our WB ES schemes on the fixed uniform mesh and adaptive moving mesh will be denoted by 
``\texttt{UM-ES}''
and ``\texttt{MM-ES}''.
In 1D cases, $x$ and $\xi$ will replace $x_1$ and $\xi_1$, respectively.

\subsection{Estimation of the maximal wave speed used in the dissipation terms}\label{subsec:Upper/Lower Bound}

Similar to \cite{Kurganov2009Central}, based on the estimation of polynomial roots by Lagrange \cite{Mignotte2002Estimation}, it is possible to derive upper and lower bounds for the roots of the characteristic polynomial, which can be used as an estimation of the maximal wave speed.
To be specific, when $M=2$ the characteristic polynomial of the Jacobian matrix along the $x_1$-direction for the two-layer SWEs equals $(\lambda - u_1)(\lambda-u_2) \mathcal{P}_{2}(\lambda)$, with 
	\begin{equation*}
		\mathcal{P}_{2}(\lambda) = \lambda^{4}+c_1 \lambda^{3}+c_2 \lambda^{2}+c_3 \lambda^{1}+c_4,
	\end{equation*}
	where 
 \begin{equation}\label{eq:Two_layer_eigen_eq}
	\begin{aligned}
		c_1 = ~&-2(u_1+u_2),\\
		c_2 = ~&(u_1+u_2)^2+2u_1u_2 -g(h_1+h_2),\\
		c_3 = ~&- 2u_1^2u_2 - 2u_1u_2^2 + 2gh_2u_1 + 2gh_1u_2,\\
		c_4 = ~&u_1^2u_2^2 - gh_1u_2^2 - gh_2u_1^2 + g^2h_1h_2 - g^2h_1h_2r_{12}.
	\end{aligned}
 \end{equation}
    For $M=3$, the characteristic polynomial is $(\lambda - u_1)(\lambda-u_2) (\lambda-u_3)\mathcal{P}_{3}(\lambda)$, with 
	\begin{equation*}
		\mathcal{P}_{3}(\lambda) = \lambda^{6} +c_1 \lambda^{5}+c_2 \lambda^{4}+c_3 \lambda^{3}+c_4 \lambda^{2}+c_5 \lambda^{1}+c_6,
	\end{equation*}
    where 
\begin{equation}\label{eq:Three_layer_eigen_eq}
    \begin{aligned}
    	c_1 =~& -2(u_1+u_2+u_3),\\
    	c_2 =~& (u_1+u_2)^2+(u_2+u_3)^{2}+2u_1u_2+2u_2u_3 -g(h_1+h_2+h_3),\\
    	c_3 =~& 2 g (h_1 (u_2+u_3)+h_2 (u_1+u_3)+h_3 (u_1+u_2))\\
     &-2 \left(u_1^2 (u_2+u_3)+u_1 \left(u_2^2+4 u_2 u_3+u_3^2\right)+u_2 u_3 (u_2+u_3)\right), \\
    	c_4 = ~& g^2 (h_1 (h_2-h_3 {r_{13}}+h_3)-h_2 h_3 ({r_{23}}-h_2 {r_{12}}-1))-g \big(h_1 \left(u_2^2+4 u_2 u_3+u_3^2\right)
    	\\&+h_2 \left(u_1^2+4 u_1 u_3+u_3^2\right)+h_3 \left(u_1^2+4 u_1 u_2+u_2^2\right)\big)
    	\\& +u_1^2 \left(u_2^2+4 u_2 u_3+u_3^2\right)+4 u_1 u_2 u_3 (u_2+u_3)+u_2^2 u_3^2,\\
    	c_5 =~& 2 \big(g^2 (h_1 h_2 ({r_{12}}-1) u_3+h_1 h_3 ({r_{13}}-1) u_2+h_2 h_3 ({r_{23}}-1) u_1)\\
    	&+g (h_1 u_2 u_3 (u_2+u_3)+h_2 u_1 u_3 (u_1+u_3)+h_3 u_1 u_2 (u_1+u_2))
     \\&-u_1 u_2 u_3 (u_1 (u_2+u_3)+u_2 u_3)\big), \\
    	c_6 =~&  g^3 h_1 h_2 h_3 ({r_{12}}+{r_{23}}-{r_{12}} {r_{23}}-1)+g^2 \big(-h_1 h_2 ({r_{12}}-1) u_3^2
     \\&-h_1 h_3 ({r_{13}}-1) u_2^2-h_2 h_3 ({r_{23}}-1) u_1^2\big)\\
    	&-g \left(u_3^2 \left(h_1 u_2^2+h_2 u_1^2\right)+h_3 u_1^2 u_2^2\right)+u_1^2 u_2^2 u_3^2.
        \end{aligned}
\end{equation}
According to the results in \cite{Mignotte2002Estimation}, the largest positive root of $\mathcal{P}_{2}$ or $\mathcal{P}_{3}$ is smaller than the sum of the first two largest values in the set $\{\sqrt[j]{\abs{c_{j}}},~j\in J_{\max}\}$, where $J_{\max}$ is the set of the negative coefficients of 	\eqref{eq:Two_layer_eigen_eq} or \eqref{eq:Three_layer_eigen_eq}. Meanwhile, the smallest nonpositive root is larger than the sum of the first two smallest numbers in the set $\{-\sqrt[j]{\abs{d_{j}}},~j\in J_{\min}\}$, where $J_{\min}$ is the set of the negative coefficients of the polynomial
\begin{equation*}
	  	\lambda^{4} +d_1 \lambda^{3}+d_2 \lambda^{2}+d_3 \lambda^{1}+d_4
	\end{equation*}
 for $\mathcal{P}_2$, or
	\begin{equation*}
	  	\lambda^{6} +d_1 \lambda^{5}+d_2 \lambda^{4}+d_3 \lambda^{3}+d_4 \lambda^{2}+d_5 \lambda^{1}+d_6
	\end{equation*}
    for $\mathcal{P}_3$ with $d_{j} = (-1)^{j}c_{j}$.
    Thus one can obtain the upper and lower bounds of the eigenvalues, denoted by $\lambda_{\max}(h_m,u_m)$ and $\lambda_{\min}(h_m,u_m)$,
    and the maximal wave speed in the $x_1$-direction is taken as
    \begin{equation}\label{eq:alpha_i}
    	\alpha_1 = \max\left\{\lambda_{\max}(h_m,u_m),~\abs{\lambda_{\min}(h_m,u_m)},~\abs{u_m}\right\}, ~ m=1,\dots,M.
    \end{equation}
    For the $x_2$-direction, $\alpha_2$ can be obtained by replacing $u_m$ with $v_m$ in \eqref{eq:alpha_i}.

\begin{remark}\rm
    For $M>3$, the characteristic polynomial along $x_1$-direction can be written as $\prod_{m=1}^{M}(\lambda-u_m)\mathcal{P}_{M}(\lambda)$, with $\mathcal{P}_{M}(\lambda) = \sum_{m=0}^{2M}c_{2M-m}\lambda^{m}$.
    Once the coefficients $c_{m}$ are obtained, the estimation proceeds similarly.
    However, it is difficult to give uniform expressions of those coefficients, so one needs to calculate the coefficients for each case separately.
\end{remark}

\subsection{1D tests}
\begin{example}[Accuracy test with manufactured solutions]\label{ex:1DSmooth}\rm
  This example is first used to test the convergence rates of our schemes.
  The manufactured solutions are constructed in the physical domain $[0,2]$ with periodic boundary conditions for the ML-SWEs with extra source terms
  \begin{equation*}
    \pd{\bU}{t} + \pd{\bF}{x} = -
    \sum\limits_{m=1}^{M}gh_m\pd{\bm{B}_m}{x} + \bm{S}.
  \end{equation*}
 For the two-layer case, the exact solutions are chosen as
  \begin{align*}
    &h_1(x,t) = \cos\left(\pi t\right)\cos\left(\pi x\right)+6,~u_1(x,t) = \frac{\sin\left(\pi t\right)\sin\left(\pi x\right)}{\cos\left(\pi t\right)\cos\left(\pi x\right)+6}, \\
    &h_2(x,t) = \cos\left(\pi t\right)\cos\left(\pi x\right)+4,~u_2(x,t) = \frac{\sin\left(\pi t\right)\sin\left(\pi x\right)}{\cos\left(\pi t\right)\cos\left(\pi x\right)+4},\\
    &b(x) = \sin\left(\pi x\right)+\frac{3}{2},
  \end{align*}
  while for the three-layer case, they are
  \begin{align*}
  	  &h_1(x,t) = \cos\left(\pi t\right)\cos\left(\pi x\right)+8,~u_1(x,t) = \frac{\sin\left(\pi t\right)\sin\left(\pi x\right)}{\cos\left(\pi t\right)\cos\left(\pi x\right)+8}, \\
  	&h_2(x,t) = \cos\left(\pi t\right)\cos\left(\pi x\right)+6,~u_2(x,t) = \frac{\sin\left(\pi t\right)\sin\left(\pi x\right)}{\cos\left(\pi t\right)\cos\left(\pi x\right)+6},\\
  	 &h_3(x,t) = \cos\left(\pi t\right)\cos\left(\pi x\right)+4,~u_3(x,t) = \frac{\sin\left(\pi t\right)\sin\left(\pi x\right)}{\cos\left(\pi t\right)\cos\left(\pi x\right)+4},\\
  	&b(x) = \sin\left(\pi x\right)+\frac{3}{2}.
  \end{align*}
  The expressions of the source terms $\bm{S}$ can be obtained by some algebraic manipulations, with those for the two-layer case given in  \ref{Sec:Exact_Solution}.
  In both cases, the gravitational acceleration constant is $g=1$, and the tests are solved up to $t = 0.1$.
  The density ratio for the two-layer case is $r_{12} = 7/10$, $\rho_2=1$, while for the three-layer case, $r_{12} = 7/10$, $r_{13} = 7/13$, $r_{23} = 10/13,$ $ \rho_3 = 13/10$.
  The monitor function used in the mesh adaptation is
  \begin{equation*}
      \omega = \left(1+\theta\left(\frac{\left|\nabla_{\xi}\sigma\right|}{\max\left|\nabla_{\xi}\sigma\right|}\right)^2\right)^{\frac{1}{2}},
  \end{equation*}
  with $\theta = 1$ and $\sigma = h_2+b$.
\end{example}

Figure \ref{1D_Smooth_Order} shows the errors and convergence rates in velocity $u_2$ at $t =0.1$, which verify that our schemes can achieve the designed $5$th-order accuracy for both two-layer and three-layer cases.

 \begin{figure}[!htb]
 	\centering
         \begin{subfigure}[b]{0.4\textwidth}
         \centering
         \includegraphics[width=1.0\textwidth,  clip]{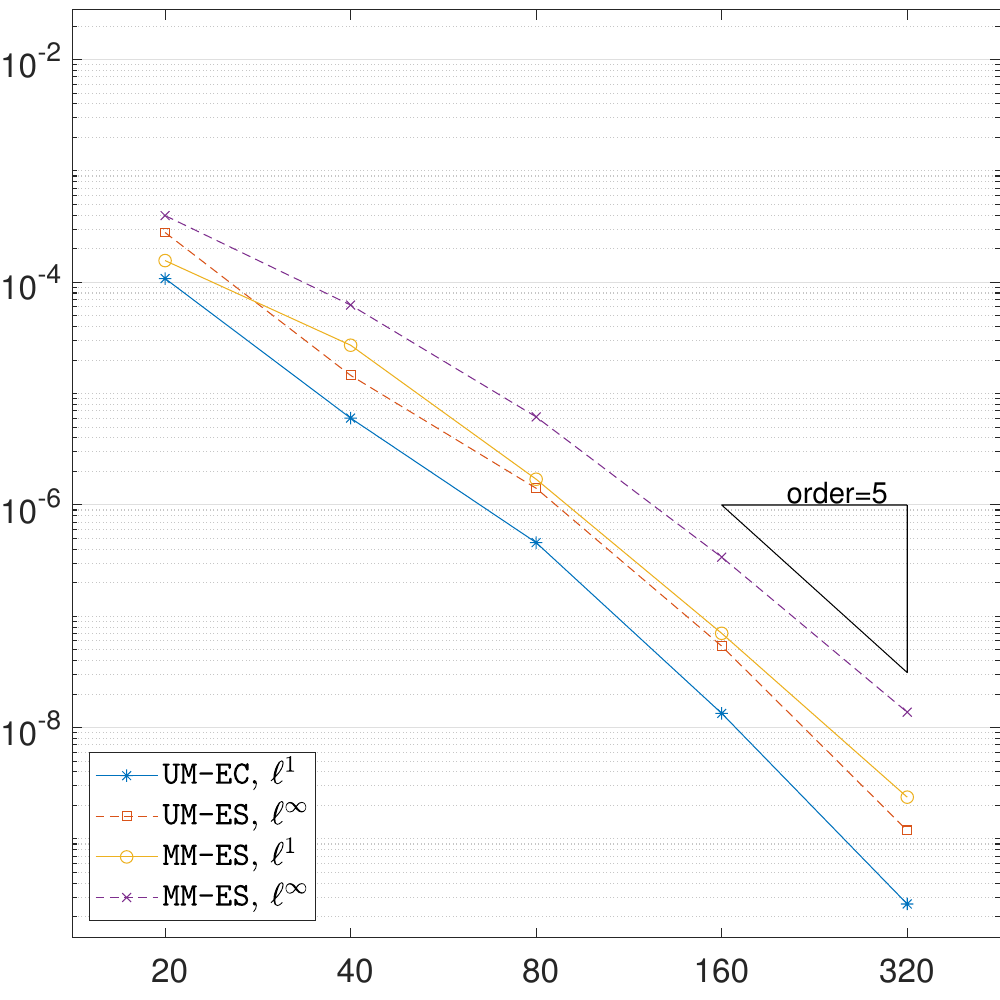}
     \end{subfigure}
     \begin{subfigure}[b]{0.4\textwidth}
 	\centering
         \includegraphics[width=1.0\textwidth]{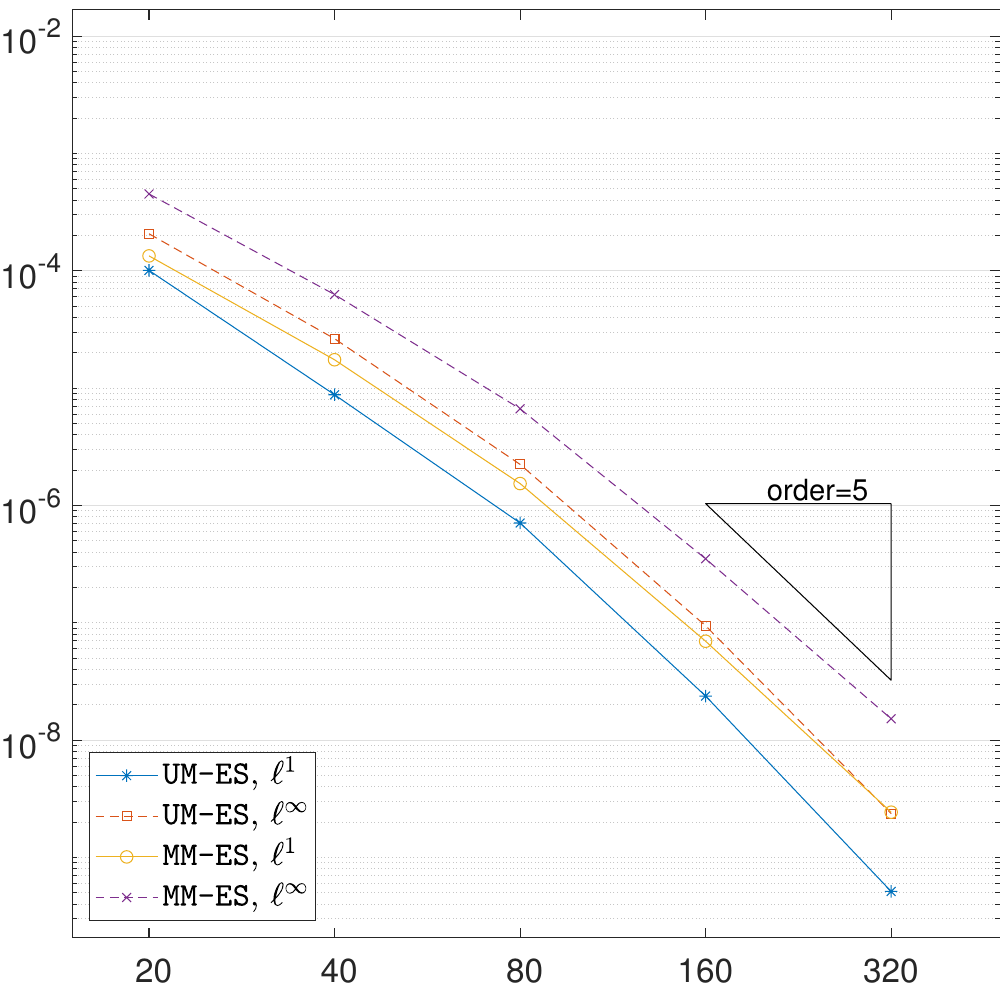}
      \end{subfigure}
      \caption{Example \ref{ex:1DSmooth}. The errors and convergence orders in velocity $u_2$ at $t =0.1$ obtained by our schemes. left: the two-layer case, right: the three-layer case.}\label{1D_Smooth_Order}
 \end{figure}

\begin{example}[1D WB test]\label{ex:1D_WB_Test}\rm
  This example is used to verify the WB properties of our schemes for the 1D ML-SWEs.
  The physical domain is $[0, 20]$ with  outflow boundary conditions,
  and the bottom topography is a smooth Gaussian profile
  \begin{equation}\label{eq:b_Smooth}
    b(x) = 2 e^{-(x-9)^2/2} + 3e^{-(x-11.5)^2},
  \end{equation}
  or a discontinuous square step
  \begin{equation}\label{eq:b_dis}
    b(x)= \begin{cases}2,~&\text{if}\quad x \in[9,13], \\ 0,~&\text{otherwise}.\end{cases}
  \end{equation}
  The initial data are $h_1 = 6-h_2 - b$, $h_2 = 4 - b$ for the two-layer case with the density ratio $r_{12} = 4/5$, $\rho_2=1$,
  while $h_1 = 8-h_2-h_3 - b$, $h_2 = 6-h_3 - b$, $h_3 = 4 - b$ for the three-layer case with $r_{12} = 4/5$, $r_{13} = 2/3$, $r_{23} = 5/6$, $\rho_{3} = 6/5$.
  The velocities are zero, and the problem is simulated up to $t = 0.2$ with the gravitational acceleration constant taken as $g = 1$.
  The monitor function is the same as that in Example \ref{ex:1DSmooth}, but with $\theta = 100$, and $\sigma = h_2$, $h_3$ for the two-layer and three-layer cases, respectively. 
\end{example}

Tables \ref{1D Well_Balance}-\ref{tb:1D Well_Balance_3L} give the $\ell^{1}$ and $\ell^{\infty}$ errors in $h_1+h_2+b$ and $h_2+b$ for the two-layer case, and the errors in $h_1+h_2+h_3+b$, $h_2+h_3+b$, and $h_3+b$ for the three-layer case with 50 mesh points, respectively.
The errors are all at the level of rounding error in double precision, showing that our schemes are WB.
The water surface levels, bottom topography $b$, and the mesh trajectories are plotted in Figures \ref{1D_Well_balance}-\ref{1D_Well_balance_3L}.
One can observe that our schemes can preserve the lake at rest well, and the mesh points concentrate near the sharp transitions of the bottom topography.

\begin{table}[!htb]
  \centering
  \begin{tabular}{cccccc}
   \hline\hline
    \multicolumn{2}{c}{\multirow{2}{*}{}} & \multicolumn{2}{c}{\texttt{UM-ES}} & \multicolumn{2}{c}{\texttt{MM-ES}} \\ 
    \cmidrule(lr){3-4}\cmidrule(lr){5-6}
    \multicolumn{2}{c}{} & \multicolumn{1}{c}{$\ell^{1}$~error}  & \multicolumn{1}{c}{$\ell^{\infty}$~error} &  \multicolumn{1}{c}{$\ell^{1}$~error}  & \multicolumn{1}{c}{$\ell^{\infty}$~error}  \\
    \hline
    \multirow{2}{*}{$b$ in \eqref{eq:b_Smooth}} &
 $h_1+h_2+b$ & 9.95e-15 	 & 2.66e-15 	 & 3.02e-14 	 & 5.33e-15  \\ 
 	 &  $h_2+b$ & 1.03e-14 	 & 3.55e-15 	 & 2.50e-14 	 & 3.55e-15  \\ 
    \hline
    \multirow{2}{*}{$b$ in \eqref{eq:b_dis}} &
 $h_1+h_2+b$ & 2.13e-15 	 & 1.78e-15 	 & 3.91e-14 	 & 4.44e-15  \\ 
 	&   $h_2+b$ & 1.07e-15 	 & 1.78e-15 	 & 3.80e-14 	 & 5.33e-15  \\ 
  \hline\hline
  \end{tabular}
  \caption{Example \ref{ex:1D_WB_Test} for the two-layer case.
  Errors in the water surface levels obtained by using our schemes with $50$ mesh points at $t=0.2$, with the bottom topography \eqref{eq:b_Smooth} and \eqref{eq:b_dis}.}\label{1D Well_Balance}
\end{table}

\begin{table}[!htb]
	\centering
	\begin{tabular}{cccccc}
    \hline\hline
    \multicolumn{2}{c}{\multirow{2}{*}{}} & \multicolumn{2}{c}{\texttt{UM-ES}} & \multicolumn{2}{c}{\texttt{MM-ES}} \\ 
    \cmidrule(lr){3-4}\cmidrule(lr){5-6}
    \multicolumn{2}{c}{} & \multicolumn{1}{c}{$\ell^{1}$~error}  & \multicolumn{1}{c}{$\ell^{\infty}$~error} &  \multicolumn{1}{c}{$\ell^{1}$~error}  & \multicolumn{1}{c}{$\ell^{\infty}$~error}  \\
    \hline
		\multirow{3}{*}{$b$ in \eqref{eq:b_Smooth}} &
	 $h_1+h_2+h_3+b$ & 1.99e-14 	 & 3.55e-15 	 & 7.03e-14 	 & 1.07e-14  \\ 
 	 &  $h_2+h_3+b$ & 2.31e-14 	 & 3.55e-15 	 & 5.97e-14 	 & 1.07e-14  \\ 
 	   &$h_3+b$ & 1.76e-14 	 & 3.55e-15 	 & 5.05e-14 	 & 8.88e-15  \\ 
		\hline
		\multirow{3}{*}{$b$ in \eqref{eq:b_dis}} &
		 $h_1+h_2+h_3+b$ & 4.62e-15 	 & 1.78e-15 	 & 4.90e-14 	 & 8.88e-15  \\ 
 	 &  $h_2+h_3+b$ & 5.33e-15 	 & 2.67e-15 	 & 5.29e-14 	 & 7.11e-15  \\ 
 	  & $h_3+b$ & 8.88e-15 	 & 3.55e-15 	 & 4.67e-14 	 & 7.11e-15  \\ 
	\hline\hline
	\end{tabular}
	\caption{Example \ref{ex:1D_WB_Test} for the three-layer case.
 Errors in the water surface levels obtained by using our schemes with $50$ mesh points at $t=0.2$, with the bottom topography \eqref{eq:b_Smooth} and \eqref{eq:b_dis}.}\label{tb:1D Well_Balance_3L}
\end{table}

\begin{figure}[!htb]
  \centering
  \begin{subfigure}[b]{0.35\textwidth}
    \centering
    \includegraphics[width=1.0\textwidth]{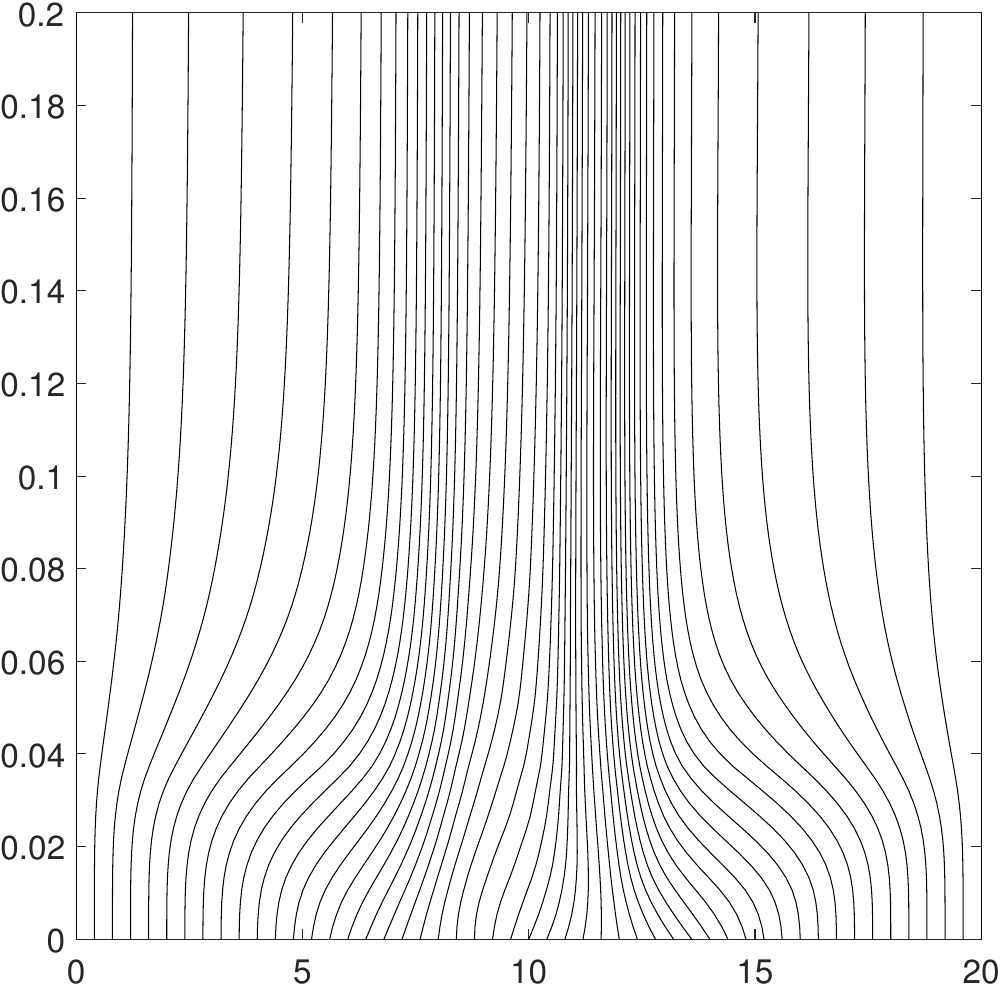}
  \end{subfigure}
  \begin{subfigure}[b]{0.35\textwidth}
    \centering
    \includegraphics[width=1.0\textwidth,  clip]{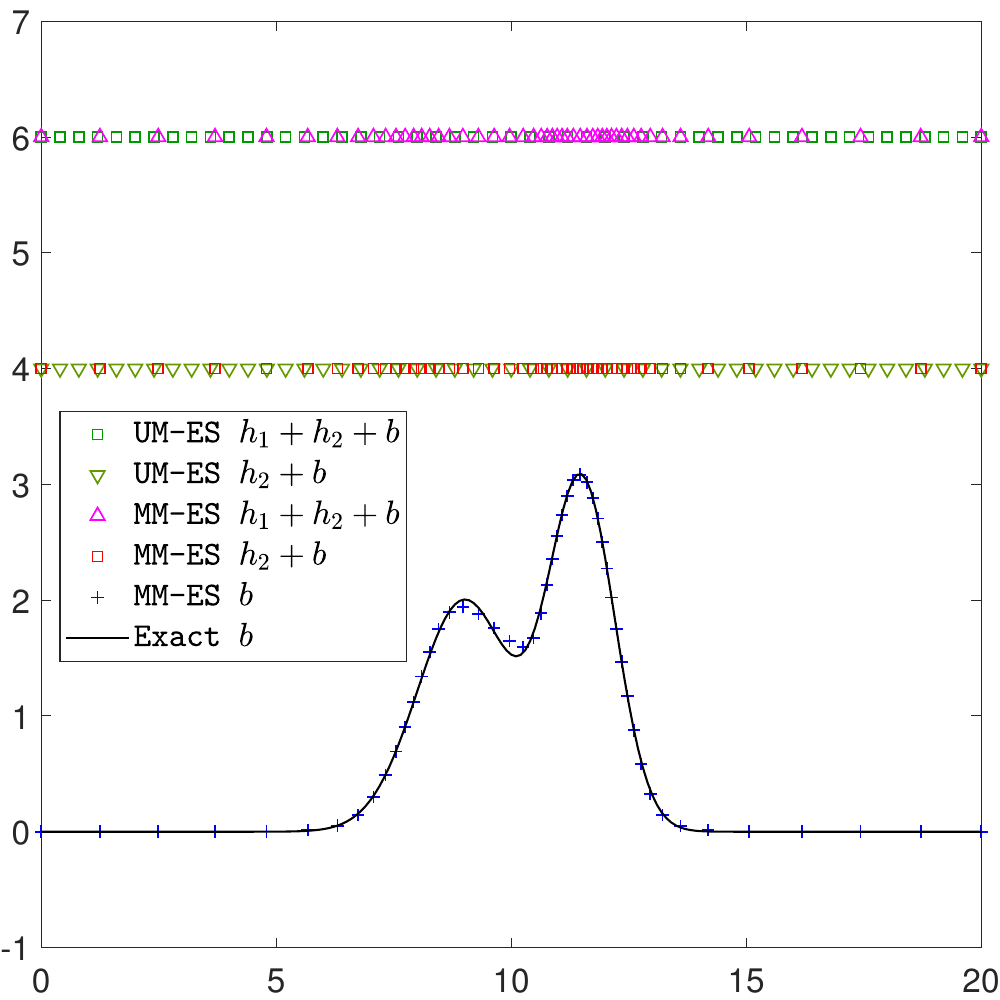}
  \end{subfigure}
  \\
  \begin{subfigure}[b]{0.35\textwidth}
	\centering
	\includegraphics[width=1.0\textwidth]{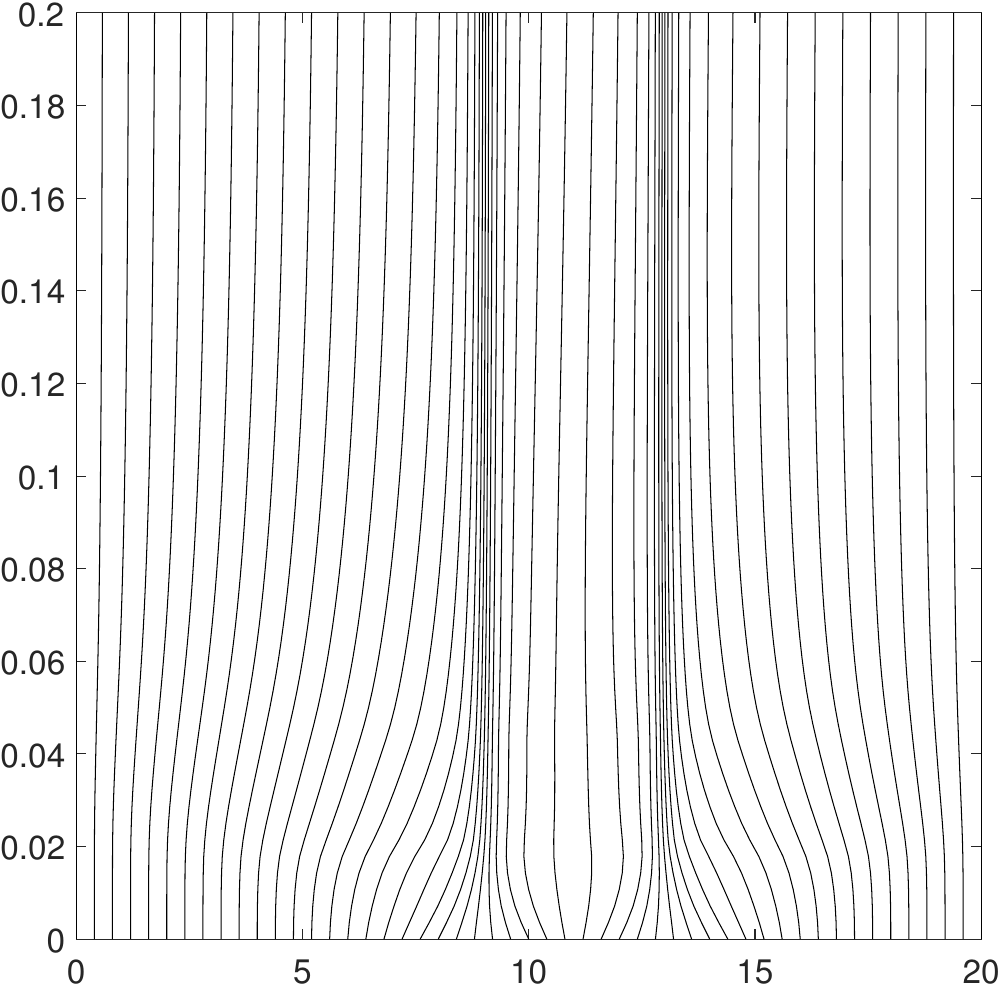}
\end{subfigure}
\begin{subfigure}[b]{0.35\textwidth}
	\centering
	\includegraphics[width=1.0\textwidth,  clip]{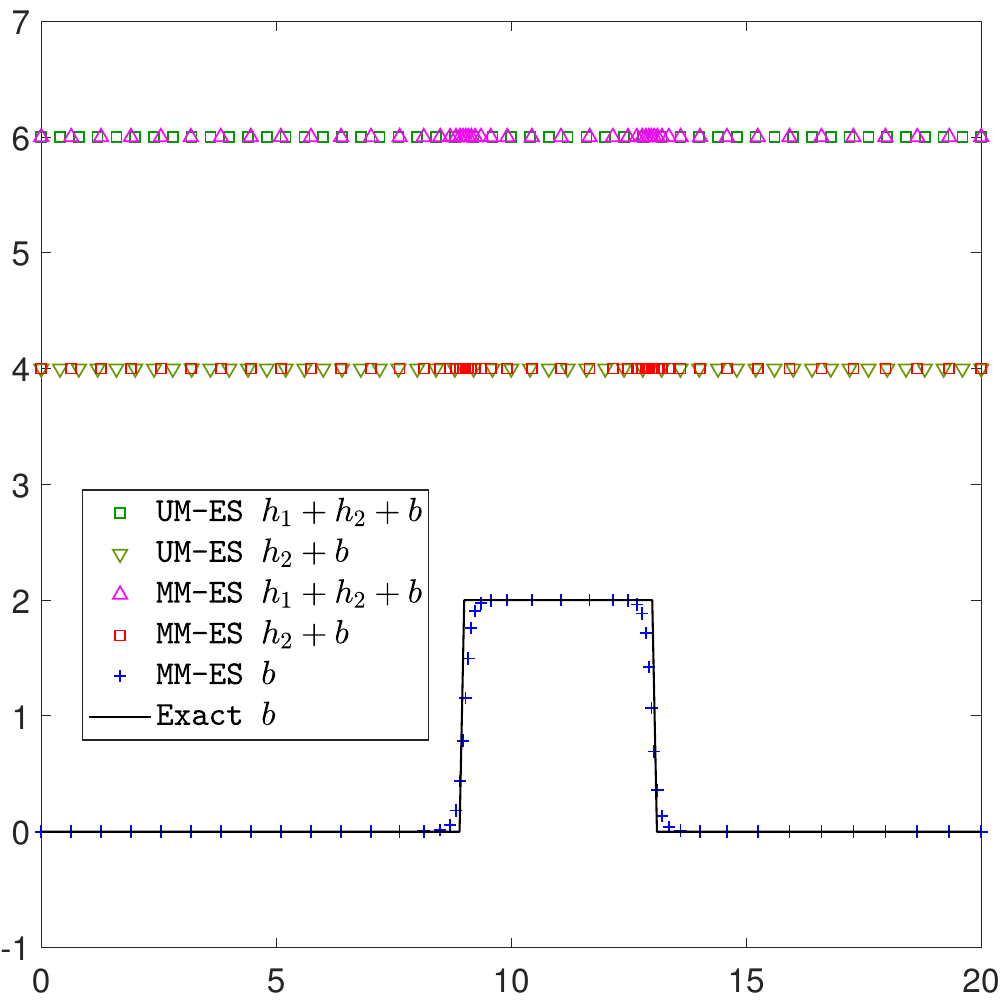}
\end{subfigure}
  \caption{Example \ref{ex:1D_WB_Test} for the two-layer case.
  Left: the mesh trajectories obtained by the \texttt{MM-ES} scheme,
  right: the bottom topography $b$ and water surface levels.
    Top: with the bottom topography \eqref{eq:b_Smooth},
    bottom: with the bottom topography \eqref{eq:b_dis}.
    The results are obtained with $50$ mesh points at $t=0.2$.
  }
  \label{1D_Well_balance}
\end{figure}

\begin{figure}[!htb]
  \centering
  \begin{subfigure}[b]{0.35\textwidth}
    \centering
    \includegraphics[width=1.0\textwidth]{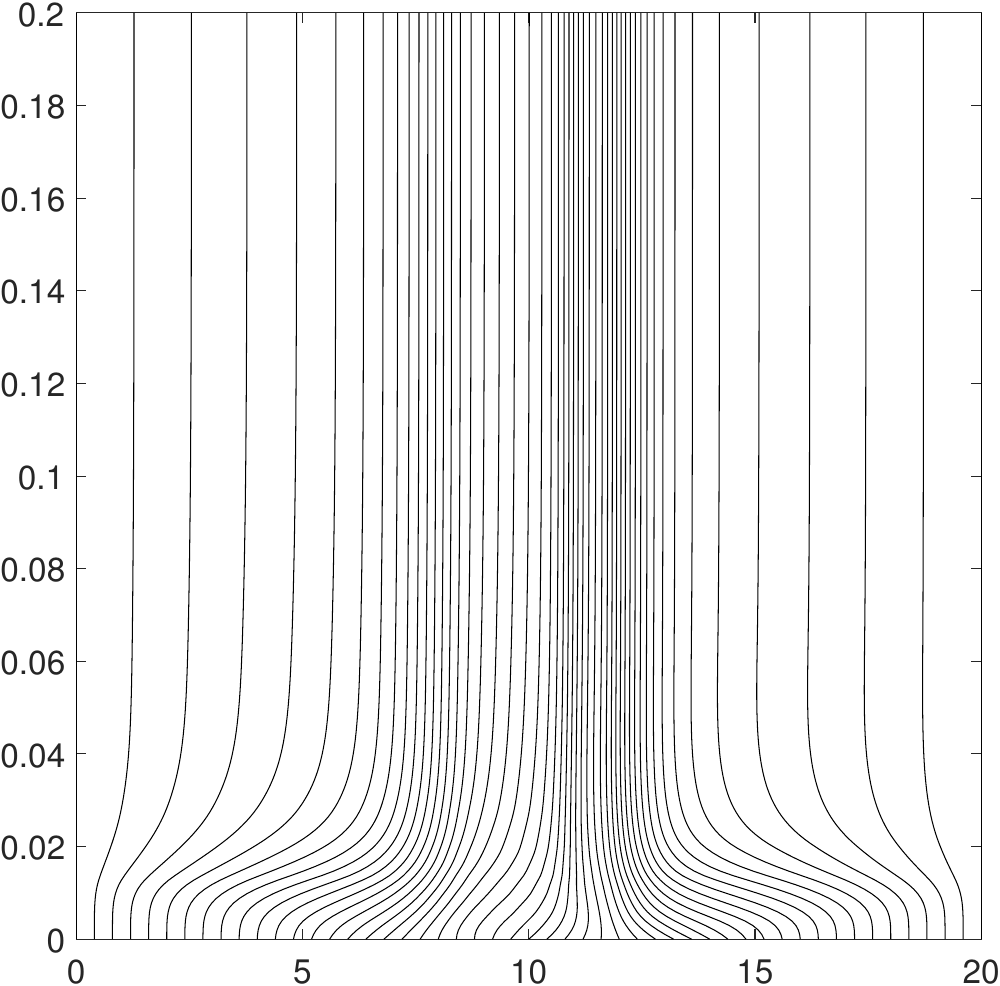}
  \end{subfigure}
  \begin{subfigure}[b]{0.35\textwidth}
    \centering
    \includegraphics[width=1.0\textwidth,  clip]{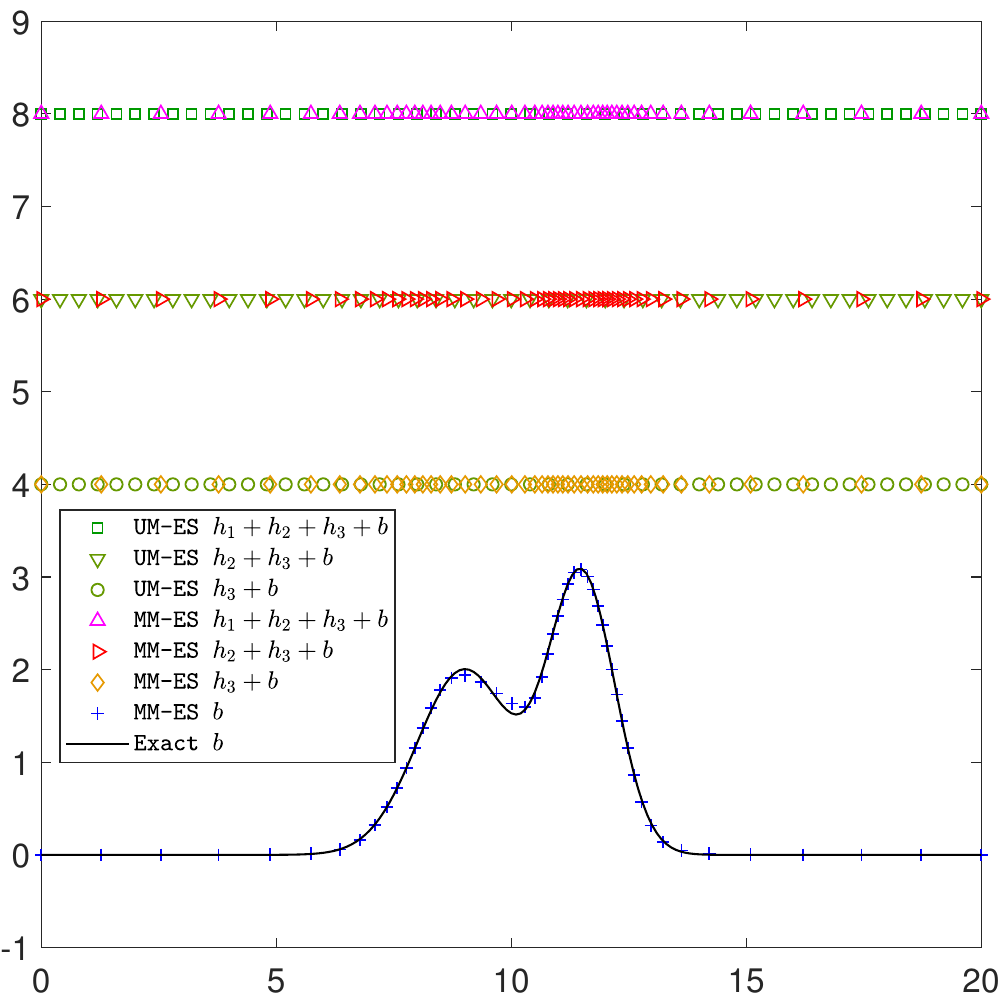}
  \end{subfigure}
  \\
\begin{subfigure}[b]{0.35\textwidth}
	\centering
	\includegraphics[width=1.0\textwidth]{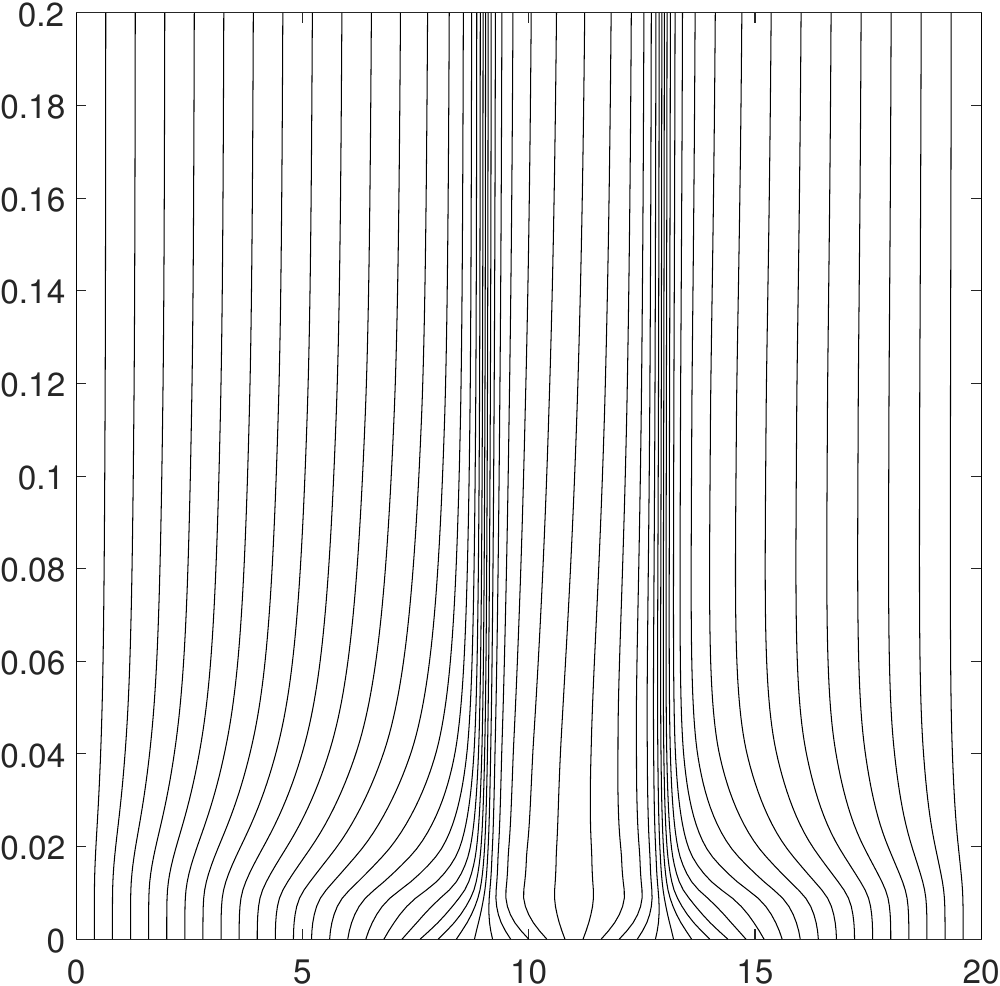}
\end{subfigure}
\begin{subfigure}[b]{0.35\textwidth}
	\centering
	\includegraphics[width=1.0\textwidth,  clip]{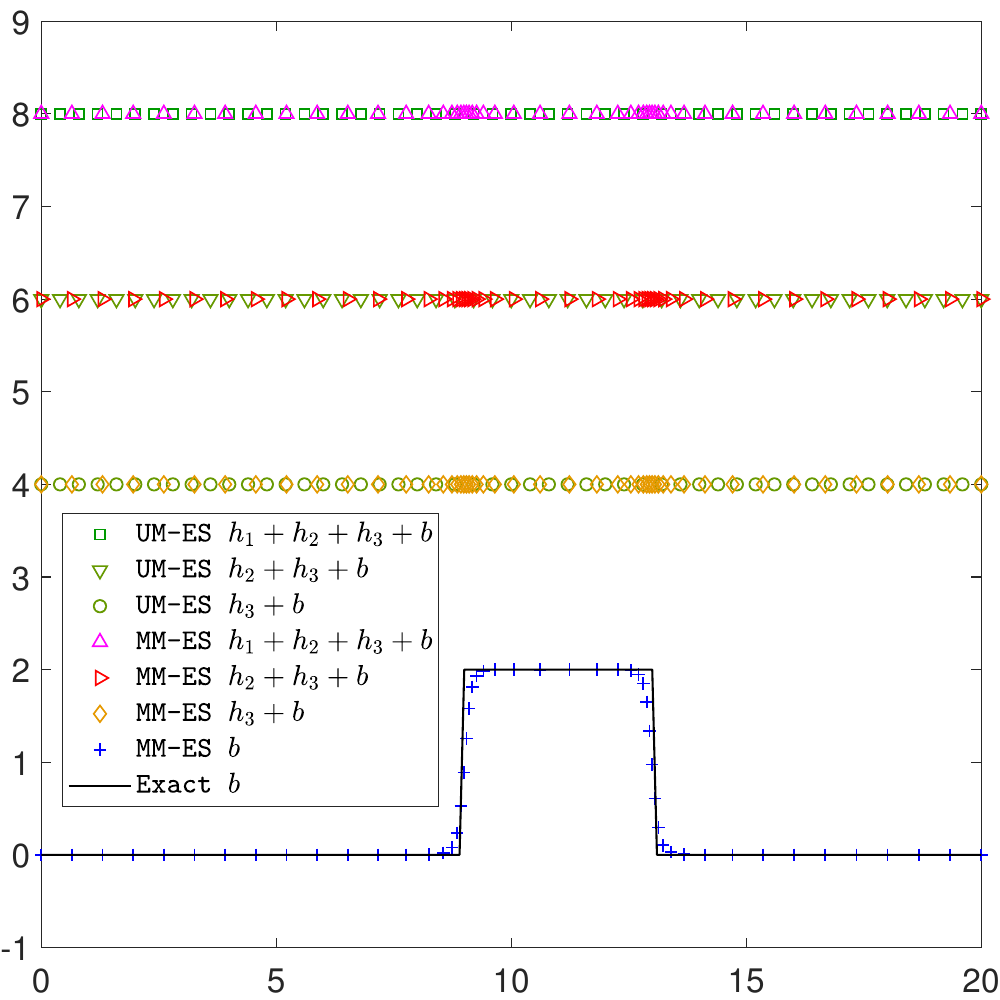}
\end{subfigure}
  \caption{Example \ref{ex:1D_WB_Test} for the three-layer case.
  Left: the mesh trajectories obtained by the \texttt{MM-ES} scheme,
  right: the bottom topography $b$ and water surface levels.
    Top: with the bottom topography \eqref{eq:b_Smooth},
    bottom: with the bottom topography \eqref{eq:b_dis}.
    The results are obtained with $50$ mesh points at $t=0.2$.
  }
  \label{1D_Well_balance_3L}
\end{figure}

\begin{example}[1D dam break over a flat bottom]\label{ex:1D_IDB_Test}\rm
	This test investigates the ability of the proposed schemes to capture the small-scale wave structures.
    The initial data for the two-layer case are
    \begin{align*}
    	h_2(x,0)&=\begin{cases}
    		0.6, &\text{if} \quad x \leqslant 5,\\
    		0.4,&\text{otherwise}, \\
    	\end{cases} \\
            h_1(x,0) &= 1 - h_2(x,0),
    \end{align*}
    with zero velocities in the physical domain $[0,10]$ and $r_{12} = 0.8$, $\rho_2=1$.
 The output time is $t = 1.25$ and outflow boundary conditions are used.
 For the three-layer case, the initial data for the water depth are
  \begin{align*}
    	h_3(x,0) &= \begin{cases}
    		0.6, &\text{if} \quad x \leqslant 5,\\
    		0.4,&\text{otherwise}, \\
    	\end{cases}\\
     h_2(x,0) &= 1 - h_3(x,0),\\
     h_1(x,0) &= 2-h_2(x,0)-h_3(x,0),
    \end{align*}
    with the density ratios $r_{12} = 0.8$, $r_{13} = 0.64$, $r_{23} = 0.8$, $\rho_3=1.0$, and the output time $t=0.8$. The gravitational acceleration constant is $g = 9.812$ in both cases.
 The monitor function is chosen as that in Example \ref{ex:1D_WB_Test}, but with
$\theta = 100$, and $\sigma = h_1+h_2+b$ and $h_1+h_2+h_3+b$ for the two-layer and three-layer cases, respectively.
\end{example}

Figure \ref{1D_IDB_Test} shows the mesh trajectories and water surface levels $h_1+h_2+b$ and $h_2+b$ obtained by the \texttt{UM-ES} and \texttt{MM-ES} schemes for the two-layer case at $t=1.25$,
and corresponding results for the three-layer case at $t=0.8$.
The reference solutions are given by using the \texttt{UM-ES} schemes with $3000$ mesh points.
One can see that the wave structures for the three-layer case are more complicated, and our schemes can well capture the discontinuities without obvious oscillations.
Moreover, the \texttt{MM-ES} scheme using $400$ mesh points gives similar local resolution compared to the \texttt{UM-ES} scheme using $1200$ mesh points, which indicates the high efficiency of our schemes on adaptive moving meshes.
Figure \ref{1D_IDB_Test_Energy} gives the evolution of the discrete total energy 
$\sum\limits_i J_{i}\eta(\bU_i)\Delta \xi$ obtained by the \texttt{MM-ES} scheme with $400$ mesh points in both cases, which decay as expected.

\begin{figure}[!htb]
	\centering
 \begin{subfigure}[b]{0.3\textwidth}
	\centering
	\includegraphics[width=1.0\textwidth]{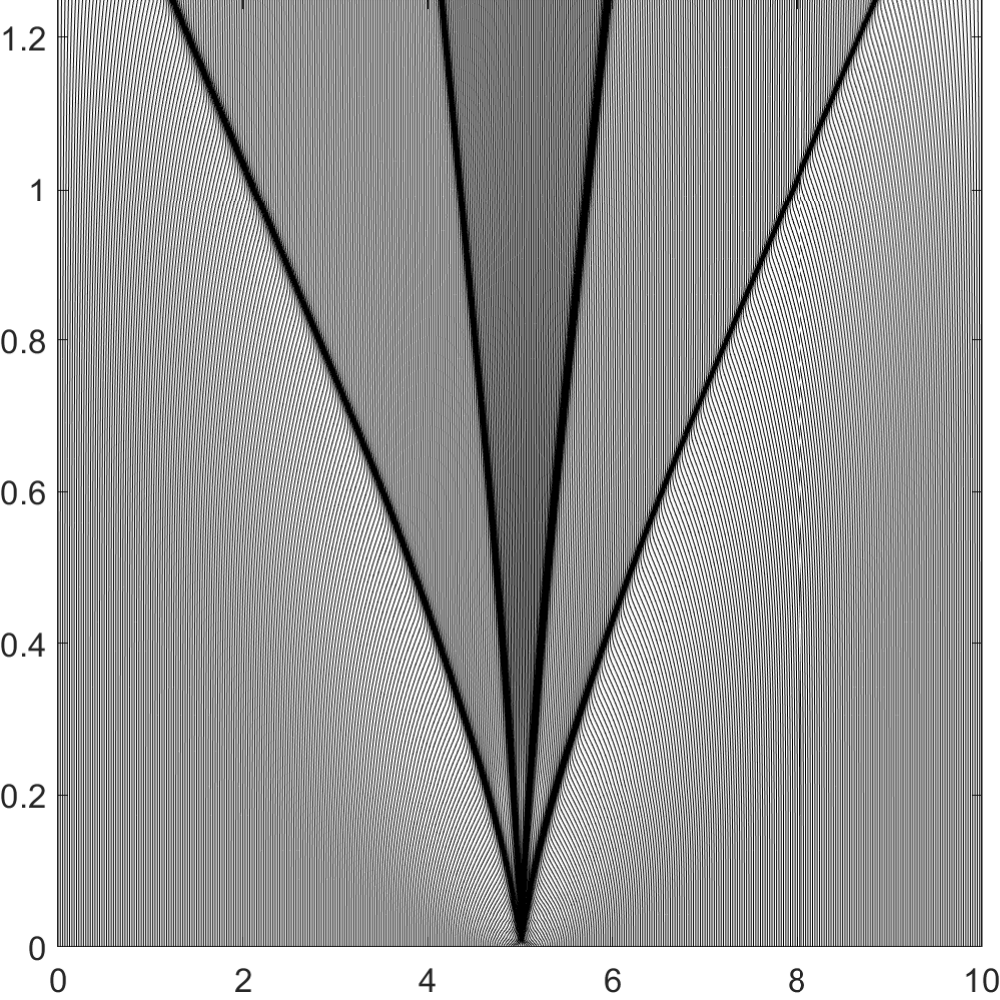}
	\caption{mesh trajectory}
\end{subfigure}
	\begin{subfigure}[b]{0.31\textwidth}
		\centering
		\includegraphics[width=1.0\textwidth]{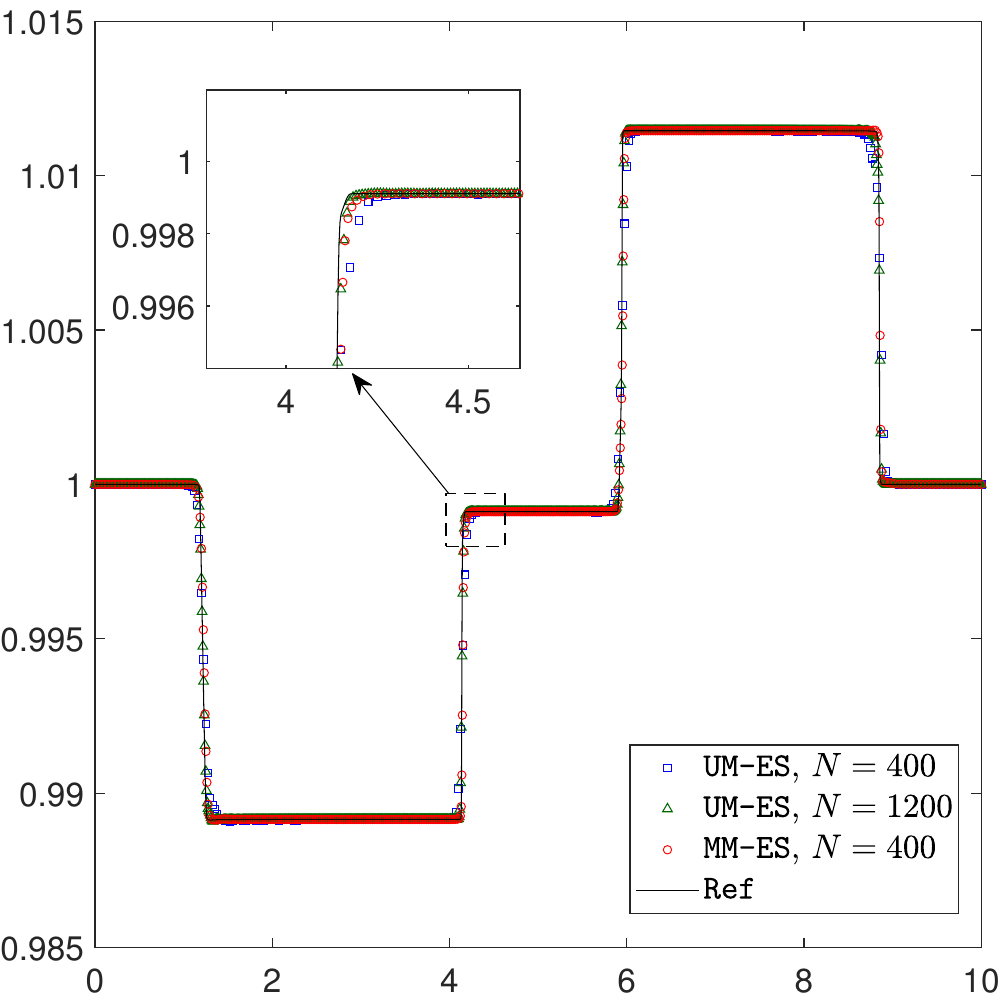}
		\caption{ $h_1+h_2+b$}
	\end{subfigure}
	\begin{subfigure}[b]{0.31\textwidth}
		\centering
		\includegraphics[width=1.0\textwidth]{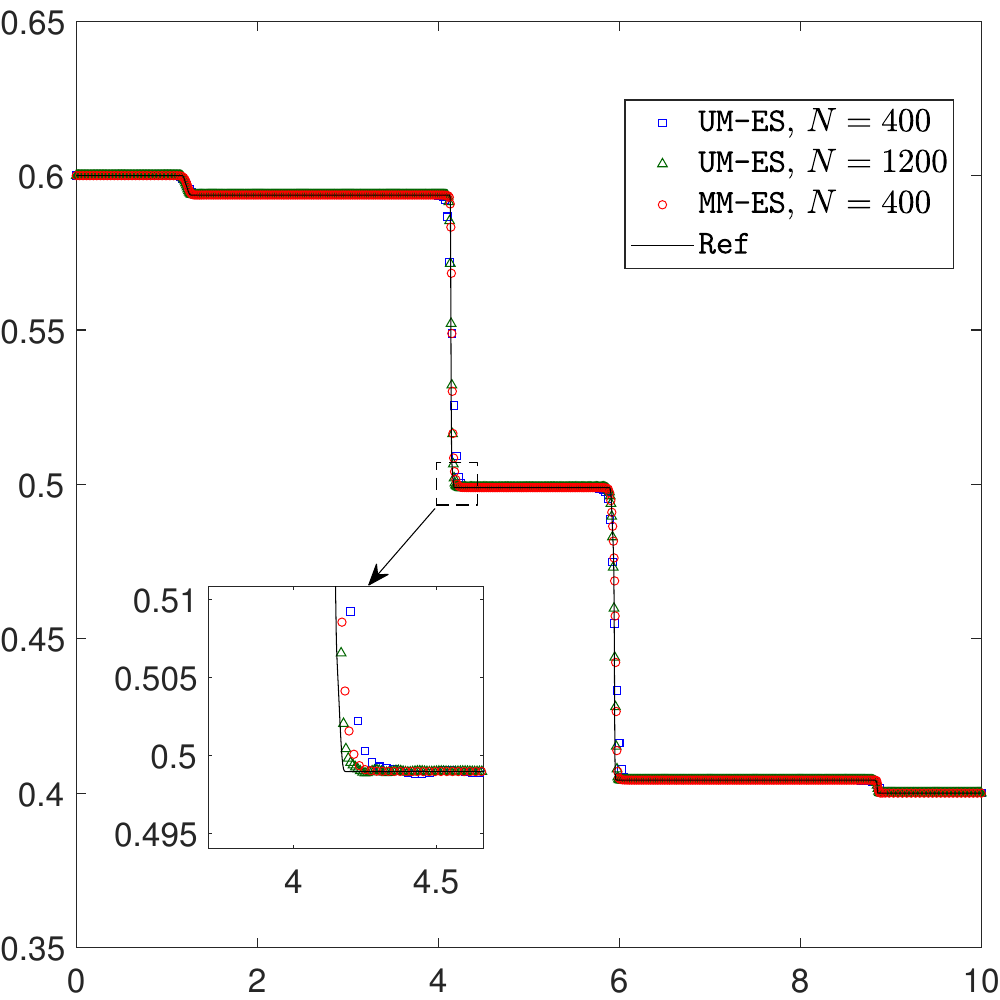}
		\caption{$h_2+b$}
	\end{subfigure}
 
\begin{subfigure}[b]{0.3\textwidth}
	\centering
	\includegraphics[width=1.0\textwidth]{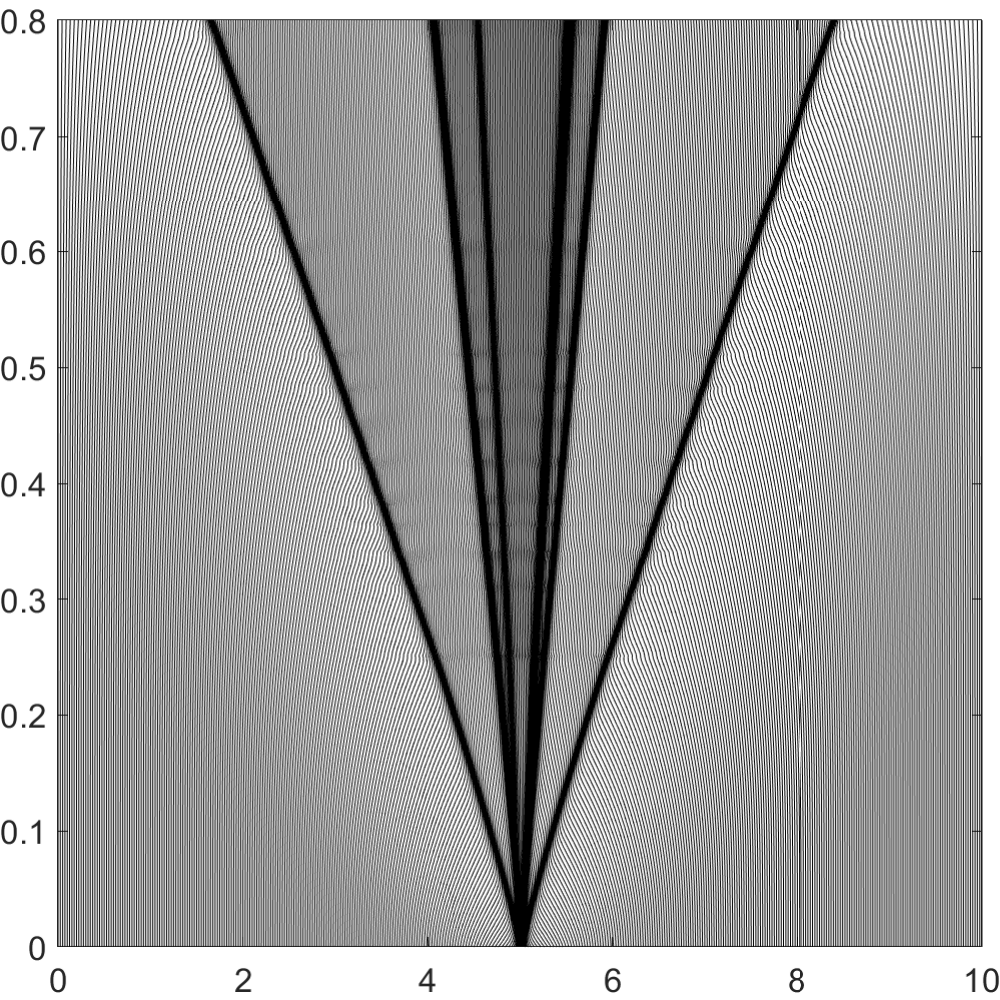}
	\caption{mesh trajectory}
	\end{subfigure}
\begin{subfigure}[b]{0.3\textwidth}
	\centering
	\includegraphics[width=1.0\textwidth]{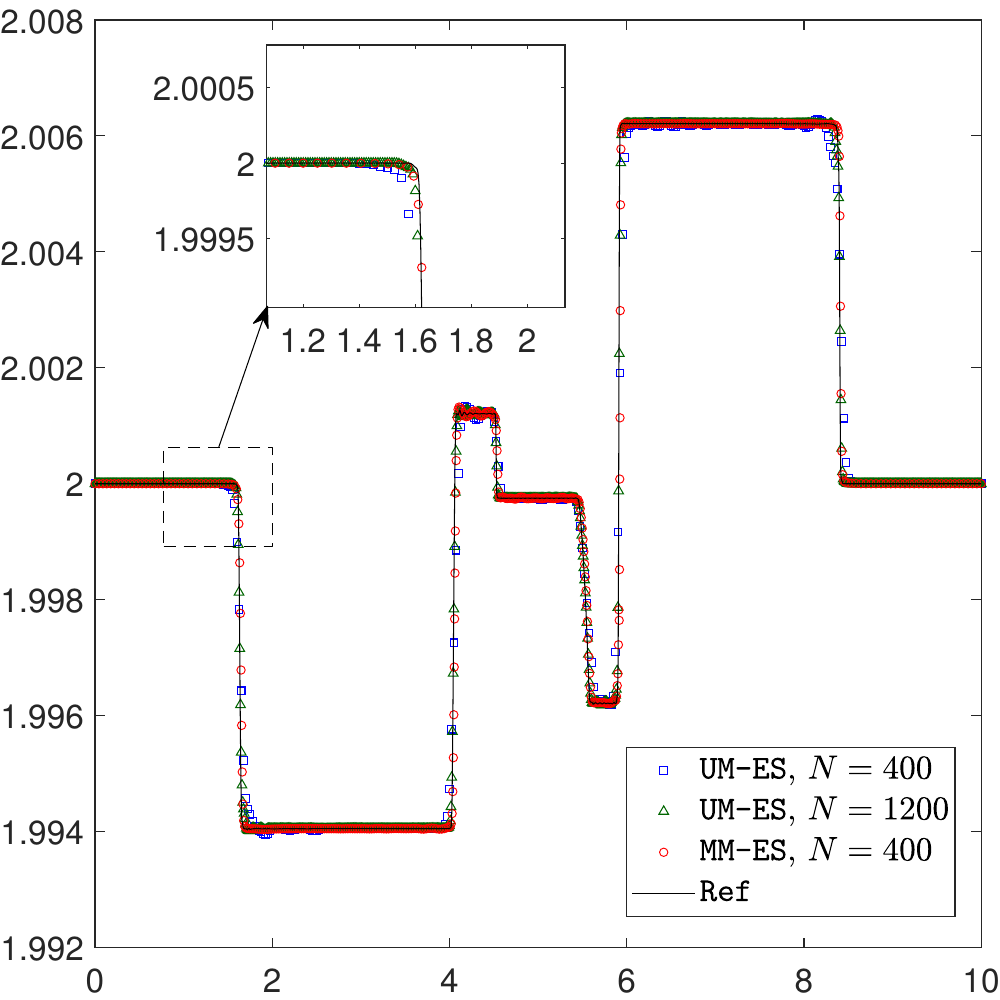}
	\caption{ $h_1+h_2+h_3+b$}
\end{subfigure}
\begin{subfigure}[b]{0.3\textwidth}
	\centering
	\includegraphics[width=1.0\textwidth]{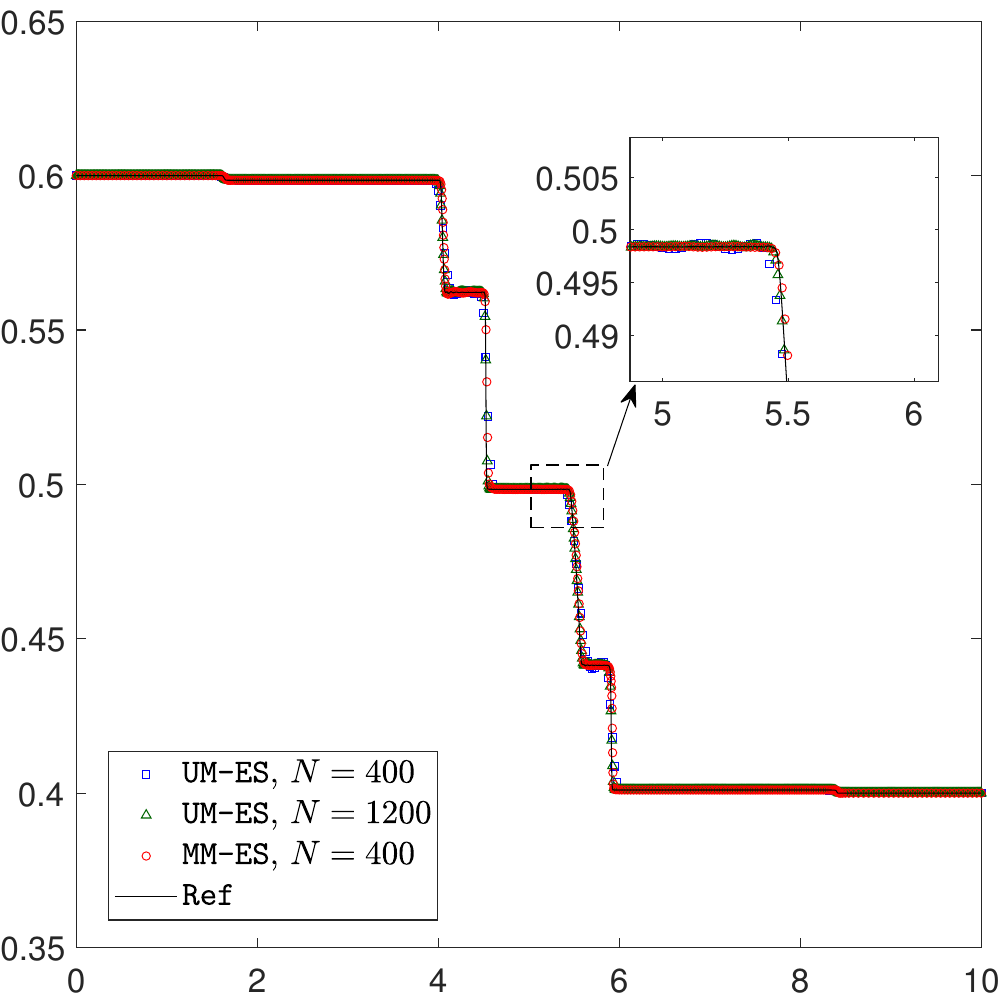}
	\caption{$h_3+b$}
\end{subfigure}
	\caption{Example \ref{ex:1D_IDB_Test}. The numerical solutions obtained by the \texttt{UM-ES} and \texttt{MM-ES}  schemes, the reference solutions are obtained by using the \texttt{UM-ES}  schemes with $3000$ mesh points. Top: the two-layer case, bottom: the three-layer case.}\label{1D_IDB_Test}
\end{figure}

\begin{figure}[!htb]
\centering
\begin{subfigure}[b]{0.3\textwidth}
	\centering
	\includegraphics[width=1.0\textwidth]{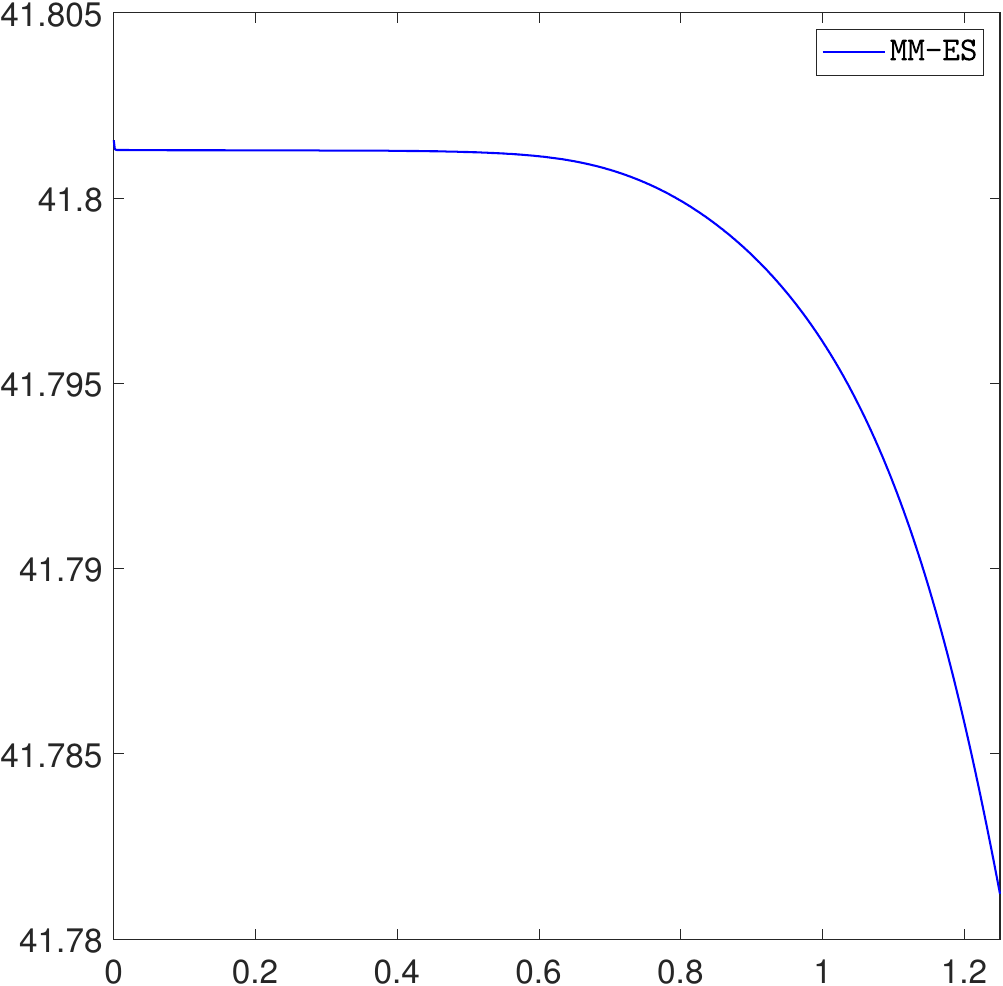}
	\caption{two-layer case}
\end{subfigure}
\begin{subfigure}[b]{0.3\textwidth}
	\centering
	\includegraphics[width=1.0\textwidth]{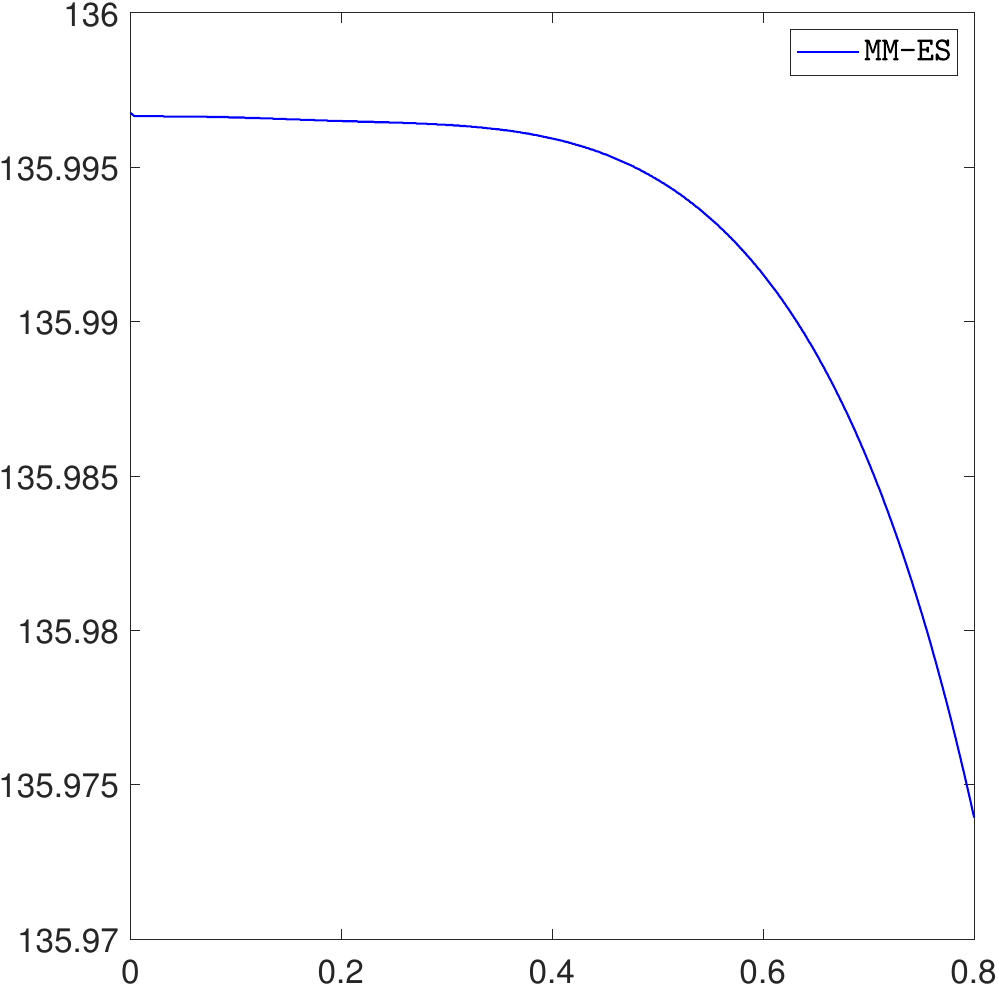}
	\caption{three-layer case}
\end{subfigure}
	\caption{Example \ref{ex:1D_IDB_Test}. The evolution of the discrete total energy by using the \texttt{MM-ES} scheme with 400 mesh points.}\label{1D_IDB_Test_Energy}
\end{figure}

\begin{example}[Small perturbation test]\label{ex:1D_Pertubation_Test}\rm
  To test the ability of our schemes to capture small perturbations in the steady-state flow, the bottom topography consisting of a ``hump'' \cite{Leveque1998Balancing} is considered here
  \begin{equation*}
    b(x) = \begin{cases}
      0.25(\cos(10\pi(x-0.5))+1)-2, &\text{if}\quad 0.4 \leqslant x \leqslant 0.6,\\
      0,   &\text{otherwise},
    \end{cases}
  \end{equation*}
  and the initial data for the two-layer case are
  \begin{align*}
        h_1 &= \begin{cases}
      1.00001, &\text{if}\quad 0.1\leqslant x\leqslant 0.2,\\
      1, &\text{otherwise},\\
    \end{cases}\\
    h_2 &= -1-b,
  \end{align*}
  with initial zero velocities and $g=9.812$, $r_{12}=0.98$, $\rho_2 = 1$.
  The physical domain is taken as $[-1, 1]$ with outflow boundary conditions.
  For the three-layer case, the initial data are
  \begin{align*}
    h_1 &= 1,\\
    h_2 &= \begin{cases}
      1.00001, &\text{if}\quad 0.1\leqslant x\leqslant 0.2,\\
      1, &\text{otherwise},\\
    \end{cases}\\
    h_3 &= -1-b,
  \end{align*}
  with zero velocities, and $r_{12} = 0.9898$, $r_{23} = 0.98$,  $r_{13} = 0.97$, $\rho_3 = 1.0$.
  The monitor function is the same as Example \ref{ex:1D_IDB_Test}.
\end{example}

Figure \ref{1D_ES_Pertubation_WB} presents the numerical solutions obtained by the \texttt{UM-ES} and \texttt{MM-ES} schemes with $200$ and $600$ mesh points at $t=0.15$ for the two-layer case, and at $t=0.1$ for the three-layer case, respectively. 
The reference solutions are obtained by the \texttt{UM-ES} scheme with $3000$ mesh points.
It can be seen that the structures in the solution can be captured well and there is no apparent numerical oscillation.
The results obtained by using the adaptive moving mesh are better than those on the fixed uniform mesh with the same number of mesh points,
and comparable to those on the fixed uniform mesh with three times the number of mesh points.
Figure \ref{1D_ES_Pertubation_WB_Entropy} gives the discrete total energy evolution versus time obtained by the \texttt{MM-ES} scheme with $200$ mesh points, which decays as expected.

\begin{figure}[!htb]
  \centering
    \begin{subfigure}[b]{0.31\textwidth}
	\centering
	\includegraphics[width=1.0\textwidth]{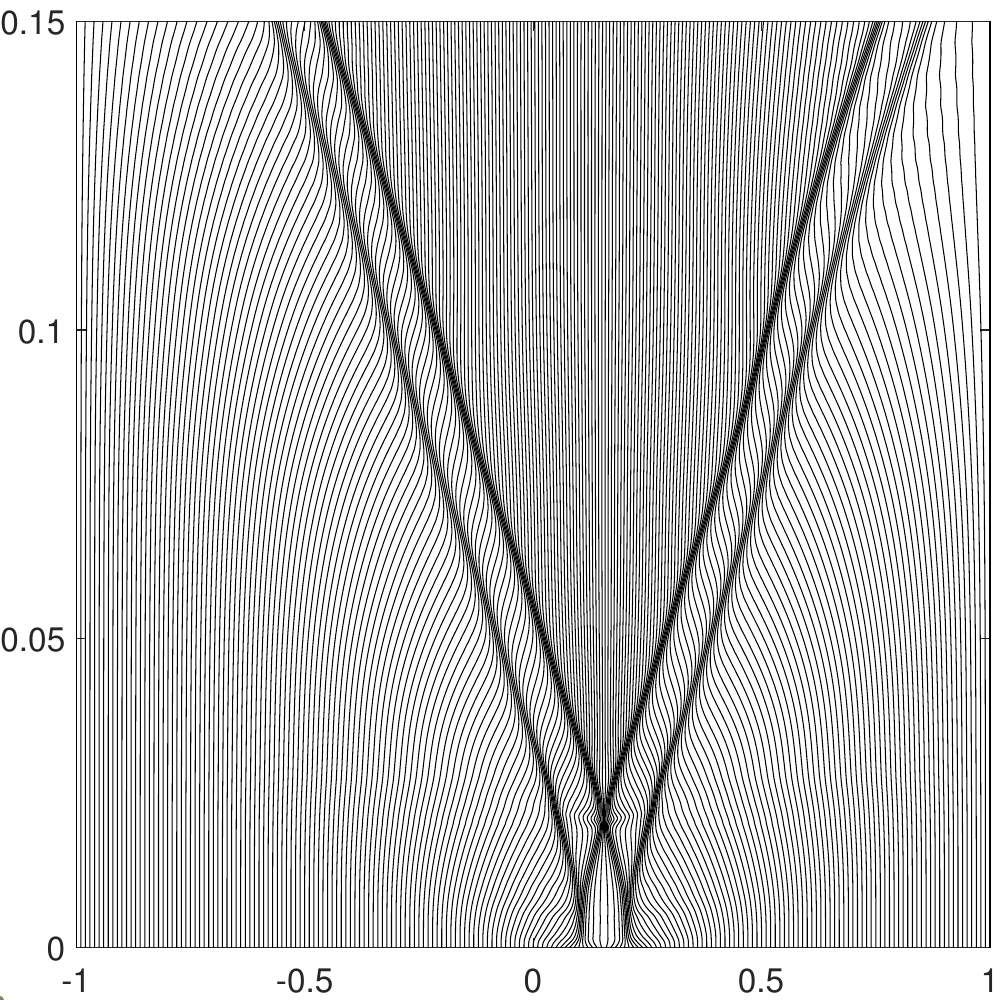}
	\caption{mesh trajectory}
\end{subfigure}
  \begin{subfigure}[b]{0.32\textwidth}
    \centering
    \includegraphics[width=1.0\textwidth]{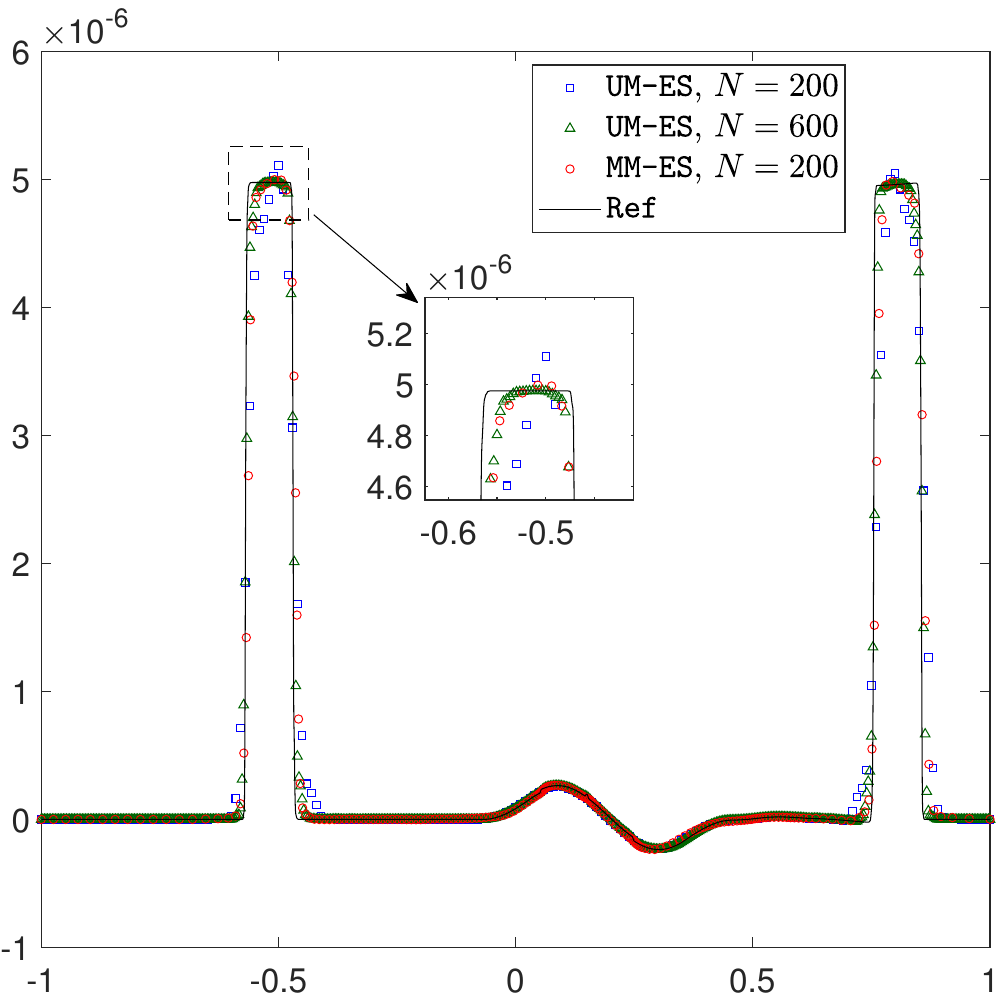}
    \caption{$h_1+h_2+b$}
  \end{subfigure}
  \begin{subfigure}[b]{0.32\textwidth}
    \centering
    \includegraphics[width=1.0\textwidth]{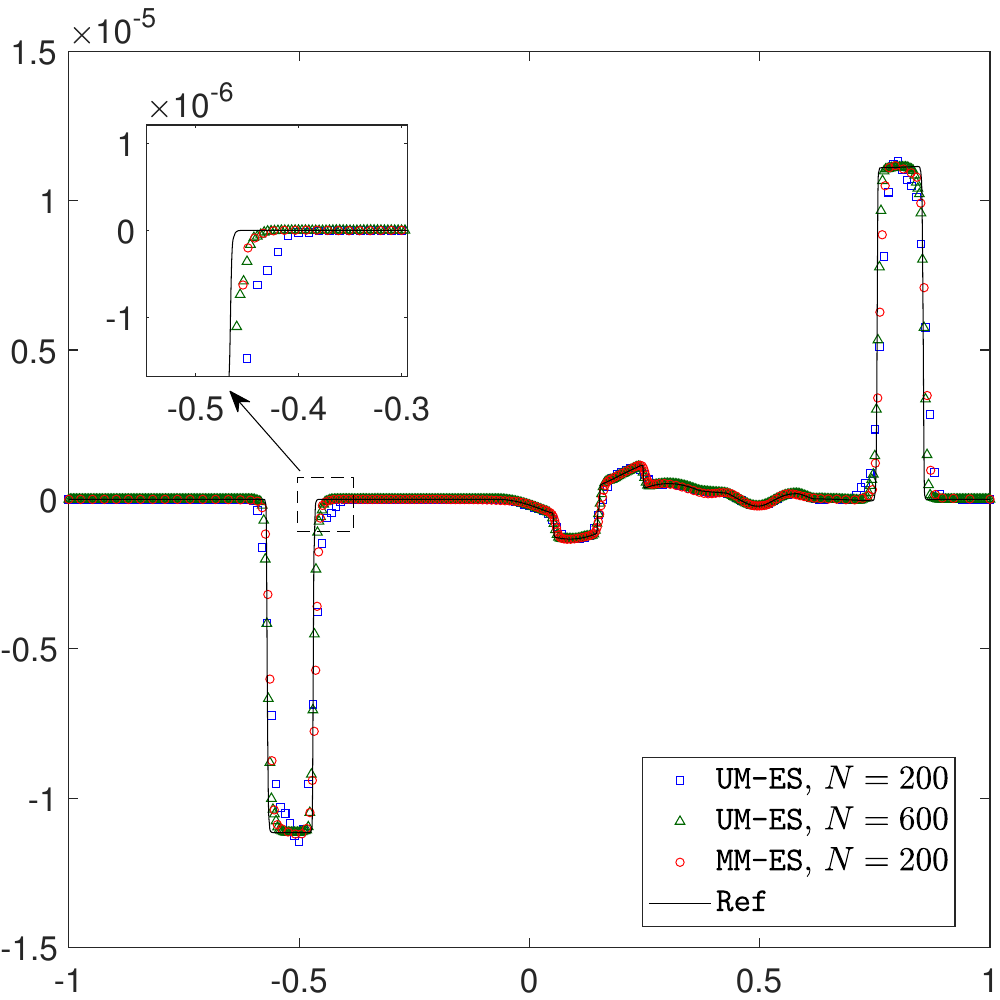}
    \caption{$u_1$}
  \end{subfigure}
\begin{subfigure}[b]{0.31\textwidth}
	\centering
	\includegraphics[width=1.0\textwidth]{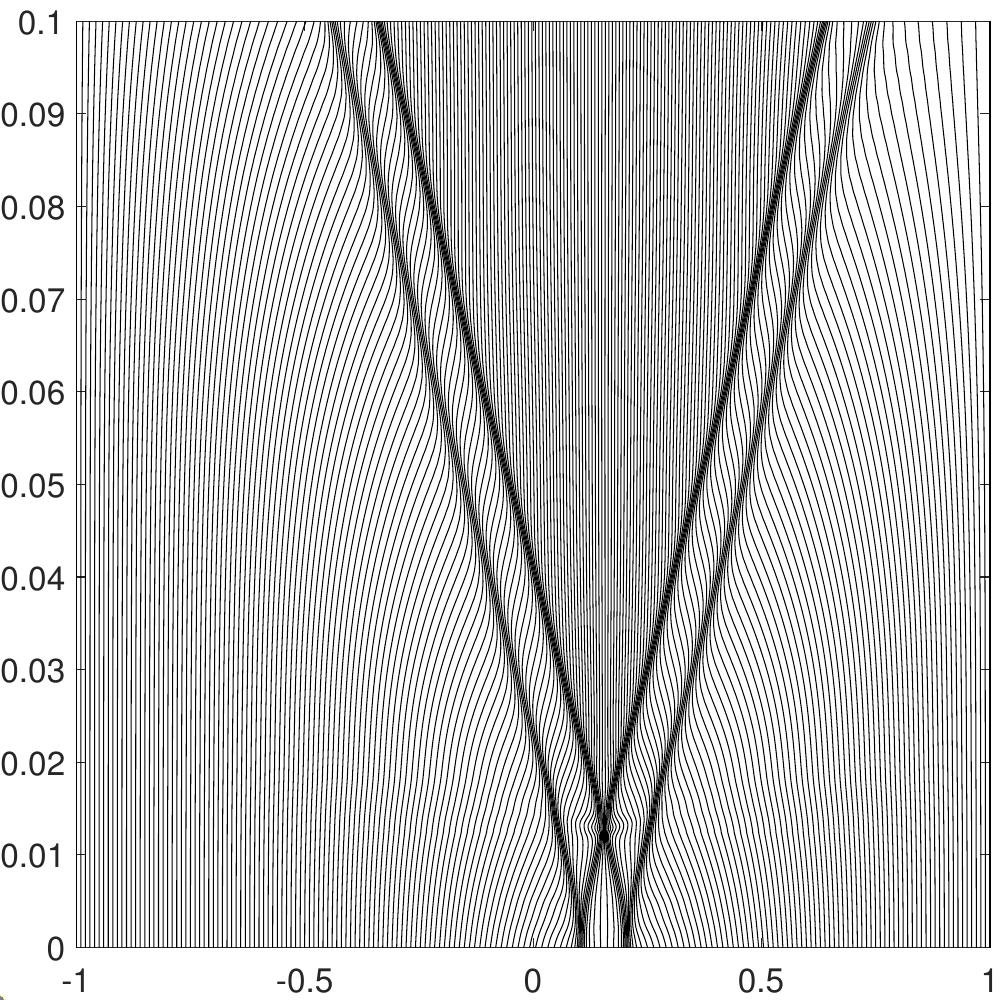}
	\caption{mesh trajectory}
 \end{subfigure}
 \begin{subfigure}[b]{0.32\textwidth}
	\centering
	\includegraphics[width=1.0\textwidth,height = 0.95\textwidth]{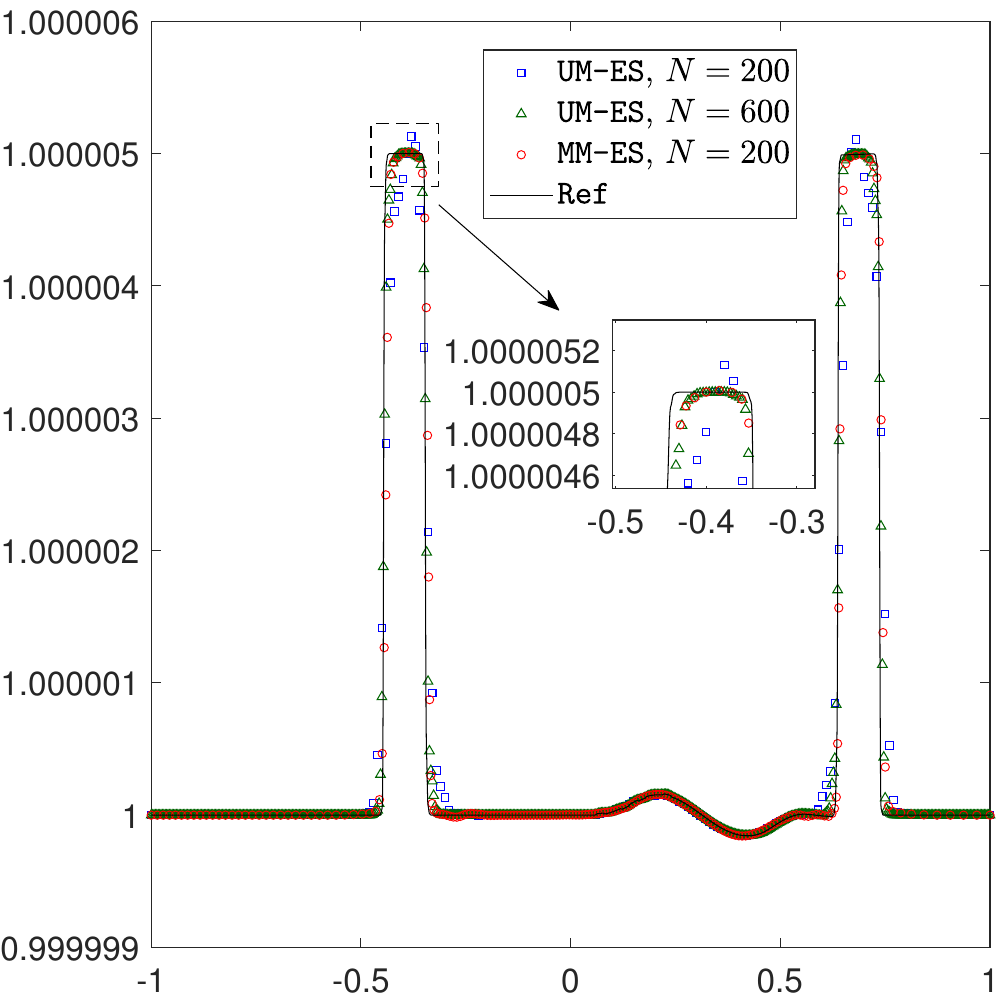}
	\caption{$h_1+h_2+b$}
\end{subfigure}
\begin{subfigure}[b]{0.32\textwidth}
	\centering
	\includegraphics[width=1.0\textwidth]{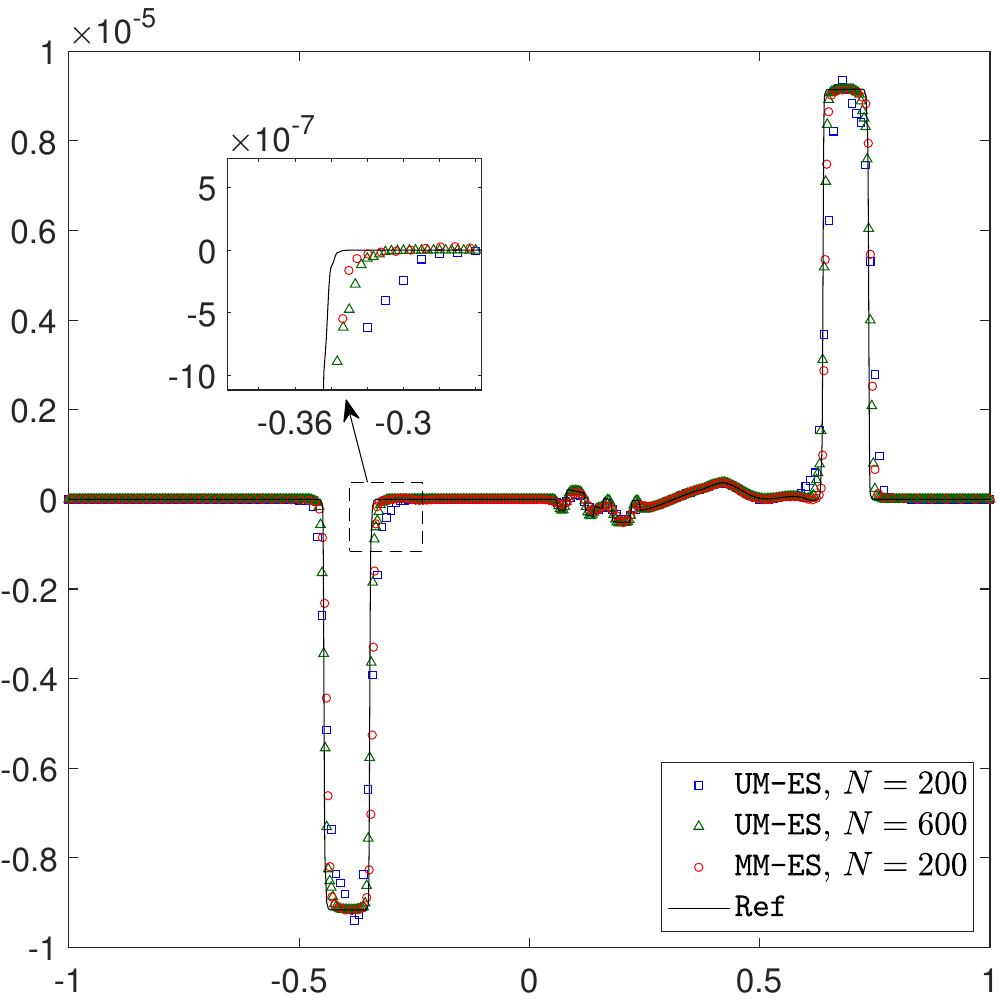}
	\caption{$u_1$}
\end{subfigure}
\caption{Example \ref{ex:1D_Pertubation_Test}.
The mesh trajectories and numerical solutions obtained by using the \texttt{UM-ES} and \texttt{MM-ES} schemes at $t=0.15$ for the two-layer case (top), and at $t=0.1$ for the three-layer case (bottom).
The reference solutions are obtained by using the \texttt{UM-ES} scheme with $3000$ mesh points.}\label{1D_ES_Pertubation_WB}
\end{figure}

\begin{figure}[!htb]
  \centering
  \begin{subfigure}[b]{0.255\textwidth}
  	\centering
  	\includegraphics[width=1.0\textwidth]{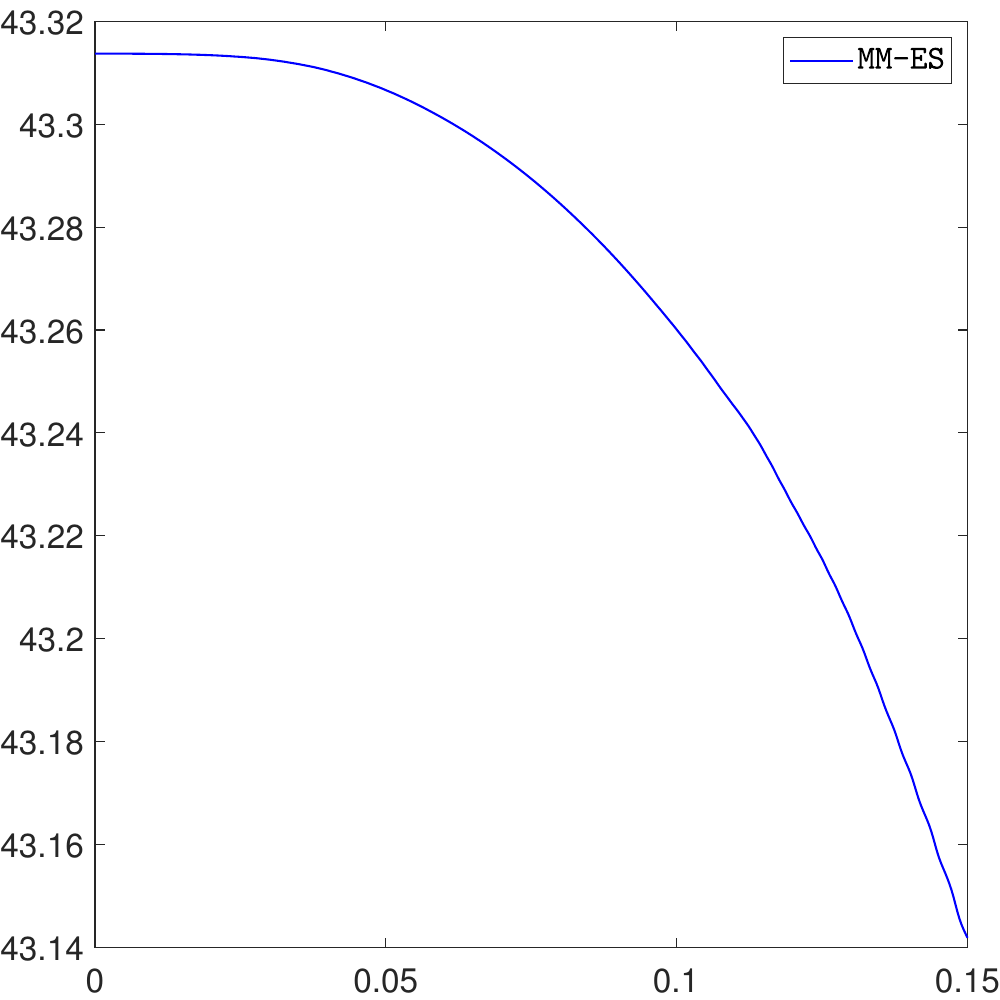}
   \caption{two-layer case}
  \end{subfigure}
  \begin{subfigure}[b]{0.25\textwidth}
  	\centering
  	\includegraphics[width=1.0\textwidth]{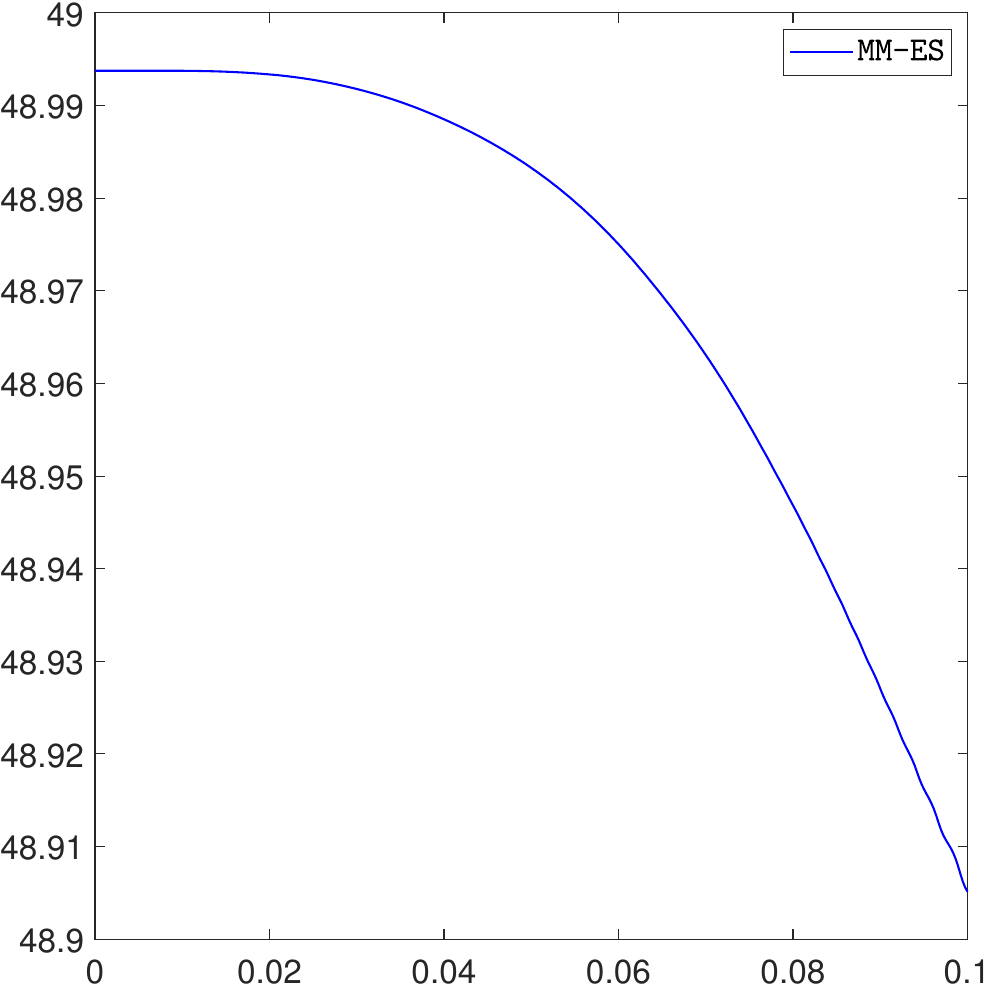}
\caption{three-layer case}
  \end{subfigure}
  \caption{Example \ref{ex:1D_Pertubation_Test}. The evolution of the discrete total energy over time by using the \texttt{MM-ES} schemes with $200$ mesh points.}\label{1D_ES_Pertubation_WB_Entropy}
\end{figure}

\subsection{2D tests}
\begin{example}[Accuracy test with manufactured solutions]\label{eq:Smooth_2D}\rm
  This test is taken to verify the accuracy of the 2D schemes.
  The manufactured solutions are constructed for the two-layer and three-layer SWEs with extra source terms.
For the two-layer case, the exact solutions are
\begin{align*}
	&h_1(x_1,x_2,t) = 
	\cos\left(\pi t\right)\cos\left(\pi x_1\right)+\cos\left(\pi t\right)\cos\left(\pi x_2\right)+6,\\
	&u_1(x_1,x_2,t) = \frac{\sin\left(\pi t\right)\sin\left(\pi x_1\right)}{h_1},~v_1(x_1,x_2,t) = \frac{\sin\left(\pi t\right)\sin\left(\pi x_2\right)}{h_1}, \\
&h_2(x_1,x_2,t) = 
\cos\left(\pi t\right)\cos\left(\pi x_1\right)+\cos\left(\pi t\right)\cos\left(\pi x_2\right)+4,\\
&u_2(x_1,x_2,t) = \frac{\sin\left(\pi t\right)\sin\left(\pi x_1\right)}{h_2},~v_2(x_1,x_2,t) = \frac{\sin\left(\pi t\right)\sin\left(\pi x_2\right)}{h_2}, \\
	&b(x_1,x_2) = \sin\left(\pi x_1\right)+\sin\left(\pi x_1\right)+\frac{3}{2},
\end{align*}
and the source terms are omitted here.
The physical domain is $[0, 2] \times [0, 2] $ with periodic boundary conditions, and the output time is $t = 0.1$.
The density ratio $r_{12}$ is $7/10$, $\rho_2=1$ with the gravitational acceleration constant $g=1$.
For the three-layer case, the exact solutions can be constructed similarly, presented in  \ref{Sec:Exact_Solution},
with $r_{12} = 7/10$, $r_{13} = 7/13$, $r_{23} = 10/13$, $\rho_3=13/10$.
The monitor function is chosen as
    \begin{equation*}
      \omega = \left(1+\theta\left(\frac{\left|\nabla_{\bm{\xi}}\sigma\right|}{\max\left|\nabla_{\bm{\xi}}\sigma\right|}\right)^2\right)^{\frac{1}{2}},
  \end{equation*}
  where $\theta = 1$ and $\sigma = h_2+b$ for both cases.
\end{example}

Figure \ref{fig:2D_Smooth_Accuracy} gives the $\ell^1$ and $\ell^\infty$ errors and corresponding convergence orders of the \texttt{UM-ES} and \texttt{MM-ES} schemes in velocity $u_2$ at $t =0.1$,
which verify the $5$th-order accuracy. 
Figure \ref{fig:2D_Smooth_Result_Entropy} shows the water surface levels and bottom topography obtained by the \texttt{UM-ES} scheme with $160\times160$ mesh, and the evolution of the discrete total energy $\sum\limits_{i,j} \eta(\bU_{i,j})\Delta x_1 \Delta x_2$,
 which decays on different meshes. 

\begin{figure}[!htb]
  \centering
  \begin{subfigure}[b]{0.4\textwidth}
    \centering
    \includegraphics[width=1.0\textwidth]{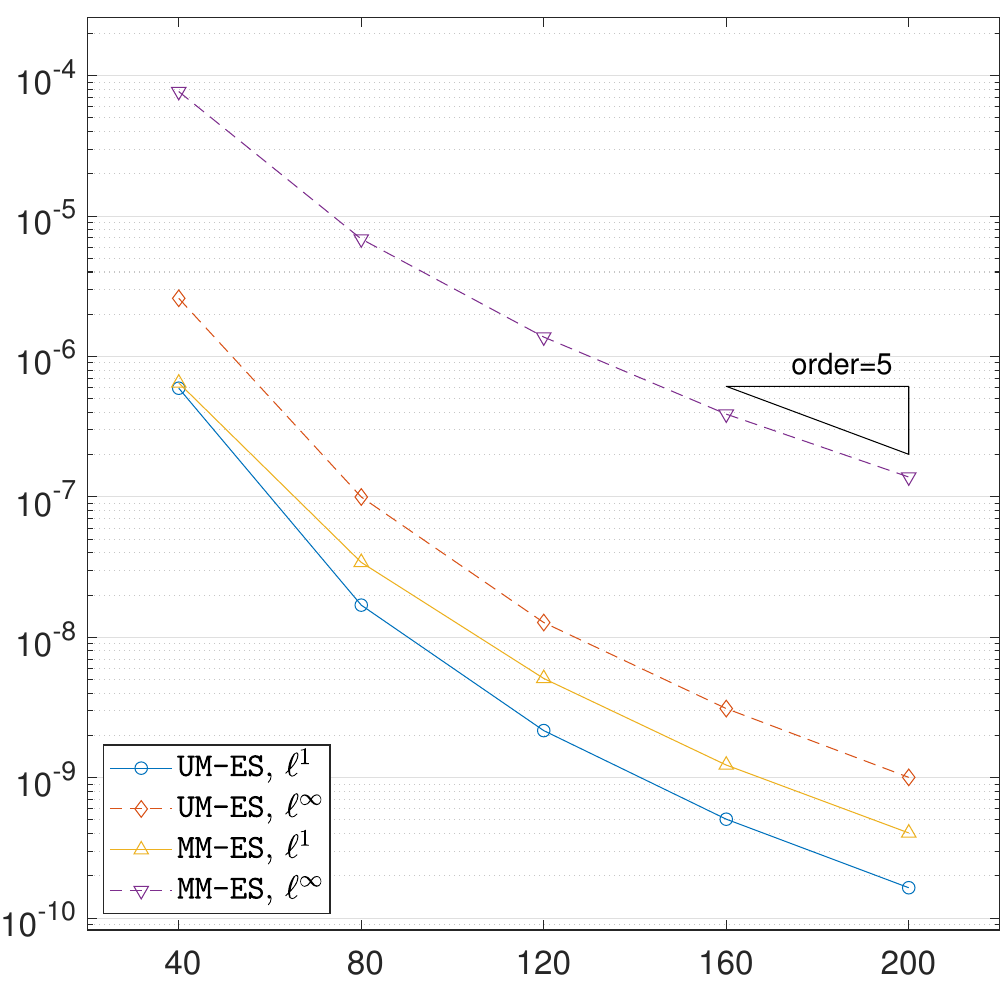}
    \caption{two-layer case}
  \end{subfigure}
  \begin{subfigure}[b]{0.4\textwidth}
    \centering
    \includegraphics[width=1.0\textwidth]{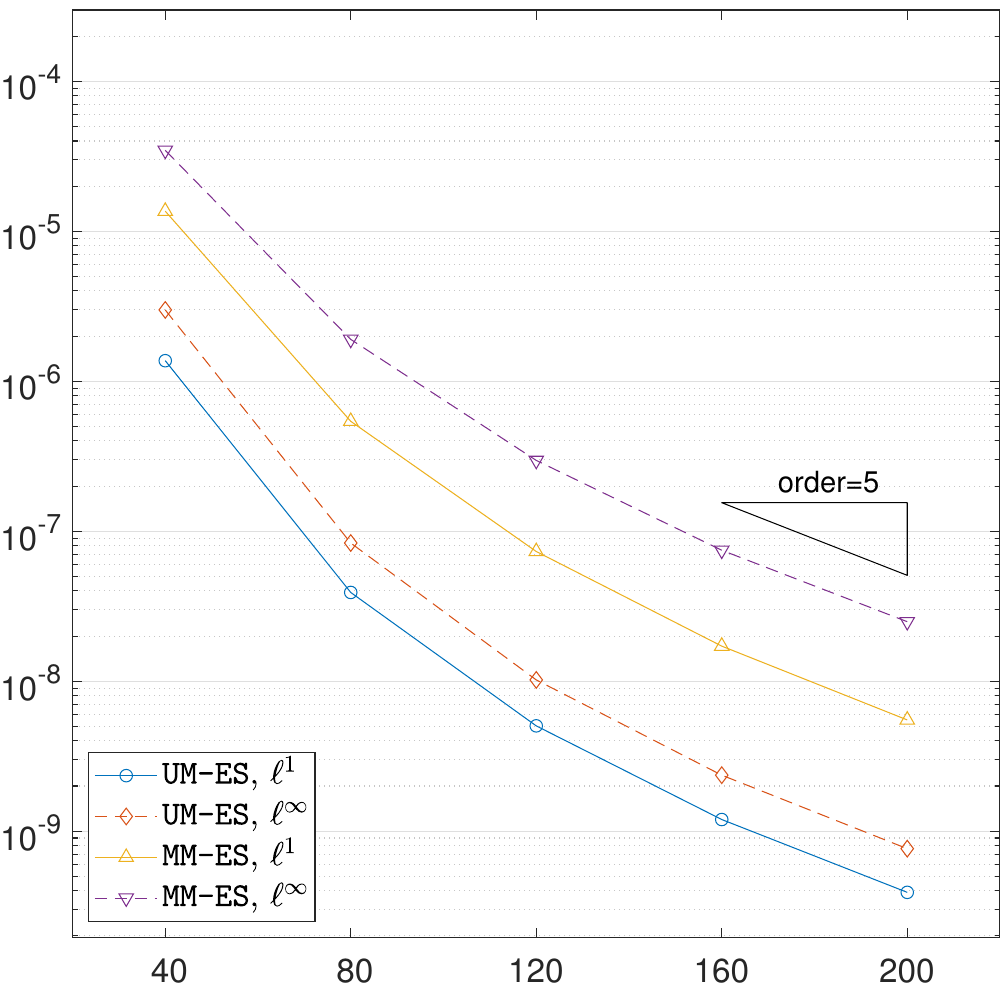}
    \caption{three-layer case}
  \end{subfigure}
  \caption{Example \ref{eq:Smooth_2D}. The errors and convergence rates in $u_2$ at $t=0.1$.}\label{fig:2D_Smooth_Accuracy}
\end{figure}

\begin{figure}[!htb]
  \centering
  \begin{subfigure}[t]{0.45\textwidth}
    \centering
    \includegraphics[width=1.0\textwidth]{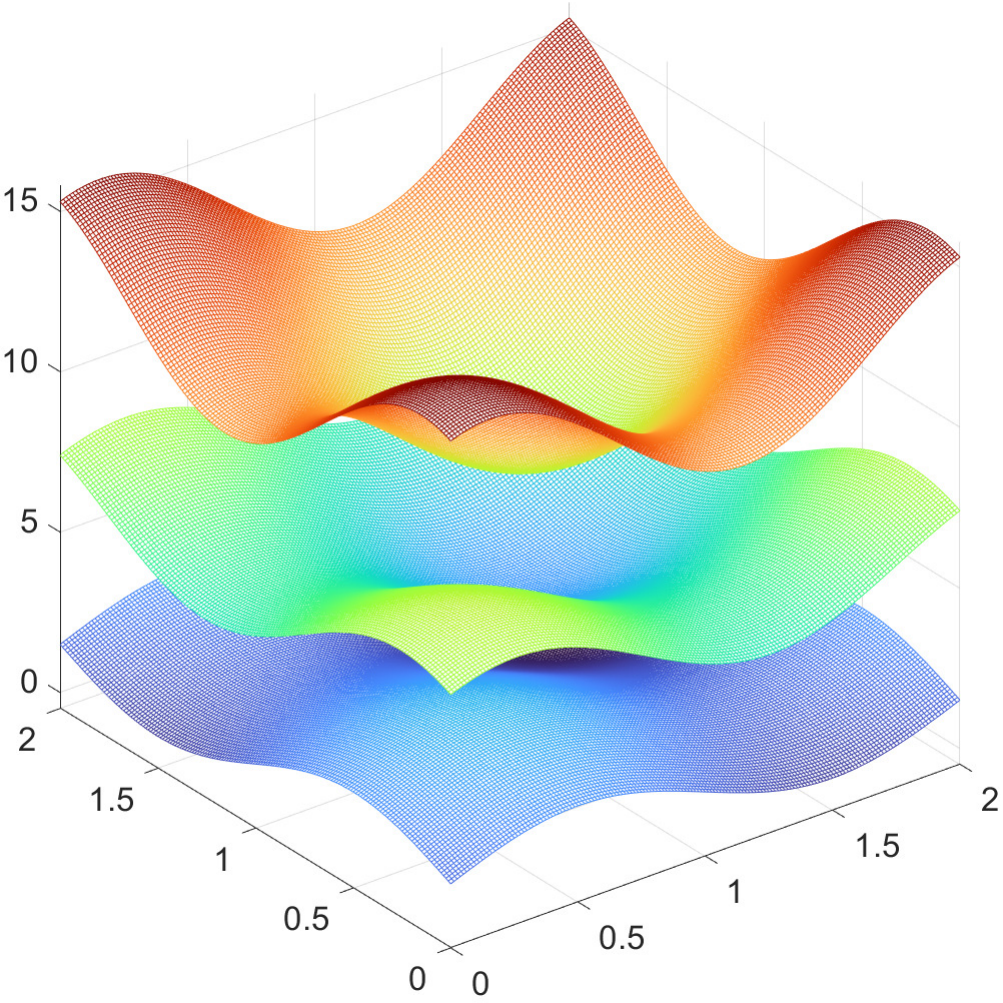}
    \caption{$h_1+h_2+b$, $h_2+b$, and $b$}
  \end{subfigure}
  \begin{subfigure}[t]{0.45\textwidth}
    \centering
    \includegraphics[width=1.0\textwidth]{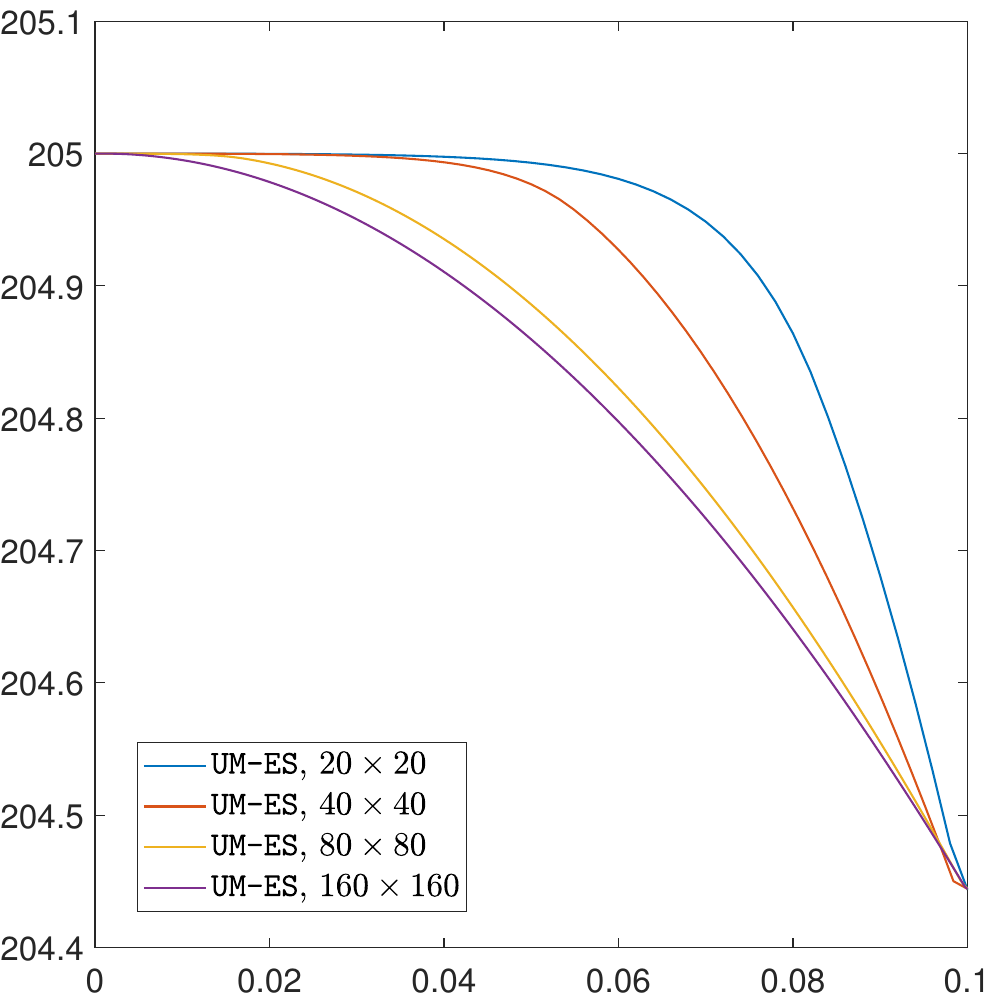}
      \caption{discrete total energy}
  \end{subfigure}\\
\begin{subfigure}[t]{0.45\textwidth}
\centering
\includegraphics[width=1.0\textwidth]{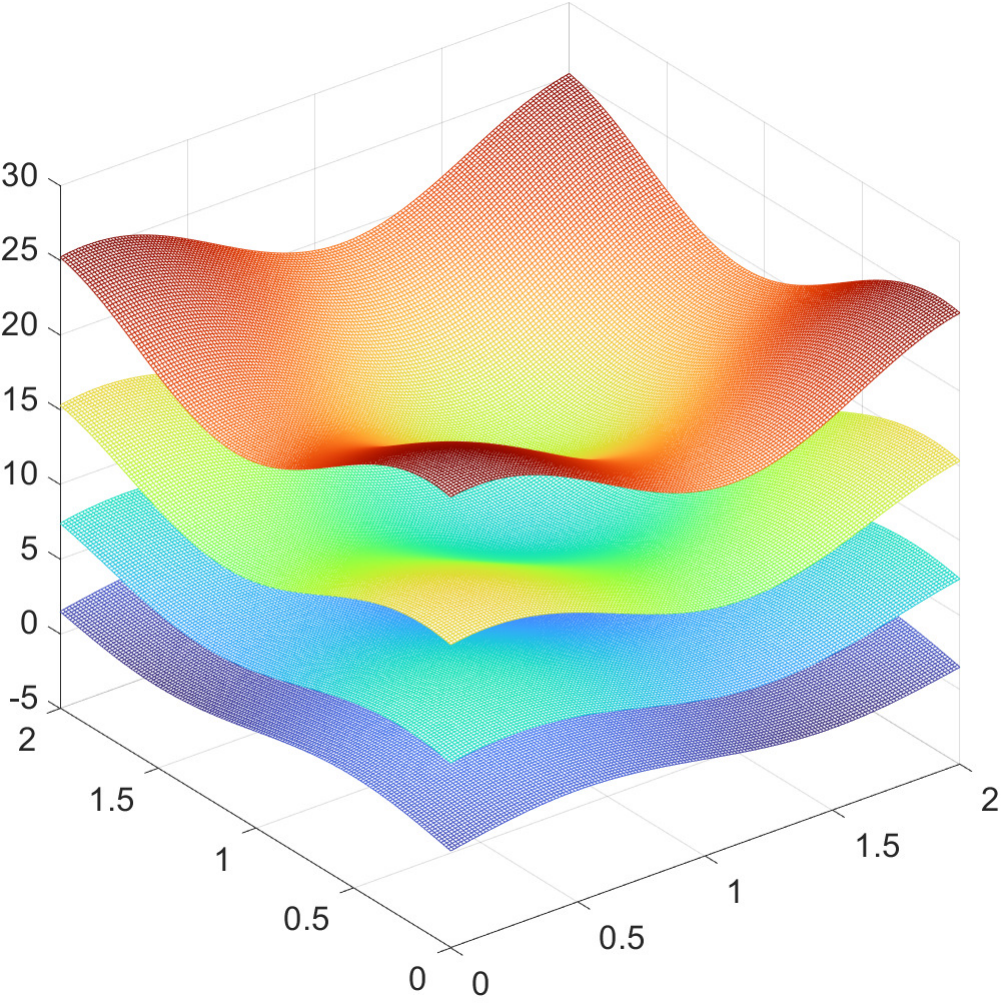}
  \caption{$h_1+h_2+h_3+b$, $h_2+h_3+b$, $h_3+b$, and $b$}
\end{subfigure}
\begin{subfigure}[t]{0.45\textwidth}
\centering
\includegraphics[width=1.0\textwidth]{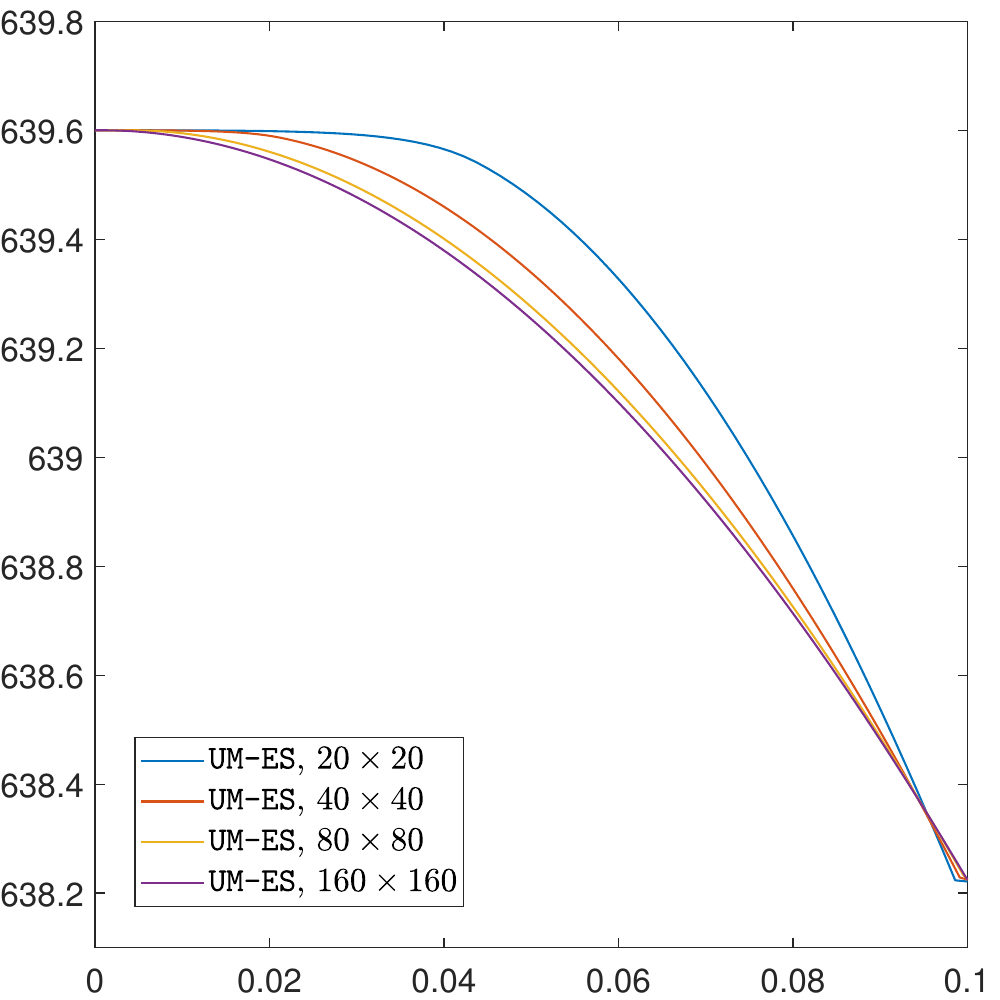}
  \caption{discrete total energy}
\end{subfigure}
  \caption{Example \ref{eq:Smooth_2D}.
  The water surface levels and bottom topography obtained by using the \texttt{UM-ES} scheme with $160\times160$ mesh at $t =0.1$,
  and the discrete total energy evolution over time on different meshes.
  Top: two-layer case, bottom: three-layer case.}\label{fig:2D_Smooth_Result_Entropy}
\end{figure}

\begin{example}[Accuracy test with a moving vortex]\rm\label{ex:2D_Vortex}
This example is used to further test the accuracy of our \texttt{MM-ES} schemes for the two-layer SWEs.
Similar to \cite{Duan2021_High,Zhang2023High}, a moving vortex can be constructed as follows
\begin{equation*}
    \begin{aligned}
        &h_1 = 5,~u_1 = v_1=0,\\
      &h_2=1-u_{\max}^2 e^{1-R^2} /(2 g), \\
      &\left(u_2, v_2\right)=(1,1)+u_{\max} e^{0.5\left(1-R^2\right)}(-(x_2-t), ~x_1-t), \\
    \end{aligned}
  \end{equation*}
  with $u_{\max} = 0.2$, $R = \sqrt{(x_1-t)^2+(x_2-t)^2}$, $g=1$, $r_{12}=0.7$, $\rho_2=1$, flat bottom topography, and the extra source terms
\begin{equation*}
	\bm{S} = (0,~-\frac{\exp(1 - (t - x_2)^2 - (t - x_1)^2)(2t - 2x_1)}{40},~-\frac{\exp(1 - (t - x_2)^2 - (t - x_1)^2)(2t - 2x_2)}{40},~0,~0,~0,~0)^{\mathrm{T}}.
\end{equation*}
  The monitor function is set to be
  \begin{equation*}
    \omega=\left({1+10 \left(\frac{|\nabla_{\bm{\xi}} (h_2+b)|}{\max |\nabla_{\bm{\xi}} (h_2+b)|}\right)^2+10\left(\frac{ |\Delta_{\bm{\xi}} (h_2+b)|}{\max |\Delta_{\bm{\xi}} (h_2+b)|}\right)^2}\right)^{\frac{1}{2}}.
  \end{equation*} 
\end{example}

Figure \ref{fig:2D_vortex} shows the adaptive mesh, the errors, and convergence orders in the water depth $h_2$ at $t =0.5$, and the evolution of the discrete total energy.
It is seen that the $5$th-order accuracy is achieved with the mesh points adaptively concentrating near the moving vortex,
and the discrete total energy $\sum\limits_{i,j} J_{i,j}\eta(\bm{U}_{i,j})\Delta \xi_{1} \Delta {\xi_2}$ decays as expected.

\begin{figure}[!htb]
  \centering
  \begin{subfigure}[b]{0.31\textwidth}
    \centering
    \includegraphics[width=1.0\textwidth]{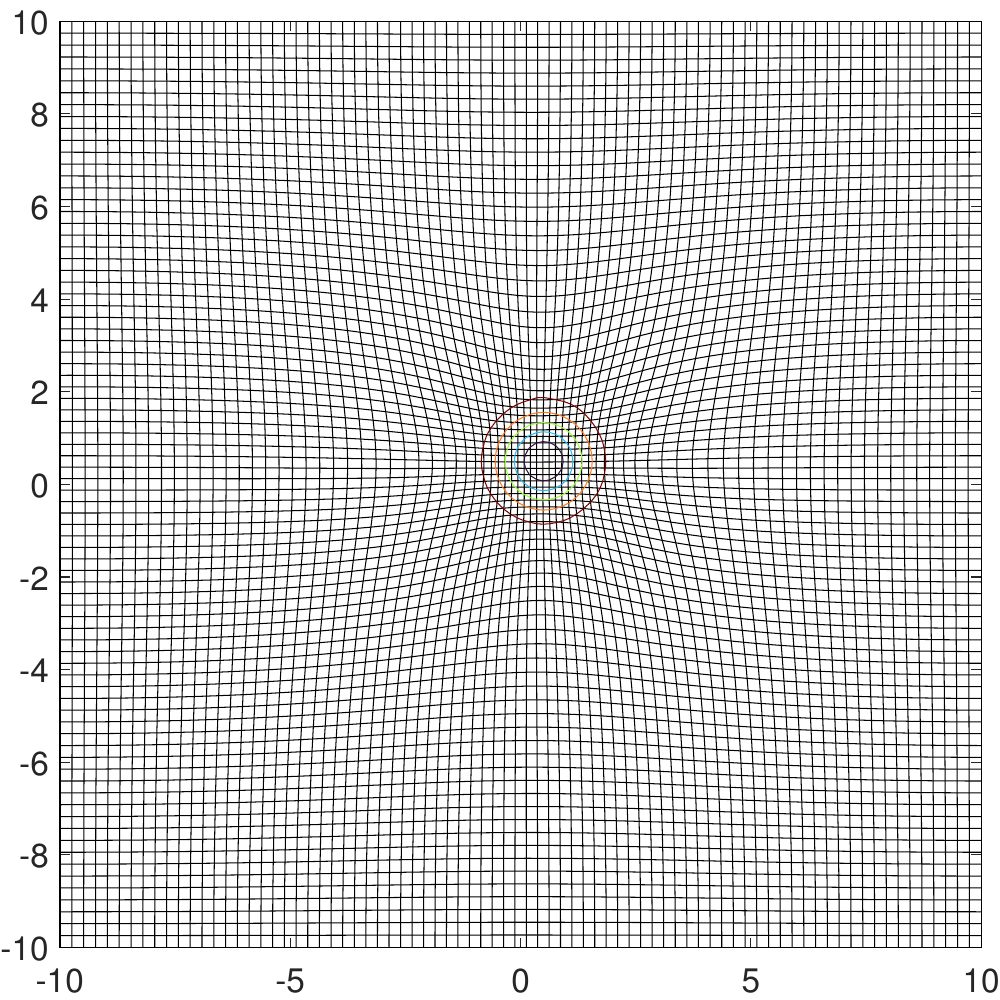}
    \caption{adaptive mesh}
  \end{subfigure}
  \begin{subfigure}[b]{0.35\textwidth}
    \centering
    \includegraphics[width=0.87\textwidth]{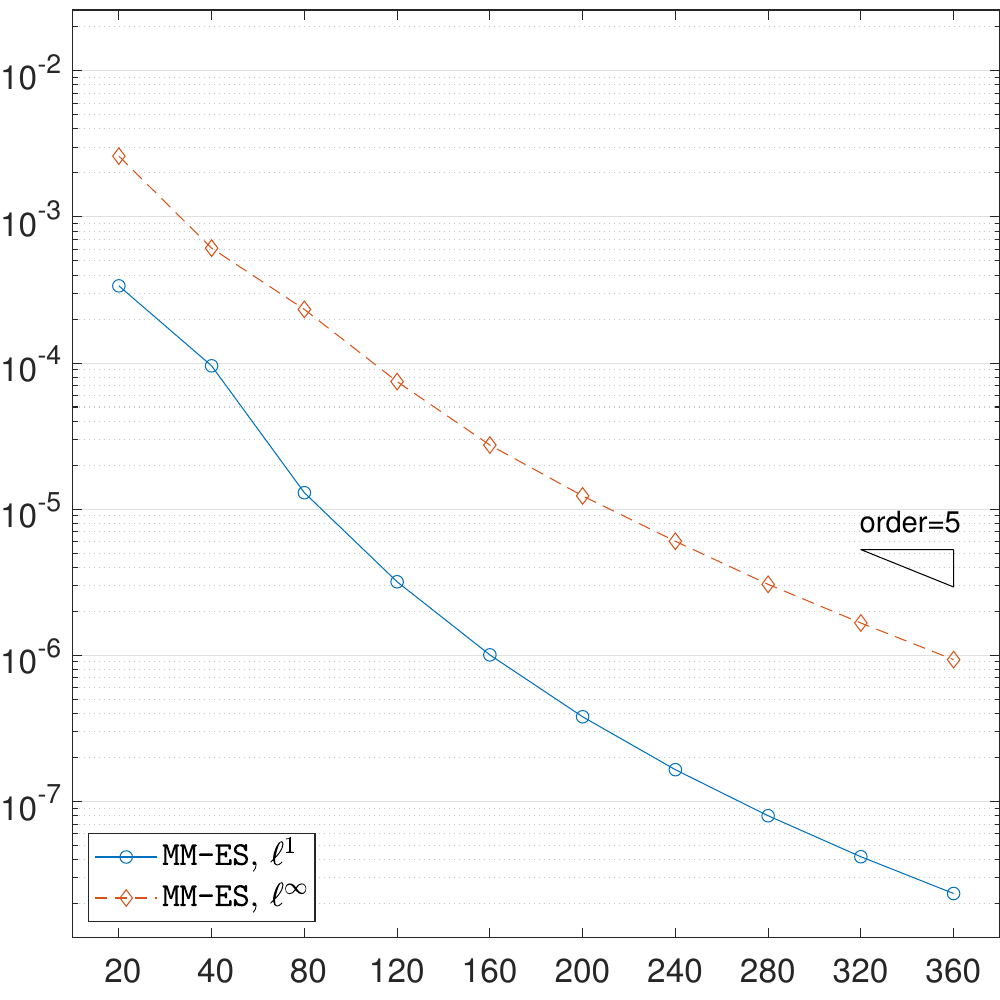}
    \caption{errors and convergence orders}
  \end{subfigure}
    \begin{subfigure}[b]{0.31\textwidth}
    \centering
    \includegraphics[width=1.0\textwidth]{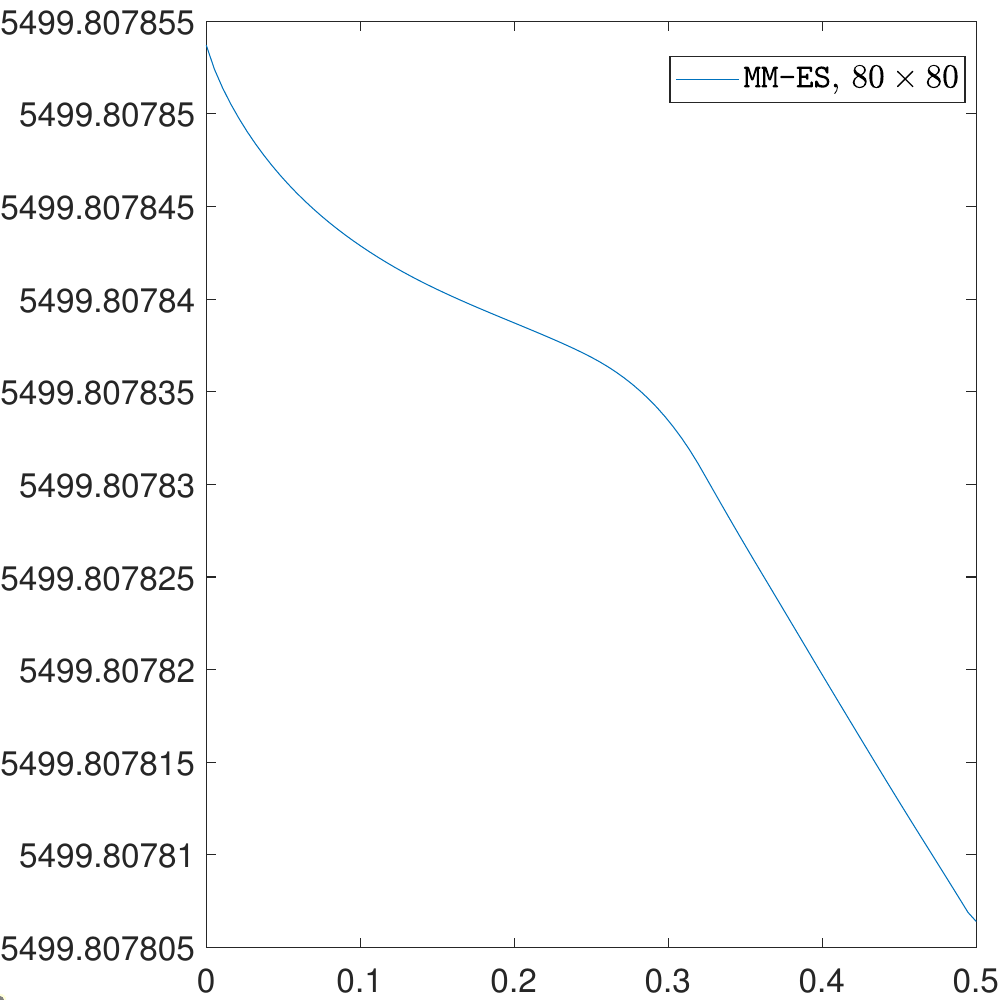}
    \caption{discrete total energy }
  \end{subfigure}
  \caption{Example \ref{ex:2D_Vortex}. 
  The $80\times80$ adaptive mesh with $5$ equally spaced contour lines of the water surface $h_2+b$, the errors and convergence orders in $h_2+b$ at $t=0.5$, and the discrete total energy obtained by the \texttt{MM-ES} scheme with $80\times80$ mesh.}\label{fig:2D_vortex}
\end{figure}

\begin{example}[2D WB test]\label{ex:2D_WB_Test}\rm

  The example is utilized to verify the WB properties of our 2D \texttt{UM-ES} and \texttt{MM-ES} schemes.
  The bottom topography is
  \begin{equation}\label{eq:2D_b_Smooth}
    b(x_1, x_2) = 1.2 \exp\left(-50\left((x_1-0.5)^2+(x_2-0.5)^2\right)\right),~ (x_1,x_2) \in [0,1]\times[0,1],
  \end{equation}
  or
  \begin{equation}\label{eq:2D_b_dis}
    b(x_1, x_2) = \begin{cases}
      1.0 , &\text{if}\quad  (x_1,x_2) \in [0.4,0.5]\times[0.4,0.5],\\
      0.5 , &\text{if}\quad  (x_1,x_2) \in ([0.4,0.6]\times[0.4,0.6]) \cup ([0.3,0.5]\times[0.3,0.5]) \backslash ([0.4,0.5]\times[0.4,0.5]),\\
      0, &\text{otherwise}.
    \end{cases}
  \end{equation}
  For the two-layer case, the initial data are $ h_2 = 2 - b$, $h_1 = 2.2-h_2-b$,
  zero velocities, with $g=1$, $r_{12} = 4/5$, $\rho_2=1$.
  For the three-layer case, the initial data are $ h_3 = 2 - b$, $h_2 = 2.2-h_2-b$, $h_1 = 2.4-h_2-h_1-b$ with $r_{12} = 4/5$, $r_{13} = 2/3$, $r_{23} = 5/6$, $\rho_3=6/5$.
  The monitor function is chosen as
  \begin{equation*}
      \omega = \left(1+\theta\left(\frac{\left|\nabla_{\bm{\xi}}\sigma\right|}{\max\left|\nabla_{\bm{\xi}}\sigma\right|}\right)^2\right)^{\frac{1}{2}},
  \end{equation*}
  with $\theta = 100$, $\sigma = h_2$ and $\sigma = h_3$ for the two-layer and three-layer cases, respectively.
  
\end{example}

The $\ell^1$ and $\ell^{\infty}$ errors in the water surface levels are listed in Tables \ref{tb:2D Well_Balance}-\ref{tb:2D Well_Balance_3L} by using our schemes with $100\times100$ mesh at $t=0.1$,
from which one observes that the lake at rest is maintained up to the rounding error in double precision.
Figures \ref{fig:2D_Well_Balance}-\ref{fig:2D_Well_Balance_3L} show the results of the water surface levels  $h_1+h_2+b$,  $h_2+b$ and $h_1+h_2+h_3+b$, $h_1+h_2+b$,  $h_2+b$ with corresponding bottom topography $b$  obtained by using the \texttt{MM-ES} scheme with $100\times100$ mesh.
Those results confirm that our schemes are WB on both fixed and adaptive moving meshes.

\begin{table}[!htb]
	\centering
	\begin{tabular}{cccccc}
		\hline\hline
		\multicolumn{2}{c}{\multirow{2}{*}{}} & \multicolumn{2}{c}{\texttt{UM-ES}} & \multicolumn{2}{c}{\texttt{MM-ES}}  \\ 
		\cmidrule(lr){3-4}\cmidrule(lr){5-6}
		\multicolumn{2}{c}{} & \multicolumn{1}{c}{$\ell^{1}$~error}  & \multicolumn{1}{c}{$\ell^{\infty}$~error} &  \multicolumn{1}{c}{$\ell^{1}$~error}  & \multicolumn{1}{c}{$\ell^{\infty}$~error}   \\
		\hline
		\multirow{2}{*}{$b$ in \eqref{eq:2D_b_Smooth}} &
	 $h_1+h_2+b$ & 6.32e-16 	 & 3.11e-15 	 & 1.02e-15 	 & 1.73e-14  \\ 
 	 &  $h_2+b$ & 6.40e-16 	 & 3.55e-15 	 & 9.24e-16 	 & 1.42e-14  \\ 
		\hline
		\multirow{2}{*}{$b$ in \eqref{eq:2D_b_dis}} &
	 $h_1+h_2+b$ & 1.95e-18 	 & 8.88e-16 	 & 1.99e-15 	 & 2.27e-14  \\ 
 	 &  $h_2+b$ & 2.13e-18 	 & 8.88e-16 	 & 1.83e-15 	 & 1.84e-14  \\ 
		\hline\hline
	\end{tabular}
	\caption{Example \ref{ex:2D_WB_Test} for the two-layer case. Errors in $h_1+h_2+b$ and $h_2+b$ obtained by our schemes using $100\times100$ mesh at $t=0.1$, with the bottom topography \eqref{eq:2D_b_Smooth} and \eqref{eq:2D_b_dis}.}\label{tb:2D Well_Balance}
\end{table}

\begin{table}[!htb]
	\centering
	\begin{tabular}{cccccc}
		\hline\hline
		\multicolumn{2}{c}{\multirow{2}{*}{}} & \multicolumn{2}{c}{\texttt{UM-ES}} & \multicolumn{2}{c}{\texttt{MM-ES}}  \\ 
			\cmidrule(lr){3-4}\cmidrule(lr){5-6}
		\multicolumn{2}{c}{} & \multicolumn{1}{c}{$\ell^{1}$~error}  & \multicolumn{1}{c}{$\ell^{\infty}$~error} &  \multicolumn{1}{c}{$\ell^{1}$~error}  & \multicolumn{1}{c}{$\ell^{\infty}$~error}   \\
		\hline
		\multirow{3}{*}{$b$ in \eqref{eq:2D_b_Smooth}} &
$h_1+h_2+h_3+b$ & 2.81e-16 	 & 2.67e-15 	 & 1.17e-15 	 & 1.24e-14  \\ 
 	 &  $h_2+h_3+b$ & 3.86e-16 	 & 3.11e-15 	 & 1.13e-15 	 & 1.07e-14   \\ 
 	   &$h_3+b$ & 3.11e-16 	 & 2.44e-15 	 & 1.02e-15 	 & 9.33e-15   \\ 
		\hline
		\multirow{3}{*}{$b$ in \eqref{eq:2D_b_dis}} &
	 $h_1+h_2+h_3+b$ & 2.66e-16 	 & 8.88e-16 	 & 1.57e-15 	 & 9.77e-15  \\ 
 	  & $h_2+h_3+b$ & 2.63e-16 	 & 1.33e-15 	 & 1.51e-15 	 & 1.11e-14   \\ 
 	  & $h_3+b$ & 1.99e-16 	 & 2.22e-15 	 & 1.41e-15 	 & 1.29e-14   \\ 
		\hline\hline
	\end{tabular}
		\caption{Example \ref{ex:2D_WB_Test} for the three-layer case. Errors in $h_1+h_2+h_3+b$, $h_2+h_3+b$ and $h_3+b$ obtained by our schemes using $100\times100$ mesh at $t=0.1$, with the bottom topography \eqref{eq:2D_b_Smooth} and \eqref{eq:2D_b_dis}.}\label{tb:2D Well_Balance_3L}
\end{table}

\begin{figure}[!htb]
  \centering
    \begin{subfigure}[b]{0.35\textwidth}
    \centering
    \includegraphics[width=1.0\textwidth]{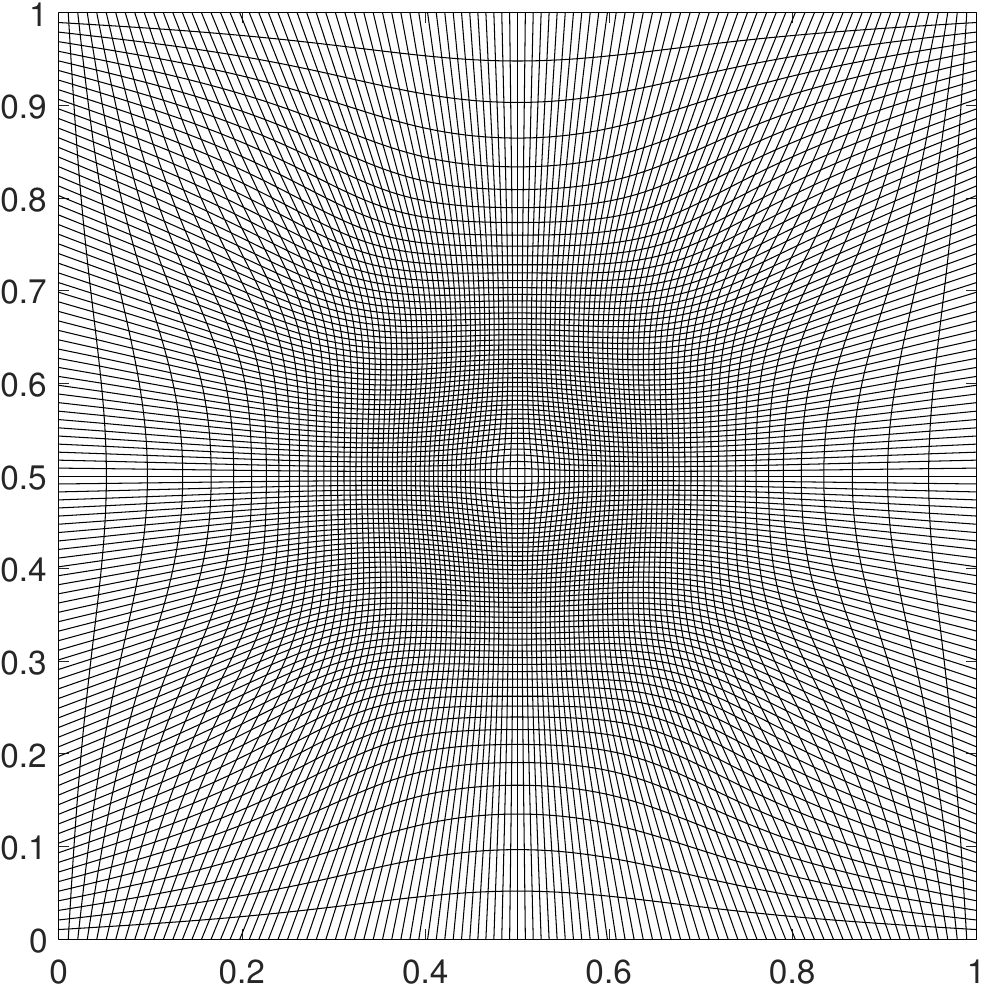}
    \caption{adaptive mesh}
  \end{subfigure}
  \begin{subfigure}[b]{0.35\textwidth}
    \centering
    \includegraphics[width=1.0\textwidth]{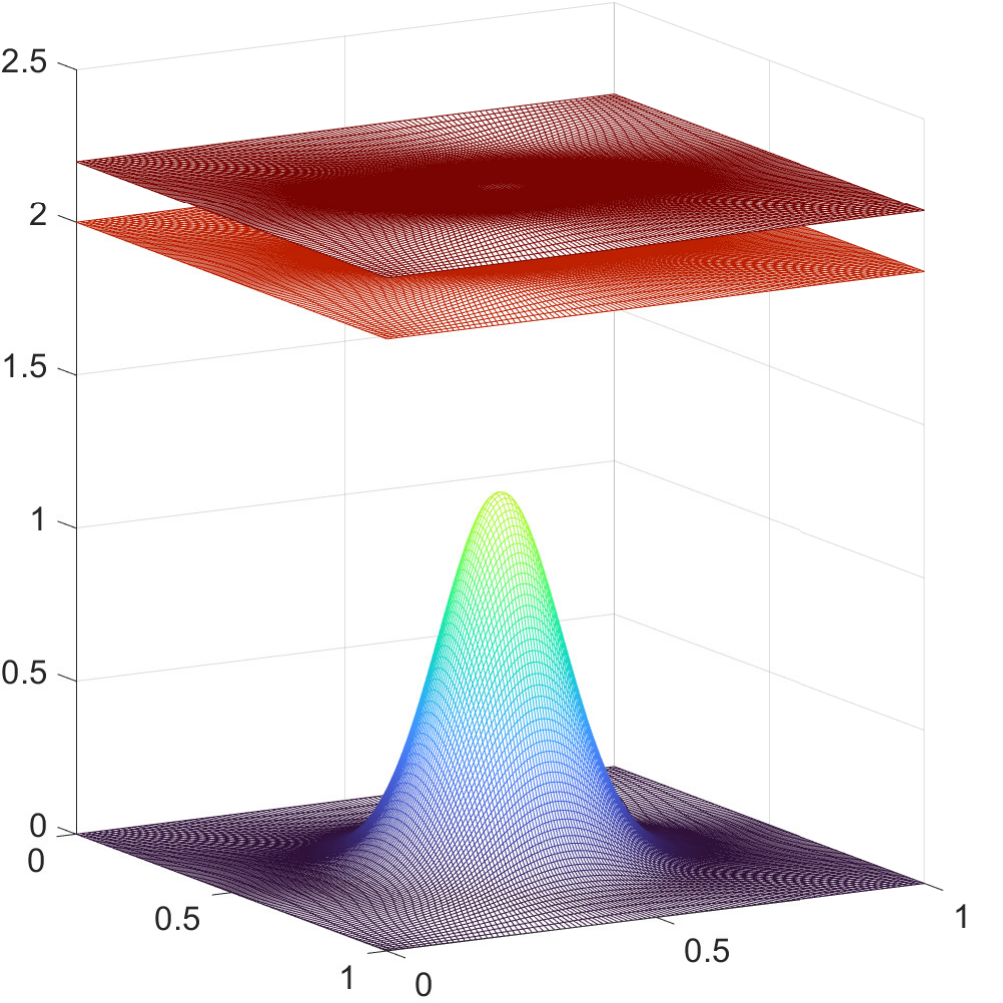}
       \caption{water surface levels}
  \end{subfigure}
\\
\begin{subfigure}[b]{0.35\textwidth}
	\centering
	\includegraphics[width=1.0\textwidth]{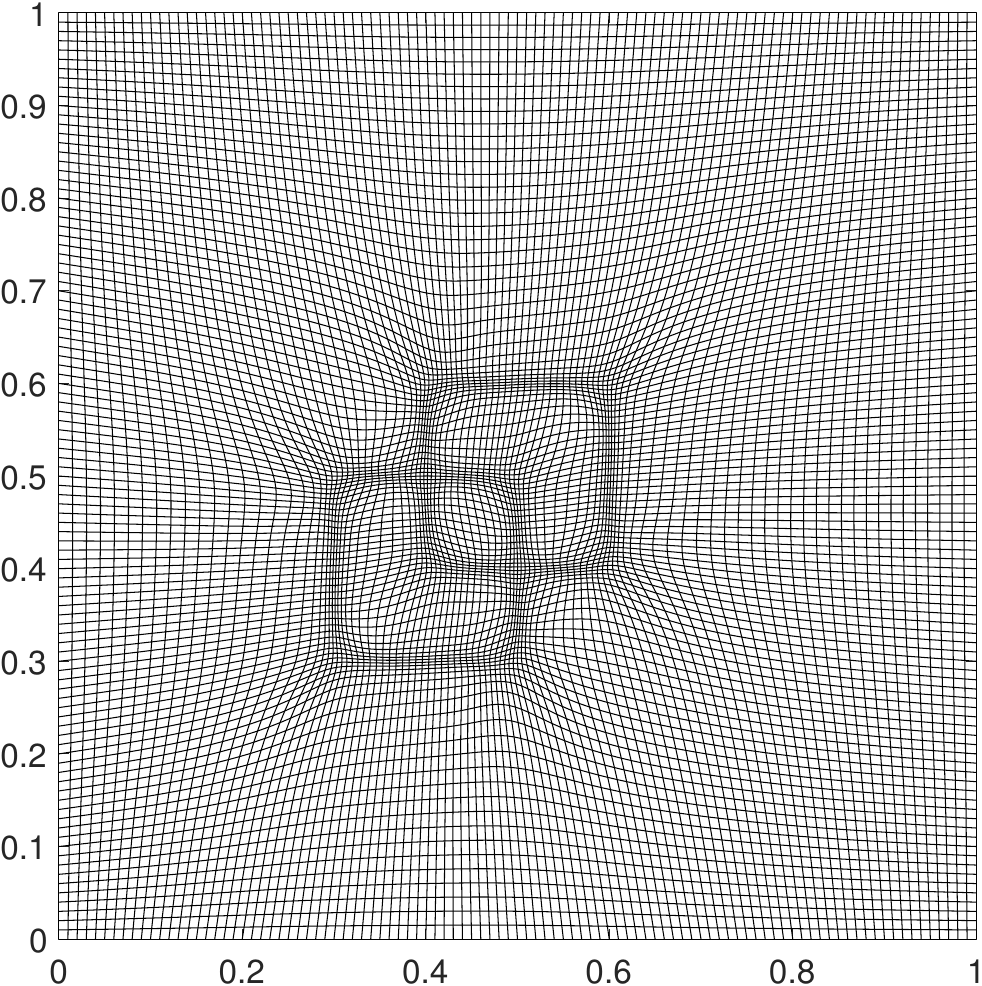}
 \caption{adaptive mesh}
\end{subfigure}
\begin{subfigure}[b]{0.35\textwidth}
	\centering
	\includegraphics[width=1.0\textwidth]{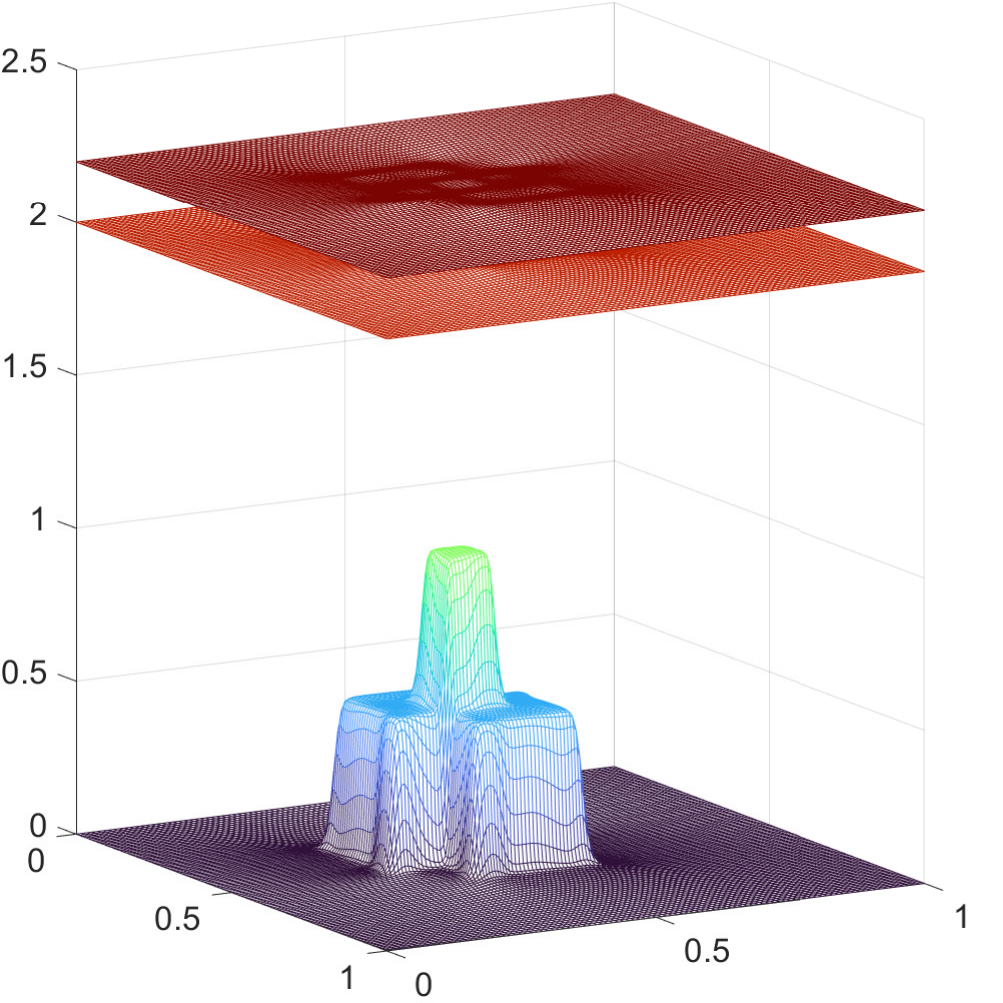}
        \caption{water surface levels}
\end{subfigure}
  \caption{Example \ref{ex:2D_WB_Test} for the two-layer case. The water surface levels $h_1+h_2+b$ and $h_2+b$  with the bottom topography $b$ and adaptive meshes obtained by using the \texttt{MM-ES} scheme with $100 \times 100$ mesh at $t=0.1$. Top:
  	with the bottom topography \eqref{eq:2D_b_Smooth}, bottom: with the bottom topography \eqref{eq:2D_b_dis}.}\label{fig:2D_Well_Balance}
\end{figure}

\begin{figure}[!htb]
  \centering
  \begin{subfigure}[b]{0.35\textwidth}
    \centering
    \includegraphics[width=1.0\textwidth]{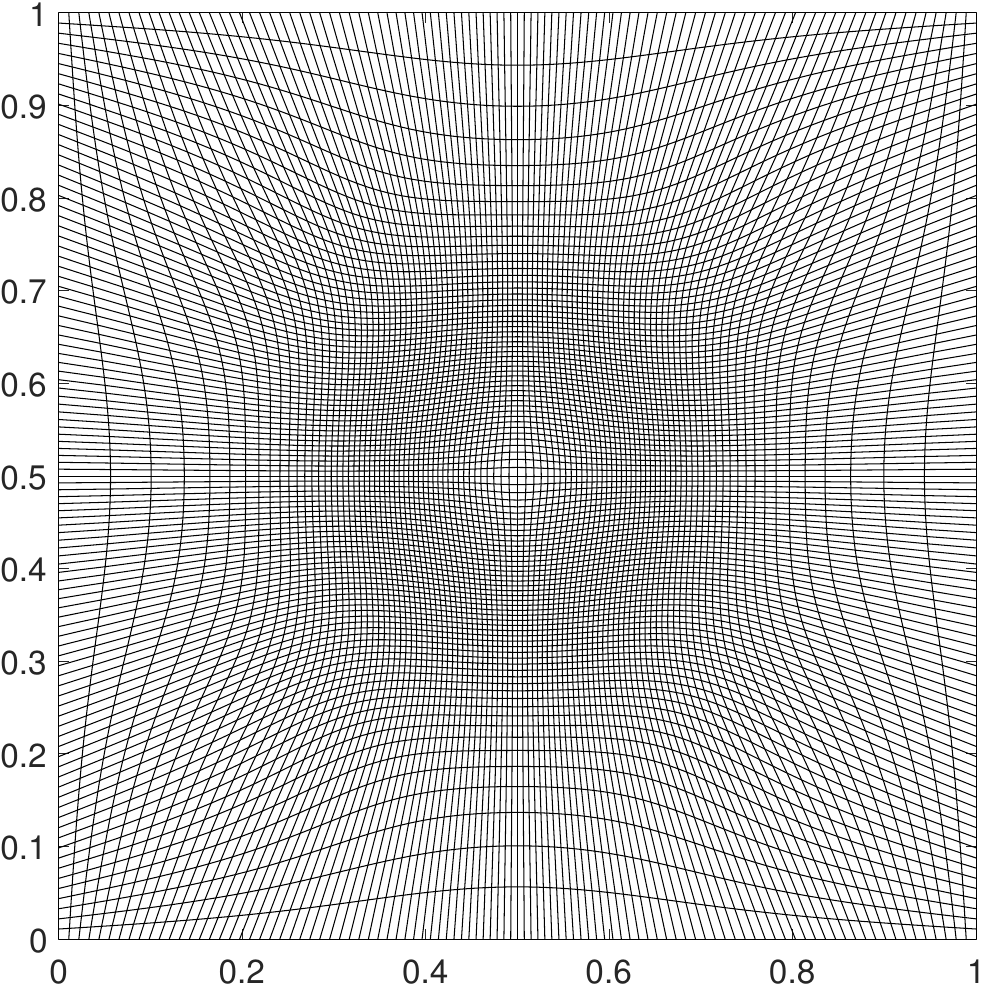}
    \caption{adaptive mesh}
  \end{subfigure}
  \begin{subfigure}[b]{0.35\textwidth}
    \centering
    \includegraphics[width=1.0\textwidth]{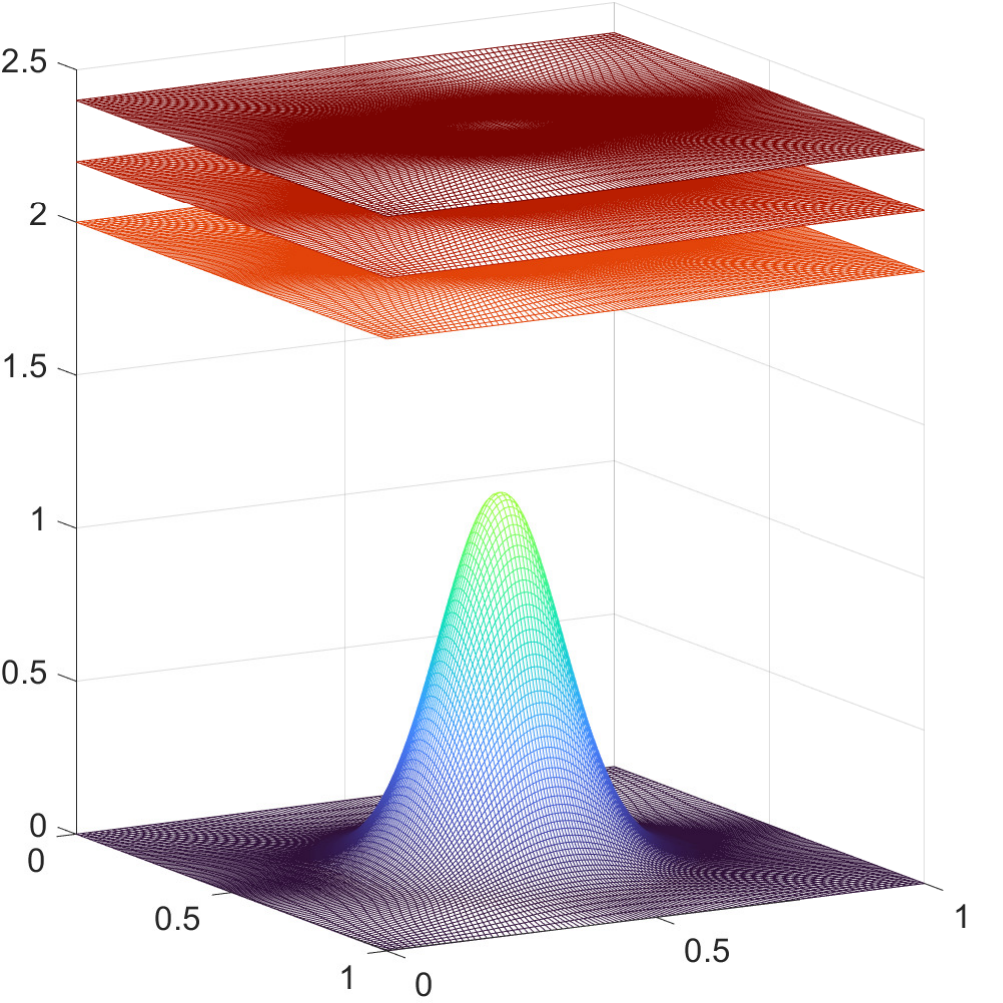}
    \caption{water surface levels}
  \end{subfigure}
  \\
  \begin{subfigure}[b]{0.35\textwidth}
	\centering
	\includegraphics[width=1.0\textwidth]{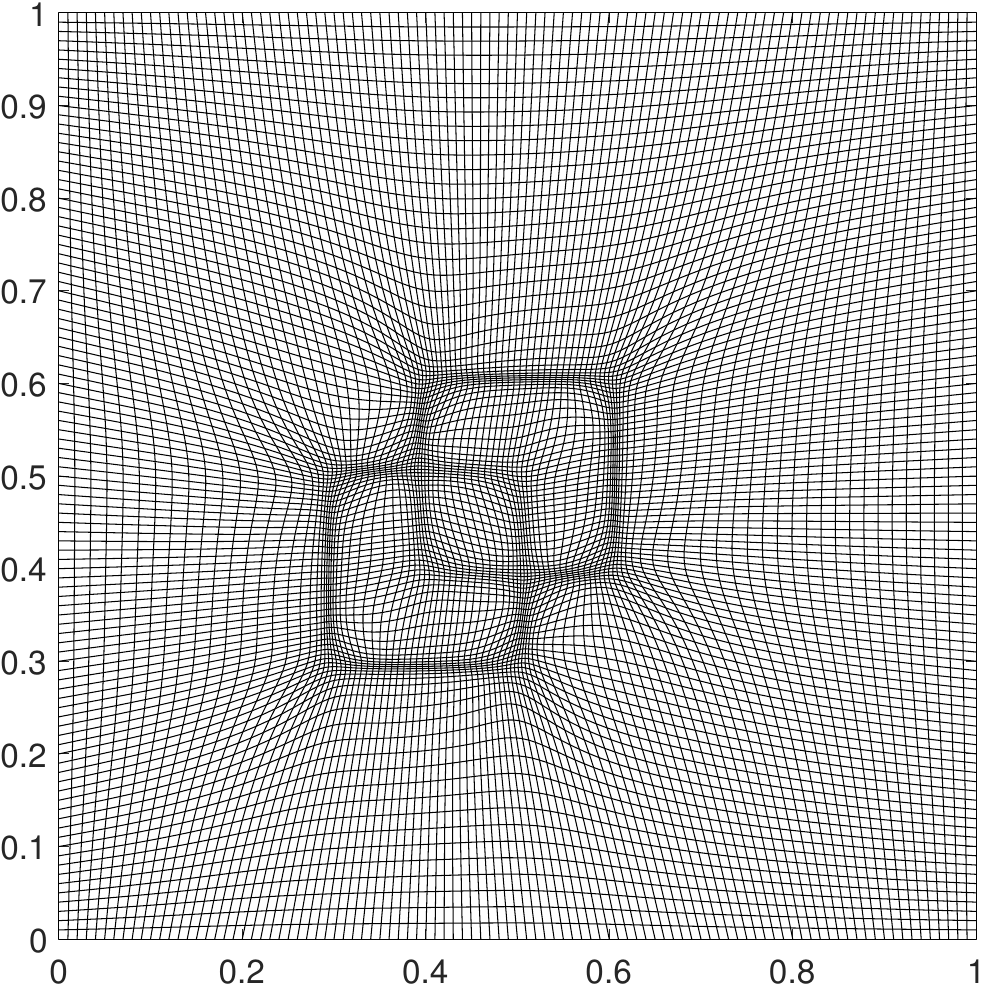}
 \caption{adaptive mesh}
\end{subfigure}
\begin{subfigure}[b]{0.35\textwidth}
	\centering
	\includegraphics[width=1.0\textwidth]{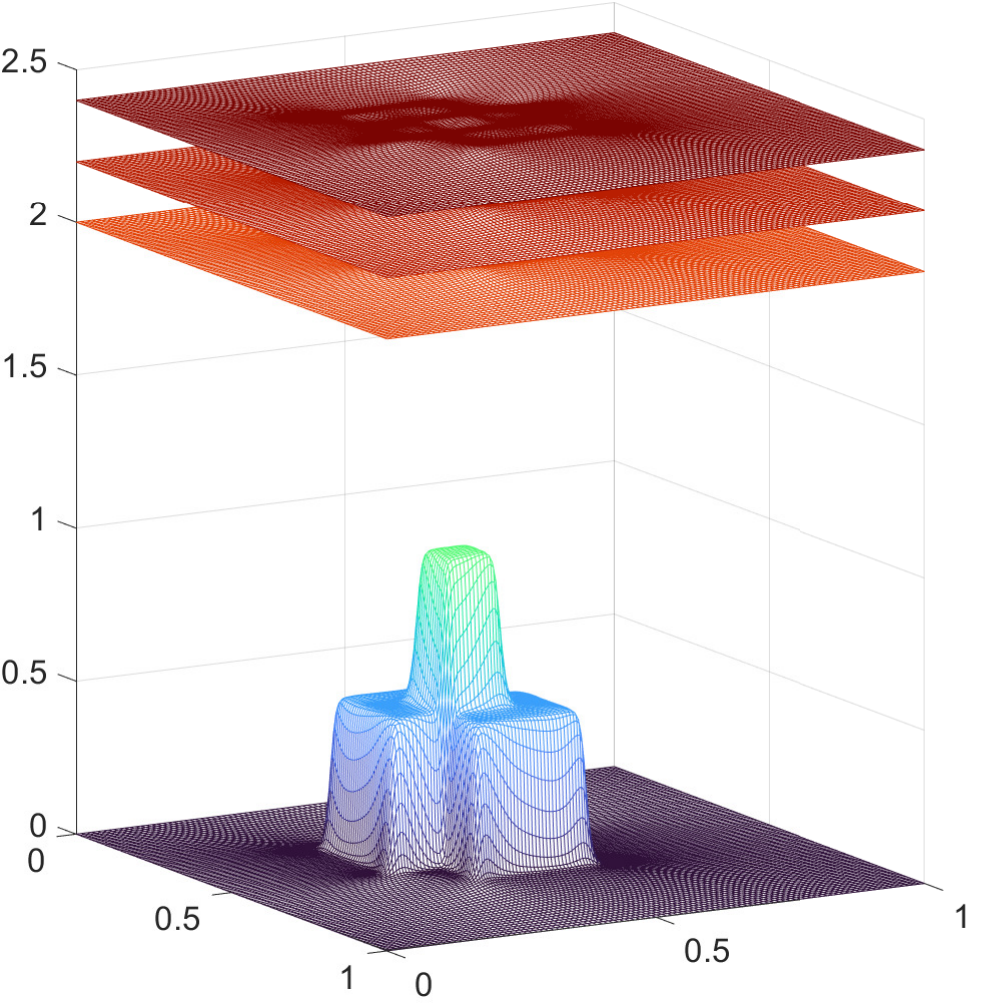}
 \caption{water surface levels}
\end{subfigure}
  \caption{Example \ref{ex:2D_WB_Test} for the three-layer case. The water surface levels $h_1+h_2+h_3+b$, $h_2+h_3+b$, $h_3+b$, with the bottom topography $b$ and adaptive meshes obtained by using the \texttt{MM-ES} scheme with $100 \times 100$ mesh at $t=0.1$. Top: with the bottom topography \eqref{eq:2D_b_Smooth}, bottom: with the bottom topography \eqref{eq:2D_b_dis}.}\label{fig:2D_Well_Balance_3L}
\end{figure}

\begin{example}[Propagation of an interface over a non-flat bottom]\rm\label{ex:Pro_NonFlat_Test}
This example tests the interface propagation over a non-flat bottom topography. Initially, the bottom topography is a Gaussian-shaped function 
\begin{equation*}
	b(x_1,~x_2) = 0.05e^{-100\left(x_1^2+x_2^2\right)}-1,
\end{equation*}
with the initial data for the two-layer case
\begin{equation*}
\left(h_1,~u_1,~v_1,~h_2,~u_2,~v_2\right) = \begin{cases}
	\left(0.50,~2.5,~2.5,~-0.50-b,~2.5,~2.5\right), & \text{if}\quad \left(x_1,x_2\right) \in \Omega,\\
	\left(0.45,~2.5,~2.5,~-0.45-b,~2.5,~2.5\right), & \text{otherwise},
\end{cases}
\end{equation*}
where
\begin{align*}
	\Omega =& \{(x_1,x_2)\mid x_1<-0.5,~x_2<0\}\cup\{(x_1,x_2)\mid x_1<0,~x_2<-0.5\} \\
 &\cup\{(x_1,x_2)\mid(x_1+0.5)^2+(x_2+0.5)^2<0.5^2\}.
\end{align*}
The physical domain is $[-1,1] \times [-1,1]$ with outflow boundary conditions, and $g=10$, $r_{12}=49/50$, $\rho_2=1$.
The solution is evolved up to $t=0.1$.
For the three-layer case, the initial data are
\begin{align*}
&\left(h_2,~u_2,~v_2,~h_3,~u_3,~v_3\right) = \begin{cases}
	\left(0.50,~2.5,~2.5,~-0.50-b,~2.5,~2.5\right), & \text{if}\quad \left(x_1,~x_2\right) \in \Omega,\\
	\left(0.45,~2.5,~2.5,~-0.45-b,~2.5,~2.5\right), & \text{otherwise},
\end{cases}\\
&\left(h_1,~u_1,~v_1\right) = 
	\left(1.0,~0.0,~0.0\right),
\end{align*}
with $r_{12} = 49/50$, $r_{13} = 49/55$, $r_{23} = 10/11$, and $\rho_3=11/10$.
The monitor function is the same as that in Example \ref{ex:2D_WB_Test}, except for $\sigma = h_1$ in both cases and $\theta = 300$.

\end{example}
\begin{figure}[!htb]
	\centering
	\begin{subfigure}[b]{0.4\textwidth}
		\centering
		\includegraphics[width=1.0\textwidth]{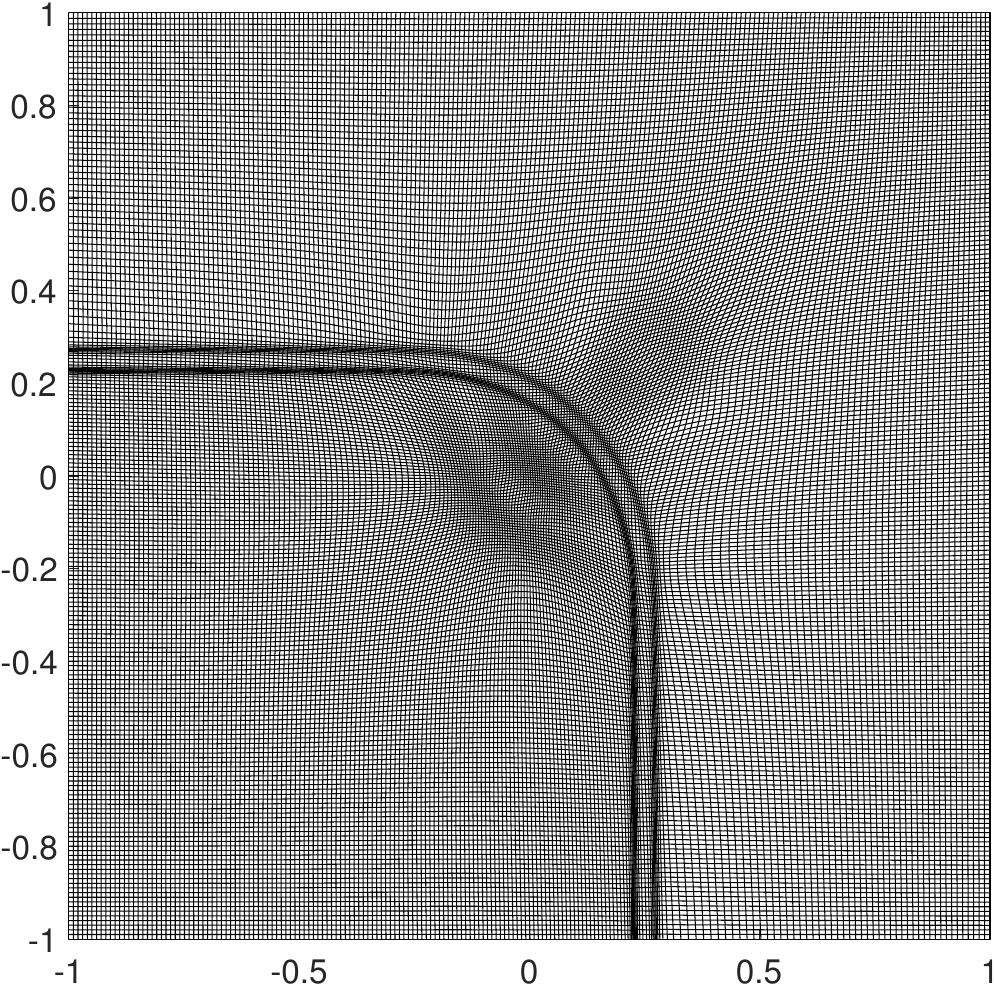}
  \caption{two-layer case}
	\end{subfigure}
	\begin{subfigure}[b]{0.4\textwidth}
		\centering
		\includegraphics[width=1.0\textwidth]{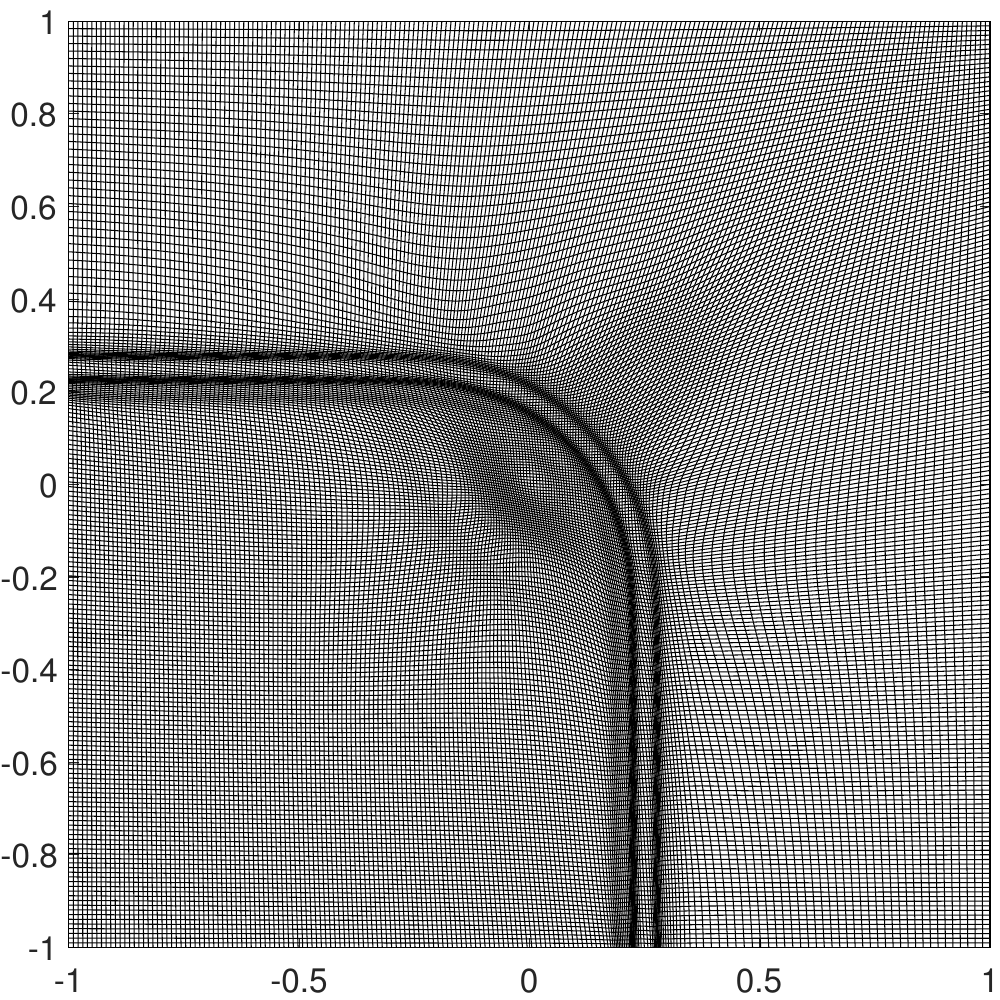}
  \caption{three-layer case}
	\end{subfigure}
 \caption{Example \ref{ex:Pro_NonFlat_Test}. The $200\times200$ adaptive meshes obtained by the \texttt{MM-ES} scheme.
}
\label{fig:2D_Pro_NonFlat_Test_Mesh}
\end{figure}

\begin{figure}[!htb]
  \centering
  \begin{subfigure}[b]{0.3\textwidth}
    \centering
    \includegraphics[width=1.0\textwidth]{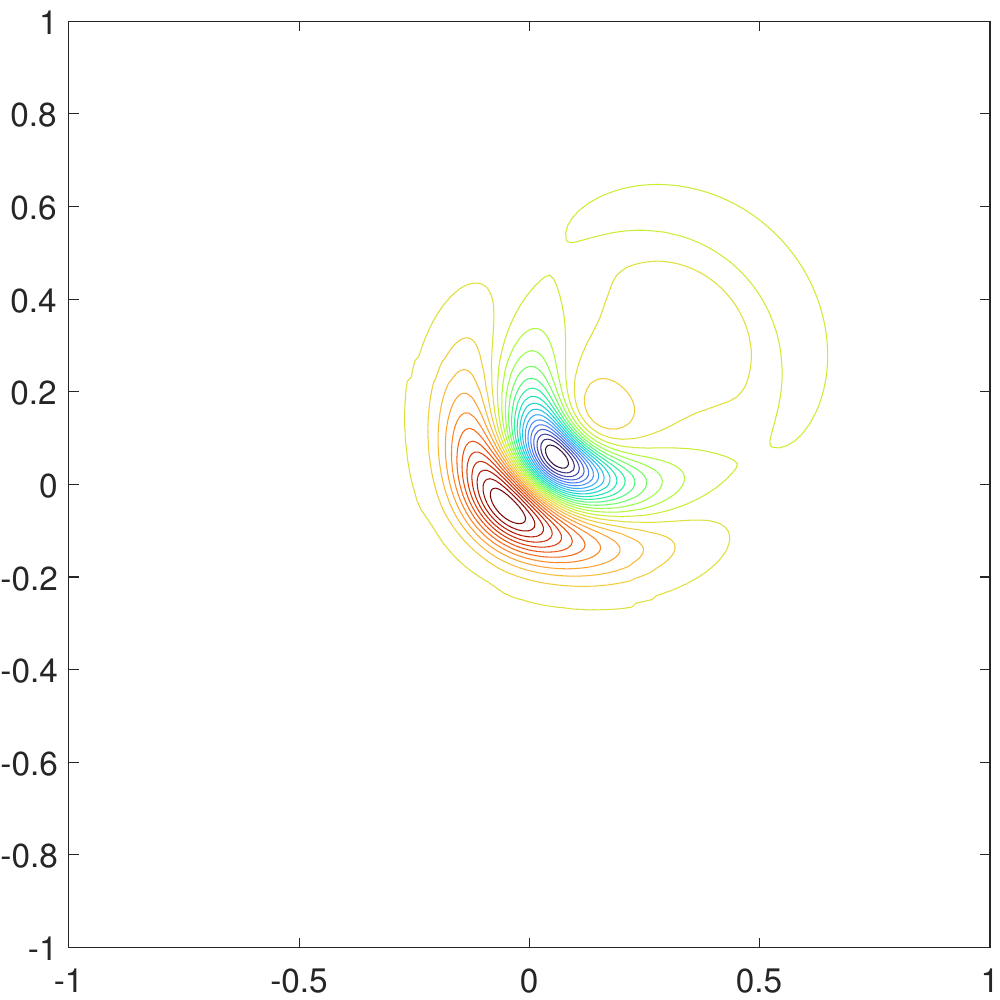}
    \caption{$h_1+h_2+b$}
  \end{subfigure}
  \begin{subfigure}[b]{0.3\textwidth}
    \centering
    \includegraphics[width=1.0\textwidth]{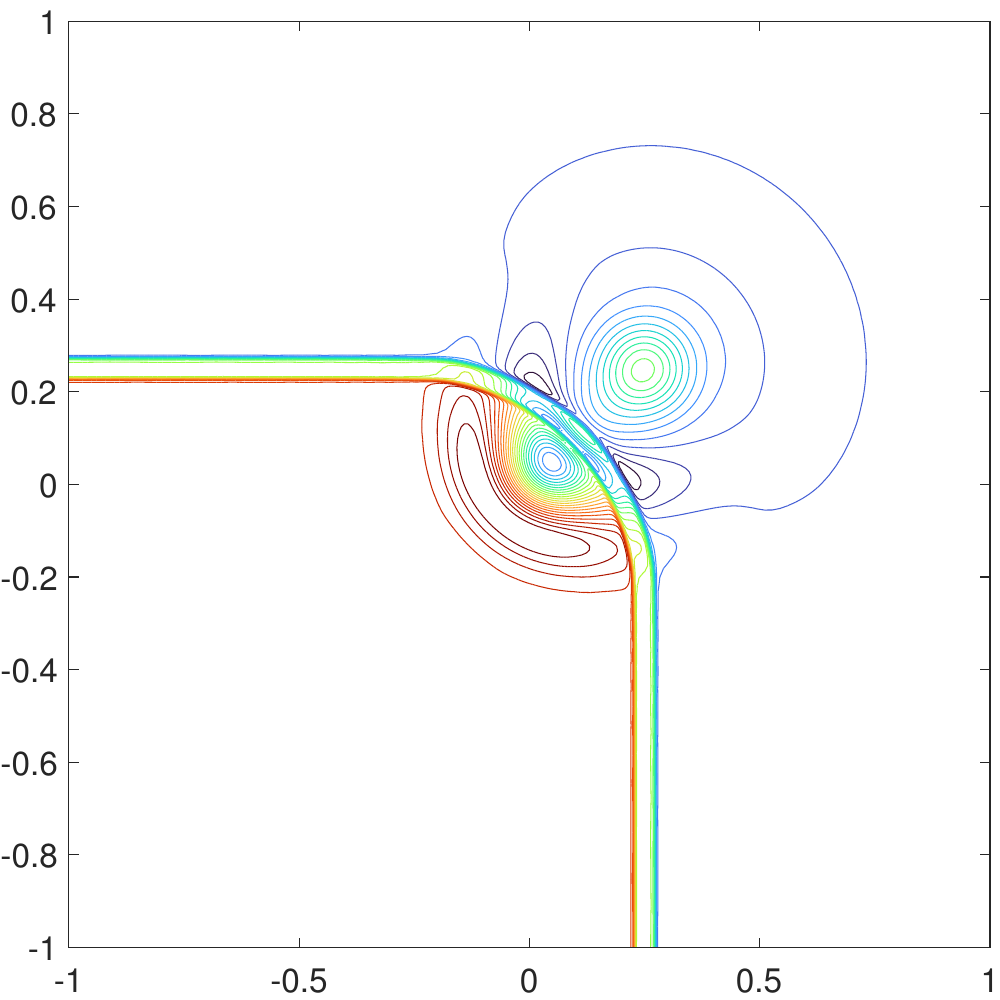}
    \caption{$h_1$}
  \end{subfigure}
\\
\begin{subfigure}[b]{0.3\textwidth}
	\centering
	\includegraphics[width=1.0\textwidth]{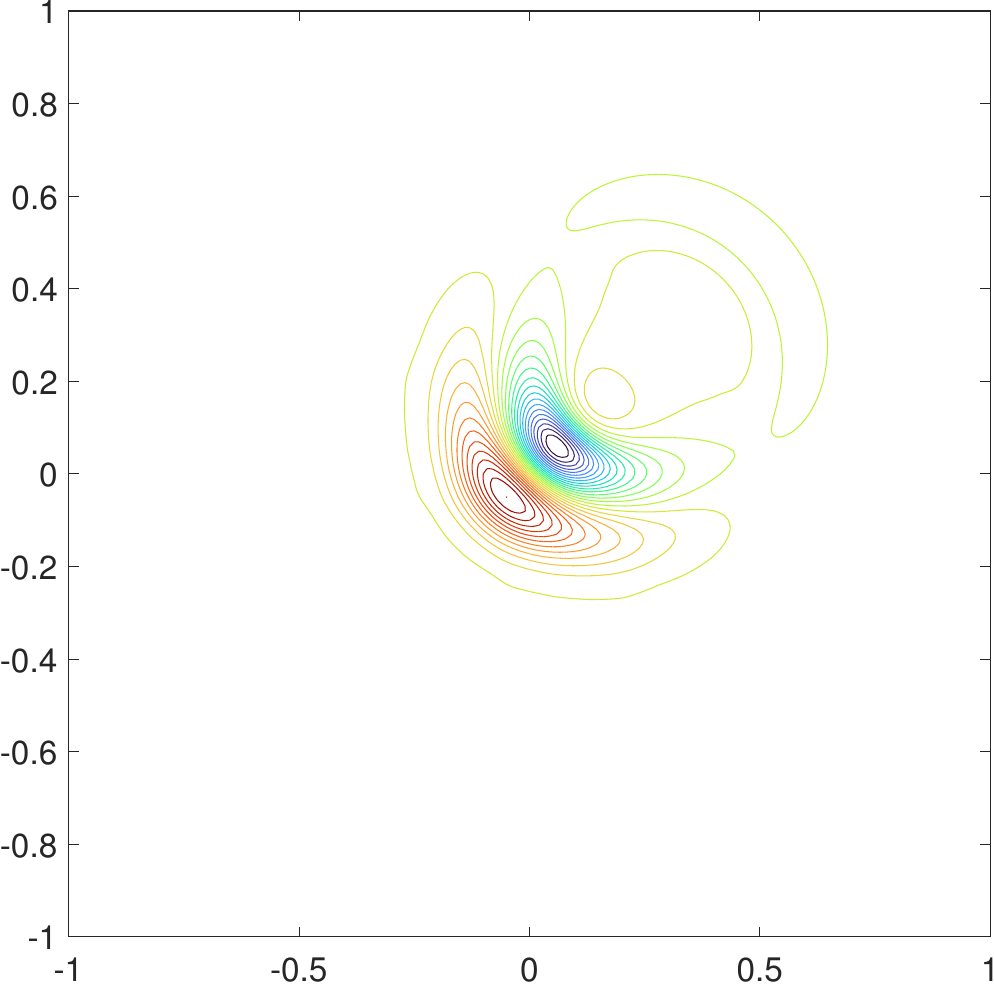}
  \caption{$h_1+h_2+b$}
\end{subfigure}
\begin{subfigure}[b]{0.3\textwidth}
	\centering
	\includegraphics[width=1.0\textwidth]{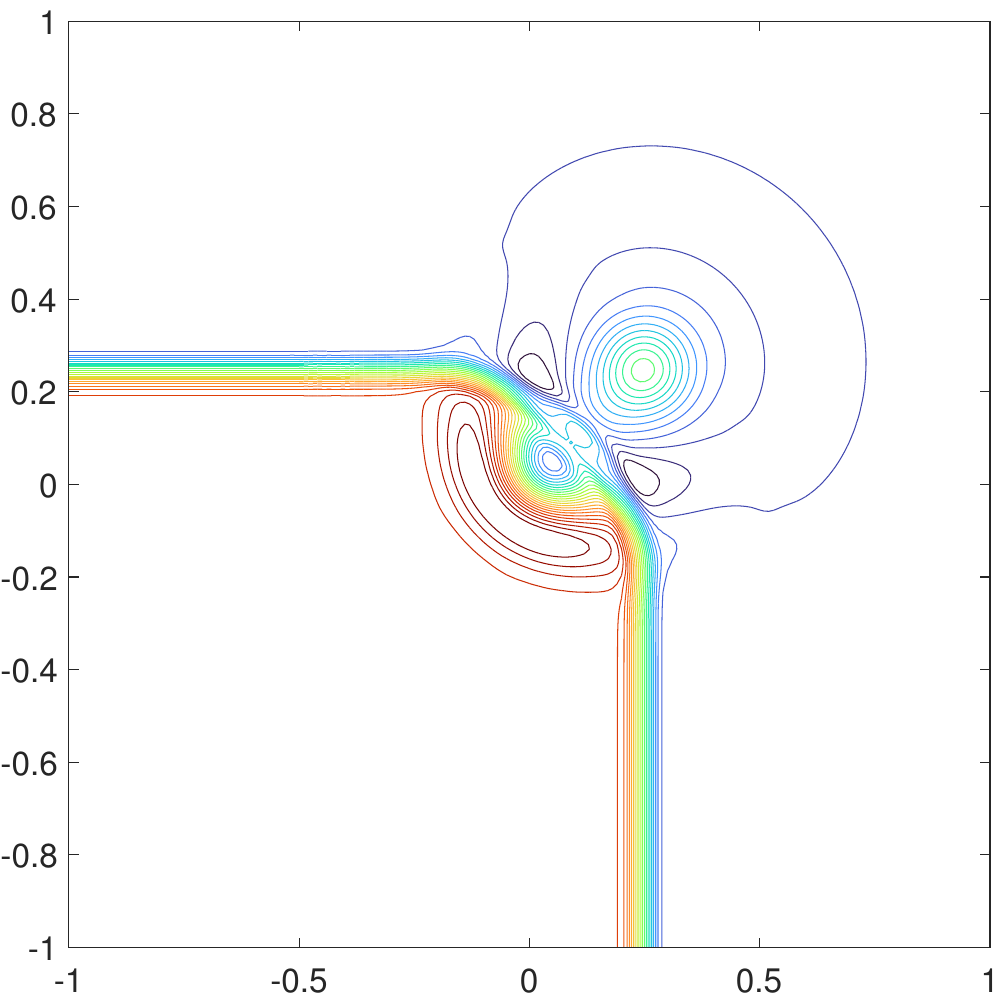}
     \caption{$h_1$}
\end{subfigure}
\\
\begin{subfigure}[b]{0.3\textwidth}
	\centering
	\includegraphics[width=1.0\textwidth]{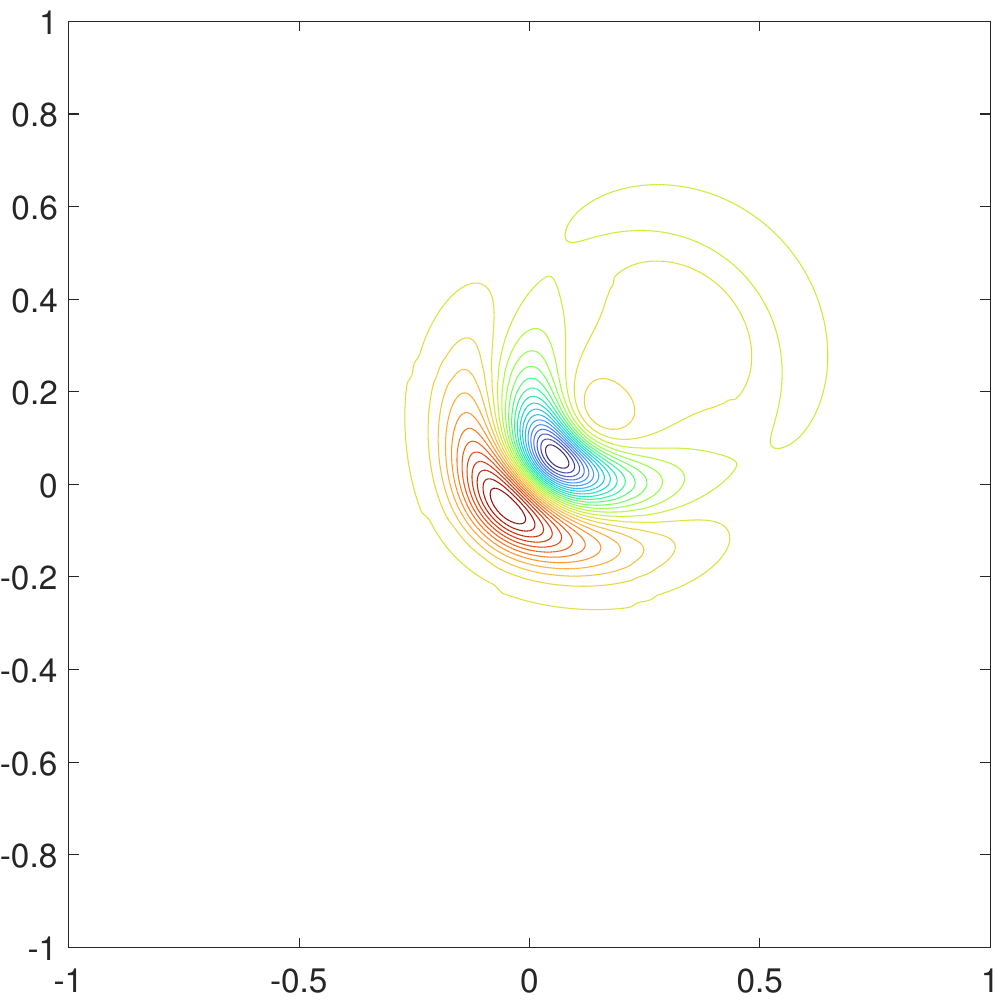}
  \caption{$h_1+h_2+b$}
\end{subfigure}
\begin{subfigure}[b]{0.3\textwidth}
	\centering
	\includegraphics[width=1.0\textwidth]{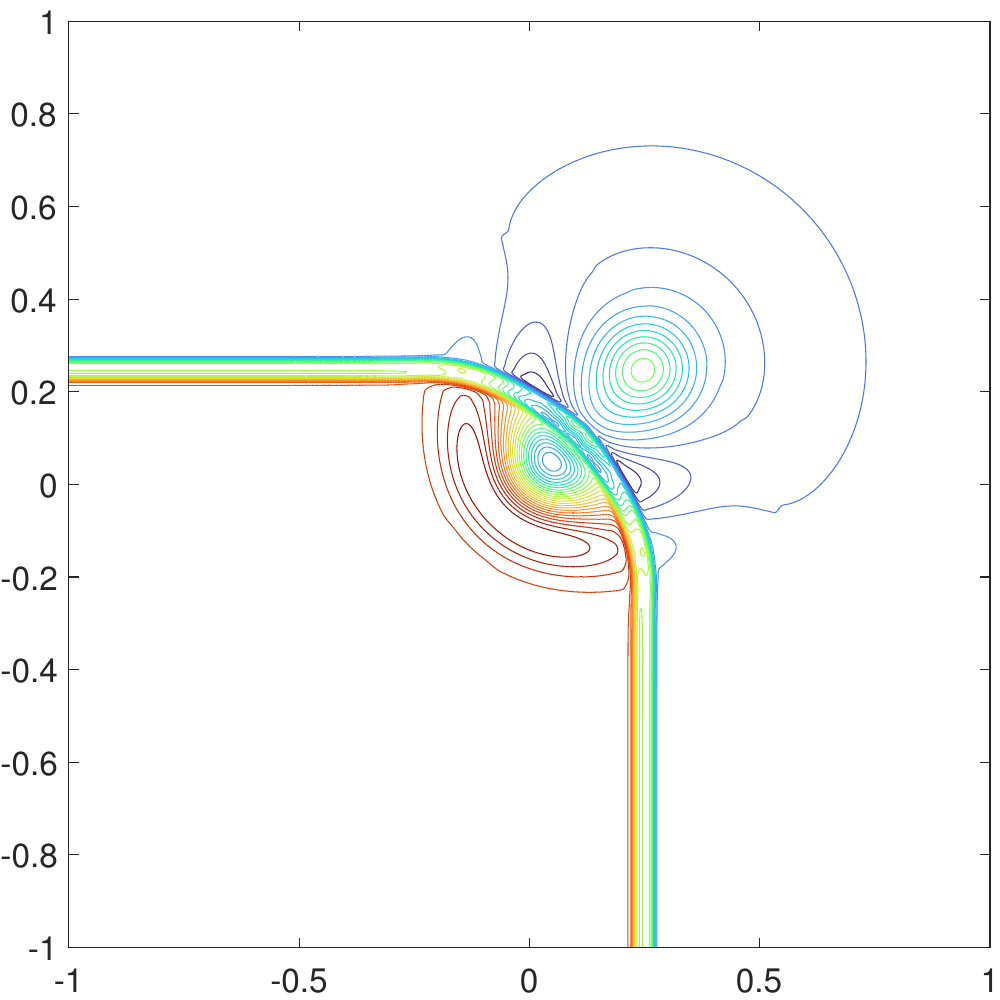}
     \caption{$h_1$}
\end{subfigure}
  \caption{Example \ref{ex:Pro_NonFlat_Test} for the two-layer case.  $30$ equally spaced contours obtained by our schemes.
  From top to bottom: \texttt{MM-ES} $(200\times200)$, \texttt{UM-ES} $(200\times200)$, \texttt{UM-ES} $(600\times600)$.
  }
  \label{fig:2D_Pro_NonFlat_Test}
\end{figure}

\begin{figure}[!htb]
	\centering
 \begin{subfigure}[b]{0.3\textwidth}
		\centering
		\includegraphics[width=1.0\textwidth]{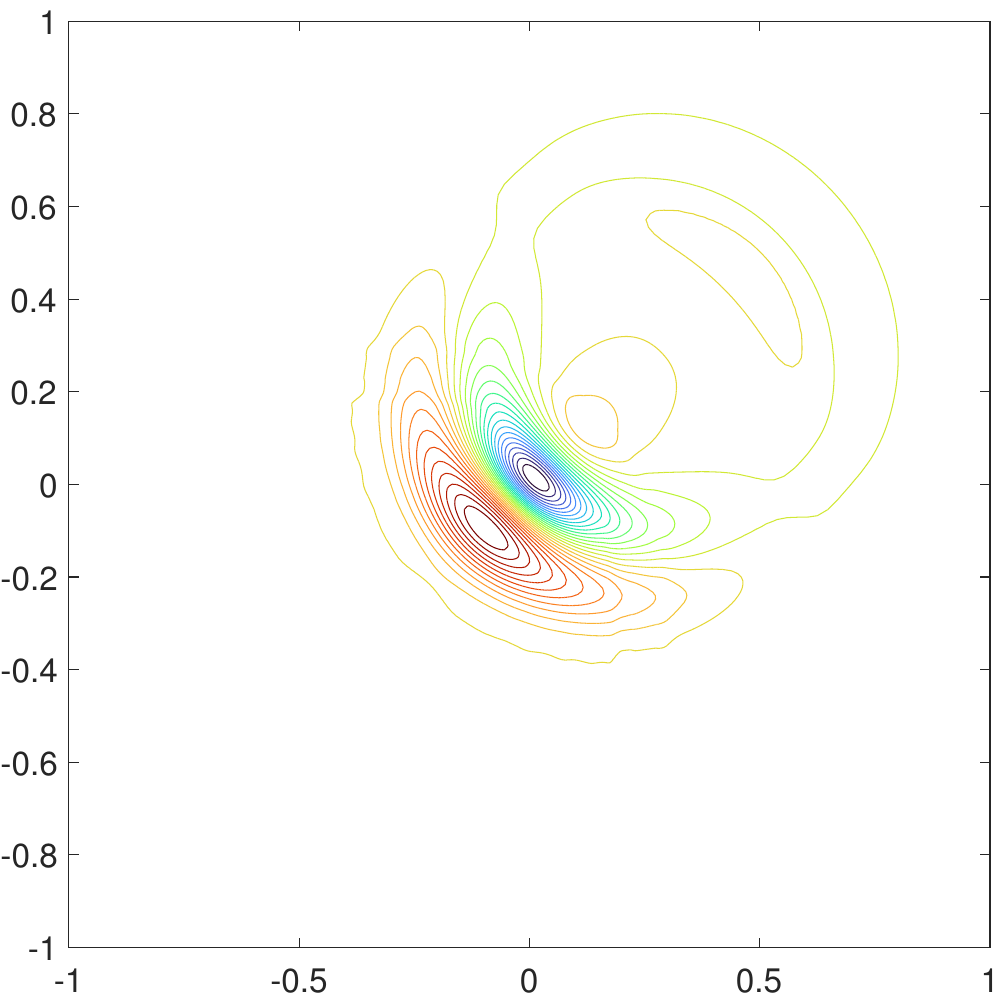}
  \caption{$h_1+h_2+h_3+b$}
	\end{subfigure}
	\begin{subfigure}[b]{0.3\textwidth}
		\centering
		\includegraphics[width=1.0\textwidth]{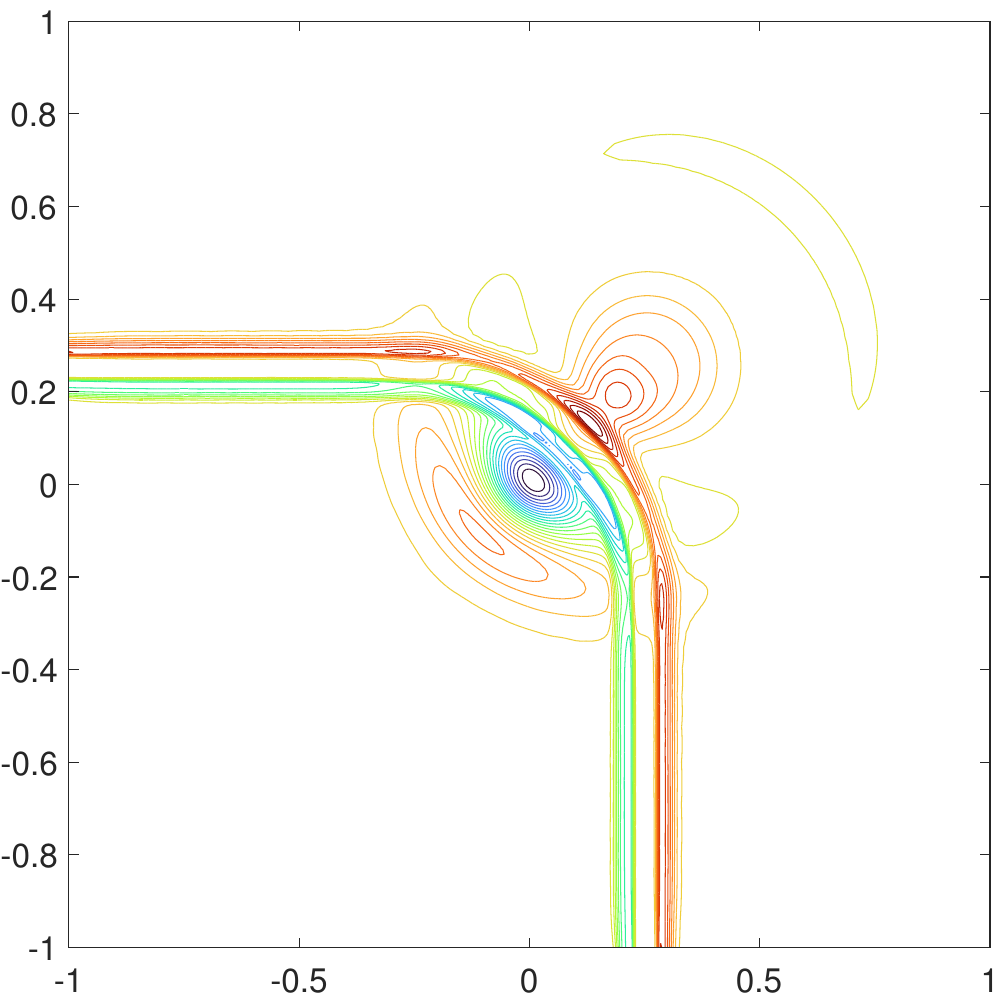}
    \caption{$h_1$}
	\end{subfigure}
 \\
 \begin{subfigure}[b]{0.3\textwidth}
		\centering
		\includegraphics[width=1.0\textwidth]{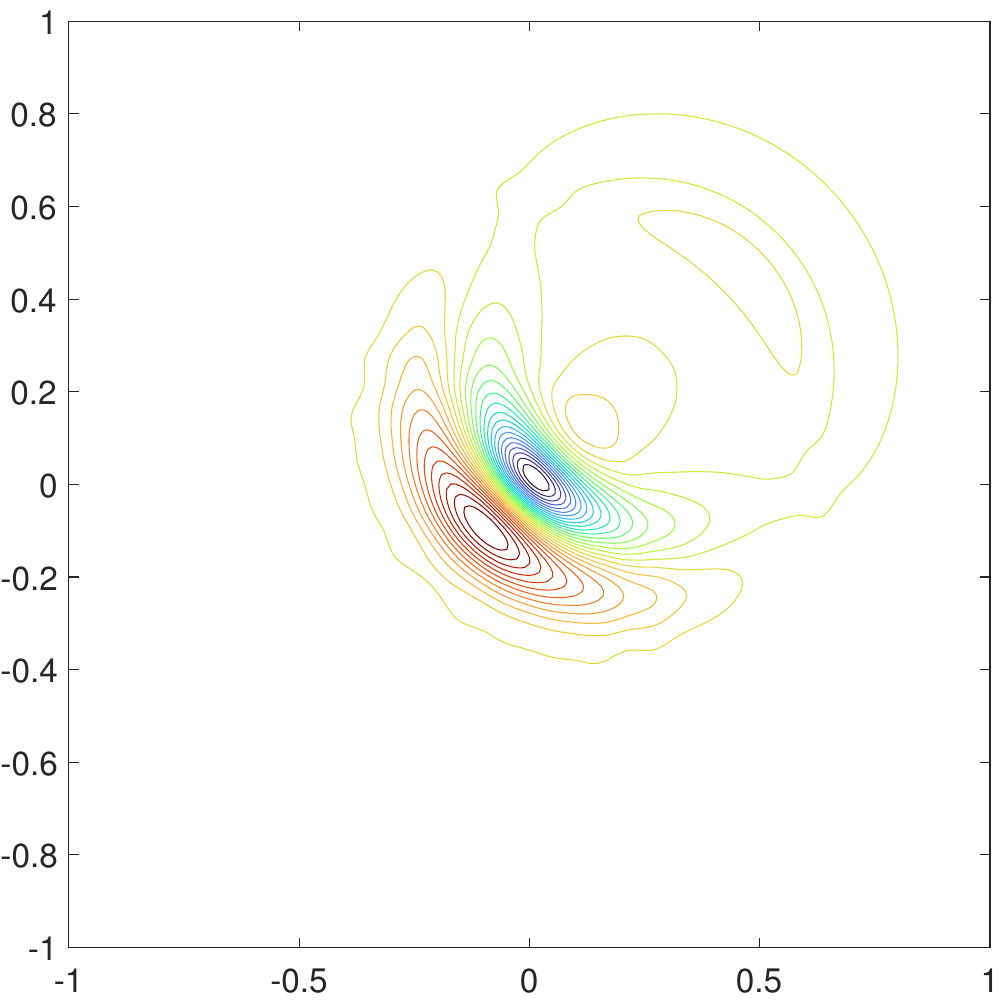}
    \caption{$h_1+h_2+h_3+b$}
	\end{subfigure}
	\begin{subfigure}[b]{0.3\textwidth}
		\centering
		\includegraphics[width=1.0\textwidth]{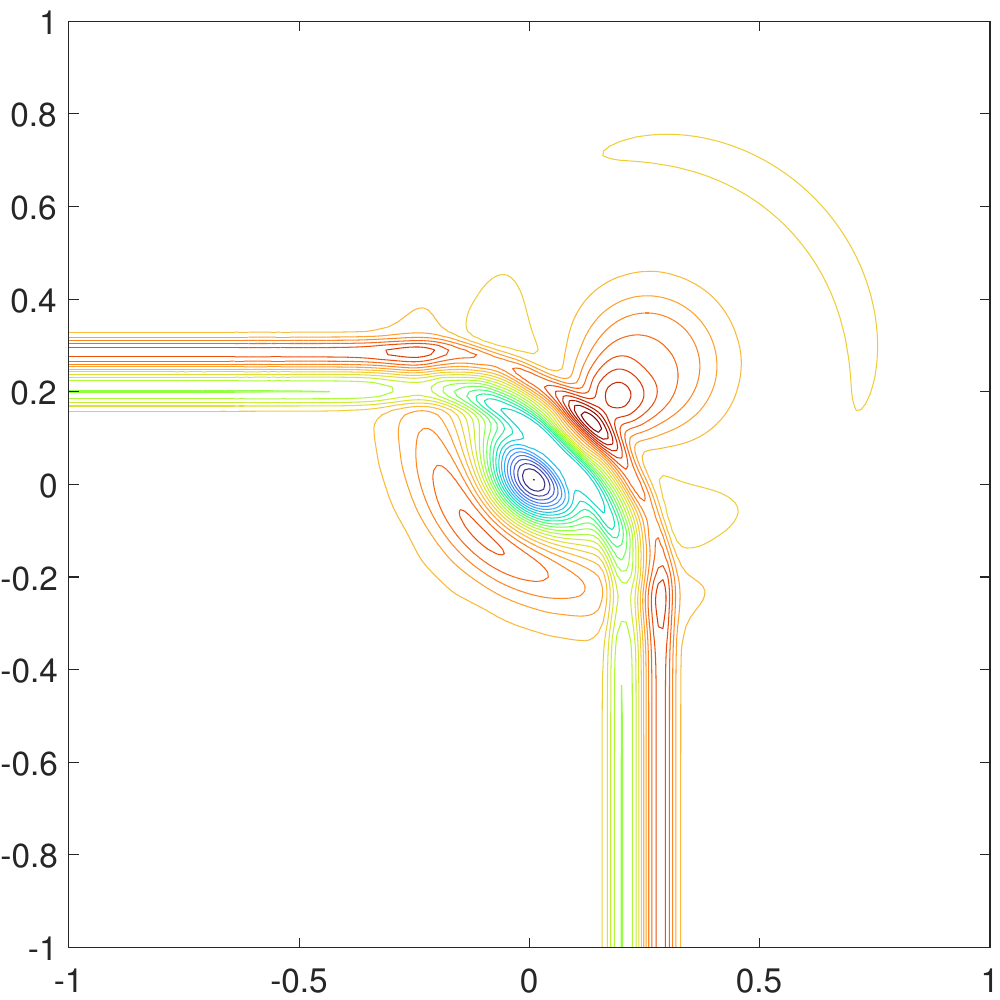}
  \caption{$h_1$}
	\end{subfigure}
 \\
	\begin{subfigure}[b]{0.3\textwidth}
		\centering
		\includegraphics[width=1.0\textwidth]{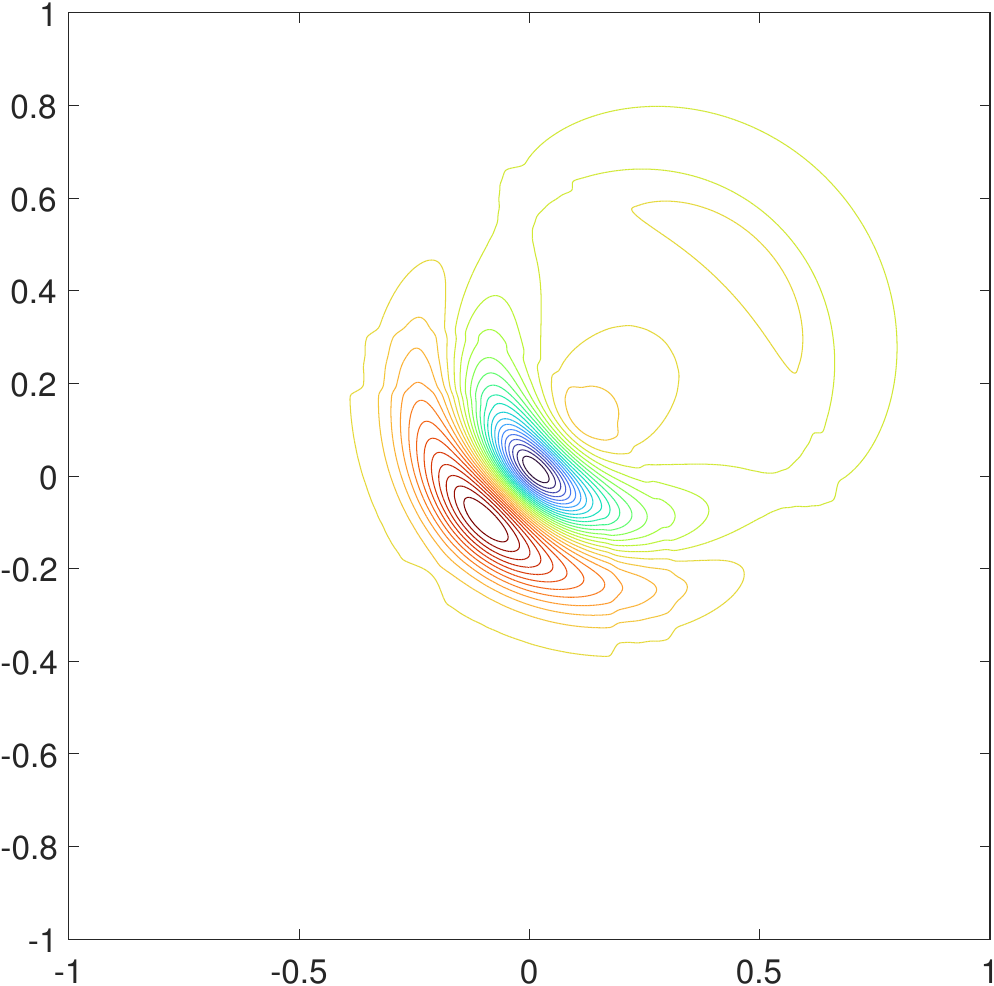}
    \caption{$h_1+h_2+h_3+b$}
	\end{subfigure}
	\begin{subfigure}[b]{0.3\textwidth}
		\centering
		\includegraphics[width=1.0\textwidth]{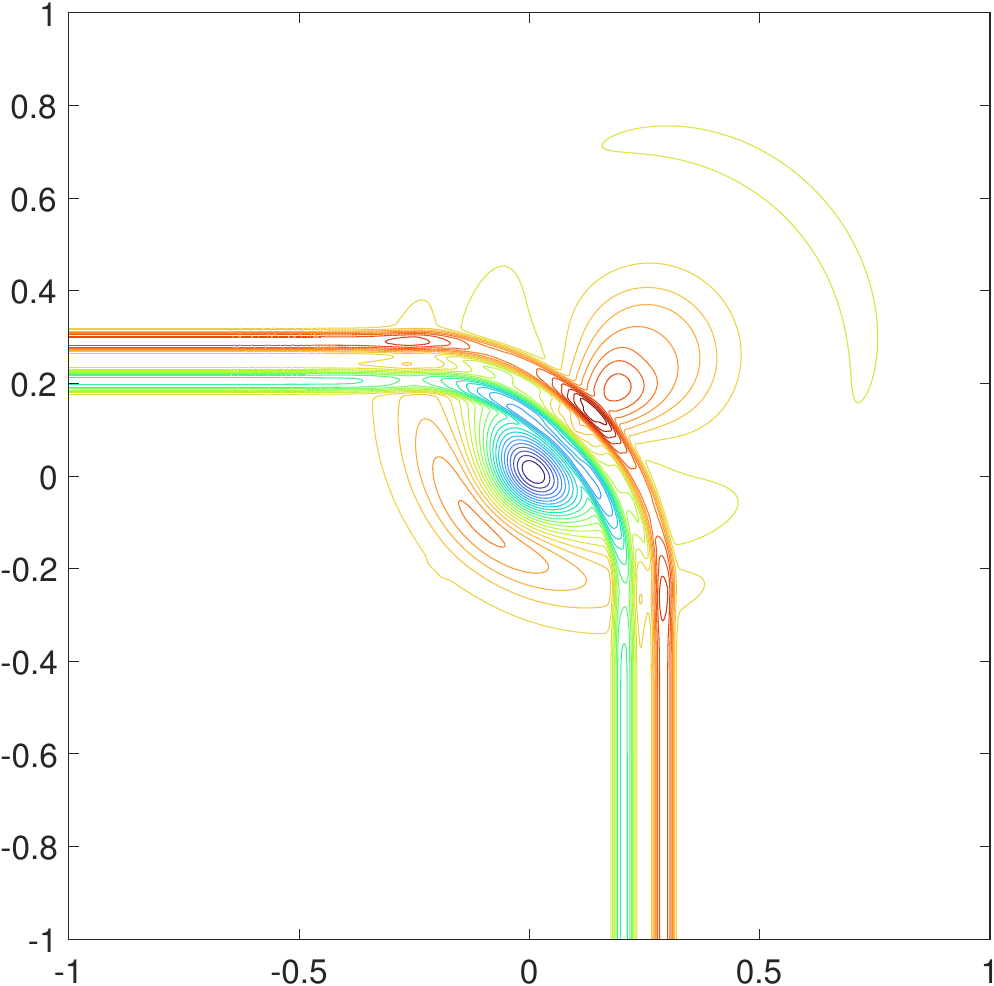}
  \caption{$h_1$}
	\end{subfigure}
	\caption{Example \ref{ex:Pro_NonFlat_Test} for the three-layer case. $30$ equally spaced contours obtained by our schemes. From top to bottom: {\tt MM-ES~}(200$\times$200), {\tt UM-ES~}(200$\times$200), {\tt UM-ES~}(600$\times$600).
	}
	\label{fig:2D_Pro_NonFlat_Test_3L}
\end{figure}
\begin{table}[hbt!]
  \centering
  \begin{tabular}{crrrrr}
    \hline\hline
    & two-layer & three-layer\\ \hline
    {\tt UM-ES}~($200\times200$) & 3m44s   & 21m29s      \\
    {\tt MM-ES}~($200\times200$) & 20m05s    & 63m35s     \\
    {\tt UM-ES}~($600\times600$) & 2h42m   & 17h54m   \\ 
    \hline\hline
  \end{tabular}
  \caption{Example \ref{ex:Pro_NonFlat_Test}. The CPU times for our schemes on the different meshes. The tests are solved up to $t=0.1$.}\label{tb:Time_Compare}
\end{table}
Figure \ref{fig:2D_Pro_NonFlat_Test_Mesh} shows the $200\times200$ adaptive meshes obtained by the \texttt{MM-ES} schemes.
Figure \ref{fig:2D_Pro_NonFlat_Test} plots $30$ equally spaced contours of the water surface levels obtained by the \texttt{UM-ES} scheme on $200\times200$ and $600\times600$ meshes, and obtained by the \texttt{MM-ES} scheme with $200\times200$ mesh at $t = 0.1$.
It can be seen that the results are comparable to those in \cite{Kurganov2009Central},
and our schemes can capture the complicated structure around $(0.2,~0.2)$ without oscillations.
One can observe that the mesh points concentrate near the sharp transitions of the water surface level $h_1$,
which improves the local resolution such that the results on $200\times 200$ adaptive moving mesh are as good as those on $600\times 600$ fixed uniform mesh.
The numerical results for the three-layer case are also shown in Figure \ref{fig:2D_Pro_NonFlat_Test_3L}.
One can also see that the resolution of the numerical results obtained by the \texttt{MM-ES} scheme is better than that obtained by the \texttt{UM-ES} scheme with the same number of mesh points.  

To verify the high efficiency of our schemes on adaptive moving meshes, the CPU times are listed in Table \ref{tb:Time_Compare}.
One observes that the \texttt{MM-ES} scheme takes less CPU time compared with the \texttt{UM-ES} scheme using a finer mesh,
which illustrates the efficiency of our adaptive moving mesh method.
The CPU times are tested with the code programmed by MATLAB R2021b on a laptop with Intel\textsuperscript{\textregistered} Core{\texttrademark} i7-8750H CPU @2.20GHz, 24GB memory.


\begin{example}[Circle dam-break problem]\rm\label{ex:DamBreak}
	This example test is taken from  \cite{Castro2012Central}, which is used to verify the shock-capturing ability and efficiency of our schemes for the two-layer SWEs. The bottom topography is  
	\begin{equation*}
		b(x_1,x_2) = 0.5e^{-5\left(x_1^2+x_2^2\right)}-2.0,
	\end{equation*}
	and the initial data are
	\begin{align*}
		&h_1(x_1,x_2,0) = \begin{cases}
			1.8, & \text{if}\quad x_1^2+x_2^2 \geqslant 1,\\
	0.2, & \text{otherwise},
		\end{cases}\\
	    &h_2(x_1,x_2,0) = \begin{cases}
		0.2-0.5e^{-5\left(x_1^2+x_2^2\right)}, & \text{if}\quad x_1^2+x_2^2 \geqslant 1,\\
		1.8-0.5e^{-5\left(x_1^2+x_2^2\right)}, & \text{otherwise},
		\end{cases}
	\end{align*}
	with zero velocities in the physical domain $[-2, 2] \times [-2, 2]$ with outflow boundary conditions, and $g=9.812$, $r_{12}=0.98$, $\rho_2=1$. The output time is $t = 1.0$. The monitor function is the same as that in Example \ref{ex:2D_WB_Test}, but with $\sigma = h_2+b+5$.
\end{example}
\begin{figure}[!htb]
	\centering
 \begin{subfigure}[b]{0.3\textwidth}
		\centering
		\includegraphics[width=1.0\textwidth]{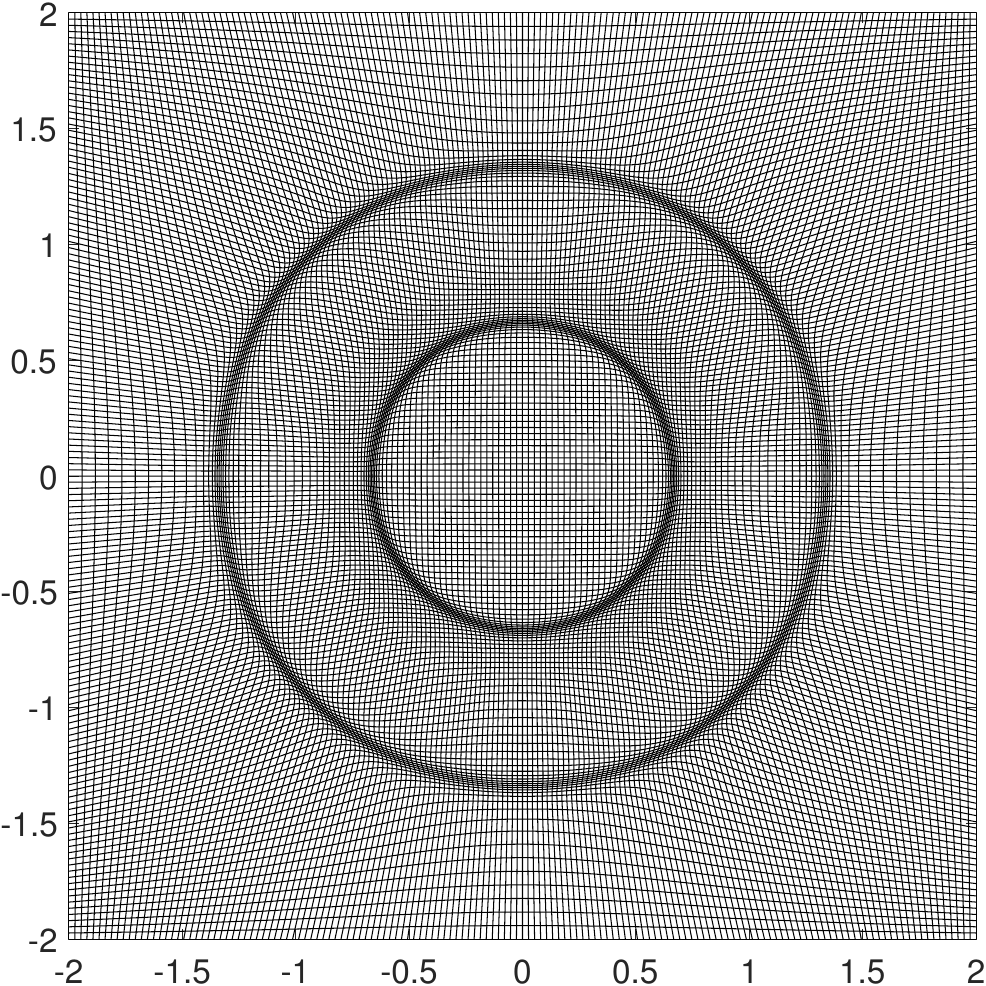}
  \caption{adaptive mesh}
	\end{subfigure}
	\begin{subfigure}[b]{0.3\textwidth}
		\centering
\includegraphics[width=1.0\textwidth]{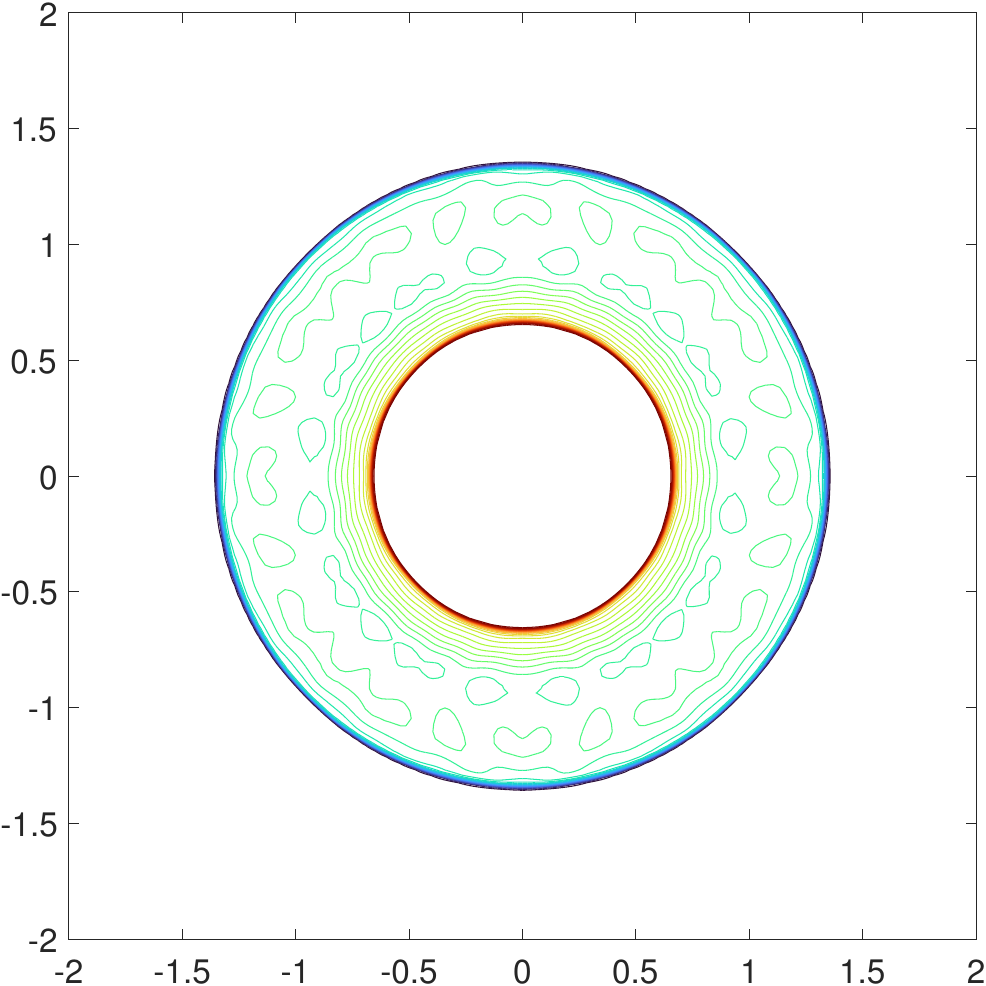}
\caption{$40$ contours of $h_2+b$}
	\end{subfigure}
	\begin{subfigure}[b]{0.3\textwidth}
	\centering
\includegraphics[width=1.0\textwidth]{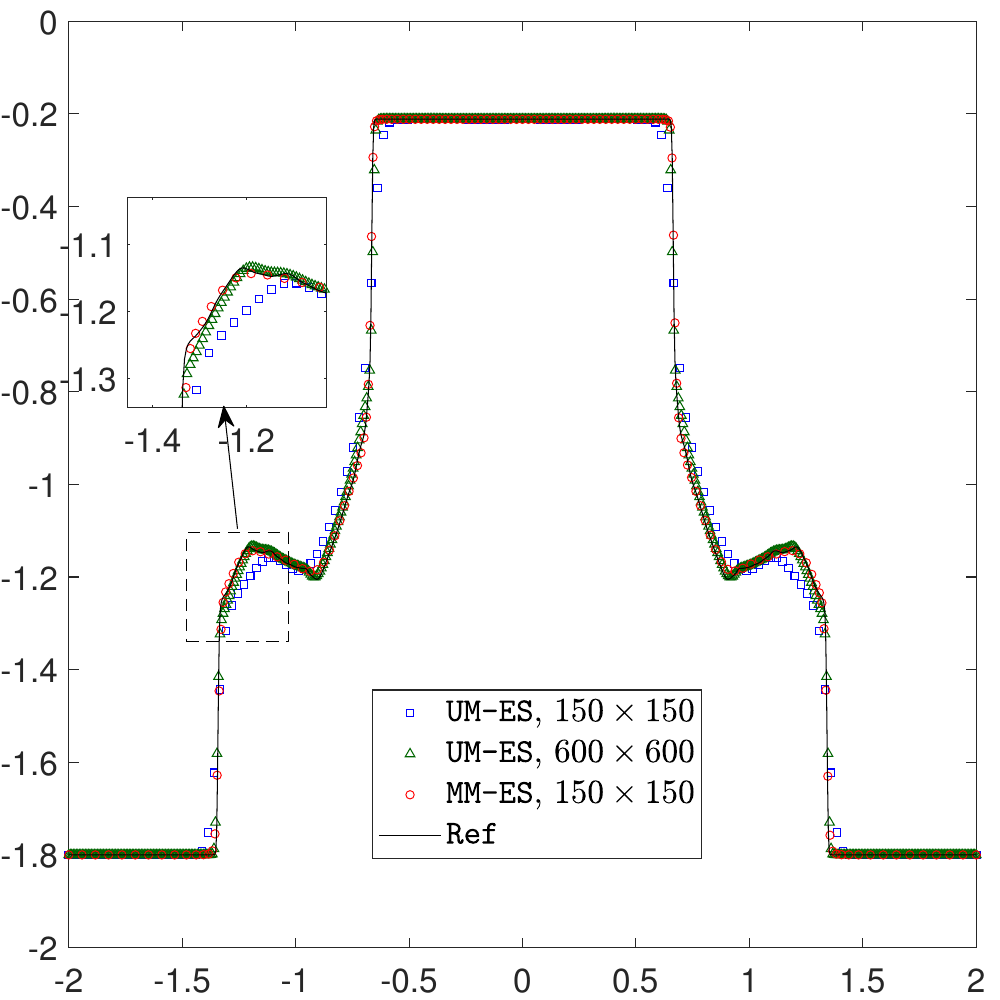}
\caption{cut line along $x_2=0$}
\end{subfigure}
	\caption{Example \ref{ex:DamBreak}.  The numerical results obtained by using the \texttt{MM-ES} scheme with $150\times150$ mesh, and the \texttt{UM-ES} scheme with $150\times150$ and $600\times600$ meshes.
	}
	\label{fig:2D_Dam_Break}
\end{figure}

Figure \ref{fig:2D_Dam_Break} gives the $150\times150$ adaptive mesh, the $40$ equally spaced contours of the water surface level $h_2+b$ obtained by the \texttt{MM-ES} scheme on $150\times150$ mesh and the cut line along $x_2=0$ of $h_2+b$ compared with the \texttt{UM-ES} scheme. The reference solution is obtained by using the \texttt{UM-ES} scheme with $1000\times1000$ mesh. It is seen that our schemes can capture the discontinuities accurately and the \texttt{MM-ES} scheme gives a higher resolution around $x_1\approx\pm1.7$ compared with the \texttt{UM-ES} scheme on a finer mesh.


\begin{example}[The perturbed flow in lake at rest]\label{ex:Pertubation}\rm
 This example tests the ability of our schemes to capture small perturbations over the lake at rest for the two-layer SWEs. The physical domain is $[0,2] \times [0,1]$ with outflow boundary conditions. The bottom topography resembles an ``oval hump'', similar to the example for the single-layer SWEs in \cite{Xing2017Numerical,Xing2005High}, but with extra water depth $h_1$. Initially, 
  \begin{align*}
    & h_2= \begin{cases}1.01-b,~&\text{if}\quad x_1 \in[0.05,0.15], \\
      1-b,~&\text{otherwise},
    \end{cases} \\
    &b=0.8 \exp(-5(x_1-0.9)^2-50(x_2-0.5)^2),\\
    & h_1 = 2-h_2-b,\\
    & u_1=v_1=0,~u_2=v_2=0.
  \end{align*}
 The density ratio is $r_{12} = 0.85$, with $\rho_2 = 1.0$. The monitor function used in this test is the same as Example \ref{ex:2D_WB_Test}, except that the value of $\sigma$ is equal to $h_2+b$.
\end{example}
Figure \ref{fig:2D_UM_Pertubation} shows the $30$ equally spaced contour lines of the water surface levels $h_1+h_2+b$ and $h_2+b$ at $t = 0.9$, $1.3$, $1.7$, $2.0$ obtained by using the \texttt{UM-ES} scheme with $300\times150$ mesh.
The $300\times150$ adaptive mesh obtained by the \texttt{MM-ES} schemes at $t=0.9$ is plotted in Figure \ref{fig:2D_MM_Pertubation}. 
The comparison of the results between the \texttt{UM-ES} and \texttt{MM-ES} schemes is shown in Figure \ref{fig:2D_UM_and_MM_Pertubation}.
One observes that our schemes can capture the wave structures accurately without obvious oscillations, and results of the \texttt{MM-ES} scheme on the $300\times 150$ mesh are comparable to those of the \texttt{UM-ES} scheme on the $600\times 300$ mesh, highlighting the efficiency of the \texttt{MM-ES} schemes.

\begin{figure}[!htb]
  \centering
  \begin{subfigure}[b]{0.4\textwidth}
    \centering
    \includegraphics[width=1.0\textwidth]{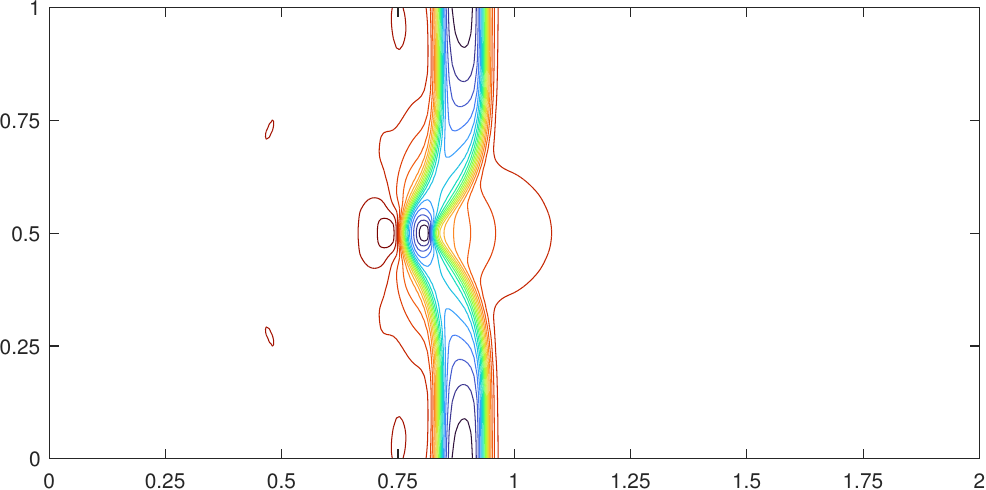}
  \end{subfigure}~
  \begin{subfigure}[b]{0.4\textwidth}
    \centering
    \includegraphics[width=1.0\textwidth]{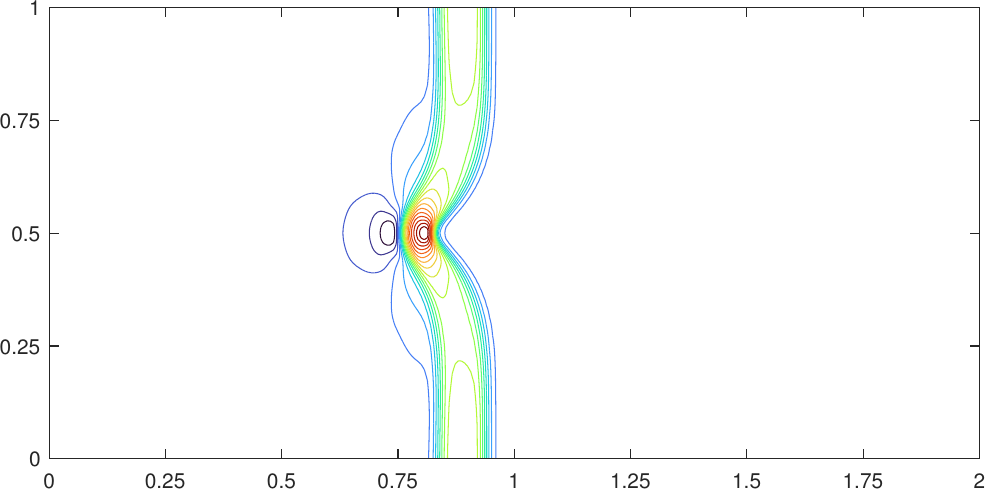}
  \end{subfigure}
  \\
  \begin{subfigure}[b]{0.4\textwidth}
    \centering
    \includegraphics[width=1.0\textwidth]{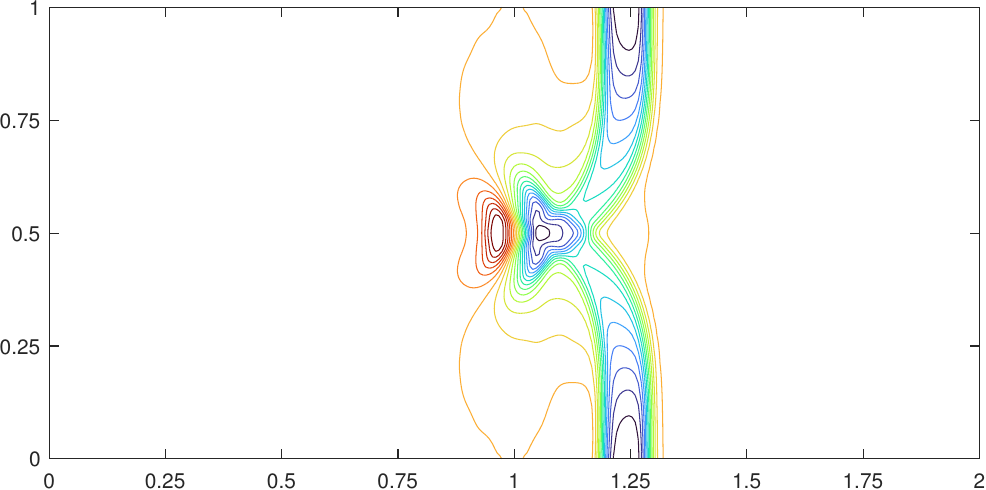}
  \end{subfigure}~
  \begin{subfigure}[b]{0.4\textwidth}
    \centering
    \includegraphics[width=1.0\textwidth]{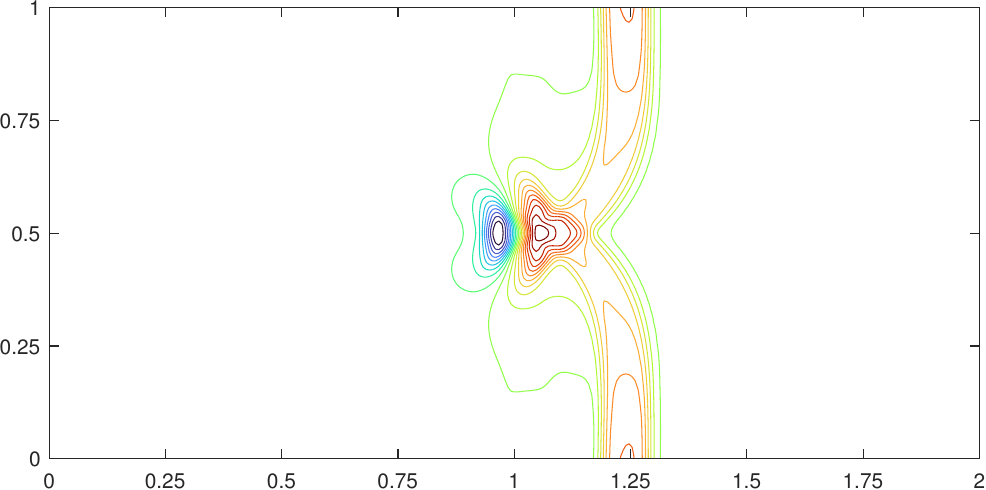}
  \end{subfigure}
  \quad\\
  \begin{subfigure}[b]{0.4\textwidth}
    \centering
    \includegraphics[width=1.0\textwidth]{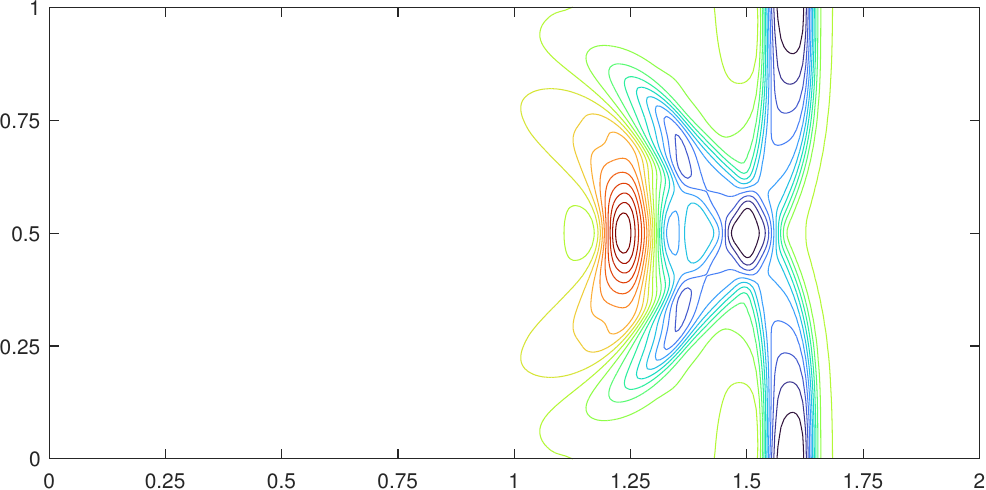}
  \end{subfigure}~
  \begin{subfigure}[b]{0.4\textwidth}
    \centering
    \includegraphics[width=1.0\textwidth]{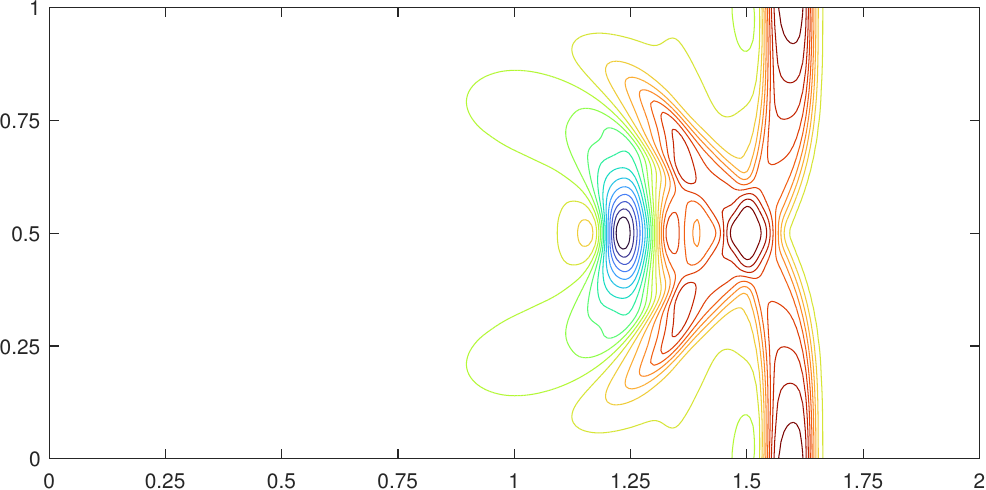}
  \end{subfigure}
  \quad\\
  \begin{subfigure}[b]{0.4\textwidth}
    \centering
    \includegraphics[width=1.0\textwidth]{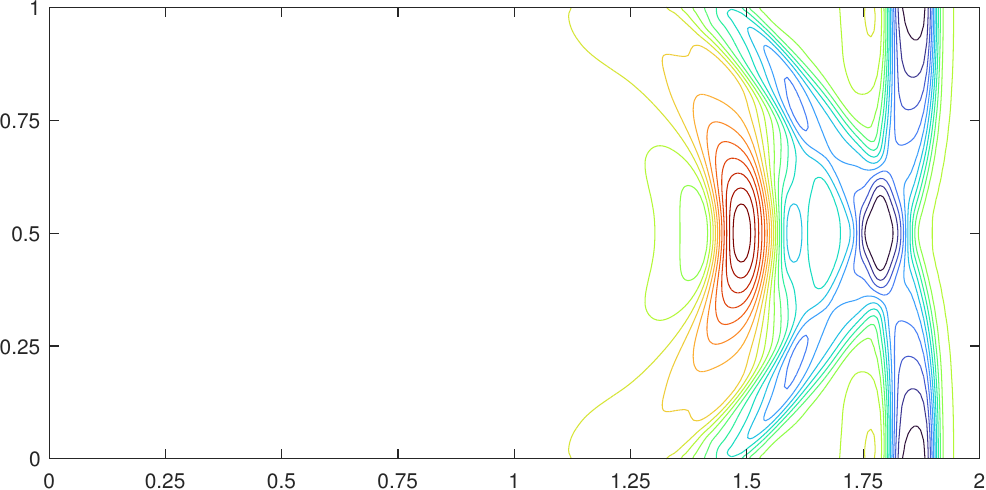}
  \end{subfigure}~
  \begin{subfigure}[b]{0.4\textwidth}
    \centering
    \includegraphics[width=1.0\textwidth]{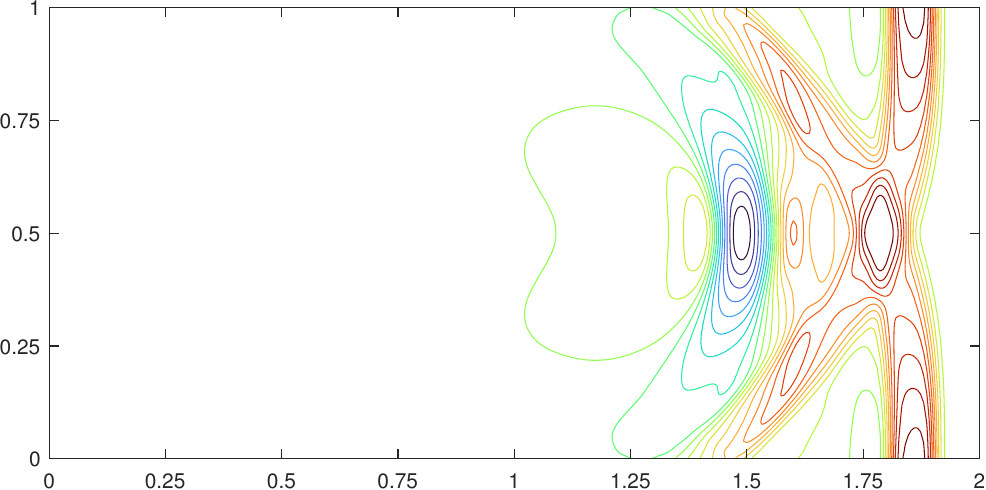}
  \end{subfigure}
  \caption{Example \ref{ex:Pertubation}. $30$ equally spaced contours of the water surface levels obtained by the \texttt{UM-ES} scheme on $300\times150$ mesh. From top to bottom: $t = 0.9$, $1.3$, $1.7$, $2.0$. Left: $h_1+h_2+b$, right: $h_2+b$.}\label{fig:2D_UM_Pertubation}
\end{figure}
\begin{figure}[!htb]
	\centering
	\begin{subfigure}[b]{0.5\textwidth}
		\centering
		\includegraphics[width=1.0\textwidth]{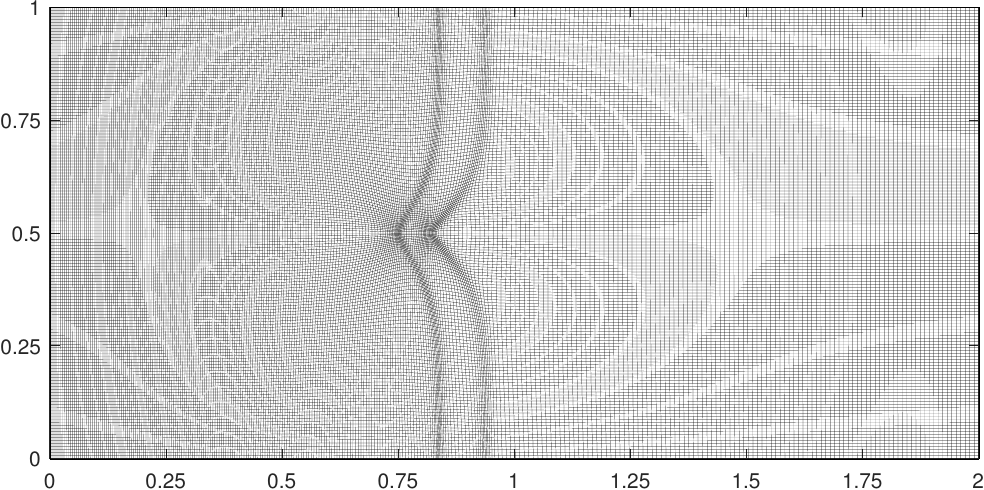}	
\end{subfigure}
	\caption{Example \ref{ex:Pertubation}. The adaptive mesh obtained by the \texttt{MM-ES} scheme with $300\times150$ mesh at $t=0.9$.}
 \label{fig:2D_MM_Pertubation}
\end{figure}

\begin{figure}[!htb]
	\centering
	\begin{subfigure}[b]{0.4\textwidth}
		\centering
		\includegraphics[width=1.0\textwidth]{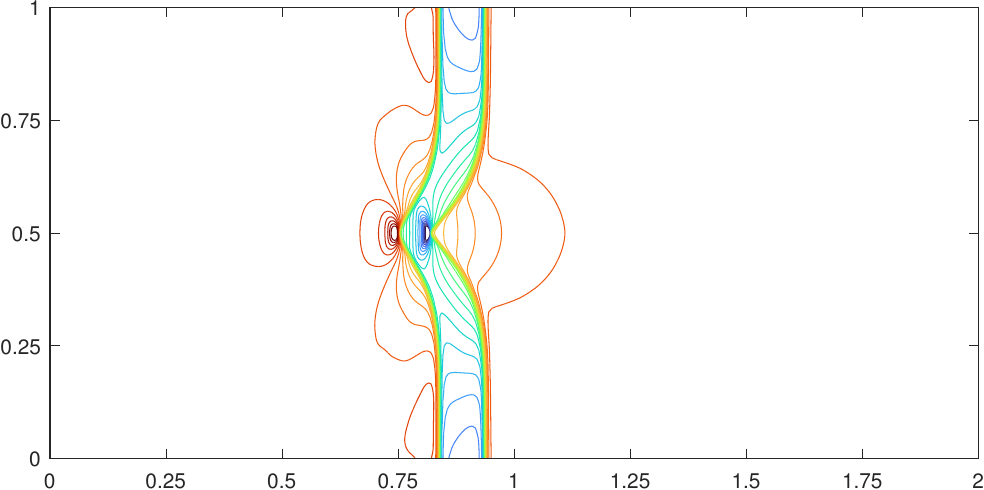}
	\end{subfigure}
	\begin{subfigure}[b]{0.4\textwidth}
		\centering
		\includegraphics[width=1.0\textwidth]{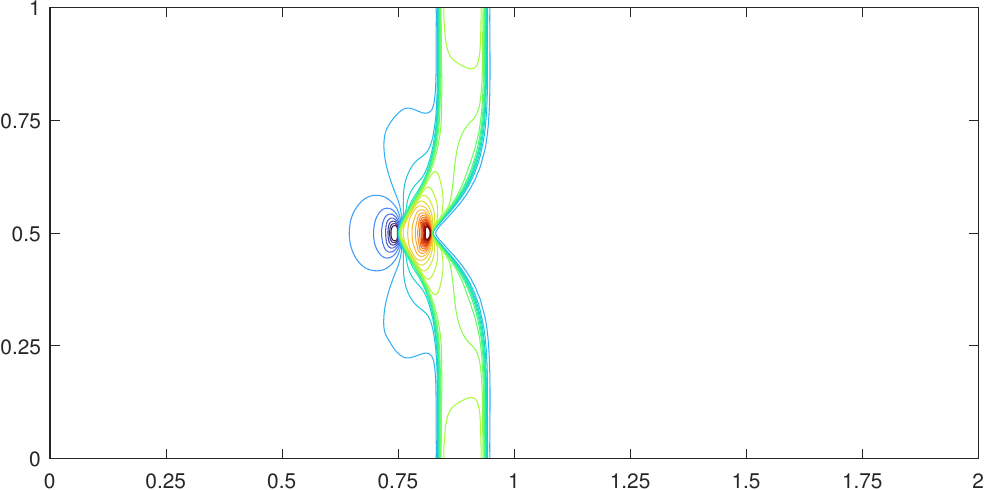}
	\end{subfigure}
  \quad\\
    \vspace{3pt}
	\begin{subfigure}[b]{0.4\textwidth}
		\centering
		\includegraphics[width=1.0\textwidth]{Figures/2DMMPerturbation/2D_UM_301_151_t=0.9_h1h2b.pdf}
	\end{subfigure}
	\begin{subfigure}[b]{0.4\textwidth}
		\centering
		\includegraphics[width=1.0\textwidth]{Figures/2DMMPerturbation/2D_UM_301_151_t=0.9_h2b.pdf}
	\end{subfigure}
  \quad\\
  \vspace{3pt}
	\begin{subfigure}[b]{0.4\textwidth}
		\centering
		\includegraphics[width=1.0\textwidth]{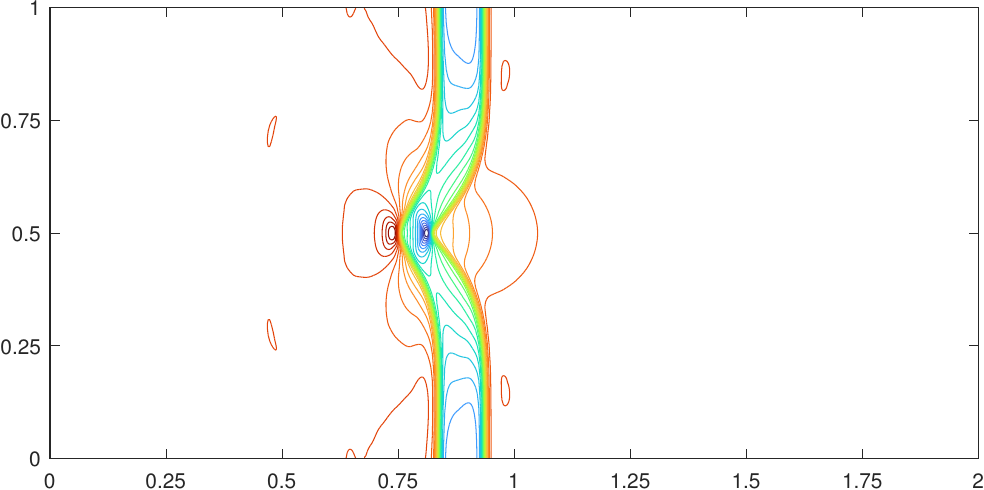}
	\end{subfigure}
	\begin{subfigure}[b]{0.4\textwidth}
		\centering
		\includegraphics[width=1.0\textwidth]{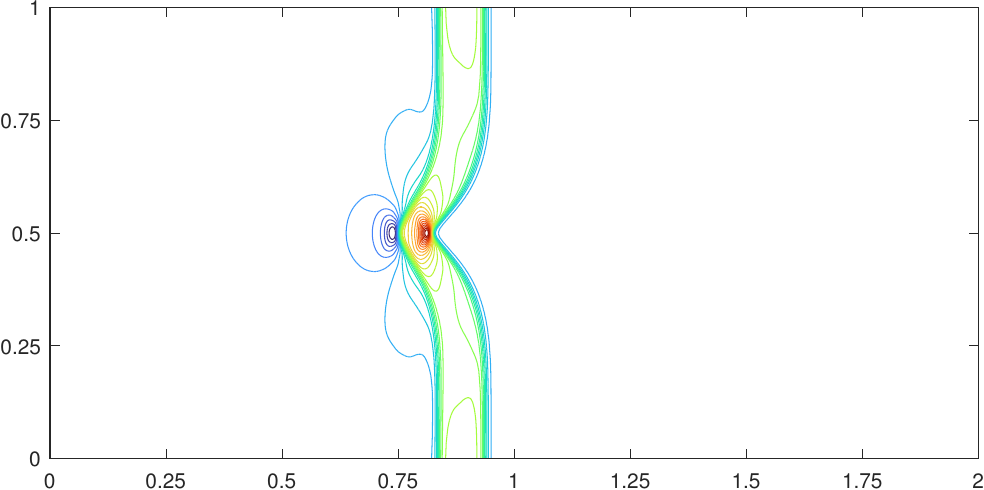}
	\end{subfigure}
	\quad
	\caption{Example \ref{ex:Pertubation}. $30$ equally spaced contours of the water surface levels obtained by the \texttt{UM-ES} and \texttt{MM-ES}  schemes at $t = 0.9$. From top to bottom: \texttt{MM-ES}~(300$\times$150), \texttt{UM-ES}~(300$\times$150), \texttt{UM-ES}~(600$\times$300). Left: $h_1+h_2+b$, right: $h_2+b$.}\label{fig:2D_UM_and_MM_Pertubation}
\end{figure}

\section{Conclusion}\label{section:Conc}
This paper has presented high-order accurate WB ES finite difference schemes for the ML-SWEs (the number of layers $M\geqslant2$) with non-flat bottom topography on fixed and adaptive moving meshes, which extended our previous works \cite{Duan2021_High,Zhang2023High}.
The proof of the convexity of the energy function for an arbitrary $M$ was more challenging than the single-layer case due to the coupling terms resulting from the multiple layers in the energy function, leading to the more complicated Hessian matrix.
The recurrence relations of the leading principal minors or the quadratic forms of the Hessian matrix were found to complete the proof.
To design WB schemes, the sufficient condition for the EC fluxes should include compatible discretizations of the source terms similar to the single-layer case.
The condition was decoupled into $M$ individual equalities for each layer, so that it was convenient to construct the two-point EC flux for the multi-layer system.
The high-order WB EC fluxes were constructed by using the two-point WB EC flux as a building block,
and then the high-order WB ES schemes satisfying semi-discrete energy inequalities were obtained by adding the WENO-based dissipation terms to the high-order WB EC fluxes with compatible discretizations of the source terms.
The high-order explicit SSP-RK3 schemes were used to obtain the fully-discrete schemes, which were proved to preserve the lake at rest.
Moreover, the extension to the adaptive moving meshes was built on the reformulated system and carefully designed dissipation terms, extending the techniques proposed in \cite{Zhang2023High}.
Some two- and three-layer test cases were employed in the numerical experiments to confirm our findings, and due to the lack of an analytical expression for the eigenstructure of the multi-layer system, the proper amount of dissipation was chosen by the estimation of the maximal wave speed, employing the method in \cite{Kurganov2009Central}.

\section*{Acknowledgments}
The authors were partially supported by
the National Key R\&D Program of China (Project Number 2020YFA0712000),
the National Natural Science Foundation of China (No. 12171227 \& 12288101).
J.M. Duan was supported by the Alexander von Humboldt Foundation Research Fellowship CHN-1234352-HFST-P.

\appendix
\section{Proof of the convexity by using quadratic forms} \label{Sec:Proof_of_Remark2.1}

Under the condition $0<\rho_1<\rho_2<\dots<\rho_{\mathcal{N}}<\dots$, the quadratic form of the Hessian matrix $\mathcal{M}_\mathcal{N} = \partial^2{\eta}/\partial {\bm{U}}^2$ is
\begin{equation}\label{eq:Qua_Form}
\begin{aligned}
    Q_\mathcal{N} := 
    \bm{\beta}_{\mathcal{N}} \mathcal{M}_\mathcal{N} \bm{\beta}_{\mathcal{N}}^{\mathrm{T}} = &\ g\rho_1\left(
    \sum_{m=1}^{\mathcal{N}}
    \beta_{3m-2}
    \right)^2
    +g\sum_{m=2}^{\mathcal{N}}(\rho_{m}-\rho_{m-1})
    \left(
    \sum_{l=m}^{\mathcal{N}}
    \beta_{3l-2}
    \right)^2
    \\ & +\sum_{m=1}^{\mathcal{N}}
    \dfrac{\rho_m}{h_m}
    \left(
    \left(
    u_{m}\beta_{3m-2}-\beta_{3m-1}
    \right)^2
    +
    \left(
    v_{m}\beta_{3m-2}-\beta_{3m}
    \right)^2
    \right),
\end{aligned}
\end{equation}
with $\bm{\beta}_{\mathcal{N}} = \left(\beta_1,\dots, \beta_{3\mathcal{N}}\right) \in \mathbb{R}^{1\times{3\mathcal{N}}}$, thus the energy function is convex.

\begin{proof}
The proof is given by induction on $\mathcal{N}$.
When $\mathcal{N}=2$, it is easy to show that the quadratic form of the Hessian matrix $\mathcal{M}_{2}$ \eqref{eq:Hessian_M2} is
\begin{align*}
Q_2 = &\ \dfrac{\rho_1}{h_1}\left(u_1^2\beta_1-2u_1\beta_1\beta_2+\beta_2^2+
v_1^2\beta_1-2v_1\beta_1\beta_3+\beta_3^2\right)
\\ &+
\dfrac{\rho_2}{h_2}\left(u_2^2\beta_4-2u_2\beta_4\beta_5+\beta_5^2+
v_2^2\beta_4-2v_2\beta_4\beta_6+\beta_6^2\right)
\\ &+g\rho_1\beta_1^2+2g\rho_1 \beta_1 \beta_4 +g\rho_2\beta_4^2
\\ = &\ \dfrac{\rho_1}{h_1}\left(
\left(u_1\beta_1-\beta_2
\right)^2+
\left(v_1\beta_1-\beta_3
\right)^2
\right) + \dfrac{\rho_2}{h_2}\left(
\left(u_2\beta_4-\beta_5
\right)^2+
\left(v_2\beta_4-\beta_6
\right)^2
\right)
\\ & +
g\rho_1 \left(
\beta_1+\beta_4
\right)^2+g\left(
\rho_2-\rho_1
\right)\beta_4^2,
\end{align*}
which is positive as $0<\rho_1<\rho_2$ for any $\bm{\beta}_{2} \in \mathbb{R}^{6}\backslash \{\bm{0}_6\}$, thus the matrix $\mathcal{M}_{2}$ is positive-definite.

Now it suffices to show that if the quadratic form of $\mathcal{M}_{\mathcal{N}}$ is $Q_{\mathcal{N}}$ in \eqref{eq:Qua_Form},
then the the quadratic form of $\mathcal{M}_{\mathcal{N}+1}$ is $Q_{\mathcal{N}+1}$.
According to the matrix structure in Lemma \ref{lemma:Hessian}, one can calculate $Q_{\mathcal{N}+1}$ by adding several extra terms to $Q_{\mathcal{N}}$, which can be written as
\begin{align*}
Q_{\mathcal{N}+1} =&\ \bm{\beta}_{\mathcal{N}+1} \mathcal{M}_{\mathcal{N}+1} \bm{\beta}_{\mathcal{N}+1}^{\mathrm{T}} \\ =
&\ Q_{\mathcal{N}} +2\beta_{3\mathcal{N}+1}
    \left(
    \sum_{m=1}^{\mathcal{N}}g\rho_{m}\beta_{3m-2}
    \right)+g\rho_{\mathcal{N}+1}\beta_{3\mathcal{N}+1}^2
    \\&+\dfrac{\rho_{\mathcal{N}+1}}{h_{\mathcal{N}+1}}\left(
    \left(
    u_{\mathcal{N}+1}\beta_{3\mathcal{N}+1} - \beta_{3\mathcal{N}+2}
    \right)^2
    +\left(
    v_{\mathcal{N}+1}\beta_{3\mathcal{N}+1} - \beta_{3\mathcal{N}+3}
    \right)^2
    \right) \\
=&\ g\rho_1\left(
    \sum_{m=1}^{\mathcal{N}}
    \beta_{3m-2}
    \right)^2
    +g\sum_{m=2}^{\mathcal{N}}(\rho_{m}-\rho_{m-1})
    \left(
    \sum_{l=m}^{\mathcal{N}}
    \beta_{3l-2}
    \right)^2
    \\ & +\sum_{m=1}^{\mathcal{N}}
    \dfrac{\rho_m}{h_m}
    \left(
    \left(
    u_{m}\beta_{3m-2}-\beta_{3m-1}
    \right)^2
    +
    \left(
    v_{m}\beta_{3m-2}-\beta_{3m}
    \right)^2
    \right)
    \\
    & +2\beta_{3\mathcal{N}+1}
    \left(
    \sum_{m=1}^{\mathcal{N}}g\rho_{m}\beta_{3m-2}
    \right)+g\rho_{\mathcal{N}+1}\beta_{3\mathcal{N}+1}^2
    \\&+\dfrac{\rho_{\mathcal{N}+1}}{h_{\mathcal{N}+1}}\left(
    \left(
    u_{\mathcal{N}+1}\beta_{3\mathcal{N}+1} - \beta_{3\mathcal{N}+2}
    \right)^2
    +\left(
    v_{\mathcal{N}+1}\beta_{3\mathcal{N}+1} - \beta_{3\mathcal{N}+3}
    \right)^2
    \right)
    \\
    =: &\ \mathcal{H}_{1}+\mathcal{H}_2,
\end{align*}
with 
\begin{align*}
    &\mathcal{H}_1 = \sum_{m=1}^{\mathcal{N}+1}
    \dfrac{\rho_m}{h_m}
    \left(
    \left(
    u_{m}\beta_{3m-2}-\beta_{3m-1}
    \right)^2
    +
    \left(
    v_{m}\beta_{3m-2}-\beta_{3m}
    \right)^2
    \right),\\
    &\mathcal{H}_2 =g\rho_1\left(
    \sum_{m=1}^{\mathcal{N}}
    \beta_{3m-2}
    \right)^2
    +g\sum_{m=2}^{\mathcal{N}}(\rho_{m}-\rho_{m-1})
    \left(
    \sum_{l=m}^{\mathcal{N}}
    \beta_{3l-2}
    \right)^2
    \\&\quad\quad +2\beta_{3\mathcal{N}+1}
    \left(
    \sum_{m=1}^{\mathcal{N}}g\rho_{m}\beta_{3m-2}
    \right)+g\rho_{\mathcal{N}+1}\beta_{3\mathcal{N}+1}^2.
\end{align*}
The term $\mathcal{H}_2$ can be further simplified as 
\begin{align*}
    \mathcal{H}_2 = 
    &\ g\rho_1\left(
    \left(
    \sum_{m=1}^{\mathcal{N}}
    \beta_{3m-2}
    \right)^2
    +
    2 \beta_{3\mathcal{N}+1}
    \left(
    \sum_{m=1}^{\mathcal{N}}
    \beta_{3m-2}
    \right)
    +\beta_{3\mathcal{N}+1}^2
    \right)
    \\
    &+g\sum_{m=2}^{\mathcal{N}}
    \left(\rho_{m}
    -\rho_{m-1}
    \right)
    \left(
    \left(
    \sum_{l=m}^{\mathcal{N}}
    \beta_{3l-2}
    \right)^2
    +2\beta_{3\mathcal{N}+1}
    \left(
    \sum_{l=m}^{\mathcal{N}}
    \beta_{3l-2}
    \right)
    +\beta_{3\mathcal{N}+1}^2
    \right)
   \\
   &+ 2g\beta_{3\mathcal{N}+1}
    \left(
    \sum_{m=1}^{\mathcal{N}}\rho_{m}\beta_{3m-2}
    \right)+g\rho_{\mathcal{N}+1}\beta_{3\mathcal{N}+1}^2
\\&- 2g\beta_{3\mathcal{N}+1}
\left(
\rho_1\left(
    \sum_{m=1}^{\mathcal{N}}
    \beta_{3m-2}
    \right)
    + \sum_{m=2}^{\mathcal{N}}
    \left(\rho_{m}
    -\rho_{m-1}
    \right)\left(
    \sum_{l=m}^{\mathcal{N}}
    \beta_{3l-2}
    \right)
\right)
\\&- g\beta_{3\mathcal{N}+1}^2
\left(
\rho_1
    + \sum_{m=2}^{\mathcal{N}}
    \left(\rho_{m}
    -\rho_{m-1}
    \right)
\right).
\end{align*}
Using
\begin{align*}
    &g\rho_{\mathcal{N}+1}\beta_{3\mathcal{N}+1}^2 - g\beta_{3\mathcal{N}+1}^2\left(\rho_1+\sum_{m=2}^{\mathcal{N}}
    \left(\rho_{m}
    -\rho_{m-1}
    \right)\right) = g\left(\rho_{\mathcal{N}+1}-\rho_{\mathcal{N}}\right)\beta_{3\mathcal{N}+1}^2
    ,\\
    &
\rho_1\left(
    \sum_{m=1}^{\mathcal{N}}
    \beta_{3m-2}
    \right)
    + \sum_{m=2}^{\mathcal{N}}
    \left(\rho_{m}
    -\rho_{m-1}
    \right)\left(
    \sum_{l=m}^{\mathcal{N}}
    \beta_{3l-2}
    \right) = 
    \sum_{m=1}^{\mathcal{N}}\rho_{m}\beta_{3m-2},
\end{align*}
yields 
\begin{align*}
    \mathcal{H}_2 = 
    g\rho_1\left(
    \sum_{m=1}^{\mathcal{N}+1}\beta_{3m-2}\right)^2+g\sum_{m=2}^{\mathcal{N}+1}(\rho_{m}-\rho_{m-1})
    \left(
    \sum_{l=m}^{\mathcal{N}+1}
    \beta_{3l-2}
    \right)^2.
\end{align*}
Thus the quadratic form of $\mathcal{M}_{\mathcal{N}+1}$ is $Q_{\mathcal{N}+1}$.
Moreover, it is easy to verify that $Q_{\mathcal{N}} > 0$, so the energy function is convex.
The proof is completed.
\end{proof}

\section{Explicit expressions of the exact solutions used in Examples \ref{ex:1DSmooth}, \ref{eq:Smooth_2D}}
\label{Sec:Exact_Solution}
In Example \ref{ex:1DSmooth}, the source terms for the two-layer case are $\bm{S} = (0,s_2,0,s_4,0)^{\mathrm{T}}$, with
\begin{align*}
s_2 = &~\pi \cos\left(\pi x\right)\left(\cos\left(\pi t\right)\cos\left(\pi x\right)+6\right)+\pi \cos\left(\pi t\right)\sin\left(\pi x\right)
\\
&~-\frac{3\pi \cos\left(\pi t\right)\sin\left(\pi x\right)\left(\cos\left(\pi t\right)\cos\left(\pi x\right)+6\right)}{2} -\pi \cos\left(\pi t\right)\sin\left(\pi x\right)\left(\frac{\cos\left(\pi t\right)\cos\left(\pi x\right)}{2}+3\right)\\
&~+\frac{\pi \cos\left(\pi t\right){\sin\left(\pi t\right)}^2{\sin\left(\pi x\right)}^3}{{\left(\cos\left(\pi t\right)\cos\left(\pi x\right)+6\right)}^2}+\frac{2\pi \cos\left(\pi x\right){\sin\left(\pi t\right)}^2\sin\left(\pi x\right)}{\cos\left(\pi t\right)\cos\left(\pi x\right)+6},
\\
	s_4 = &~ \pi \cos\left(\pi x\right)\left(\cos\left(\pi t\right)\cos\left(\pi x\right)+4\right)+\pi \cos\left(\pi t\right)\sin\left(\pi x\right)\\
	&-\frac{\pi \cos\left(\pi t\right)\sin\left(\pi x\right)\left(\cos\left(\pi t\right)\cos\left(\pi x\right)+4\right)}{2}
	\\&~
	-\pi \cos\left(\pi t\right)\sin\left(\pi x\right)\left(\frac{\cos\left(\pi t\right)\cos\left(\pi x\right)}{2}+2\right)-\pi r\cos\left(\pi t\right)\sin\left(\pi x\right)\left(\cos\left(\pi t\right)\cos\left(\pi x\right)+4\right)
	\\&~ +\frac{\pi \cos\left(\pi t\right){\sin\left(\pi t\right)}^2{\sin\left(\pi x\right)}^3}{{\left(\cos\left(\pi t\right)\cos\left(\pi x\right)+4\right)}^2}+\frac{2\pi \cos\left(\pi x\right){\sin\left(\pi t\right)}^2\sin\left(\pi x\right)}{\cos\left(\pi t\right)\cos\left(\pi x\right)+4}.
\end{align*}
For the three-layer case, the expressions are omitted.

In Example \ref{eq:Smooth_2D}, the exact solutions for the three-layer case are
	\begin{align*}
		&h_1(x_1,x_2,t) = 
		\cos\left(\pi t\right)\cos\left(\pi x_1\right)+\cos\left(\pi t\right)\cos\left(\pi x_2\right)+8,\\
		&u_1(x_1,x_2,t) = \frac{\sin\left(\pi t\right)\sin\left(\pi x_1\right)}{h_1},~v_1(x_1,x_2,t) = \frac{\sin\left(\pi t\right)\sin\left(\pi x_2\right)}{h_1}, \\
		&h_2(x_1,x_2,t) = 
		\cos\left(\pi t\right)\cos\left(\pi x_1\right)+\cos\left(\pi t\right)\cos\left(\pi x_2\right)+6,\\
		&u_2(x_1,x_2,t) = \frac{\sin\left(\pi t\right)\sin\left(\pi x_1\right)}{h_2},~v_2(x_1,x_2,t) = \frac{\sin\left(\pi t\right)\sin\left(\pi x_2\right)}{h_2}, \\
		&h_3(x_1,x_2,t) = 
		\cos\left(\pi t\right)\cos\left(\pi x_1\right)+\cos\left(\pi t\right)\cos\left(\pi x_2\right)+4,\\
		&u_3(x_1,x_2,t) = \frac{\sin\left(\pi t\right)\sin\left(\pi x_1\right)}{h_3},~v_2(x_1,x_2,t) = \frac{\sin\left(\pi t\right)\sin\left(\pi x_2\right)}{h_3}, \\
		&b(x_1,x_2) = \sin\left(\pi x_1\right)+\sin\left(\pi x_1\right)+\frac{3}{2},
 	\end{align*}
and the extra source terms for the two-layer and three-layer cases can be obtained by some algebraic manipulations similar to Example \ref{ex:1DSmooth}.

\end{document}